\newcommand{\Z}{\mathbb{Z}} 
\newcommand{\Q}{\mathbb{Q}}
\newcommand{\R}{\mathbb{R}} 
\newcommand{\F}{\mathbb{F}}
\providecommand\@dotsep{5}
\def\listtodoname{List of Todos}
\def\listoftodos{\@starttoc{tdo}\listtodoname}
\theoremstyle{plain}
\newtheorem{theorem}{Theorem}[section]
\newtheorem{lemma}[theorem]{Lemma}
\newtheorem{corollary}[theorem]{Corollary}
\newtheorem{prop}[theorem]{Proposition}
\newtheorem{prop-def}[theorem]{Proposition / Definition}
\newtheorem{sec:hyp}[theorem]{Hypothesis}
\newtheorem{algorithm}[theorem]{Algorithm}
\newtheorem*{problem*}{Problem}
\newtheorem*{theorem*}{Theorem}
\theoremstyle{remark}
\newtheorem{remark}[theorem]{Remark}
\newtheorem{example}[theorem]{Example}
\newtheorem*{note*}{Note}
\newtheorem*{remark*}{Remark}
\newtheorem*{example*}{Example}
\theoremstyle{definition}
\newtheorem*{definition*}{Definition}
\newtheorem*{hypothesis*}{Hypothesis}
\newtheorem*{hypotheses*}{Hypotheses}
\newtheorem*{assumptions*}{Assumptions}
\newcommand{\Gal}{\mathrm{Gal}}
\newcommand{\disc}{\mathrm{Disc}}
\newcommand{\Nm}{\mathrm{N}}
\newcommand{\Cl}{\mathrm{Cl}}
\newcommand{\tr}{\mathrm{tr}}
\newcommand{\Aut}{\mathrm{Aut}}
\newcommand{\End}{\mathrm{End}}
\newcommand{\Mat}{\mathrm{Mat}}
\newcommand{\Hom}{\mathrm{Hom}}
\newcommand{\GL}{\mathrm{GL}}
\newcommand{\SL}{\mathrm{SL}}
\newcommand{\E}{\mathrm{E}}
\newcommand{\calA}{\mathcal{A}}
\newcommand{\calG}{\mathcal{G}}
\newcommand{\calO}{\mathcal{O}}
\newcommand{\calM}{\mathcal{M}}
\newcommand{\fra}{\mathfrak{a}}
\newcommand{\frb}{\mathfrak{b}}
\newcommand{\frc}{\mathfrak{c}}
\newcommand{\frf}{\mathfrak{f}}
\newcommand{\frg}{\mathfrak{g}}
\newcommand{\frh}{\mathfrak{h}}
\newcommand{\frp}{\mathfrak{p}}
\newcommand{\frP}{\mathfrak{P}}
\newcommand{\frq}{\mathfrak{q}}
\newcommand{\lra}{\longrightarrow}
\newcommand{\nr}{\mathrm{nr}}
\newcommand{\OC}{\calO_C}
\newcommand{\OK}{{\calO_K}}
\newcommand{\sseq}{\subseteq}
\newcommand{\im}{\mathrm{im}}
\newcommand{\mpar}[1]{}
\newcommand{\psimilar}{\ensuremath{\mathsf{IsSimilar}}}
\newcommand{\ppip}{\ensuremath{\mathsf{IsPrincipal}}}
\newcommand{\psplitting}{\ensuremath{\mathsf{SplittingMatrixAlgebra}}}
\newcommand{\pprimitive}{\ensuremath{\mathsf{Primitive}}}
\newcommand{\unit}{\ensuremath{\mathsf{UnitGroup}}}
\newcommand{\pwedderburn}{\ensuremath{\mathsf{Wedderburn}}}
\newcommand{\pfactor}{\ensuremath{\mathsf{Factor}}}
\newcommand{\pdlog}{\ensuremath{\mathsf{DLog}}}
\newcommand{\pisomorphic}{\ensuremath{\mathsf{IsIsomorphic}}}
\newcommand{\punit}{\ensuremath{\mathsf{UnitGroup}}}
\DeclareMathOperator{\G}{G}
\DeclareMathOperator{\jac}{J}
\title{Computation of lattice isomorphisms and the integral matrix similarity problem}
\author{Werner Bley}
\address{
Ludwig-Maximilians-Universität München\\
Theresienstr.\ 39\\
D-80333 M\"unchen\\
Germany}
\email{bley@math.lmu.de}
\urladdr{https://www.mathematik.uni-muenchen.de/$\sim$bley/}
\author{Tommy Hofmann}
\address{
Naturwissenschaftlich-Technische Fakult\"at\\
Universit\"at Siegen\\
Walter-Flex-Straße 3\\
57068 Siegen\\
Germany}
\email{tommy.hofmann@uni-siegen.de}
\author{Henri Johnston}
\address{
Department of Mathematics\\
University of Exeter\\
Exeter\\
EX4 4QF\\
United Kingdom
}
\email{H.Johnston@exeter.ac.uk}
\urladdr{http://emps.exeter.ac.uk/mathematics/staff/hj241}
\subjclass[2000]{11R33, 11Y40, 16Z05, 20G30}
\keywords{}
\date{31st August 2022}
\begin{document}

\begin{abstract}
Let $K$ be a number field, let $A$ be a finite-dimensional $K$-algebra, 
let $\jac(A)$ denote the Jacobson radical of $A$, and let $\Lambda$ be
an $\mathcal{O}_{K}$-order in $A$. 
Suppose that each simple component of the semisimple $K$-algebra 
$A/{\jac(A)}$ is isomorphic to a matrix ring over a field.
Under this hypothesis on $A$, we give an algorithm that given two $\Lambda$-lattices 
$X$ and $Y$, determines whether $X$ and $Y$ are isomorphic, and if so, computes an 
explicit isomorphism $X \rightarrow Y$. 
This algorithm reduces the problem to standard problems in computational algebra and algorithmic algebraic number theory in polynomial time. 
As an application, we give an algorithm for the following long-standing problem: 
given a number field $K$, a positive integer $n$ and two matrices
$A,B \in \Mat_{n}(\mathcal{O}_{K})$, determine whether $A$ and $B$ are similar over $\mathcal{O}_{K}$,
and if so, return a matrix $C \in \GL_{n}(\mathcal{O}_{K})$ such that $B= CAC^{-1}$.
We give explicit examples that show that the implementation of the latter algorithm
for $\mathcal{O}_{K}=\Z$ vastly outperforms implementations of all previous algorithms,
as predicted by our complexity analysis.
\end{abstract}

\maketitle

\section{Introduction}

Let $K$ be a number field with ring of integers $\mathcal{O}_{K}$. 
Let $A$ be a finite-dimensional $K$-algebra and 
let $\Lambda$ be an $\mathcal{O}_{K}$-order in $A$.
A \textit{$\Lambda$-lattice} is a (left) $\Lambda$-module that is finitely generated and torsion-free over 
$\mathcal{O}_{K}$.
We will consider the following problem.

\begin{problem*}[$\pisomorphic$] 
Given two $\Lambda$-lattices $X$ and $Y$, decide whether $X$ and $Y$ are isomorphic, and if so, 
return an isomorphism $X \to Y$.
\end{problem*}

A $\Lambda$-lattice contained in $A$ is said to be \textit{full} if it contains a $K$-basis of $A$.
We will show that $\pisomorphic$ is polynomial-time reducible (see \S \ref{sec:prelim-complexity})
to the following problem.

\begin{problem*}[$\ppip$] 
Given a full $\Lambda$-lattice $X$ in $A$, decide whether there exists $\alpha \in X$ such that
$X = \Lambda \alpha$, and if so, return such an element $\alpha$.
\end{problem*}

Let $\jac(A)$ denote the Jacobson radical of $A$.
Note that the quotient algebra $\overline{A} := A/{\jac(A)}$ is semisimple.  
Let $h : A \rightarrow \overline{A}$ denote the canonical projection map
and let $\overline{\Lambda} = h(\Lambda)$. 
We will show that the problem $\ppip$ for a full $\Lambda$-lattice $X$ in $A$
is polynomial-time reducible to the problem $\ppip$ for the full $\overline{\Lambda}$-lattice 
$\overline{X}$ in $\overline{A}$, where $\overline{X}=h(X)$.

Let
\[
A/{\jac(A)} \simeq \bigoplus_{i=1}^{r} A_{i}
\]
be the Wedderburn decomposition. 
Each simple component $A_{i}$ is isomorphic to a matrix ring $\Mat_{n_i}(D_i)$ where
$D_i$ is a skew field extension of $K$. Let $K_{i}$ denote the centre of $D_{i}$.
In order to make progress on the above problems, we impose the following hypothesis.

\begin{itemize}
\item[(H)] Each component $A_{i}$ of the Wedderburn decomposition $A/{\jac(A)} \simeq \bigoplus_{i=1}^{r} A_{i}$ is isomorphic to a matrix ring over a field.
\end{itemize}
In the above notation, this is equivalent to the assertion that $D_{i}=K_{i}$ for each $i$.

Under hypothesis (H), we give algorithms that solve both $\pisomorphic$ and $\ppip$.
Moreover, we give the first complexity analysis of these problems and thus
prove the following result.
For precise definitions and statements, we refer the reader to \S \ref{sec:prelim-complexity} and 
\S \ref{sec:pip}.

\begin{theorem*}
The problem $\pisomorphic$ for lattices over orders in algebras 
satisfying hypothesis
\textup{(H)} reduces in probabilistic polynomial time to
\begin{enumerate}
\item \pwedderburn, the problem of computing explicitly the Wedderburn decomposition,
\item \pfactor, the problem of factoring integers,
\item \ppip{} in the special case of rings of integers of number fields,
\item \punit, the computation of unit groups for rings of integers of number fields,
\item \pprimitive, the computation of primitive elements in finite fields, and
\item \pdlog, the computation of discrete logarithms in finite fields.
\end{enumerate}
\end{theorem*}

A number of articles have considered $\pisomorphic$, $\ppip$ or closely related problems in special cases.
In particular, \cite{bley-endres} applies in the case that $A$ is commutative and semisimple;
\cite{BW09} applies to group rings $\mathcal{O}_{K}[G]$ where $G$ is a finite group, but only decides whether two lattices are both locally free and stably isomorphic; and 
\cite{MR2467859,Kirschmer2010,MR3240815} apply to maximal or Eichler orders in quaternion algebras.
The series of articles \cite{Bley1997, BJ08, BJ11, MR4136552} consider progressively more
general situations, culminating in a solution to $\pisomorphic$ when $A$ is semisimple, but they all involve a very expensive enumeration step, which in many cases renders the algorithm impractical.
We refer the reader to the introduction of \cite{MR4136552} for a more detailed overview. 
By contrast, Algorithm~\ref{alg:main} replaces this enumeration step by a new method combining results of
\cite{bley-boltje, BW09} with an idea of Husert \cite{Hus17}.

The original motivation for the study of these problems comes from the Galois module structure of rings of
integers. Let $L/K$ be a finite Galois extension of number fields and let $G=\Gal(L/K)$.
An interesting but difficult problem is to determine whether 
$\mathcal{O}_{L}$ is free over its so-called associated order
$\mathcal{A}_{L/K} = \{ \alpha \in K[G] \mid \alpha\mathcal{O}_{L} \subseteq \mathcal{O}_{L} \}$, 
and if so, to determine an explicit generator. We refer the reader to \S \ref{sec:GMS} and to the 
introduction of \cite{MR4136552} for a more detailed overview of this question and related problems.

The main application of $\ppip$ in the present article is to the following problem.

\begin{problem*}[$\psimilar$] 
Given a number field $K$ with ring of integers $\mathcal{O}=\mathcal{O}_{K}$,
an integer $n \in \Z_{>0}$ and two matrices 
$A,B \in \Mat_{n}(\mathcal{O})$, determine whether $A$ and $B$ are similar over $\mathcal{O}$,
and if so, return a conjugating matrix $C \in \GL_{n}(\mathcal{O})$ such that $B= CAC^{-1}$.
\end{problem*}

As a special case, this problem includes the so-called conjugacy problem for $\GL_{n}(\mathcal{O})$.
A number of authors have considered the problem $\psimilar$ (or special cases), including 
Latimer--MacDuffee~\cite{MR1503108},
Sarkisyan \cite{zbMATH03665057}, Grunewald \cite{MR579942},
Husert \cite{Hus17} and Marseglia \cite{MR4111932}.
Eick--O'Brien and the second named author of the present article gave the first
practical algorithm that solves this problem in full generality \cite{MR4048718}.
We refer the reader to \S \ref{subsec:alg-similarity} and \S \ref{subec:similarity-implementation}
for a more detailed discussion of these results.

In \S \ref{sec:conjugacy}, we give an efficient algorithm that solves $\psimilar$ 
in full generality and a complexity analysis showing that it 
is polynomial-time reducible to standard problems
in algorithmic algebraic number theory, 
including the principal ideal problem in certain rings of integers
and the computation of their unit groups
(see Algorithm \ref{alg:similar} and Theorem \ref{thm:isconjugated}).
As a corollary we obtain the following result (see Corollary~\ref{cor:isconjugated1} and Remark~\ref{remark:subcomp}).

\begin{theorem*}
The problem
  $\psimilar$ reduces in probabilistic subexponential time to the problems \ppip{} and \punit{} for rings of integers of number fields.
\end{theorem*}

We first adapt ideas of Faddeev \cite{MR0194432} to recast $\psimilar$ in terms of 
lattices over orders in a certain $K$-algebra satisfying hypothesis (H). 
We then show how to explicitly compute the 
Jacobson radical of this $K$-algebra as well as the Wedderburn decomposition of the semisimple quotient
from the rational canonical forms of the input matrices. 
Thus we show that $\psimilar$ is reducible to $\ppip$. 
In particular, Algorithm \ref{alg:similar} avoids any expensive enumeration step.
For a detailed comparison with other algorithms and implementations, 
including explicit examples and timings, 
we refer the reader to \S \ref{subec:similarity-implementation}.
As these comparisons and our complexity analysis suggest, 
the implementation of Algorithm \ref{alg:similar} in 
the computer algebra package~\textsc{Hecke}~\cite{Fieker2017}
vastly outperforms implementations of other algorithms.

\subsection*{Acknowledgements}
The authors wish to thank Nigel Byott, Fabio Ferri, Claus Fieker, and J\"urgen Kl\"uners 
for useful conversations, and are grateful for numerous helpful comments and corrections
from Nigel Byott, Gunter Malle, Stefano Marseglia, and an anonymous referee.
The second named author was supported by Project II.2 of SFB-TRR 195 `Symbolic Tools in
Mathematics and their Application'
of the German Research Foundation (DFG).

\section{Preliminaries on lattices and orders}\label{sec:prelims-on-lattices-orders}

For further background on lattices and orders, we refer the reader to \cite[\S 4, \S 8]{Reiner2003}.
Henceforth all rings considered will be associative and unital. 

Let $R$ be an noetherian integral domain with field of fractions $K$.
To avoid trivialities, we assume that $R \neq K$.
An \textit{$R$-lattice} is a finitely generated torsion-free module over $R$.
Since $R$ is noetherian, any $R$-submodule of an $R$-lattice is again an $R$-lattice.
For any finite dimensional $K$-vector space $V$, an \textit{$R$-lattice in $V$} is a finitely generated $R$-submodule $M$ in $V$.
We define a $K$-vector subspace of $V$ by 
\[
K M := \{ \alpha_{1} m_{1} + \alpha_{2} m_{2} + \cdots + \alpha_{r} m_{r} \mid r \in \Z_{\geq 0}, \alpha_i \in K, m_{i} \in M \}
\]
and say that $M$ is a \textit{full} $R$-lattice in $V$ if $K M=V$. 
We may identify $KM$ with $K \otimes_{R} M$.

Now let $A$ be a finite-dimensional $K$-algebra.
Then $A$ is both left and right artinian and noetherian.
An \textit{$R$-order} in $A$ is a subring $\Lambda$ of $A$ 
(so in particular has the same unit element as $A$) such that $\Lambda$
is a full $R$-lattice in $A$.
Note that $\Lambda$ is both left and right noetherian, since $\Lambda$ is finitely generated over $R$. 
A left \textit{$\Lambda$-lattice $X$} is a left $\Lambda$-module that is also an $R$-lattice; in this case, $KX$ may be viewed as a left $A$-module.

Henceforth all modules (resp.\ lattices) will be assumed to be left modules (resp.\ lattices) unless otherwise stated.
Two $\Lambda$-lattices are said to be \textit{isomorphic} if they are isomorphic as $\Lambda$-modules.
The following two lemmas generalise \cite[Lemma 2.1]{MR4136552}.

\begin{lemma}\label{lem:lattice-extension}
Let $S$ be a noetherian integral domain such that $R \subseteq S \subsetneq K$. 
Let $\Gamma$ be an $S$-order in $A$.
Let $V$ be a finitely generated $A$-module.
For any $R$-lattice $M$ in $V$, the set
\[
\Gamma M := \{ \gamma_{1} m_{1} + \gamma_{2} m_{2} + \cdots + \gamma_{r} m_{r} \mid r \in \Z_{\geq 0}, m_{i} \in M, \gamma_{i} \in \Gamma \}
\]
is a $\Gamma$-lattice in $V$ containing $M$.
\end{lemma}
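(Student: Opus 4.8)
The plan is to verify the three assertions — that $\Gamma M$ is an $S$-submodule of $V$ (hence a $\Gamma$-lattice once we check finite generation), that it is finitely generated over $S$, and that it contains $M$ — in that order, with finite generation being the only point requiring care. First I would note that $M \subseteq \Gamma M$ is immediate since $1 \in \Gamma$ (an $S$-order contains the unity of $A$), so each $m \in M$ equals $1 \cdot m$. Next, $\Gamma M$ is visibly closed under addition and under the action of $\Gamma$: a $\Gamma$-linear combination of elements of the displayed form is again of that form, using that $\Gamma$ is a ring. In particular $\Gamma M$ is an $S$-submodule of $V$, and $V$ is torsion-free over $S$ (being a $K$-vector space), so $\Gamma M$ is torsion-free over $S$.

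The substantive step is that $\Gamma M$ is finitely generated over $S$. Here I would use that $S$ is noetherian together with the finite generation data we have: $M$ is generated over $R$, hence over $S$ (as $R \subseteq S$), by finitely many elements $m_{1}, \ldots, m_{k}$, and $\Gamma$ is generated over $S$ by finitely many elements $\gamma_{1}, \ldots, \gamma_{\ell}$. Then $\Gamma M$ is spanned over $S$ by the finite set $\{\gamma_{j} m_{i}\}_{i,j}$: indeed any element of $\Gamma M$ is a sum $\sum_{t} \gamma_{t}' m_{t}'$ with $\gamma_{t}' = \sum_{j} s_{tj}\gamma_{j}$ and $m_{t}' = \sum_{i} s_{ti}' m_{i}$ for scalars in $S$, and multiplying out expresses it as an $S$-linear combination of the $\gamma_{j} m_{i}$. (Strictly one should first observe that $V$, being a finitely generated $A$-module, is finite-dimensional over $K$, so all these sums live in a fixed finite-dimensional space; this is where finite generation of $V$ as an $A$-module is used.) Hence $\Gamma M$ is a finitely generated $S$-module, and since it is also torsion-free, it is an $S$-lattice in $V$; being a $\Gamma$-submodule of $V$, it is a $\Gamma$-lattice.

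The only mild subtlety is that the definition of $\Gamma M$ as a set of \emph{finite} sums is used implicitly when we re-expand and regroup; but since everything takes place inside the finite-dimensional $K$-vector space $V$ and $S$ is noetherian, no convergence or well-definedness issue arises. I expect no serious obstacle — the lemma is essentially bookkeeping, and the proof of \cite[Lemma 2.1]{MR4136552} presumably goes through verbatim with $R$ replaced by $S$ and the ambient ring $\Lambda$ replaced by $\Gamma$, the only new ingredient being the harmless inclusion $R \subseteq S$ that lets us view $M$ as an $S$-module at the outset.
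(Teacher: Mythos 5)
Your argument is correct and follows the same route as the paper's: pick finitely many $R$-generators of $M$ and finitely many $S$-generators of $\Gamma$, and observe that $\Gamma M$ is the $S$-span of the pairwise products. The paper compresses this into the phrase ``an easy calculation shows''; you simply unwind it. Two small remarks: the invocation of noetherianity of $S$ is not actually doing any work here, since you exhibit an explicit finite spanning set rather than appealing to a noetherian submodule argument; and the finite generation of $V$ over $A$ is used only so that $V$ is finite-dimensional over $K$, which is what makes the phrase ``$R$-lattice in $V$'' meaningful according to the paper's definition in \S\ref{sec:prelims-on-lattices-orders} — it is not needed for the re-expansion step you describe. Neither of these affects correctness.
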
 

\begin{proof}
That $M \subseteq \Gamma M$ is clear.
Note that $K$ is the field of fractions of both $R$ and $S$. 
Write $M = \langle v_{1}, \ldots, v_{l} \rangle_{R}$ and $\Gamma = \langle w_{1}, \ldots, w_{m} \rangle_{S}$.
An easy calculation shows that
\[
\Gamma M = \langle w_{i} v_{j} \mid 1 \leq i \leq m, 1 \leq j \leq l \rangle_{S}
\]
and hence $\Gamma M$ is an $S$-lattice in $V$.
Moreover, it is straightforward to see that $\Gamma M$ is also a $\Gamma$-module and therefore is a $\Gamma$-lattice in $V$.
\end{proof}

\begin{lemma}\label{lem:ext}
Let $S$ be a noetherian integral domain such that $R \subseteq S \subsetneq K$.
Let $\Lambda$ be an $R$-order in $A$, let $\Gamma$ be an $S$-order in $A$ and suppose that 
$\Lambda \subseteq \Gamma$.
Let $f \colon X \to Y$ be a homomorphism of $\Lambda$-lattices.
Then the following hold.
\begin{enumerate}
\item There exists a unique homomorphism of $A$-modules $f^{A} \colon KX \to KY$ extending $f$.
\item There exists a unique homomorphism of $\Gamma$-lattices $f^{\Gamma}\colon \Gamma X \to \Gamma Y$ extending $f$.
\item If $f$ is injective (resp.\ surjective), then $f^{A}$ and $f^{\Gamma}$ are injective (resp.\ surjective).
\end{enumerate}
\end{lemma}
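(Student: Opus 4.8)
The plan is to establish the existence and uniqueness claims for $f^A$ first, then deduce the statement for $f^\Gamma$ as essentially a restriction of $f^A$, and finally check the injectivity/surjectivity assertions by a direct localisation-type argument.

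For part (a), since $X$ is an $R$-lattice in the $A$-module $KX$, every element of $KX$ can be written as $\alpha^{-1}x$ with $\alpha \in R \setminus \{0\}$ and $x \in X$ (using that $X$ is finitely generated over $R$ and $KX = K \otimes_R X$). The only possible extension is $f^A(\alpha^{-1}x) := \alpha^{-1}f(x)$, which forces uniqueness; one then checks this is well-defined (if $\alpha^{-1}x = \beta^{-1}y$ then $\beta x = \alpha y$ in $X$, so $\beta f(x) = \alpha f(y)$ since $f$ is $R$-linear, hence $\alpha^{-1}f(x) = \beta^{-1}f(y)$) and that it is $A$-linear. The $A$-linearity is the step requiring a little care: for $a \in A$ one writes $a = \gamma^{-1}\lambda$ with $\gamma \in R\setminus\{0\}$, $\lambda \in \Lambda$ (possible because $\Lambda$ is a full $R$-lattice in $A$), and then uses that $f$ is $\Lambda$-linear together with the already-established $K$-linearity of $f^A$ to push $a$ through. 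For part (b), note that by Lemma~\ref{lem:lattice-extension} applied with $M = X$ and $M = Y$, the sets $\Gamma X$ and $\Gamma Y$ are $\Gamma$-lattices in $KX$ and $KY$ respectively, and $KX = K(\Gamma X)$, $KY = K(\Gamma Y)$. So one simply sets $f^\Gamma := f^A|_{\Gamma X}$; the only thing to verify is that $f^A(\Gamma X) \subseteq \Gamma Y$, which is immediate from $f^A(\sum \gamma_i x_i) = \sum \gamma_i f(x_i) \in \Gamma Y$ using $A$-linearity of $f^A$ and $f(x_i) \in Y \subseteq \Gamma Y$. Uniqueness of $f^\Gamma$ follows from uniqueness in (a), since any $\Gamma$-linear extension is in particular $R$-linear and hence extends to the same $f^A$.

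For part (c), suppose $f$ is injective. If $f^A(\alpha^{-1}x) = 0$ then $\alpha^{-1}f(x) = 0$, so $f(x) = 0$ in $Y$, so $x = 0$, hence $f^A$ is injective; restricting to $\Gamma X$ shows $f^\Gamma$ is injective. Now suppose $f$ is surjective. Then $f^A(KX) \supseteq f(X) = Y$, and $f^A(KX)$ is a $K$-subspace of $KY$ containing the full lattice $Y$, so $f^A(KX) = KY$ and $f^A$ is surjective. For $f^\Gamma$, given $w \in \Gamma Y$, write $w = \sum_i \gamma_i y_i$ with $\gamma_i \in \Gamma$, $y_i \in Y$; choosing $x_i \in X$ with $f(x_i) = y_i$ gives $f^\Gamma(\sum_i \gamma_i x_i) = \sum_i \gamma_i y_i = w$, so $f^\Gamma$ is surjective.

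The routine but slightly delicate point — and the only real obstacle — is verifying that $f^A$ is $A$-linear rather than merely $K$-linear; this is where the hypothesis that $f$ is $\Lambda$-linear (not just $R$-linear) is used, via the fact that $\Lambda$ spans $A$ over $K$. Everything else is a direct check using that $X$ and $Y$ are full $R$-lattices in their rational spans.
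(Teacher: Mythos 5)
Your proof is correct and follows essentially the same three steps the paper sketches: extend $f$ by $K$-linearity to $f^A$, restrict to $\Gamma X$ to get $f^\Gamma$, and then verify injectivity/surjectivity directly. The only cosmetic difference is in the injectivity step, where the paper suggests checking that $\ker(f)$ is a full $R$-lattice in $\ker(f^A)$, whereas you argue directly that $f^A(\alpha^{-1}x)=0$ forces $f(x)=0$ and hence $x=0$; both work, and yours is arguably more economical.
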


\begin{proof}
This is straightforward. The key points are to
(a) extend $f$ to $KX$ using $K$-linearity; (b) restrict $f^{A}$ to $\Gamma X$; (c) (injectivity) check that $\ker(f)$ is a full $R$-lattice in $\ker(f^{A})$; and (c) (surjectivity) use the definitions of $K Y$ and
$\Gamma Y$.
\end{proof}

We will often use the following result without explicit mention.

\begin{lemma}\label{lem:cyclic-iff-free}
Let $\Lambda$ be an $R$-order in $A$ and let $X$ be a $\Lambda$-lattice such that 
$\dim_{K} KX = \dim_{K} A$. Let $\alpha \in X$. 
Then $X=\Lambda \alpha$ if and only if $\alpha$ is a free generator of $X$ over $\Lambda$.
\end{lemma}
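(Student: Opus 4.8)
The statement asserts that for a $\Lambda$-lattice $X$ with $\dim_K KX = \dim_K A$, an element $\alpha \in X$ generates $X$ as a $\Lambda$-module if and only if $\alpha$ is a \emph{free} generator, i.e.\ the map $\Lambda \to X$, $\lambda \mapsto \lambda\alpha$, is an isomorphism. One direction is trivial: if $\alpha$ is a free generator then in particular $X = \Lambda\alpha$. So the substance is the forward direction: assuming $X = \Lambda\alpha$, I must show the surjection $\pi\colon \Lambda \to X$, $\lambda \mapsto \lambda\alpha$, is injective.

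\begin{proof}
If $\alpha$ is a free generator of $X$ over $\Lambda$, then by definition $X = \Lambda\alpha$. Conversely, suppose $X = \Lambda\alpha$ and consider the surjective $\Lambda$-module homomorphism $\pi\colon \Lambda \to X$, $\lambda \mapsto \lambda\alpha$. Applying Lemma~\ref{lem:ext}(a) (with $S = R$, $\Gamma = \Lambda$, so that the extension is just $K$-linear), $\pi$ extends to a surjective homomorphism of $A$-modules $\pi^{A}\colon A = K\Lambda \to KX$ (surjectivity by Lemma~\ref{lem:ext}(c)). Since $\dim_{K} KX = \dim_{K} A$ by hypothesis, $\pi^{A}$ is an isomorphism of $K$-vector spaces; in particular $\pi^{A}$ is injective. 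As $\Lambda \subseteq A$ and $\pi = \pi^{A}|_{\Lambda}$, the map $\pi$ is injective as well. Hence $\pi\colon \Lambda \to X$ is a $\Lambda$-module isomorphism, which is precisely the assertion that $\alpha$ is a free generator of $X$ over $\Lambda$.
\end{proof}

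The only mild subtlety is making sure the two $K$-dimensions being compared are the relevant ones: $KX$ is the $A$-module generated by $X$ (equivalently $K \otimes_R X$), and $K\Lambda = A$ since $\Lambda$ is a full $R$-lattice in $A$ by the definition of an $R$-order. Once that identification is in place, the argument is just "a surjective linear map between $K$-vector spaces of equal finite dimension is bijective," restricted back down to the lattice level. There is no real obstacle here; this lemma is essentially bookkeeping, and the reason it is stated separately is so that later sections can freely pass between the phrases "$X$ is cyclic" and "$X$ is free of rank one" without further comment.
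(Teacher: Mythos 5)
Your proof is correct and follows essentially the same route as the paper: extend the surjection $\Lambda \to X$ to an $A$-module map $A \to KX$ via Lemma~\ref{lem:ext}, use the dimension hypothesis to conclude this extension is injective, and restrict back to deduce $\Lambda \to X$ is an isomorphism. The only cosmetic difference is that the paper does not spell out which part of Lemma~\ref{lem:ext} is being invoked or that $K\Lambda = A$, both of which you correctly note.
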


\begin{proof}
Suppose $X = \Lambda \alpha$. 
Then the map $f : \Lambda \rightarrow X$ given by $f(\lambda) = \lambda\alpha$
is a surjective homomorphism of $\Lambda$-lattices. 
By Lemma \ref{lem:ext} $f$ extends uniquely to a surjective map $f^{A} : A \rightarrow KX$.
The hypotheses imply that $f^{A}$ is injective, thus $f$ is an isomorphism and so
$\alpha$ is a free generator of $X$ over $\Lambda$.
The converse is trivial. 
\end{proof}

\section{Reduction steps for the lattice isomorphism problem}\label{sec:reduce-lattice-isom}

Let $R$ be a noetherian integral domain with field of fractions $K$ and assume that $R \neq K$.
Let $\Lambda$ be an $R$-order in a finite-dimensional $K$-algebra $A$. 

\subsection{Reduction to the free rank $1$ case via homomorphism groups}\label{subsec:reduce-free-rank1}
Let $X$ and $Y$ be $\Lambda$-lattices. Let $V=KX$ and $W=KY$, which we regard as $A$-modules. 
We have 
\[
\Hom_\Lambda(X, Y) = \{ f|_{X} \mid f \in \Hom_{A}(V, W) \text{ such that $f(X) \subseteq Y$} \}, 
\]
where $f|_{X}$ denotes the restriction of $f$ to a map $f \colon X \to Y$. 
This follows from the fact that every element in $\Hom_{\Lambda}(X, Y)$ extends uniquely to an element in $\Hom_{A}(V, W)$ (see Lemma~\ref{lem:ext}).
Since a map $f \in \Hom_{A}(V,W)$ is also $R$-linear, we have $f(X) \subseteq Y$ if and only if
$f \in \Hom_{R}(X,Y)$. Therefore
\[
\Hom_{\Lambda}(X, Y) =\Hom_{A}(V, W) \cap \Hom_{R}(X, Y).
\]
Since $X$ and $Y$ are finitely generated over $R$, so is $\Hom_{R}(X,Y)$.
Therefore $\Hom_{\Lambda}(X,Y)$ is a full $R$-lattice in $\Hom_{A}(V,W)$.
Similarly, $\End_{\Lambda}(Y)$ is a full $R$-lattice in $\End_{A}(W)$. 

In fact, $\End_{\Lambda}(Y)$ is an $R$-order in $\End_{A}(W)$
and $\Hom_{\Lambda}(X,Y)$ is a (left) $\End_{\Lambda}(Y)$-lattice in
$\Hom_{A}(V,W)$ via post-composition. 
The following result underpins the main results of the present article; it is a straightforward generalisation of \cite[Proposition 3.7]{MR4136552}.

\begin{prop}\label{prop:hom-free-over-end}
Two $\Lambda$-lattices $X$ and $Y$ are isomorphic if and only if
\renewcommand{\labelenumi}{(\alph{enumi})}
\begin{enumerate}
\item the $\End_{\Lambda}(Y)$-lattice $\Hom_{\Lambda}(X, Y)$ is free of rank $1$, and
\item every (any) free generator of $\Hom_{\Lambda}(X, Y)$ over $\End_{\Lambda}(Y)$ is an isomorphism.
\end{enumerate}
\end{prop}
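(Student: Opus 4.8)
The plan is to prove both directions by analysing how an isomorphism $\phi\colon X\to Y$ interacts with the $\End_\Lambda(Y)$-module structure on $\Hom_\Lambda(X,Y)$. First suppose $X\cong Y$ and fix an isomorphism $\phi\in\Hom_\Lambda(X,Y)$. I claim $\phi$ is a free generator. Indeed, post-composition with $\phi$ gives a map $\End_\Lambda(Y)\to\Hom_\Lambda(X,Y)$, $\psi\mapsto\psi\circ\phi$, which is a homomorphism of $\End_\Lambda(Y)$-modules; since $\phi$ is an isomorphism of $\Lambda$-lattices it has a $\Lambda$-linear inverse $\phi^{-1}\colon Y\to X$, and the map $f\mapsto f\circ\phi^{-1}$ is a two-sided inverse. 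Hence $\Hom_\Lambda(X,Y)\cong\End_\Lambda(Y)$ as $\End_\Lambda(Y)$-modules, with $\phi$ corresponding to the identity, so $\phi$ is a free generator and (a) holds. This also shows that \emph{some} free generator (namely $\phi$) is an isomorphism.

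For (b) — that in this situation \emph{every} free generator is an isomorphism — suppose $g\in\Hom_\Lambda(X,Y)$ is another free generator. Then $g=\psi\circ\phi$ for some $\psi\in\End_\Lambda(Y)$, and freeness of $g$ forces $\psi$ to be a unit in $\End_\Lambda(Y)$: writing $\phi=\theta\circ g$ for some $\theta\in\End_\Lambda(Y)$ gives $\phi=\theta\circ\psi\circ\phi$, and cancelling the isomorphism $\phi$ (using Lemma~\ref{lem:ext}, or simply right-composing with $\phi^{-1}$) yields $\theta\circ\psi=\mathrm{id}_Y$; symmetrically $\psi\circ\theta=\mathrm{id}_Y$. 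Thus $\psi\in\Aut_\Lambda(Y)$, and $g=\psi\circ\phi$ is a composite of isomorphisms, hence an isomorphism. This settles the forward direction together with the "every (any)" phrasing.

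Conversely, assume (a) and (b). By (a) there is a free generator $g$ of $\Hom_\Lambda(X,Y)$ over $\End_\Lambda(Y)$, and by (b) $g$ is an isomorphism $X\xrightarrow{\sim} Y$; in particular $X\cong Y$.

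The only genuinely delicate point is the cancellation "$\theta\circ\psi\circ\phi=\phi\implies\theta\circ\psi=\mathrm{id}_Y$" used in the (b)-argument, since $\Hom$- and $\End$-rings of lattices need not be domains and one cannot cancel arbitrary endomorphisms. The clean way around this is to pass to $KX$ and $KY$ via Lemma~\ref{lem:ext}(a),(c): an isomorphism of $\Lambda$-lattices extends to an isomorphism of $A$-modules, which is invertible, and then restrict the inverse back; alternatively, observe directly that a bijective $\Lambda$-linear map has a $\Lambda$-linear set-theoretic inverse. Either way the inverse $\phi^{-1}$ genuinely lies in $\Hom_\Lambda(Y,X)$, so right-composition with it is legitimate and the cancellation goes through. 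Apart from this, every step is a routine unwinding of the module structure on $\Hom_\Lambda(X,Y)$, exactly as in \cite[Proposition 3.7]{MR4136552}.
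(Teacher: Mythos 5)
Your proof is correct and follows essentially the same route as the paper: fix an isomorphism $\phi$, show it is a free generator of $\Hom_\Lambda(X,Y)$ over $\End_\Lambda(Y)$, and observe that any other free generator differs from $\phi$ by a unit of $\End_\Lambda(Y)=\Aut_\Lambda(Y)$ and is therefore itself an isomorphism. The only minor stylistic difference is that you exhibit the two-sided inverse $f\mapsto f\circ\phi^{-1}$ directly to establish freeness, whereas the paper first shows $\phi$ is a generator and then upgrades this via Lemma~\ref{lem:cyclic-iff-free}.
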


\begin{proof}
If (a) and (b) hold then it is clear that $X$ and $Y$ are isomorphic. 
Suppose conversely that $X$ and $Y$ are isomorphic.
Fix an isomorphism $\varphi \in \Hom_{\Lambda}(X, Y)$.
Then for any $g \in \Hom_{\Lambda}(X, Y)$, we have $h_{g} := g \circ \varphi^{-1} \in \End_{\Lambda}(Y)$ and so $g = h_{g} \circ \varphi$.
Hence $\varphi$ is a generator of $\Hom_{\Lambda}(X, Y)$ over $\End_{\Lambda}(Y)$
and by Lemma \ref{lem:cyclic-iff-free} it is in fact a free generator.
Thus (a) holds. Now let $f$ be any free generator of $\Hom_{\Lambda}(X, Y)$ over $\End_{\Lambda}(Y)$. 
Then there exists $\theta \in \Aut_{\Lambda}(Y)=\End_{\Lambda}(Y)^{\times}$ such that
$f = \theta \circ \varphi$ and hence $f$ is an isomorphism.
Thus (b) holds.
\end{proof}

We now state and prove a closely related `folklore' result that appears to be well known, but whose proof is difficult to locate in the literature.
We include this result for completeness and it will not be applied in the present article.
For any full $R$-lattice $M$ in $A$ we define
$\mathcal{O}_{r}(M) = \{ \mu \in A \mid M\mu \subseteq M \}$.
This is an $R$-order in $A$ and is called the \emph{right order} of $M$ in $A$ (see \cite[\S 8]{Reiner2003}).
The following result may be viewed as a corollary of Proposition \ref{prop:hom-free-over-end},
but it is easier to give a direct proof.

\begin{prop}\label{prop:lattices-in-A-isomorphic}
Let $X$ and $Y$ be full $\Lambda$-lattices in $A$. 
Then $C := \{ \lambda \in A \mid X\lambda \subseteq Y \}$ is a full $\mathcal{O}_{r}(X)$-lattice in $A$.
Moreover, $X$ and $Y$ are isomorphic if and only if 
\begin{enumerate}
\item there exists $\alpha \in A^{\times}$ such that $C=\mathcal{O}_{r}(X) \alpha$, and
\item we have $Y=XC$.
\end{enumerate}
Furthermore, when this is the case, $Y=X\alpha$.
\end{prop}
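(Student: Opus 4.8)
First I would establish that $C$ is a full $\mathcal{O}_{r}(X)$-lattice in $A$. It is clear that $C$ is an $R$-submodule of $A$, and for $\mu \in \mathcal{O}_{r}(X)$ and $\lambda \in C$ we have $X(\mu\lambda) = (X\mu)\lambda \subseteq X\lambda \subseteq Y$, so $C$ is a left $\mathcal{O}_{r}(X)$-module. To see that $C$ is a full $R$-lattice: since $Y$ is full in $A$, there is a nonzero $c \in R$ (or more generally a regular element) with $cA \subseteq Y$ — here one uses that $X$ is finitely generated over $R$, pick a common denominator clearing $X \cdot 1$ into $Y$ — which shows $C$ contains a nonzero scalar multiple of $1$, and since $C \subseteq A$ it is contained in a finitely generated $R$-module, hence (as $R$ is noetherian) is itself a finitely generated $R$-lattice spanning $A$ over $K$. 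This mirrors the standard argument for right orders in \cite[\S 8]{Reiner2003}.

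Next, the forward direction. Suppose $\psi \colon X \to Y$ is a $\Lambda$-isomorphism. By Lemma~\ref{lem:ext}(a) it extends to an $A$-automorphism $\psi^{A}$ of $A$ (note $KX = KY = A$ since $X, Y$ are full), and every $A$-linear endomorphism of $A$ as a left module is right multiplication by a fixed element, so $\psi^{A} = \cdot\,\alpha$ for some $\alpha \in A^{\times}$, with $\alpha = \psi^{A}(1)$. Then $X\alpha = Y$. From $X\alpha \subseteq Y$ we get $\alpha \in C$, hence $\mathcal{O}_{r}(X)\alpha \subseteq C$. Conversely, given $\lambda \in C$, right multiplication by $\lambda$ sends $X$ into $Y = X\alpha$, so $\cdot(\lambda\alpha^{-1})$ sends $X$ into $X$, giving $\lambda\alpha^{-1} \in \mathcal{O}_{r}(X)$ and thus $\lambda \in \mathcal{O}_{r}(X)\alpha$. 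Therefore $C = \mathcal{O}_{r}(X)\alpha$, which is (a); and $XC = X\mathcal{O}_{r}(X)\alpha = X\alpha = Y$ (using $X\mathcal{O}_{r}(X) = X$, which holds since $1 \in \mathcal{O}_{r}(X)$ and $X\mathcal{O}_{r}(X) \subseteq X$), which is (b). The final claim $Y = X\alpha$ has already been recorded in this computation.

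For the converse, assume (a) and (b), so $C = \mathcal{O}_{r}(X)\alpha$ with $\alpha \in A^{\times}$ and $Y = XC$. Then $Y = XC = X\mathcal{O}_{r}(X)\alpha = X\alpha$. Right multiplication by $\alpha$ is a $\Lambda$-module homomorphism $X \to Y$ (it is $R$-linear and commutes with the left $\Lambda$-action since $\Lambda \subseteq A$ acts on the left), it is surjective because $X\alpha = Y$, and it is injective because $\alpha \in A^{\times}$ forces $\cdot\,\alpha$ to be injective on all of $A \supseteq X$. Hence $X \cong Y$.

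The only genuinely delicate point is the fullness of $C$ as an $R$-lattice — specifically producing a regular element of $R$ (or of $A$) that multiplies $X$ into $Y$, which rests on $X$ being finitely generated over $R$ and $Y$ being full in $A$; everything else is a routine unwinding of definitions together with the elementary fact that left $A$-module endomorphisms of $A$ are right multiplications. I would state the fullness argument carefully and treat the rest briskly.
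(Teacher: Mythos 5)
Your overall route matches the paper's: extend a $\Lambda$-isomorphism to an $A$-automorphism of $A$, which must be right multiplication by some $\alpha \in A^{\times}$, then identify $C$ with $\mathcal{O}_{r}(X)\alpha$ and deduce $Y = X\alpha = XC$; the converse is the same direct computation. Both directions are argued correctly.

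However, there is a genuine gap in the verification that $C$ is a full $R$-lattice. The claim ``since $Y$ is full in $A$, there is a nonzero $c \in R$ with $cA \subseteq Y$'' is false: $A$ is a $K$-vector space, so $cA = A$ for any nonzero $c$, and $A \not\subseteq Y$. The subsequent parenthetical corrects this to the right statement for the \emph{lower} bound (there exists nonzero $r \in R$ with $Xr \subseteq Y$, hence $\mathcal{O}_{r}(X)\,r \subseteq C$, so $C$ spans $A$), but the \emph{upper} bound is then dispatched with ``since $C \subseteq A$ it is contained in a finitely generated $R$-module,'' which is a non sequitur: $A$ is not finitely generated over $R$, so containment in $A$ gives nothing. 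The missing step is the second half of the sandwich used in the paper: choose a nonzero $s \in R$ with $Ys \subseteq X$ (both $X$ and $Y$ are full $R$-lattices in the finite-dimensional $K$-space $A$, so both $r$ and $s$ exist by \cite[\S 4]{Reiner2003}); then for $\lambda \in C$ one has $X\lambda s \subseteq Ys \subseteq X$, so $\lambda s \in \mathcal{O}_{r}(X)$ and hence $C \subseteq \mathcal{O}_{r}(X)\,s^{-1}$. Now $C$ sits between two full $R$-lattices, and noetherianity finishes the finite-generation claim. Without this containment, the finite generation of $C$ is unjustified.
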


\begin{proof}
Set $\mathcal{O}:=\mathcal{O}_{r}(X)$.
Clearly, $C$ is both an $R$-module and an $\mathcal{O}$-module.
Since $X$ and $Y$ are both full $R$-lattices in $A$ there exist nonzero $r,s \in R$
such that $Ys \subseteq X$ and $Xr \subseteq Y$ (see \cite[\S 4]{Reiner2003}). 
Thus $\mathcal{O}r \subseteq C \subseteq \mathcal{O}s^{-1}$,
where $\mathcal{O}r$ and $\mathcal{O}s^{-1}$ are both full $R$-lattices in $A$.
Hence $C$ is a full $R$-lattice and therefore a full $\mathcal{O}$-lattice in $A$.

Suppose (a) and (b) hold. Then $Y=XC=X(\mathcal{O}\alpha) = (X\mathcal{O})\alpha = X \alpha$.
Hence $X$ and $Y$ are isomorphic since $\alpha \in A^{\times}$. 
Suppose conversely that $f: X \rightarrow Y$ is a $\Lambda$-isomorphism.
Then by Lemma \ref{lem:ext} $f$ extends uniquely to an $A$-isomorphism $f^{A} : A \rightarrow A$,
and hence is given by right multiplication by an element $\alpha \in A^{\times}$.
Thus $Y = X\alpha$. 
Moreover, 
$C = \{ \lambda \in A \mid X\lambda \subseteq X\alpha \} = \mathcal{O} \alpha$
and $Y = X\alpha =  (X\mathcal{O})\alpha = X(\mathcal{O}\alpha) = XC$.
\end{proof}

\subsection{An alternative approach via localisation}
We give an alternative version of Proposition \ref{prop:hom-free-over-end} that uses localisation.
This will be useful later for understanding the relation of some of our results to other results in the literature.
For a nonzero prime ideal $\mathfrak{p}$ of $R$, we
let $R_{\mathfrak{p}}$ denote the localisation (not completion) of $R$ at $\mathfrak{p}$.
We define the \textit{localisation} $M_{\mathfrak{p}}$ of $M$ at $\mathfrak{p}$ to be
$R_{\mathfrak{p}}M$ and note that this is an $R_{\mathfrak{p}}$-lattice in $KM$.
The localisation $\Lambda_{\mathfrak{p}}$ is an 
$R_{\mathfrak{p}}$-order in $A$ and localising a $\Lambda$-lattice $X$ 
at $\mathfrak{p}$ yields
a $\Lambda_{\mathfrak{p}}$-lattice $X_{\mathfrak{p}}$.  
Two $\Lambda$-lattices $X$ and $Y$ are said to be \textit{locally isomorphic} 
if the $\Lambda_{\mathfrak{p}}$-lattices $X_{\mathfrak{p}}$ and $Y_{\mathfrak{p}}$
are isomorphic for all maximal ideals $\mathfrak{p}$ of $R$. 

\begin{prop}\label{prop:hom-end-locally-iso}
Two $\Lambda$-lattices $X$ and $Y$ are isomorphic if and only if
\renewcommand{\labelenumi}{(\alph{enumi})}
\begin{enumerate}
\item $X$ and $Y$ are locally isomorphic, and
\item the $\End_{\Lambda}(Y)$-lattice $\Hom_{\Lambda}(X, Y)$ is free of rank $1$.
\end{enumerate} 
Furthermore, when this is the case, every free generator of 
 $\Hom_{\Lambda}(X, Y)$ over $\End_{\Lambda}(Y)$ is an isomorphism.
\end{prop}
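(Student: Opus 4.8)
The plan is to deduce both directions from Proposition~\ref{prop:hom-free-over-end}, applied over $R$ and also over the localisations $R_{\mathfrak{p}}$, together with the elementary fact that a homomorphism of $R$-modules is an isomorphism if and only if it becomes one after localising at every maximal ideal of $R$. For the forward implication, suppose $f\colon X\to Y$ is a $\Lambda$-isomorphism. Then for each maximal ideal $\mathfrak{p}$ of $R$ the localised map $f_{\mathfrak{p}}\colon X_{\mathfrak{p}}\to Y_{\mathfrak{p}}$ is a $\Lambda_{\mathfrak{p}}$-isomorphism, so $X$ and $Y$ are locally isomorphic and (a) holds; and (b) is then immediate from Proposition~\ref{prop:hom-free-over-end} (its part (a)), since $X$ and $Y$ are isomorphic.

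For the converse I would assume (a) and (b) and show that \emph{any} free generator $f$ of the $\End_{\Lambda}(Y)$-lattice $\Hom_{\Lambda}(X,Y)$ is an isomorphism, which simultaneously yields the converse and the final ``furthermore'' assertion. Fix a maximal ideal $\mathfrak{p}$ of $R$. Since $\Lambda$ is noetherian, $X$ is finitely presented as a $\Lambda$-module, so localisation commutes with $\Hom_{\Lambda}(X,-)$; thus the natural maps identify $\Hom_{\Lambda}(X,Y)_{\mathfrak{p}}$ with $\Hom_{\Lambda_{\mathfrak{p}}}(X_{\mathfrak{p}},Y_{\mathfrak{p}})$ and $\End_{\Lambda}(Y)_{\mathfrak{p}}$ with $\End_{\Lambda_{\mathfrak{p}}}(Y_{\mathfrak{p}})$, compatibly with the module structures given by post-composition (naturality in $Y$). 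Since $\Hom_{\Lambda}(X,Y)$ is free of rank $1$ over $\End_{\Lambda}(Y)$ with generator $f$, base change along $\End_{\Lambda}(Y)\to\End_{\Lambda}(Y)_{\mathfrak{p}}$ shows that $\Hom_{\Lambda_{\mathfrak{p}}}(X_{\mathfrak{p}},Y_{\mathfrak{p}})$ is free of rank $1$ over $\End_{\Lambda_{\mathfrak{p}}}(Y_{\mathfrak{p}})$ with generator $f_{\mathfrak{p}}$. Now $R_{\mathfrak{p}}$ is again a noetherian integral domain with field of fractions $K$, and $R_{\mathfrak{p}}\neq K$ because $\mathfrak{p}\neq 0$; and by (a) the $\Lambda_{\mathfrak{p}}$-lattices $X_{\mathfrak{p}}$ and $Y_{\mathfrak{p}}$ are isomorphic. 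Hence Proposition~\ref{prop:hom-free-over-end}, applied over $R_{\mathfrak{p}}$, shows that every free generator of $\Hom_{\Lambda_{\mathfrak{p}}}(X_{\mathfrak{p}},Y_{\mathfrak{p}})$ over $\End_{\Lambda_{\mathfrak{p}}}(Y_{\mathfrak{p}})$ is an isomorphism; in particular $f_{\mathfrak{p}}$ is an isomorphism. As this holds for every maximal ideal $\mathfrak{p}$, the map $f$ is an isomorphism.

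The step requiring the most care is the bookkeeping around localisation: checking that localisation interchanges with $\Hom_{\Lambda}$ and $\End_{\Lambda}$ (which rests on the finite presentation of $X$ and $Y$ over the noetherian order $\Lambda$) and that the $\End_{\Lambda}(Y)$-module structure on $\Hom_{\Lambda}(X,Y)$ localises to the post-composition structure of $\End_{\Lambda_{\mathfrak{p}}}(Y_{\mathfrak{p}})$ on $\Hom_{\Lambda_{\mathfrak{p}}}(X_{\mathfrak{p}},Y_{\mathfrak{p}})$, so that a free generator localises to a free generator. Everything else is a direct invocation of Proposition~\ref{prop:hom-free-over-end} (noting that its standing hypotheses on the base ring are inherited by each $R_{\mathfrak{p}}$) and the standard local criterion for an isomorphism of modules.
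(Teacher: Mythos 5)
Your proof is correct and follows essentially the same route as the paper's: the forward direction is identical, and for the converse you localise the free generator $f$ at each maximal ideal $\mathfrak{p}$, show that $f_{\mathfrak{p}}$ is an isomorphism of $\Lambda_{\mathfrak{p}}$-lattices, and appeal to the standard local-global criterion to conclude that $f$ is an isomorphism. The only (minor) difference lies in how you verify that $f_{\mathfrak{p}}$ is an isomorphism: you observe that $R_{\mathfrak{p}}$ again satisfies the standing hypotheses of Proposition~\ref{prop:hom-free-over-end} and invoke that proposition over $R_{\mathfrak{p}}$, whereas the paper works directly, writing a local isomorphism $g_{\mathfrak{p}}$ as $h_{\mathfrak{p}}\circ f_{\mathfrak{p}}$ and citing the fact that a surjective endomorphism of a noetherian module is an automorphism. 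Your variant is a clean reuse of the earlier proposition, and the care you take with the compatibility of $\Hom$ and $\End$ with localisation (via finite presentation over the noetherian order $\Lambda$), and with checking that a free generator localises to a free generator, is exactly the point that needs attention and is handled correctly.
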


\begin{proof}
If $X$ and $Y$ are isomorphic then (a) clearly holds and (b) holds by Proposition \ref{prop:hom-free-over-end}. Suppose conversely that (a) and (b) hold. 
Let $f$ be a free generator of $\Hom_{\Lambda}(X, Y)$ over $\End_{\Lambda}(Y)$.
Let $\mathfrak{p}$ be a maximal ideal of $R$.
Then there exists an isomorphism 
$g_{\mathfrak{p}} \in \Hom_{\Lambda_{\mathfrak{p}}}(X_{\mathfrak{p}}, Y_{\mathfrak{p}})$.
Moreover, $f$ extends to a free generator $f_{\mathfrak{p}}$ of
$\Hom_{\Lambda_{\mathfrak{p}}}(X_{\mathfrak{p}}, Y_{\mathfrak{p}})$ over 
$\End_{\Lambda_{\mathfrak{p}}}(Y_{\mathfrak{p}})$,
and so there exists $h_{\mathfrak{p}} \in \End_{\Lambda_{\mathfrak{p}}}(Y_{\mathfrak{p}})$
such that $g_{\mathfrak{p}} = h_{\mathfrak{p}} \circ f_{\mathfrak{p}}$.
Note that $h_{\mathfrak{p}}$ is surjective and thus is in fact an automorphism of $Y_{\mathfrak{p}}$
by \cite[(5.8)]{curtisandreiner_vol1}.
Therefore $f_{\mathfrak{p}}$ is an isomorphism. 
Since this is true for all choices of $\mathfrak{p}$, 
we have that $f$ itself is an isomorphism by \cite[(4.2)(ii)]{curtisandreiner_vol1}.
\end{proof}

\subsection{Reduction to the case of lattices in semisimple algebras}\label{subsec:red-to-ss}

Let $\jac(A)$ denote the Jacobson radical of $A$ and note that
$\overline{A} := A/{\jac(A)}$ is a semisimple $K$-algebra by \cite[(5.19)]{curtisandreiner_vol1}.
Let $h : A \rightarrow \overline{A}$ denote the canonical projection map. 
For an element $a \in A$ write $\overline{a}$ for $h(a)$ and for a subset $S \subseteq A$ write
$\overline{S}$ for $h(S)$.
Then $\overline{\Lambda}$ is an $R$-order in $\overline{A}$.
The following result may be viewed as a variant of \cite[Theorem 3]{MR0194432}.

\begin{theorem}\label{thm:lift-gen-from-semisimple-case}
Let $X$ be a full $\Lambda$-lattice in $A$. 	
Then $\overline{X}$ is a full $\overline{\Lambda}$-lattice in $\overline{A}$.
Moreover, the following statements hold for $\alpha \in X$.
\begin{enumerate}
\item If $X = \Lambda \alpha$ then $\overline{X} = \overline{\Lambda} \overline{\alpha}$.
\item If  $\overline{X} = \overline{\Lambda} \overline{\alpha}$ 
then either $X = \Lambda \alpha$ or $X \ne \Lambda \beta$ for all $\beta \in X$.
\end{enumerate}
\end{theorem}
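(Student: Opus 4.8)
The plan is to prove the claim about $\overline{X}$ being a full $\overline{\Lambda}$-lattice first, then to establish (a) and (b).

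\medskip

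\textbf{The lattice statement.} Since $X$ is a full $\Lambda$-lattice in $A$, we have $KX = A$, and since $h$ is surjective and $K$-linear, $K\overline{X} = h(KX) = h(A) = \overline{A}$. As $\overline{X} = h(X)$ is finitely generated over $R$ (being a quotient of $X$) and torsion-free over $R$ (being an $R$-submodule of $\overline{A}$), it is an $R$-lattice; it is evidently an $\overline{\Lambda}$-module, hence a full $\overline{\Lambda}$-lattice in $\overline{A}$.

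\medskip

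\textbf{Part (a).} This is immediate: if $X = \Lambda\alpha$, then applying $h$ gives $\overline{X} = h(\Lambda\alpha) = h(\Lambda)h(\alpha) = \overline{\Lambda}\,\overline{\alpha}$, using that $h$ is a ring homomorphism.

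\medskip

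\textbf{Part (b).} This is the substantive part. Suppose $\overline{X} = \overline{\Lambda}\,\overline{\alpha}$ and suppose that $X = \Lambda\beta$ for some $\beta \in X$; I must show $X = \Lambda\alpha$. First I would observe that, since $\dim_K KX = \dim_K A$, Lemma~\ref{lem:cyclic-iff-free} applies: $\beta$ is a free generator of $X$ over $\Lambda$, so there is a $\Lambda$-isomorphism $\psi \colon \Lambda \to X$ with $\psi(1) = \beta$. By Lemma~\ref{lem:ext}(a), $\psi$ extends to an $A$-module isomorphism $\psi^A \colon A \to KX = A$, which is right multiplication by the unit $u := \psi^A(1) = \beta \in A^\times$; thus $X = \Lambda\beta$ with $\beta \in A^\times$ and, identifying via $\psi$, the problem reduces to showing that $\Lambda$ itself is generated by $\alpha\beta^{-1}$, i.e.\ I may assume $X = \Lambda$ and $\beta = 1$ after replacing $\alpha$ by $\alpha\beta^{-1}$. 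So reduce to: if $\overline{\Lambda} = \overline{\Lambda}\,\overline{\alpha}$ for some $\alpha \in \Lambda$, then $\Lambda = \Lambda\alpha$. Now $\overline{\Lambda} = \overline{\Lambda}\,\overline{\alpha}$ together with Lemma~\ref{lem:cyclic-iff-free} (applied over $\overline{A}$, noting $\dim_K \overline{A} = \dim_K K\overline{X}$) shows $\overline{\alpha}$ is a free generator of $\overline{\Lambda}$, hence a unit in $\overline{A}$. The key point is then: an element $\alpha \in \Lambda$ whose image in $\overline{A} = A/\jac(A)$ is a unit is itself a unit in $A$, because $\jac(A)$ consists of (topologically, or rather formally) "small" elements — more precisely, one writes $\alpha v = 1 + j$ for some $v \in A$ and $j \in \jac(A)$, and $1 + j$ is invertible in $A$ since $\jac(A)$ is nilpotent (as $A$ is artinian, $\jac(A)^N = 0$ for some $N$), so $(1+j)^{-1} = 1 - j + j^2 - \cdots$ is a finite sum; hence $\alpha \in A^\times$. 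But one needs more: that $\alpha$ generates $\Lambda$ over $\Lambda$, i.e.\ $\alpha^{-1} \in \Lambda$. For this I would argue: $\overline{\alpha}$ is a unit in $\overline{\Lambda}$ (not just in $\overline{A}$) since $\overline{\Lambda}\,\overline{\alpha} = \overline{\Lambda} \ni 1$ gives some $\overline{w} \in \overline{\Lambda}$ with $\overline{w}\,\overline{\alpha} = 1$; lifting, $w\alpha = 1 + j$ with $w \in \Lambda$, $j \in \jac(A) \cap \Lambda$. Since $\jac(A)$ is nilpotent, $(1+j)^{-1} = \sum_{k=0}^{N-1} (-j)^k$, and I claim this lies in $\Lambda$: this holds provided $\jac(A) \cap \Lambda$ is an ideal of $\Lambda$ with all its powers inside $\Lambda$, which is automatic since $\Lambda$ is a ring and $j \in \Lambda$. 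Therefore $(1+j)^{-1} \in \Lambda$, so $\alpha^{-1} = (1+j)^{-1} w \in \Lambda$, whence $\Lambda\alpha = \Lambda$, as desired. Undoing the reduction gives $X = \Lambda\alpha$.

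\medskip

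\textbf{Main obstacle.} The only genuinely delicate point is the last one: passing from "$\overline{\alpha}$ generates $\overline{\Lambda}$" to "$\alpha$ generates $\Lambda$", which rests on the nilpotence of $\jac(A)$ (valid because $A$ is artinian) together with the fact that $j \in \Lambda$ forces the Neumann series $\sum (-j)^k$ to be a finite sum lying in $\Lambda$. Everything else is bookkeeping with Lemmas~\ref{lem:ext} and~\ref{lem:cyclic-iff-free}. I would also take care to phrase the dichotomy in (b) correctly: the statement is that if $\overline{X} = \overline{\Lambda}\,\overline{\alpha}$ then \emph{either} $X = \Lambda\alpha$ \emph{or} $X$ has no free generator at all — equivalently, if $X$ does happen to be free then this particular $\alpha$ must already generate it, which is exactly what the reduction argument above proves.
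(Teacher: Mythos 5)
Your proof is correct and follows essentially the same route as the paper's: the paper writes $\alpha = \varepsilon\beta$ with $\varepsilon\in\Lambda$, notes $\overline{\varepsilon}\in\overline{\Lambda}^{\times}$ because $\overline{\alpha}$ and $\overline{\beta}$ are both free generators of $\overline{X}$, lifts an inverse $\eta$ of $\overline{\varepsilon}$, and uses the nilpotence of $\jac(A)$ to invert $\varepsilon\eta = 1+\rho$ inside $\Lambda$ — which is precisely your argument after your reduction to $X=\Lambda$, $\beta=1$ (your reduced $\alpha$ is the paper's $\varepsilon$, and your $w$ plays the role of the paper's $\eta$). The reduction via $\psi^A$ is a harmless cosmetic detour, and the only implicit step in both treatments is that a one-sided inverse in an Artinian algebra is two-sided (or, alternatively, that $1\in\Lambda\alpha$ already forces $\Lambda\alpha=\Lambda$, which sidesteps this entirely).
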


\begin{proof}
The first claim and part (a) are both clear.
Suppose that $\overline{X} = \overline{\Lambda} \overline{\alpha}$ and that there exists
$\beta \in X$ such that $X=\Lambda \beta$.
Since $\alpha \in X=\Lambda \beta$ there exists $\varepsilon \in \Lambda$
such that $\alpha = \varepsilon \beta$. 
Hence $\overline{\alpha} = \overline{\varepsilon}\overline{\beta}$ and
since each of $\overline{\alpha}$ and $\overline{\beta}$ is a free  generator
of $\overline{X}$ over $\overline{\Lambda}$,
we must have $\overline{\varepsilon} \in \overline{\Lambda}^{\times}$.
Let $\eta \in \Lambda$ such that $\overline{\eta}=\overline{\varepsilon}^{-1}$.
Then $\varepsilon \eta = 1 + \rho$ where $\rho \in \jac(A)$. 
Moreover, $\rho = \varepsilon \eta - 1 \in  \Lambda$.
Since $A$ is artinian, $\jac(A)$ is a nilpotent ideal (see \cite[(5.15)]{curtisandreiner_vol1})
and so $\rho$ is a nilpotent element. 
Therefore
\[
\varepsilon^{-1} 
= \eta(1 + \rho)^{-1}
= \eta(1 - \rho + \rho^{2} - \rho^{3} + \dotsb) \in \Lambda,
\]
where the alternating sum is finite.
Hence $\varepsilon \in \Lambda^{\times}$ and so
$\Lambda \alpha = \Lambda \varepsilon \beta = \Lambda \beta = X$.
\end{proof}

\section{A necessary and sufficient condition for freeness}

Let $K$ be a number field with ring of integers $\mathcal{O}=\mathcal{O}_{K}$, 
and let $A$ be a finite-dimensional semisimple $K$-algebra.
Let $\Lambda$ be an $\mathcal{O}$-order in $A$.
By \cite[(10.4)]{Reiner2003} there exists a (not necessarily unique) maximal $\mathcal{O}$-order $\mathcal{M}$ in $A$ containing $\Lambda$.

Lemmas \ref{lem:intersection-addition-ideals} and \ref{lem:lattice-intersection}, as well as part of Proposition \ref{prop:equiv-conditions}, are based on \cite[\S 1.6]{Hus17}.

\begin{lemma}\label{lem:intersection-addition-ideals}
Let $\mathfrak{c}, \mathfrak{d}, \mathfrak{f}$ be left ideals of $\Lambda$ such that $\mathfrak{c} \subseteq \mathfrak{d}$. Then
$
\mathfrak{d} \cap (\mathfrak{c} + \mathfrak{f}) = \mathfrak{c} + (\mathfrak{d} \cap \mathfrak{f})
$.
\end{lemma}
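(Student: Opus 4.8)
The statement to prove is the modular-law identity
\[
\mathfrak{d} \cap (\mathfrak{c} + \mathfrak{f}) = \mathfrak{c} + (\mathfrak{d} \cap \mathfrak{f})
\]
for left ideals of $\Lambda$ with $\mathfrak{c} \subseteq \mathfrak{d}$. This is the classical modular law for submodules, here applied in the submodule lattice of $\Lambda$ viewed as a left module over itself; the hypothesis $\mathfrak{c} \subseteq \mathfrak{d}$ is exactly the modularity condition, and nothing about orders or number fields is really used beyond the fact that left ideals are abelian groups closed under the relevant operations. So the plan is simply to prove it by mutual inclusion.

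First I would establish the inclusion $\mathfrak{c} + (\mathfrak{d} \cap \mathfrak{f}) \subseteq \mathfrak{d} \cap (\mathfrak{c} + \mathfrak{f})$, which is the easy direction and does not even use $\mathfrak{c} \subseteq \mathfrak{d}$ on the right-hand side's first factor in a serious way: clearly $\mathfrak{c} \subseteq \mathfrak{c} + \mathfrak{f}$ and $\mathfrak{d} \cap \mathfrak{f} \subseteq \mathfrak{f} \subseteq \mathfrak{c} + \mathfrak{f}$, so $\mathfrak{c} + (\mathfrak{d} \cap \mathfrak{f}) \subseteq \mathfrak{c} + \mathfrak{f}$; and $\mathfrak{c} \subseteq \mathfrak{d}$ (here the hypothesis enters) together with $\mathfrak{d} \cap \mathfrak{f} \subseteq \mathfrak{d}$ gives $\mathfrak{c} + (\mathfrak{d} \cap \mathfrak{f}) \subseteq \mathfrak{d}$. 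Intersecting the two conclusions yields the claimed inclusion.

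For the reverse inclusion, take $x \in \mathfrak{d} \cap (\mathfrak{c} + \mathfrak{f})$. Then $x \in \mathfrak{d}$, and we may write $x = c + g$ with $c \in \mathfrak{c}$ and $g \in \mathfrak{f}$. Since $\mathfrak{c} \subseteq \mathfrak{d}$ we have $c \in \mathfrak{d}$, hence $g = x - c \in \mathfrak{d}$, so $g \in \mathfrak{d} \cap \mathfrak{f}$; therefore $x = c + g \in \mathfrak{c} + (\mathfrak{d} \cap \mathfrak{f})$. This completes the proof.

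There is no real obstacle here — the only thing to be careful about is that the hypothesis $\mathfrak{c} \subseteq \mathfrak{d}$ is genuinely needed (the identity fails for arbitrary $\mathfrak{c}$, as the submodule lattice is modular but not distributive), and it is used precisely at the step $c \in \mathfrak{c} \subseteq \mathfrak{d}$ in the reverse inclusion and at the step $\mathfrak{c} \subseteq \mathfrak{d}$ in the forward inclusion. I would present the argument in two or three lines, possibly just citing it as the modular law, but since the paper proves other elementary facts in full I would include the short element-chase above.
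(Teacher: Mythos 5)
Your proof is correct and is essentially the same element-chase as the paper's (which also proves both inclusions directly, following Husert); you have merely spelled out the easy inclusion in slightly more detail and correctly identified the statement as the modular law for submodule lattices.
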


\begin{proof}
We follow the proof of \cite[Lemma 1.37]{Hus17}.
If $c \in \mathfrak{c}$ and $f \in \mathfrak{f}$ with $c+f \in \mathfrak{d}$ then
$f \in \mathfrak{d}$ since $c \in \mathfrak{d}$.
Hence $c+f \in \mathfrak{c} + (\mathfrak{d} \cap \mathfrak{f})$. 
Therefore $\mathfrak{d} \cap (\mathfrak{c} + \mathfrak{f}) \subseteq \mathfrak{c} + (\mathfrak{d} \cap \mathfrak{f})$. For the reverse inclusion, note that both $\mathfrak{c}$ and 
$\mathfrak{d} \cap \mathfrak{f}$ are contained in
$\mathfrak{d} \cap (\mathfrak{c} + \mathfrak{f})$, and thus the same is true for their sum. 
\end{proof}

An ideal of a ring will be said to be \emph{proper} if the containment is strict.
Henceforth let $\mathfrak{f}$ be any proper full two-sided ideal of $\mathcal{M}$ that is contained in $\Lambda$.
For $\eta \in \mathcal{M}$ we write $\overline{\eta}$ for its image in $\mathcal{M} / \mathfrak{f}$.

\begin{lemma}\label{lem:lattice-intersection}
Let $X$ be a left ideal of $\Lambda$. 
If $X + \mathfrak{f} = \Lambda$ and $\beta \in \mathcal{M}$ such that 
$\mathcal{M}X = \mathcal{M}\beta$ then $\overline{\beta} \in (\mathcal{M}/\mathfrak{f})^{\times}$ and 
$\mathcal{M}X \cap \Lambda = X$.
\end{lemma}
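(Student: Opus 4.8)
The statement has two conclusions: that $\overline{\beta}$ is a unit in $\mathcal{M}/\mathfrak{f}$, and that $\mathcal{M}X \cap \Lambda = X$. The plan is to prove the first conclusion by a reduction modulo $\mathfrak{f}$, then use it together with Lemma~\ref{lem:intersection-addition-ideals} to extract the second.

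For the unit claim, I would start from the hypothesis $X + \mathfrak{f} = \Lambda$ and apply $\mathcal{M}\cdot(-)$ to get $\mathcal{M}X + \mathfrak{f} = \mathcal{M}$, using that $\mathfrak{f}$ is a two-sided ideal of $\mathcal{M}$ with $\mathcal{M}\mathfrak{f} = \mathfrak{f}$ and $\mathcal{M}\Lambda = \mathcal{M}$ (as $1 \in \Lambda \subseteq \mathcal{M}$). Substituting $\mathcal{M}X = \mathcal{M}\beta$ gives $\mathcal{M}\beta + \mathfrak{f} = \mathcal{M}$, i.e.\ $(\mathcal{M}/\mathfrak{f})\overline{\beta} = \mathcal{M}/\mathfrak{f}$. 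Since $\mathcal{M}/\mathfrak{f}$ is a finite ring (as $\mathfrak{f}$ is a full ideal, the quotient is a finitely generated torsion $\mathcal{O}$-module, hence finite), a surjective left multiplication by $\overline{\beta}$ on the finite module $\mathcal{M}/\mathfrak{f}$ is also injective, so there exists $\overline{\gamma}$ with $\overline{\gamma}\,\overline{\beta} = \overline{1}$; a finite ring with a left inverse for $\overline{\beta}$ forces $\overline{\beta}$ to be a two-sided unit, giving $\overline{\beta} \in (\mathcal{M}/\mathfrak{f})^{\times}$.

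For the second conclusion, I would argue $\mathcal{M}X \cap \Lambda = X$ by two inclusions; the inclusion $X \subseteq \mathcal{M}X \cap \Lambda$ is immediate since $1 \in \mathcal{M}$ and $X \subseteq \Lambda$. For the reverse inclusion, I would use that $\mathfrak{f} \subseteq \mathcal{M}\beta = \mathcal{M}X$: indeed $\mathfrak{f} = \mathcal{M}\mathfrak{f}\subseteq \mathcal{M}$, and since $\overline{\beta}$ is a unit, $\mathcal{M}\beta + \mathfrak{f} = \mathcal{M}$ already forces $\mathcal{M}\beta \supseteq \mathfrak{f}$ (lift a preimage of $\overline{\beta}^{-1}\overline{f}$ for each $f \in \mathfrak{f}$). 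Thus $\mathfrak{f} \subseteq \mathcal{M}X$, and applying Lemma~\ref{lem:intersection-addition-ideals} with $\mathfrak{c} = \mathfrak{f}$, $\mathfrak{d} = \mathcal{M}X \cap \Lambda$ (checking $\mathfrak{f}\subseteq \mathcal{M}X\cap\Lambda$ since $\mathfrak{f}\subseteq\Lambda$), and noting $\Lambda = X + \mathfrak{f}$:
\[
\mathcal{M}X \cap \Lambda
= (\mathcal{M}X \cap \Lambda) \cap (X + \mathfrak{f})
= X + \big((\mathcal{M}X \cap \Lambda) \cap \mathfrak{f}\big)
= X + \mathfrak{f}
\cap \mathcal{M}X \cap \Lambda.
\]
Here the last intersection equals $\mathfrak{f}$ since $\mathfrak{f} \subseteq \mathcal{M}X$ and $\mathfrak{f}\subseteq\Lambda$, but $\mathfrak{f}\subseteq X$ need not hold; instead I would run Lemma~\ref{lem:intersection-addition-ideals} in the form $\mathcal{M}X\cap\Lambda = (\mathcal{M}X\cap\Lambda)\cap\Lambda = (\mathcal{M}X\cap\Lambda)\cap(X+\mathfrak{f})$ and then use $X\subseteq\mathcal{M}X\cap\Lambda$ to write this as $X + \big((\mathcal{M}X\cap\Lambda)\cap\mathfrak{f}\big) = X + (\mathcal{M}X\cap\mathfrak{f})$; finally $\mathcal{M}X\cap\mathfrak{f} = \mathfrak{f}$ because $\mathfrak{f}\subseteq\mathcal{M}X$, yielding $\mathcal{M}X\cap\Lambda = X + \mathfrak{f}\cdot\text{(contained in }\mathcal{M}X\cap\Lambda\text{)}$, and since $\mathfrak{f}\subseteq\Lambda$ this is $X+\mathfrak{f}\cap\Lambda = \ldots$; the bookkeeping here is the only delicate point.

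\textbf{Main obstacle.} The genuine content is the unit assertion, which rests on the finiteness of $\mathcal{M}/\mathfrak{f}$ and the fact that a surjective endomorphism of a finite (or Noetherian) module is injective; everything after that is formal manipulation with the ideal identity of Lemma~\ref{lem:intersection-addition-ideals}. I expect the fiddliest step to be confirming $\mathfrak{f}\subseteq\mathcal{M}X$, i.e.\ that once $\overline{\beta}$ is a unit the surjection $\mathcal{M}\beta + \mathfrak{f} = \mathcal{M}$ actually upgrades to $\mathfrak{f}\subseteq\mathcal{M}\beta$, which is where the two-sided-ideal property of $\mathfrak{f}$ and the unit property combine.
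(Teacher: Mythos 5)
Your treatment of the unit assertion is correct and is essentially the argument the paper uses: from $X + \mathfrak{f} = \Lambda$ one gets $\mathcal{M}\beta + \mathfrak{f} = \mathcal{M}$, so right multiplication by $\overline{\beta}$ is onto the finite ring $\mathcal{M}/\mathfrak{f}$, which forces $\overline{\beta} \in (\mathcal{M}/\mathfrak{f})^{\times}$.

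The second half breaks down at the claim $\mathfrak{f} \subseteq \mathcal{M}X$, which is simply false. Take $\mathcal{O} = \mathcal{M} = \Lambda = \Z$, $A = \Q$, $X = 5\Z$, $\mathfrak{f} = 6\Z$, $\beta = 5$; then $X + \mathfrak{f} = \Z$ and $\mathcal{M}X = 5\Z = \mathcal{M}\beta$, but $6\Z \not\subseteq 5\Z$. Your attempted justification (``lift a preimage of $\overline{\beta}^{-1}\overline{f}$'') gives nothing, because for $f \in \mathfrak{f}$ one has $\overline{f} = \overline{0}$, hence $\overline{\beta}^{-1}\overline{f} = \overline{0}$; lifting $\overline{0}$ only produces elements of $\mathfrak{f}$, not an element of $\mathcal{M}\beta$ lying over $f$. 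Equality $\mathcal{M}\beta + \mathfrak{f} = \mathcal{M}$ never upgrades to containment $\mathfrak{f} \subseteq \mathcal{M}\beta$. Consequently your final evaluation $\mathcal{M}X \cap \mathfrak{f} = \mathfrak{f}$ is wrong, and feeding it into the modular law gives $\mathcal{M}X \cap \Lambda = X + \mathfrak{f} = \Lambda$, not $X$.

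The correct statement --- and the genuine content of the lemma beyond the unit assertion --- is $\mathcal{M}X \cap \mathfrak{f} = \mathfrak{f}X$. This is where the unit property of $\overline{\beta}$ is actually used: if $\gamma \in \mathcal{M}X \cap \mathfrak{f}$, write $\gamma = \lambda\beta$ with $\lambda \in \mathcal{M}$ (possible since $\mathcal{M}X = \mathcal{M}\beta$); then $\overline{\lambda}\,\overline{\beta} = \overline{\gamma} = \overline{0}$, and since $\overline{\beta}$ is a unit, $\overline{\lambda} = \overline{0}$, i.e. $\lambda \in \mathfrak{f}$, giving $\gamma = \lambda\beta \in \mathfrak{f}\beta = \mathfrak{f}\mathcal{M}\beta = \mathfrak{f}\mathcal{M}X = \mathfrak{f}X$ (using the two-sidedness of $\mathfrak{f}$ to absorb $\mathcal{M}$). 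The reverse inclusion $\mathfrak{f}X \subseteq \mathcal{M}X \cap \mathfrak{f}$ is immediate. With this in hand, your modular-law computation does deliver the goal:
\[
\mathcal{M}X \cap \Lambda = \mathcal{M}X \cap (X + \mathfrak{f}) = X + (\mathcal{M}X \cap \mathfrak{f}) = X + \mathfrak{f}X = X.
\]
In short, you had the right shape (unit property, then modular law), but misidentified the key intermediate identity; it is $\mathcal{M}X \cap \mathfrak{f} = \mathfrak{f}X$, not $\mathfrak{f} \subseteq \mathcal{M}X$.
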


\begin{proof}
We adapt the proof of \cite[Lemma 1.38]{Hus17}.
Clearly $\mathfrak{f}X \subseteq \mathcal{M}X \cap \mathfrak{f}$.
We now show the reverse inclusion.
Let $\gamma \in \mathcal{M} X \cap \mathfrak{f}$.
Then we write $\gamma = \lambda \beta$ with $\lambda \in \mathcal{M}$
and we have
\begin{align*}
X + \mathfrak{f} = \Lambda 
&\implies \mathcal{M}(X + \mathfrak{f}) = \mathcal{M} \\
& \implies \mathcal{M}\beta + \mathfrak{f} = \mathcal{M} \\
& \implies \overline{\beta} \in (\mathcal{M} / \mathfrak{f})^{\times} \\
&\implies \overline{\lambda} = \overline{\gamma} \overline{\beta}^{-1} = \overline{0} (\overline{\beta})^{-1} =  \overline{0} \textrm{ in } \mathcal{M}/\mathfrak{f}\\
& \implies \lambda \in \mathfrak{f} \\
&\implies \gamma=\lambda\beta \in \mathfrak{f}\beta = \mathfrak{f}\mathcal{M}\beta = \mathfrak{f}\mathcal{M} X = \mathfrak{f} X.
\end{align*}
Therefore $\mathfrak{f}X = \mathcal{M}X \cap \mathfrak{f}$.
Moreover, we have
\[
\mathcal{M} X \cap \Lambda = \mathcal{M} X \cap (X+\mathfrak{f}) = X + (\mathcal{M} X \cap \mathfrak{f}) = X + \mathfrak{f} X = X
\]
where the second equality holds by Lemma \ref{lem:intersection-addition-ideals}. 
\end{proof}

Define
\[
\pi : (\mathcal{M}/\mathfrak{f})^{\times}
\longrightarrow
{(\mathcal{M} / \mathfrak{f} )^{\times}} / {(\Lambda / \mathfrak{f} )^{\times}}
\]
to be the map induced by the canonical projection, where the codomain is the 
collection of left cosets of
$ (\Lambda / \mathfrak{f} )^{\times}$ in $(\mathcal{M} / \mathfrak{f} )^{\times}$.
Note that $(\Lambda/\mathfrak{f})^{\times}$ is a subgroup of $(\mathcal{M}/\mathfrak{f})^{\times}$, but is not necessarily a normal subgroup, and so $\pi$ is only a map of sets in general.

Part of the following result is a variant of \cite[Theorem 1.39]{Hus17}.

\begin{prop}\label{prop:equiv-conditions}
Let $X$ be a left ideal of $\Lambda$. 
Suppose that $X + \mathfrak{f} = \Lambda$ and that there exists $\beta \in \mathcal{M}$ such that 
$\mathcal{M}X = \mathcal{M}\beta$. Let $u \in \mathcal{M}^{\times}$ and let $\alpha = u \beta$.
Then $\overline{\alpha}, \overline{\beta}, \overline{u} \in (\mathcal{M}/\mathfrak{f})^{\times}$ and 
the following are equivalent:
\begin{enumerate}
\item $X = \Lambda\alpha$,
\item $\Lambda\alpha + \mathfrak{f} = \Lambda$,
\item $\overline{\alpha} \in (\Lambda/\mathfrak{f})^{\times}$,
\item $\pi(\overline{\beta}) = \pi(\overline{u^{-1}})$,
\item $\alpha \in X$ and $X$ is locally free over $\Lambda$.
\end{enumerate}
\end{prop}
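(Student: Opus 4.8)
The plan is to prove the equivalences by running the cycle $(a)\Rightarrow(b)\Rightarrow(c)\Rightarrow(a)$, then disposing of $(c)\Leftrightarrow(d)$ and $(a)\Leftrightarrow(e)$ separately; the real work is in $(e)\Rightarrow(a)$. First I would record the preliminary observations. Lemma~\ref{lem:lattice-intersection}, applied to $X$, gives $\overline{\beta}\in(\mathcal{M}/\mathfrak{f})^{\times}$ and $\mathcal{M}X\cap\Lambda=X$; since $u\in\mathcal{M}^{\times}$ we get $\overline{u}\in(\mathcal{M}/\mathfrak{f})^{\times}$ with inverse $\overline{u^{-1}}$, hence $\overline{\alpha}=\overline{u}\,\overline{\beta}\in(\mathcal{M}/\mathfrak{f})^{\times}$, which settles the first assertion; moreover $\mathcal{M}\alpha=\mathcal{M}u\beta=\mathcal{M}\beta=\mathcal{M}X$. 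Finally, since $\mathfrak{f}$ is a full $\mathcal{O}$-lattice, $\mathcal{M}/\mathfrak{f}$ and its subring $\Lambda/\mathfrak{f}$ are finite rings, so in either of them a one-sided inverse is automatically a two-sided inverse; I will use this freely below.

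For the cycle: $(a)\Rightarrow(b)$ is just substituting $X=\Lambda\alpha$ into $X+\mathfrak{f}=\Lambda$. For $(b)\Leftrightarrow(c)$, reducing $\Lambda\alpha+\mathfrak{f}=\Lambda$ modulo $\mathfrak{f}$ gives $(\Lambda/\mathfrak{f})\overline{\alpha}=\Lambda/\mathfrak{f}$, i.e.\ $\overline{\alpha}$ has a left inverse in $\Lambda/\mathfrak{f}$, i.e.\ $\overline{\alpha}\in(\Lambda/\mathfrak{f})^{\times}$; the converse implication is immediate. For $(c)\Rightarrow(a)$: from $(c)$ we obtain $(b)$, i.e.\ $\Lambda\alpha+\mathfrak{f}=\Lambda$, and since $\mathcal{M}(\Lambda\alpha)=\mathcal{M}\alpha=\mathcal{M}\beta$, Lemma~\ref{lem:lattice-intersection} applies with $\Lambda\alpha$ in place of $X$ and yields $\mathcal{M}(\Lambda\alpha)\cap\Lambda=\Lambda\alpha$; combining this with $\mathcal{M}(\Lambda\alpha)=\mathcal{M}X$ and $\mathcal{M}X\cap\Lambda=X$ from the preliminaries gives $\Lambda\alpha=X$.

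Next, $(c)\Leftrightarrow(d)$ is bookkeeping with $\pi$: the relation $\pi(\overline{\beta})=\pi(\overline{u^{-1}})$ means that $\overline{\beta}$ and $\overline{u^{-1}}=\overline{u}^{-1}$ lie in the same left coset of $(\Lambda/\mathfrak{f})^{\times}$, equivalently $(\overline{u}^{-1})^{-1}\overline{\beta}=\overline{u}\,\overline{\beta}=\overline{\alpha}$ lies in $(\Lambda/\mathfrak{f})^{\times}$, which is condition $(c)$; the only point requiring care is the left/right coset convention, since $(\Lambda/\mathfrak{f})^{\times}$ need not be normal. For $(a)\Rightarrow(e)$: if $X=\Lambda\alpha$ then $\alpha\in X$, and using that $X$ is full in $A$ (so $\alpha\in A^{\times}$) the map $\Lambda\to X$, $\lambda\mapsto\lambda\alpha$, is an isomorphism by Lemma~\ref{lem:cyclic-iff-free}, so $X$ is free, hence locally free, of rank~$1$. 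Together with the cycle this already yields $(c)\Rightarrow(e)$, so it remains to prove $(e)\Rightarrow(a)$.

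The crux is $(e)\Rightarrow(a)$. A direct attempt — localise, write $X_{\mathfrak{p}}=\Lambda_{\mathfrak{p}}\gamma$ and $\alpha=\lambda\gamma$ — quickly pins $\lambda$ to a unit of $\mathcal{M}_{\mathfrak{p}}$ but then gets stuck distinguishing units of $\mathcal{M}_{\mathfrak{p}}$ from units of $\Lambda_{\mathfrak{p}}$, so I would instead argue globally. Set $S:=\{\,m\in\mathcal{M}\mid m\alpha\in\Lambda\,\}$. Since $\alpha\in X\subseteq\Lambda$ and $\mathfrak{f}$ is a two-sided ideal of $\mathcal{M}$, we have $\mathfrak{f}\subseteq\Lambda\subseteq S\subseteq\mathcal{M}$; and since $\mathcal{M}\alpha=\mathcal{M}X$, Lemma~\ref{lem:lattice-intersection} gives $S\alpha=\mathcal{M}\alpha\cap\Lambda=\mathcal{M}X\cap\Lambda=X$. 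Hence $S\alpha+\mathfrak{f}=X+\mathfrak{f}=\Lambda$, and reducing modulo $\mathfrak{f}$ yields $(S/\mathfrak{f})\,\overline{\alpha}=\Lambda/\mathfrak{f}$ inside $\mathcal{M}/\mathfrak{f}$. Since $\overline{\alpha}\in(\mathcal{M}/\mathfrak{f})^{\times}$, right multiplication by $\overline{\alpha}$ is a bijection of the finite set $\mathcal{M}/\mathfrak{f}$, so $|S/\mathfrak{f}|=|\Lambda/\mathfrak{f}|$; as $\Lambda/\mathfrak{f}\subseteq S/\mathfrak{f}$ this forces $S/\mathfrak{f}=\Lambda/\mathfrak{f}$, hence $S=\Lambda$, hence $X=S\alpha=\Lambda\alpha$, which is $(a)$. (This last argument uses only $\alpha\in X$ and the standing hypotheses; local freeness enters only through $(a)\Rightarrow(e)$.) The main obstacle, then, is locating the auxiliary module $S$ and the counting step; everything else is routine verification.
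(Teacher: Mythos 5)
Your proof of the equivalences $(a)\Leftrightarrow(b)\Leftrightarrow(c)\Leftrightarrow(d)$ matches the paper's argument essentially step for step: in particular, deducing $(b)\Rightarrow(a)$ from two applications of Lemma~\ref{lem:lattice-intersection} (once to $X$, once to $\Lambda\alpha$) is exactly what the paper does. Where you genuinely diverge is in $(a)\Leftrightarrow(e)$: the paper simply cites a special case of a result from an earlier article and does not spell it out, whereas you give a direct, self-contained argument. The introduction of the auxiliary module $S=\{m\in\mathcal{M}\mid m\alpha\in\Lambda\}$, the identity $S\alpha=\mathcal{M}\alpha\cap\Lambda=\mathcal{M}X\cap\Lambda=X$ obtained from Lemma~\ref{lem:lattice-intersection}, and the counting step using that right multiplication by $\overline{\alpha}\in(\mathcal{M}/\mathfrak{f})^{\times}$ is a bijection of the finite ring $\mathcal{M}/\mathfrak{f}$ are all correct, and give a cleaner global proof than the localisation route you briefly dismiss. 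Your observation that this argument uses only $\alpha\in X$ and not local freeness is also correct and is in fact slightly sharper than the stated $(e)\Rightarrow(a)$.

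One caveat concerning your $(a)\Rightarrow(e)$: you invoke ``using that $X$ is full in $A$'', but fullness is neither a hypothesis of the proposition nor a consequence of the stated hypotheses. For example, take $A=K\times K$, $\Lambda=\mathcal{M}=\mathcal{O}\times\mathcal{O}$, $\mathfrak{f}=\mathfrak{p}\mathcal{O}\times\mathcal{O}$ for a prime $\mathfrak{p}$, $X=\mathcal{O}\times\{0\}$, $\beta=\alpha=(1,0)$ and $u=1$: then $X+\mathfrak{f}=\Lambda$ and $\mathcal{M}X=\mathcal{M}\beta$, and $(a)$ holds, yet $X$ is neither full nor locally free over $\Lambda$, so $(e)$ fails. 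Thus $(a)\Rightarrow(e)$ really does need the auxiliary assumption that $X$ is full (equivalently, that $\beta\in A^{\times}$); this is implicit in the source the paper cites for $(a)\Leftrightarrow(e)$, and in the paper's actual applications $X$ is always full, so the gap is as much in the proposition's wording as in your proof. Nevertheless, as written you assert fullness without deriving or hypothesising it, and you should either add it as a standing assumption or record it alongside the cited lemma before invoking Lemma~\ref{lem:cyclic-iff-free}.
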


\begin{proof}
Lemma \ref{lem:lattice-intersection} and the definitions of $u$ and $\alpha$ imply that 
$\overline{\alpha}, \overline{\beta}, \overline{u} \in (\mathcal{M}/\mathfrak{f})^{\times}$.
It is clear that (b) $\Leftrightarrow$ (c).
Since $\beta = u^{-1} \alpha$, that (c) $\Leftrightarrow$ (d) follows from the definition of $\pi$.
Since $X + \mathfrak{f} = \Lambda$, we also have (a) $\Rightarrow$ (b).
Assume (b) holds.
By two applications of Lemma~\ref{lem:lattice-intersection} we have
\[
X = \mathcal{M}X \cap \Lambda = \mathcal{M}\beta \cap \Lambda
= \mathcal{M}\alpha \cap \Lambda = 
\mathcal{M}(\Lambda\alpha) \cap \Lambda = \Lambda\alpha,
\]
where the first equality uses the hypothesis that  $X + \mathfrak{f} = \Lambda$ and the last equality uses the assumption that (b) holds; thus (a) holds. Therefore (a) $\Leftrightarrow$ (b).
Finally, a special case of \cite[Proposition 2.1]{BJ08} shows that (a) $\Leftrightarrow$ (e).
\end{proof}

Much of the following notation is adopted from \cite{bley-boltje} and \cite{MR4136552}.
Denote the centre of a ring $R$ by $Z(R)$. 
Let $C=Z(A)$ and let $\mathcal{O}_{C}$ be the integral closure of $\mathcal{O}$ in $C$.
Let $\mathfrak{g} = \mathfrak{f} \cap C$ and note that this is a proper full ideal of $\mathcal{O}_{C}$.
Let $e_{1}, \ldots, e_{r}$ be the primitive idempotents
of $C$ and set $A_{i} = Ae_{i}$. Then
\begin{equation}\label{eq:weddecomp}
A = A_{1} \oplus \cdots \oplus A_{r}
\end{equation}
is a decomposition of $A$ into indecomposable two-sided ideals (see \cite[(3.22)]{curtisandreiner_vol1}).
Each $A_{i}$ is a simple $K$-algebra with identity element $e_{i}$.
The centres $K_{i} := Z(A_{i})$ are finite field extensions of $K$ via $K \rightarrow K_{i}$, $\alpha \mapsto \alpha e_{i}$,
and we have $K$-algebra isomorphisms $A_{i} \cong \Mat_{n_{i}}(D_{i})$ where $D_{i}$ is a skew field with $Z(D_{i}) \cong K_{i}$ 
(see \cite[(3.28)]{curtisandreiner_vol1}).
The Wedderburn decomposition (\ref{eq:weddecomp}) induces decompositions
\begin{equation}\label{eq:central-idem-decomps}
C = K_{1} \oplus \cdots \oplus K_{r}
\quad \textrm{and} \quad  \mathcal{O}_{C} = \mathcal{O}_{K_{1}} \oplus \cdots \oplus \mathcal{O}_{K_{r}},
\end{equation}
where $\calO_{K_i}$ denotes the ring of algebraic integers of $K_{i}$.
By \cite[(10.5)]{Reiner2003} we have $e_{1}, \ldots, e_{r} \in \mathcal{M}$ and each $\mathcal{M}_{i}:=\mathcal{M}e_{i}$ is a maximal $\mathcal{O}$-order 
(and thus a maximal $\mathcal{O}_{K_{i}}$-order) in $A_{i}$.
Moreover, each $\frf_{i}:=\frf e_{i}$ is a full two-sided ideal of $\calM_{i}$, 
each $\frg_{i} := \frg e_{i}$ is a nonzero integral ideal of $\mathcal{O}_{K_{i}}$, 
and we have decompositions
\begin{equation}\label{eq:max-and-ideal-decomp}
\mathcal{M} =  \mathcal{M}_{1} \oplus \cdots \oplus \mathcal{M}_{r},
\quad 
\frf = \frf_{1} \oplus \cdots \oplus \frf_{r}
\quad \textrm{and} \quad
\frg = \frg_{1} \oplus \cdots \oplus \frg_{r}.
\end{equation}

The reduced norm map $\nr : A \rightarrow C$ is defined componentwise (see \cite[\S 9]{Reiner2003})
and restricts to a group homomorphism $\nr : \mathcal{M}^{\times} \rightarrow \mathcal{O}_{C}^{\times}$.

\begin{lemma}\label{lem:defn-of-overline-nr}
There exists a surjective group homomorphism
$\overline{\nr}: (\mathcal{M}/\mathfrak{f})^{\times} \longrightarrow (\mathcal{O}_{C}/\mathfrak{g})^{\times}$
that fits into the commutative diagram
\[
\xymatrix@1@!0@=48pt { 
\mathcal{M}^{\times} \ar[rr]^{\nr}  \ar[d] & & \mathcal{O}_{C}^{\times} \ar[d] \\
(\mathcal{M}/\mathfrak{f})^{\times} \ar[rr]^{\overline{\nr}}  & &
(\mathcal{O}_{C}/\mathfrak{g})^{\times},
}
\]
where the vertical maps are induced by the canonical projections.
\end{lemma}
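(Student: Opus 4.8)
The plan is to show that the reduced norm $\nr \colon \calM^{\times} \to \calO_C^{\times}$ descends to the quotients, i.e.\ that it carries the kernel of $\calM^{\times} \twoheadrightarrow (\calM/\frf)^{\times}$ into the kernel of $\calO_C^{\times} \twoheadrightarrow (\calO_C/\frg)^{\times}$, and then to check surjectivity of the induced map. First I would reduce to the simple case using the decompositions \eqref{eq:weddecomp}, \eqref{eq:central-idem-decomps} and \eqref{eq:max-and-ideal-decomp}: since $\nr$ is defined componentwise and all the relevant objects split as direct sums indexed by $i = 1, \ldots, r$, it suffices to produce, for each $i$, a surjective homomorphism $\overline{\nr}_i \colon (\calM_i/\frf_i)^{\times} \to (\calO_{K_i}/\frg_i)^{\times}$ compatible with the projections, and then take the direct product. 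So from now on assume $A = A_i$ is simple with centre $K_i$, maximal order $\calM = \calM_i$, and $\frf = \frf_i$, $\frg = \frg_i = \frf \cap K_i$.

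The key step is the following claim: if $u \in \calM^{\times}$ with $u \equiv 1 \pmod{\frf}$, then $\nr(u) \equiv 1 \pmod{\frg}$. Writing $u = 1 + x$ with $x \in \frf$, the reduced norm $\nr(u)$ is (up to sign, or via the reduced characteristic polynomial) a polynomial expression in the coefficients of $u$ with coefficients in $\calO_{K_i}$, and its constant term corresponds to $\nr(1) = 1$; every other monomial involves at least one entry of $x$, hence lies in the ideal of $\calO_{K_i}$ generated by the "coordinates" of $x$. Since $\frf$ is a two-sided ideal of $\calM$ with $\frf \cap K_i = \frg$, one checks that $\nr(1+x) - 1 \in \calM \cap K_i = \calO_{K_i}$ actually lands in $\frg$; the cleanest way is to work locally at each prime $\frp$ of $\calO_{K_i}$ dividing $\frg$, complete, use that $\calM_\frp$ is then a (split or division) algebra over a complete local ring with $\frf_\frp = \frp^{e}\calM_\frp$ for some $e \geq 1$, and observe from the explicit formula for the reduced norm (a product of conjugates, or the determinant after a splitting field extension) that $\nr(1 + \frf_\frp) \subseteq 1 + \frp^{e}\calO_{K_i,\frp}$. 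Granting the claim, $\overline{\nr}$ is well defined by $\overline{\nr}(\overline{u}) = \overline{\nr(u)}$, and it is a group homomorphism because $\nr$ is.

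For surjectivity: given $\overline{v} \in (\calO_C/\frg)^{\times}$, lift to $v \in \calO_C$; since $\calO_C = \bigoplus \calO_{K_i}$ we may assume $v \in \calO_{K_i}$ with $v$ coprime to $\frg$, hence $v \in \calO_{K_i}^{\times}$ after adjusting by a unit congruent to $1$... more precisely, one can first replace $v$ by a unit of $\calO_{K_i}$ congruent to it modulo $\frg$ if one exists, but in general it is cleaner to argue directly that $\nr \colon \calM_i^{\times} \to \calO_{K_i}^{\times}$ already has image of finite, controlled index and then pass to quotients. The slickest route uses the surjectivity of $\nr$ on units locally: for each $\frp \mid \frg$, the map $\calM_\frp^{\times} \to \calO_{K_i,\frp}^{\times}$ is surjective when $\frp \nmid$ the discriminant (split case) and has image containing $1 + \frp^{e}\calO_{K_i,\frp}$ in general, which is exactly $\ker(\calO_{K_i,\frp}^{\times} \to (\calO_{K_i}/\frg)_\frp^{\times})$ for $e$ large enough; since $\frf$ is a \emph{full} two-sided ideal and $\frg = \frf \cap K_i \neq 0$, this covers all primes in the support of $\frg$, and diagonally combining these local solutions via strong approximation (or just CRT on $\calM/\frf$, a finite ring) produces $u \in \calM^{\times}$ with $\overline{\nr(u)} = \overline{v}$. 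The main obstacle is pinning down the last surjectivity argument cleanly — the local reduced-norm surjectivity onto $(\calO_{K_i}/\frg)^{\times}$ rests on the Eichler/local norm theorems for maximal orders, and one must be a little careful about primes where $A_i$ ramifies; this is where I would invest the detailed bookkeeping, since the well-definedness part is essentially formal once the local formula for $\nr(1+\frf)$ is in hand.
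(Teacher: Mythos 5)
Your plan follows the same route as the paper: reduce to the simple case via the central idempotent decompositions, pass to completions at primes $\frp$ dividing $\frg$, use local properties of the reduced norm on units of maximal orders, and reassemble with the Chinese remainder theorem. The precise local facts you flag as the "detailed bookkeeping" at ramified primes, namely $\nr_\frp(\calM_\frp^\times)=\calO_{K_\frp}^\times$ and $\nr_\frp(1+\frf_\frp)=1+\frg_\frp$, are exactly what the paper imports from \cite[Corollary 2.4]{bley-boltje}, after which well-definedness and surjectivity both drop out of a snake-lemma chase on the two short exact sequences (so the ad hoc "polynomial expression for $\nr$" argument for well-definedness is unnecessary).
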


\begin{proof}
The decompositions \eqref{eq:weddecomp}, \eqref{eq:central-idem-decomps}, 
\eqref{eq:max-and-ideal-decomp} and the componentwise definition of the reduced
norm mean that we can and do assume without loss of generality that $r=1$, that is, 
$A$ is simple, $\mathcal{M}=\mathcal{M}_{1}$, $\mathfrak{f}=\mathfrak{f}_{1}$, 
$\mathfrak{g}=\mathfrak{g}_{1}$, and $C=K=K_{1}$.
Let $\mathfrak{p}$ be a prime ideal of $\mathcal{O}_{K}$ dividing $\mathfrak{g}$ and let $K_{\mathfrak{p}}$ denote the completion (not localisation) of $K$ at $\mathfrak{p}$.
If $M$ is an $\mathcal{O}_{K}$-module or an $\mathcal{O}_{K}$-algebra, then we write 
$\widehat{M}_{\mathfrak{p}} := M \otimes_{\mathcal{O}_{K}} \mathcal{O}_{K_{\mathfrak{p}}}$. 
By \cite[(7.6),(11.6)]{Reiner2003},
$\widehat{\mathcal{M}}_{\mathfrak{p}}$ is a maximal $\mathcal{O}_{K_{\mathfrak{p}}}$-order in 
the central simple $K_{\mathfrak{p}}$-algebra $A \otimes_{K} K_{\mathfrak{p}}$.
Let $\nr_{\mathfrak{p}} : \widehat{\mathcal{M}}_{\mathfrak{p}}^{\times} \rightarrow \mathcal{O}_{K_{\mathfrak{p}}}^{\times}$ denote the restriction of the reduced norm map.
Then by \cite[Corollary 2.4]{bley-boltje} we have that
$\nr_{\mathfrak{p}}(1+\widehat{\mathfrak{f}}_{\mathfrak{p}})=1+\widehat{\mathfrak{g}}_{\mathfrak{p}}$ and 
$\nr_{\mathfrak{p}}(\widehat{\mathcal{M}}_{\mathfrak{p}}^{\times})=\mathcal{O}_{K_{\mathfrak{p}}}^{\times}$.
Hence we have a commutative diagram
\[
\xymatrix@1@!0@=48pt { 
1  \ar[r] & 1+\widehat{\mathfrak{f}}_{\mathfrak{p}} \ar[rr] \ar[d]_{\nr_{\mathfrak{p}}} & &
\widehat{\mathcal{M}}_{\mathfrak{p}}^{\times} \ar[rr]  
\ar[d]_{\nr_{\mathfrak{p}}} & & 
(\widehat{\mathcal{M}}_{\mathfrak{p}}/\widehat{\mathfrak{f}}_{\mathfrak{p}})^{\times}
\ar[r] \ar[d]_{\overline{\nr}_{\mathfrak{p}}} &  1 \\
1 \ar[r] &1+\widehat{\mathfrak{g}}_{\mathfrak{p}} \ar[rr] & &
\mathcal{O}_{K_{\mathfrak{p}}}^{\times} \ar[rr]
& & ( \mathcal{O}_{K_{\mathfrak{p}}} / \widehat{\mathfrak{g}}_{\mathfrak{p}})^{\times} \ar[r] & 1,
}
\]
where $\overline{\nr}_{\mathfrak{p}}$ is induced by the other two vertical maps and
is surjective by the snake lemma.

The Chinese remainder theorem gives canonical isomorphisms
\[
(\mathcal{M}/\mathfrak{f})^{\times} \cong \prod_{\mathfrak{p} \mid \mathfrak{g}}
(\widehat{\mathcal{M}}_{\mathfrak{p}}/\widehat{\mathfrak{f}}_{\mathfrak{p}})^{\times}
\quad \textrm{ and } \quad
(\mathcal{O}_{K}/\mathfrak{g})^{\times} \cong \prod_{\mathfrak{p} \mid \mathfrak{g}}
(\mathcal{O}_{K_{\mathfrak{p}}} / \widehat{\mathfrak{g}}_{\mathfrak{p}})^{\times}.
\]
Let $\overline{\nr} = \prod_{\mathfrak{p} \mid \mathfrak{g}} \overline{\nr}_{\mathfrak{p}}$
and observe that the desired result now follows since 
the reduced norm map commutes with completion by \cite[(9.29)]{Reiner2003}.
\end{proof}

Let $\SL(\mathcal{M})=\ker(\nr: \mathcal{M}^{\times} \rightarrow \mathcal{O}_{C}^{\times})$ and 
$\SL(\mathcal{M}/\mathfrak{f})=\ker(\overline{\nr})$. 
(Note that in the literature the set $\SL(\mathcal{M})$, which is the group of units of reduced norm one, is sometimes also denoted by $\mathcal{M}^1$.)
Then by Lemma \ref{lem:defn-of-overline-nr} and the definitions,
we have the following commutative diagram
\begin{equation}\label{f1 diagram}
\xymatrix@1@!0@=48pt { 
1 \ar[r] & \SL(\mathcal{M}) \ar[rr] \ar[d]^{f_{1}} & &  \mathcal{M}^{\times} \ar[rr]^{\nr}  
\ar[d]_{f} & & \nr(\mathcal{M}^{\times}) \ar[r] \ar[d]_{f_{2}} &  1 \\
1 \ar[r] & \SL(\mathcal{M}/\mathfrak{f}) \ar[rr] & & (\mathcal{M}/\mathfrak{f})^{\times}  \ar[d]^{\pi} \ar[rr]^{\overline{\nr}}  
& & ( \mathcal{O}_{C}/\mathfrak{g})^{\times} \ar[d]^{\pi_2} \ar[r] & 1 \\
&                                  & &
{(\mathcal{M} / \mathfrak{f} )^{\times}} / {(\Lambda / \mathfrak{f} )^{\times}}& &  
\sfrac{(\mathcal{O}_{C} / \mathfrak{g})^{\times}}{\, \overline{\nr} (( \Lambda/\mathfrak{f} )^{\times})},
}
\end{equation}
where the rows are exact and the vertical maps are induced by the canonical projections.
Note that all the maps are group homomorphisms, apart from $\pi$, which is only a map of sets in general.

\begin{theorem}\label{thm:main-thm-for-freeness}
Let $X$ be a left ideal of $\Lambda$ such that $X + \mathfrak{f} = \Lambda$.
Suppose that there exists $\beta \in \mathcal{M}$ such that 
$\mathcal{M}X = \mathcal{M}\beta$. Then the following statements hold.
\begin{enumerate}
\item If $X$ is free over $\Lambda$, then
$\pi_{2}(\overline{\nr}(\overline{\beta}))$ is in the image of $\pi_{2} \circ f_{2}$.
\item If $f_{1}$ is surjective, then the converse of \textup{(a)} holds. More precisely, if $u \in \calM^\times$ and
$\overline{a} \in (\Lambda/\mathfrak{f})^{\times}$ satisfy
$\overline{\nr}(\overline{\beta}) = \overline{\nr}(\overline{u}) \overline{\nr}(\overline{a})$, 
then for any $v \in \SL(\mathcal{M})$ with $f_{1}(v)=\overline{\beta}\overline{ a}^{-1}\overline{u}^{-1}$, 
we have $X = \Lambda\alpha$ where $\alpha := (vu)^{-1}\beta$.
\end{enumerate}
\end{theorem}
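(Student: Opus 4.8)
The strategy is to translate the statement "$X$ is free over $\Lambda$" entirely into the language of the commutative diagram \eqref{f1 diagram}, using Proposition~\ref{prop:equiv-conditions} as the bridge. First I would recall that by Proposition~\ref{prop:equiv-conditions}, since $X + \mathfrak{f} = \Lambda$ and $\mathcal{M}X = \mathcal{M}\beta$, the lattice $X$ is free over $\Lambda$ if and only if there exists $u \in \mathcal{M}^{\times}$ such that $\alpha := u\beta$ satisfies $\overline{\alpha} \in (\Lambda/\mathfrak{f})^{\times}$, equivalently $\pi(\overline{u}\,\overline{\beta}) = \pi(\overline{1})$, i.e. $\pi(\overline{\beta}) = \pi(\overline{u}^{-1})$ (condition (d)). So freeness of $X$ is equivalent to: $\pi(\overline{\beta})$ lies in the image of $\pi \circ f$, where $f \colon \mathcal{M}^{\times} \to (\mathcal{M}/\mathfrak{f})^{\times}$ is the reduction map appearing in the middle column of \eqref{f1 diagram}.

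For part~(a): assuming $X$ is free, pick $u \in \mathcal{M}^{\times}$ with $\pi(\overline{\beta}) = \pi(\overline{u}^{-1})$, so $\overline{\beta}\,\overline{u} \in (\Lambda/\mathfrak{f})^{\times}$ (after adjusting: $\overline{\alpha} = \overline{u}\,\overline{\beta}$, and I should be careful whether $(\Lambda/\mathfrak{f})^\times$-cosets are left or right; the map $\pi$ uses left cosets, so $\overline{\beta} \in \overline{u}^{-1}(\Lambda/\mathfrak{f})^\times$, giving $\overline{\beta} = \overline{u}^{-1}\overline{a}$ for some $\overline{a} \in (\Lambda/\mathfrak{f})^\times$). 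Now apply $\overline{\nr}$, which is a group homomorphism by Lemma~\ref{lem:defn-of-overline-nr}, to get $\overline{\nr}(\overline{\beta}) = \overline{\nr}(\overline{u})^{-1}\overline{\nr}(\overline{a})$. Applying $\pi_2$ kills the factor $\overline{\nr}(\overline{a}) \in \overline{\nr}((\Lambda/\mathfrak{f})^\times)$, so $\pi_2(\overline{\nr}(\overline{\beta})) = \pi_2(\overline{\nr}(\overline{u})^{-1}) = \pi_2(f_2(\nr(u)^{-1}))$ by commutativity of the top-right square, exhibiting it in the image of $\pi_2 \circ f_2$. (One needs to note $\pi_2$ is a genuine group homomorphism — its codomain is a quotient group — so inverses cause no trouble.)

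For part~(b): given $u \in \mathcal{M}^\times$ and $\overline{a} \in (\Lambda/\mathfrak{f})^\times$ with $\overline{\nr}(\overline{\beta}) = \overline{\nr}(\overline{u})\,\overline{\nr}(\overline{a})$, the element $w := \overline{\beta}\,\overline{a}^{-1}\,\overline{u}^{-1} \in (\mathcal{M}/\mathfrak{f})^\times$ (all three factors are units by Proposition~\ref{prop:equiv-conditions} and the hypotheses) satisfies $\overline{\nr}(w) = 1$, hence $w \in \SL(\mathcal{M}/\mathfrak{f})$. Now invoke the surjectivity of $f_1$: choose $v \in \SL(\mathcal{M})$ with $f_1(v) = w$, i.e. $\overline{v} = \overline{\beta}\,\overline{a}^{-1}\,\overline{u}^{-1}$ in $(\mathcal{M}/\mathfrak{f})^\times$. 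Set $\alpha := (vu)^{-1}\beta \in A$; since $v, u \in \mathcal{M}^\times$ this is of the form (unit of $\mathcal{M}$)$\cdot\beta$, so Proposition~\ref{prop:equiv-conditions} applies with $v u \in \mathcal M^\times$ playing the role of $u^{-1}$ there. Compute $\overline{\alpha} = (\overline{v}\,\overline{u})^{-1}\overline{\beta} = \overline{u}^{-1}\overline{v}^{-1}\overline{\beta} = \overline{u}^{-1}\,(\overline{u}\,\overline{a}\,\overline{\beta}^{-1})\,\overline{\beta} = \overline{a} \in (\Lambda/\mathfrak{f})^\times$. By the equivalence (a)~$\Leftrightarrow$~(c) of Proposition~\ref{prop:equiv-conditions}, this gives $X = \Lambda\alpha$, so $X$ is free; and the converse direction of (a) follows since from $X$ free we may run (a) to land in $\im(\pi_2 \circ f_2)$, which is precisely the condition just shown to be sufficient.

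**Main obstacle.** The only real subtlety is bookkeeping with left versus right cosets and the order of multiplication: $\pi$ is merely a map of sets because $(\Lambda/\mathfrak{f})^\times$ need not be normal, so one cannot freely rearrange factors, and the precise placement of $u$, $v$, $\overline a$ in the expression $\alpha = (vu)^{-1}\beta$ must be matched exactly against condition (d) of Proposition~\ref{prop:equiv-conditions}. Once the reduced norm is applied everything becomes commutative (the target groups $(\mathcal O_C/\mathfrak g)^\times$ and its quotient are abelian), so the only place care is genuinely required is the final verification that $\overline{\alpha} = \overline a$ lands in $(\Lambda/\mathfrak f)^\times$; the surjectivity hypothesis on $f_1$ is exactly what lets one upgrade the norm-level equality back to a unit-level equality in the non-abelian group $(\mathcal M/\mathfrak f)^\times$.
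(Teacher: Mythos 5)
Your proof is correct and follows essentially the same strategy as the paper's: reduce freeness of $X$ to the coset condition of Proposition~\ref{prop:equiv-conditions}(d) via the explicit $u \in \mathcal{M}^\times$ with $\alpha = u\beta$, push through $\overline{\nr}$ for part~(a), and for part~(b) use surjectivity of $f_1$ to lift the norm-one element $\overline{\beta}\,\overline{a}^{-1}\,\overline{u}^{-1}$ back to $\SL(\mathcal{M})$ and then verify $\overline{\alpha} = \overline{a} \in (\Lambda/\mathfrak{f})^\times$. The only place you are a touch more explicit than the paper is the concluding coset computation in part~(b) (the paper simply asserts $\pi(\overline{\beta}) = \pi(\overline{vu})$ and cites Proposition~\ref{prop:equiv-conditions}), and you correctly flag the one genuine subtlety — that $\pi$ is only a map of sets, so one must respect factor order until after applying $\overline{\nr}$.
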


\begin{proof}
(a) Suppose that $X$ is free over $\Lambda$.
Then there exists $\alpha \in X$ such that $X=\Lambda\alpha$.
Thus $\mathcal{M}\beta = \mathcal{M}X=\mathcal{M}(\Lambda\alpha)=\mathcal{M}\alpha$
and so there exists
$u \in \mathcal{M}^{\times}$ such that $\alpha = u\beta$.
Hence $\pi(\overline{\beta}) = \pi(\overline{u^{-1}})$ by
Proposition \ref{prop:equiv-conditions}.
In other words, there exists $a \in \Lambda$ such that
$\overline{a} \in (\Lambda/\mathfrak{f})^{\times}$
and $\overline{\beta}=\overline{u^{-1}}\overline{a}$. Thus
\[
\overline{\nr}(\overline{\beta}) 
= \overline{\nr}(\overline{u^{-1}}\overline{a})
= \overline{\nr}(\overline{u^{-1}})\overline{\nr}(\overline{a})
= f_{2}(\nr(u^{-1}))\overline{\nr}(\overline{a}),
\]
and so $\pi_{2}(\overline{\nr}(\overline{\beta}))=\pi_{2}(f_{2}(\nr(u^{-1})))$.

(b) Suppose that $f_1$ is surjective and $\pi_{2}(\overline{\nr}(\overline{\beta}))$
is in the image of $\pi_{2} \circ f_{2}$.
Then there exist $u,a$ and $v$ as in (b)  and so $\pi(\overline{\beta})= \pi(\overline{vu})$.
Hence by Proposition \ref{prop:equiv-conditions} we have 
$X = \Lambda \alpha$ where $\alpha := (vu)^{-1}\beta$.
\end{proof}

\section{Preliminaries on complexity}\label{sec:prelim-complexity}
We briefly recall the conventions that we will use for the complexity analysis of our algorithms. For details we
refer the reader to Lenstra~\cite{Lenstra1992} or Cohen~\cite[\S 1.1]{MR1228206}.

Let $l$ be the size of the input data measured by the number of required bits. Then an algorithm
is \textit{polynomial time} if the running time is $O(P(l))$ for a polynomial $P$.
An algorithm is \textit{subexponential time} if there exists $0 \le a < 1$ and
$b \in \R_{>0}$ such that the running time is $O(\exp(b \cdot l^a(\log l)^{a-1}))$.

A~\textit{probabilistic} algorithm may call a random number generator.
In this case we say that the algorithm is \textit{probabilistic polynomial time} if the expected running time is
$O(P(l))$ for a polynomial $P$. 
We adopt the same convention for probabilistic subexponential time algorithms.

Given two computational problems $\mathsf{A}$ and $\mathsf{B}$, a
\textit{(probabilistic) polynomial-time reduction} from $\mathsf{A}$ to
$\mathsf{B}$ is an algorithm that solves $\mathsf{A}$ using a polynomial
number of calls to an oracle solving $\mathsf{B}$ and is (probabilistic)
polynomial time outside of those calls to the oracle. 

Let $K$ be a number field with ring of integers $\calO = \calO_K$. We follow the convention of~\cite{Lenstra1992}
for representing our input data. In some more detail, if $V = K^{n}$
is an $n$-dimensional vector space over $K$,
we represent an $\calO$-module $M \subseteq V$ by a pseudobasis of $M$, that is, by elements $v_1,\dotsc,v_k \in V$ and
fractional $\calO$-ideals $\mathfrak a_1,\dotsc,\mathfrak a_k$ such that $M = \mathfrak a_1 v_1 \oplus \dotsb \oplus \mathfrak a_k v_k$ for some $k \leq n$. 
A~$K$-algebra $A$ of dimension $d$ is represented as a $d$-dimensional vector space together with the $K$-linear multiplication
map $A \otimes_K A \to A$, which is represented using $d^{3}$ elements of $K$.
Given a finite-dimensional $K$-algebra $A$, an $\calO$-order of $A$ is represented using a pseudobasis.
An $n$-dimensional $A$-module $V$ is represented as a vector space over $K$
together with $d$ matrices in $\Mat_{n}(K)$, one for each basis element
of $A$ describing the action on elements of $V$. Given an $\calO$-order
$\Lambda$, a $\Lambda$-lattice is represented by an $\calO$-submodule of a
finite-dimensional $A$-module, invariant under the action of $\Lambda$.

We will be mainly interested in solving the following two problems.

\begin{problem*}[$\pisomorphic$] 
Given a finite-dimensional $K$-algebra $A$, an $\mathcal{O}$-order $\Lambda$ in $A$,
and two $\Lambda$-lattices $X$ and $Y$, decide whether $X$ and $Y$ are isomorphic, and if so, 
return an isomorphism $X \to Y$.
\end{problem*}

\begin{problem*}[$\ppip$] 
Given a finite-dimensional $K$-algebra $A$, an $\mathcal{O}$-order $\Lambda$ in $A$,
and a full $\Lambda$-lattice $X$ in $A$, decide whether there exists $\alpha \in X$ such that
$X = \Lambda \alpha$, and if so, return such an element $\alpha$.
\end{problem*}

Under the assumption that $A$ satisfies hypothesis (H), 
we will reduce these questions to well-studied problems in algorithmic number theory.
These include $\ppip$ in the case where $A = K$ and $\Lambda=\mathcal{O}$, 
as well as the following problems:

\begin{itemize}
\item $\pfactor$: 
Given an ideal or element of the ring of integers $\mathcal{O}_{F}$ of a number field $F$, determine its factorisation into prime ideals.
\item $\pprimitive$: Given a finite field $\F_{q}$, determine $\alpha \in \F_{q}^{\times}$ 
such that $\F_{q}^{\times} = \langle \alpha \rangle$.
\item $\pdlog$: Given a finite field $\F_{q}$ and $\alpha, \beta \in \F_{q}^{\times}$ with
$\F_{q}^{\times} = \langle \alpha \rangle$, determine $n \in \Z_{\geq 0}$ such that $\alpha^{n} = \beta$.
\item $\punit$: Given the ring of integers $\calO_{F}$ of a number field $F$, 
determine a system of fundamental units for $\calO_{F}^{\times}$.
\end{itemize}

We will use the following standard convention and notation to denote variations
and instances of computational problems.
For example, for an $\mathcal{O}$-order $\Lambda$, we denote by $\pisomorphic_{\Lambda}$ the set of instances of
$\pisomorphic$ restricted to $\Lambda$-lattices. 
Similarly, 
we use $\ppip_\Lambda$ for the set of instances of $\ppip$ for $\Lambda$-lattices. 
Moreover, given a $\Lambda$-lattice $X$,
we use $\ppip(X)$ to denote the instance of $\ppip$ for the lattice $X$.
Note that in this case we still consider $\Lambda$ part of the input.

\begin{remark}\label{remark:subcomp}
Currently, the following complexity statements are known.
\begin{enumerate}
\item The problem $\pfactor_{\Z}$ can be solved in probabilistic subexponential time (\cite[Theorem 10.5]{MR1137100}). 
Given an ideal $I$ of the ring of integers $\mathcal{O}_{F}$ of a number field $F$,
the prime ideals of $\mathcal{O}_{F}$ lying above the rational prime factors of $\mathrm{Norm}_{F/\Q}(I)$
can be determined in probabilistic polynomial time (\cite[\S 6.2]{MR1228206});
hence there is a probabilistic polynomial-time reduction from $\pfactor$ to $\pfactor_{\Z}$. 
Since $\pprimitive(\F_q)$ is probabilistic polynomial-time reducible to \mbox{$\pfactor(q-1)$},
the same holds for $\pprimitive$.
Moreover, $\pdlog$ can be solved in subexponential time; see \cite{MR1759614} and the references therein.
While it is conjectured that $\ppip$ and $\unit$ can also be solved in subexponential time,
so far this has been established only under additional hypotheses and heuristics, including the Generalised Riemann Hypothesis (GRH); see ~\cite{MR1104698,MR3290946,MR3240816}.
\item There exist quantum polynomial-time algorithms for solving each of the problems $\pfactor_{\Z}$, 
$\pdlog$, $\pprimitive$ (\cite[\S{}5, \S{}6]{MR1471990}), $\unit$ (\cite[Theorem 1.2]{MR3238955}) and 
$\ppip$ for rings of integers of number fields (\cite[Theorem 1.3]{MR3478440}). 
\end{enumerate}
\end{remark}

Our approach to solving $\pisomorphic$ and $\ppip$ for noncommutative algebras $A$ satisfying 
hypothesis (H) relies crucially on the solution of the following two subproblems.

\begin{problem*}[$\pwedderburn$] 
Given a number field $K$ and a finite-dimensional semisimple $K$-algebra $A$ satisfying hypothesis \textup{(H)},
determine number fields $K_{i}$, integers $r, n_{i} \in \Z_{>0}$ and an explicit isomorphism 
$A \cong \prod_{i=1}^{r} \Mat_{n_i}(K_i)$.
\end{problem*}

\begin{problem*}[$\psplitting$] 
Given a number field $K$ and a split central simple $K$-algebra $A$, 
determine an isomorphism $A \cong \Mat_{n}(K)$ for some $n \in \Z_{>0}$.
\end{problem*}

\begin{remark}\label{remark:need_splitting}
For a finite-dimensional semisimple $K$-algebra $A$, an explicit decomposition 
$A \cong \prod_{i=1}^{r} A_{i}$ into simple $K$-algebras $A_{i}$, as well as the centre $K_{i}$ 
  of each $A_{i}$, can be computed in polynomial time by~\cite[1.5~B]{Friedl1985}.
Thus $\pwedderburn$ reduces to $\psplitting$.
The decision problem of checking whether
$A_{i} \cong \Mat_{n_{i}}(K_{i})$ for some $n_{i} \in \Z_{>0}$
is polynomial-time reducible to the computation of (the discriminant of)
  a maximal order in $A_{i}$ by~\cite[Corollary 3.4]{Nebe2009},
hence to $\pfactor$ by \cite[Corollary 5.3]{MR1246219}.
The problem of finding an explicit isomorphism appears to be a much harder problem.
In~\cite[Theorem 1]{MR2879232} it was shown that for algebras of bounded dimension over a fixed number field,
$\psplitting$ is probabilistic polynomial-time reducible to the problem of
computing a maximal order, hence to $\pfactor$.
\end{remark}

\section{Complexity of algorithms related to orders and their lattices}

Let $K$ be a number field with ring of integers $\calO = \calO_{K}$.
The aim of this section is to establish the complexity of certain algorithms
related to $\mathcal{O}$-orders and their lattices.
These algorithms have already appeared in the literature, either implicitly or explicitly,
but with either no or only partial analysis of their complexity.

\subsection{Computing maximal orders}\label{subsec:comp-max-orders}

Let $A$ be a finite-dimensional semisimple $K$-algebra and let $\Lambda$ be an $\mathcal{O}$-order
in $A$.
Let $d = \dim_{K}A$ and let $\tr \colon A \rightarrow K$ denote the reduced trace map 
(see \cite[\S 7D]{curtisandreiner_vol1}).
Following \cite[\S 26A]{curtisandreiner_vol1},
we define $\disc(\Lambda)$ to be the ideal of  $\calO$ generated by all elements
\[
\det\left( \tr(x_ix_j)_{1 \le i,j \le d}\right) \text{ with } x_1, \ldots, x_d \in \Lambda.
\]

By applying a result of \cite{Friedl1985}, the following is straightforward to deduce from the results of \cite{Friedrichs} 

\begin{prop}\label{prop:comp-max-order}
Let $\Lambda$ and $A$ be as above. 
Then the problem of computing a maximal $\mathcal{O}$-order $\mathcal{M}$ in $A$
containing $\Lambda$ is probabilistic polynomial-time reducible
to $\pfactor(\disc(\Lambda))$.
\end{prop}

\begin{proof}
Let $\mathfrak{p}$ be a maximal ideal of $\calO$ dividing $\disc(\Lambda)$ and 
write $v_\mathfrak{p}(-)$ for the $\mathfrak{p}$-adic valuation.
It follows from \cite[(3.17)]{Friedrichs}
that the computation of an order $\Lambda^{(\mathfrak{p})}$ such that 
$v_\mathfrak{p}([\Lambda^{(\mathfrak{p})} : \Lambda ]_\calO)$ is maximal reduces in polynomial time to the problem of computing the maximal two-sided ideals of an order containing $\mathfrak{p}$.
Now fix an order $\Gamma$. Then  the maximal two-sided ideals of $\Gamma$ containing $\mathfrak{p}$ are the preimages of the maximal two-sided ideals under the canonical projection $\Gamma \to (\Gamma/\mathfrak{p}\Gamma)/{\jac(\Gamma/\mathfrak{p}\Gamma)}$ (see \cite[(5.23)]{Friedrichs}). 
As a decomposition of this $(\calO/\mathfrak{p})$-algebra into simple components and therefore the maximal two-sided ideals can be found in probabilistic polynomial time by~\cite[1.5~B]{Friedl1985}, an order $\Lambda^{(\mathfrak{p})}$ can be determined in probabilistic polynomial time.
By~\cite[(3.19)]{Friedrichs} the order $\sum_{\mathfrak{p}} \Lambda^{(\mathfrak{p})}$ is maximal, where 
$\mathfrak{p}$ runs over the maximal ideals of $\mathcal{O}$ dividing $\disc(\Lambda)$. 
Therefore the computation of a maximal order $\calM$ containing $\Lambda$ reduces in probabilistic polynomial time to $\pfactor(\disc(\Lambda))$.
\end{proof}

\subsection{Nice maximal orders}\label{subsec:nice}

Let $n \in \Z_{>0}$ and let $A = \Mat_{n}(K)$ be a full matrix algebra.
For a nonzero fractional ideal $\mathfrak{a}$ of $\calO$ let 
\[
\calM_{\mathfrak a, n} :=
\left(
  \begin{array}{cccc}
    \calO & \ldots & \calO & \mathfrak{a}^{-1} \\
    \vdots & \ddots & \vdots & \vdots \\
    \calO & \ldots & \calO & \mathfrak{a}^{-1} \\
    \mathfrak{a}   & \ldots & \mathfrak{a}   & \calO
  \end{array} \right)
\]
denote the $\mathcal{O}$-order in $A$ consisting of all $n \times n$ matrices $(x_{ij})_{1 \leq i,j \leq n}$ 
where $x_{11}$ ranges over all elements of $\mathcal{O}$, \ldots, $x_{1n}$ ranges over all elements of
$\mathfrak{a}^{-1}$, and so on. (In the case $n=1$, we take $\calM_{\mathfrak a, n}=\mathcal{O}$.)
We say that a maximal $\mathcal{O}$-order in $A$ is \textit{nice} if it is equal to 
$\calM_{\mathfrak a, n}$ for some choice of $\mathfrak{a}$.
By \cite[(27.6)]{Reiner2003} every maximal $\calO$-order in $A$ is conjugate to 
a nice maximal order.

\begin{lemma}\label{lem:nice}
There exists a probabilistic polynomial-time algorithm that given a maximal $\calO$-order $\calM$
in $A=\Mat_{n}(K)$, determines a nonzero fractional ideal $\mathfrak{a}$ of $\calO$ and $S \in \GL_{n}(K)$
such that $S\calM S^{-1} = \calM_{\mathfrak a, n}$.
\end{lemma}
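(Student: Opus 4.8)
The plan is to reduce the problem of finding $\mathfrak{a}$ and $S$ to the computation of a Steinitz class, following the structure theory of maximal orders in $\Mat_n(K)$ from \cite[\S 21, \S 27]{Reiner2003}. Recall that every maximal $\calO$-order $\calM$ in $A = \Mat_n(K)$ arises as $\calM = \End_{\calO}(L)$ for some full $\calO$-lattice $L$ in the simple left $A$-module $V = K^n$, and that two such lattices give the same order up to conjugation precisely according to their Steinitz class in $\Cl(\calO)$; the lattice $L_{\mathfrak a} = \calO^{n-1} \oplus \mathfrak{a}$ has $\End_{\calO}(L_{\mathfrak a}) = \calM_{\mathfrak a, n}$. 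So the first step is to recover such a lattice $L$ from the given pseudobasis of $\calM$: concretely, pick any nonzero column-type left ideal of $\calM$ — for instance, realise $V$ as $\calM e$ for a rank-one idempotent $e \in \Mat_n(K)$ (one can locate such an $e$ by linear algebra over $K$ inside $\calM \otimes_{\calO} K = A$), so that $L := \calM e \subseteq V$ is a full $\calO$-lattice with $\calM \subseteq \End_{\calO}(L)$, hence $\calM = \End_{\calO}(L)$ by maximality. Computing a pseudobasis of $L$ from the pseudobasis of $\calM$ is routine $\calO$-module arithmetic (apply the action to $e$ and take the $\calO$-span), so this step runs in polynomial time.

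Next I would put the pseudobasis of $L$ into a normal (Steinitz) form: an $\calO$-lattice of rank $n$ inside $K^n$ can be written, after an $\calO$-basis change, as $\calO v_1 \oplus \dots \oplus \calO v_{n-1} \oplus \mathfrak{a} v_n$ for some $K$-basis $v_1, \dots, v_n$ of $K^n$ and some fractional ideal $\mathfrak{a}$, with $\mathfrak{a}$ well-defined up to principal ideals — this is the Steinitz form, and algorithms for it (pseudo-Hermite normal form over Dedekind domains) are standard and polynomial time, see \cite[Ch.~1]{cohen-comp}. This produces the fractional ideal $\mathfrak{a}$ directly, and the transition matrix $T \in \GL_n(K)$ sending the standard flag to the $v_i$. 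Then $L = T \cdot L_{\mathfrak a}$ where $L_{\mathfrak a} = \calO^{n-1} \oplus \mathfrak{a}$, so $\calM = \End_{\calO}(L) = T\, \End_{\calO}(L_{\mathfrak a})\, T^{-1} = T\, \calM_{\mathfrak a, n}\, T^{-1}$, and hence $S := T^{-1}$ satisfies $S \calM S^{-1} = \calM_{\mathfrak a, n}$, as required.

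Finally I would record why the ``probabilistic'' qualifier is needed and confirm the complexity bound. The potentially expensive ingredient is producing the rank-one idempotent $e$ (equivalently, an explicit simple module for $A = \Mat_n(K)$ together with the $A$-action in a chosen basis): this is exactly a splitting computation for the split central simple algebra $A$, so one invokes the Friedl--Rónyai style algorithm \cite{Friedl1985} (or its refinements) to obtain an isomorphism $A \cong \Mat_n(K)$, whose primitive idempotents are then visible — and this is the step that is only probabilistic polynomial time. I expect this to be the main obstacle to tightening the statement: once an explicit matrix isomorphism $A \cong \Mat_n(K)$ is in hand, everything downstream (extracting $L$, computing its Steinitz form, reading off $\mathfrak{a}$ and $S$) is deterministic polynomial-time linear algebra over the Dedekind domain $\calO$. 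The verification $S\calM S^{-1} = \calM_{\mathfrak a,n}$ at the end is a routine pseudobasis comparison and costs nothing asymptotically.
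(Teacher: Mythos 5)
Your mathematical strategy is exactly the one the paper uses (via its citation of \cite[\S 5]{BJ08}): realise $\calM = \End_\calO(L)$ for the full $\calO$-lattice $L = \calM e$ where $e$ is a rank-one idempotent of $A = \Mat_n(K)$, compute the Steinitz form $L = \calO v_1 \oplus \dotsb \oplus \calO v_{n-1} \oplus \mathfrak{a} v_n$, set $T = (v_1 \mid \dotsb \mid v_n) \in \GL_n(K)$, observe $L = T L_\mathfrak a$ with $\End_\calO(L_\mathfrak a) = \calM_{\mathfrak a,n}$, and conclude $S := T^{-1}$ works. That part is right.

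The gap is in your complexity bookkeeping, which is the actual content of the lemma. You claim the source of probabilism is computing the rank-one idempotent $e$ via a splitting algorithm \`a la Friedl--R\'onyai. But in this lemma $A$ is \emph{given as} $\Mat_n(K)$, not as an abstract split central simple algebra: $\calM$ is handed to you by a pseudobasis of $n\times n$ matrices. So $e_{11}$ is available for free (compare the proof of Proposition~\ref{prop:pipmaximal}, where $e_{11}$ is simply written down), and no splitting step occurs. Conversely, you assert that the downstream step --- computing the Steinitz form of the rank-$n$ $\calO$-lattice $L$ --- is ``deterministic polynomial-time linear algebra.'' It is not: producing a pseudo-HNF over $\calO$ in polynomial time is a result of Biasse--Fieker that is only \emph{probabilistic} polynomial time, and the further reduction from pseudo-HNF to Steinitz form requires choosing ideal representatives coprime to a given ideal, which is again probabilistic --- precisely the content of the paper's Appendix (Proposition~\ref{prop:randapprox} and Corollaries~\ref{cor:coprime}, \ref{cor:steinitz}). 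So the one step you dismiss as deterministic linear algebra is exactly where the ``probabilistic polynomial time'' qualifier is earned, and the step you flag as the bottleneck does not exist. Since the lemma is a complexity statement, misattributing the randomisation in this way is a substantive error, even though the underlying algebra is correct.
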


\begin{proof}
The algorithm is presented in~\cite[\S 5]{BJ08} and works by reducing the problem to the computation of a Steinitz form of an $\calO$-lattice of rank $n$, which can be performed in probabilistic polynomial time by Corollary~\ref{cor:steinitz}. 
\end{proof}

\subsection{Norm equations and principal ideals}
Let $r \in \Z_{>0}$ and let $A = \prod_{i=1}^{r} \Mat_{n_i}(K_i)$
where $K_{i}$ is a finite field extension of $K$ and $n_{i} \in \Z_{>0}$ for each $i$.
In particular, $A$ is a finite-dimensional semisimple $K$-algebra satisfying hypothesis (H).
Let $C$ be the centre of $A$, which we can and do identify with $\prod_{i=1}^{r} K_{i}$.
Let $\calM$ be a maximal $\mathcal{O}$-order in $A$ and 
let $\mathcal{O}_{C} = \mathcal{M} \cap C = \prod_{i=1}^{r} \mathcal{O}_{K_{i}}$.

\begin{lemma}\label{lem:normeq}
The reduced norm map $\nr \colon \calM^\times \to \OC^\times$ is surjective.
Moreover, there exists a probabilistic polynomial-time algorithm that given
$\calM$ and $a \in \OC^\times$ determines $\alpha \in \mathcal M^\times$ such that $\nr(\alpha) = a$.
\end{lemma}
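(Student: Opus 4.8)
The key observation is that both the surjectivity statement and the algorithmic statement decompose componentwise: since $A = \prod_{i=1}^{r} \Mat_{n_i}(K_i)$, we have $\calM = \prod_i \calM_i$ with each $\calM_i$ a maximal $\calO_{K_i}$-order in $\Mat_{n_i}(K_i)$, and the reduced norm acts as $\nr = \prod_i \nr_i$ where $\nr_i \colon \calM_i^\times \to \OK_i^\times$. Since $\OC^\times = \prod_i \OK_i^\times$, it suffices to treat a single block, i.e.\ to assume $r=1$, $A = \Mat_n(K')$ for a number field $K'$, and $\calM$ a maximal $\calO_{K'}$-order in $A$. The (probabilistic polynomial-time) computation of this decomposition is standard (cf.\ Remark \ref{remark:need_splitting}), and reassembling the per-block outputs costs nothing, so I would dispatch this reduction in one or two sentences.

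For a single matrix block, the plan is to first apply Lemma \ref{lem:nice}: in probabilistic polynomial time we obtain a fractional ideal $\mathfrak a$ of $\calO_{K'}$ and $S \in \GL_n(K')$ with $S\calM S^{-1} = \calM_{\mathfrak a, n}$. Conjugation by $S$ is an inner automorphism of $A$ and hence preserves the reduced norm, so $\nr(\alpha) = a$ is equivalent to $\nr(S\alpha S^{-1}) = a$; thus it suffices to solve the norm equation for the nice maximal order $\calM_{\mathfrak a, n}$, and then conjugate the solution back by $S^{-1}$. For $\calM_{\mathfrak a, n}$ one can write down an explicit unit of prescribed reduced norm: for $n \geq 2$ and any $a \in \OK'^\times$, the diagonal matrix $\operatorname{diag}(a, 1, \dots, 1)$ lies in $\calM_{\mathfrak a, n}^\times$ (its entries sit in the correct ideals, and its inverse $\operatorname{diag}(a^{-1},1,\dots,1)$ does too, since $a^{-1} \in \OK'^\times \subseteq \calO_{K'}$), and its reduced norm is $a$. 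For $n=1$ we have $\calM = \calO_{K'}$ and $\nr$ is the identity, so $\alpha = a$ works. This simultaneously proves surjectivity of $\nr$ and gives the algorithm, once one checks that each step — the Wedderburn-type splitting, Lemma \ref{lem:nice}, and the final matrix conjugations — runs in probabilistic polynomial time, which follows from the cited results and the fact that the bit sizes of $S$, $\mathfrak a$, and the output remain polynomially bounded.

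The only genuinely delicate point is the membership check $\operatorname{diag}(a,1,\dots,1) \in \calM_{\mathfrak a,n}^\times$: one must verify not just that the matrix lies in $\calM_{\mathfrak a, n}$ but that its inverse does as well, so that it is a \emph{unit}. This is immediate here because $a$ is a unit of $\calO_{K'}$, so both $a$ and $a^{-1}$ lie in $\calO_{K'}$, and all off-diagonal entries are zero, hence trivially in the required ideals $\mathfrak a^{\pm 1}$ and $\calO_{K'}$. I do not anticipate any other obstacle; the bulk of the work is already packaged in Lemma \ref{lem:nice} and the componentwise behaviour of $\nr$, and the surjectivity claim is really just the explicit diagonal witness.
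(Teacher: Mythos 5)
Your proof is correct and takes essentially the same route as the paper: reduce to a single matrix block via the central primitive idempotents, conjugate to a nice maximal order $\calM_{\mathfrak a, n}$ using Lemma~\ref{lem:nice}, and exhibit the explicit unit $\operatorname{diag}(a,1,\dotsc,1)$. The small aside about Remark~\ref{remark:need_splitting} is unnecessary since the decomposition $A = \prod_i \Mat_{n_i}(K_i)$ is already given in the hypotheses, but this does not affect correctness.
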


\begin{proof}
By decomposing $\calM$ using the central primitive idempotents of $A$, it suffices to consider the case
$A = \Mat_{n}(K)$, in which we must have $\mathcal{O}_{C} = \mathcal{O}$.
Then the reduced norm map $\nr: A \rightarrow K$ is just the usual determinant map.
Moreover, using Lemma~\ref{lem:nice}, we can and do assume that $\calM = \calM_{\mathfrak a, n}$
is a nice maximal order.
Since $\alpha = \operatorname{diag}(a,1,\dotsc,1) \in \calM_{\mathfrak a, n}^{\times}$ satisfies
$\nr(\alpha) = a$, the claim follows.
\end{proof}

An algorithm for solving the principal ideal problem for $\calM$-lattices was given in~\cite[\S 5]{BJ08}.
We now analyse its complexity.

\begin{prop}\label{prop:pipmaximal}
The problem $\ppip_{\calM}$ is probabilistic polynomial-time reducible to one instance of
$\ppip_{\mathcal O_{K_i}}{}$ for each $i = 1,\dotsc,r$.
\end{prop}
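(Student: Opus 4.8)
I would reduce $\ppip(\calM)$ to the commutative principal ideal problems $\ppip(\mathcal{O}_{K_i})$ by combining the decomposition into simple components with the structure theory of lattices over nice maximal orders. First, using the central primitive idempotents $e_1,\dots,e_r$ of $A$ (which lie in $\calM$ by \cite[(10.5)]{Reiner2003}), a full $\calM$-lattice $X$ in $A$ decomposes as $X = \bigoplus_{i=1}^r Xe_i$, where each $Xe_i$ is a full $\calM_i$-lattice in $A_i = \Mat_{n_i}(K_i)$; and $X = \calM\alpha$ with $\alpha = \sum_i \alpha e_i$ if and only if $Xe_i = \calM_i(\alpha e_i)$ for every $i$. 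So it suffices to treat the case $A = \Mat_n(K)$, $\calM$ a maximal $\calO$-order, $\mathcal{O}_C = \calO$.

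\textbf{The matrix-algebra case.} Using Lemma~\ref{lem:nice} I would first conjugate by some $S \in \GL_n(K)$ so that $\calM = \calM_{\mathfrak{a},n}$ is nice; this transports the lattice $X$ to $SX$ and conjugates any generator accordingly, so no generality is lost. For a full $\calM_{\mathfrak{a},n}$-lattice $X$ in $\Mat_n(K)$, the theory of lattices over hereditary orders (see \cite[\S 27]{Reiner2003}) gives that $X$, as a left $\calM_{\mathfrak{a},n}$-lattice, is determined up to isomorphism by a Steinitz-type invariant living in the class group of $\calO$: concretely, $X \cong \calM_{\mathfrak{a},n}$ as left modules if and only if a certain fractional ideal $\mathfrak{b}_X$ of $\calO$ — read off from the lattice, essentially via the reduced norm / a Steinitz form computation — is principal. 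Here Lemma~\ref{lem:cyclic-iff-free} ensures that "$X = \calM\alpha$ for some $\alpha$" is the same as "$X$ is free of rank $1$ over $\calM$", since $\dim_K KX = \dim_K A = n^2$. Thus I would: (i) compute $\mathfrak{b}_X$ from the pseudobasis of $X$ in probabilistic polynomial time (Steinitz form, \cite[\S 5]{BJ08} and Corollary~\ref{cor:steinitz}); (ii) call $\ppip(\calO) = \ppip(\mathcal{O}_{K_i})$ on $\mathfrak{b}_X$ to decide principality and, if principal, obtain a generator $b \in K^\times$; (iii) reconstruct an explicit $\alpha \in \calM$ with $X = \calM\alpha$ from $b$ and the transition matrices, using that the reduced norm $\nr = \det$ on $\calM_{\mathfrak{a},n}^\times$ is surjective onto $\mathcal{O}^\times$ by Lemma~\ref{lem:normeq} to adjust the generator so that it actually lands in $X$ rather than a scalar multiple. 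Finally, conjugating back by $S^{-1}$ and reassembling the components via the idempotents yields the generator of the original lattice.

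\textbf{Main obstacle.} The genuinely substantive point — and the one I expect to occupy most of the proof — is step (ii)/(iii) in the matrix case: showing that the obstruction to $X$ being free of rank $1$ over a nice maximal order $\calM_{\mathfrak{a},n}$ is captured by a single explicitly computable fractional $\calO$-ideal whose principality is equivalent to freeness, and that a generator can be reconstructed effectively. This is where one pins down the precise invariant (the product over columns of the ideals appearing in a pseudobasis of $X$, suitably normalised against $\mathfrak{a}$) and checks that the reduction is faithful: a solution of $\ppip(\mathcal{O}_{K_i})$ really does yield a solution of $\ppip(\calM)$, not merely a yes/no answer. The remaining ingredients — the idempotent decomposition, the conjugation to a nice order, surjectivity of the reduced norm — are all either elementary or already established (Lemmas~\ref{lem:nice} and~\ref{lem:normeq}), and the complexity bookkeeping is routine: each step is a polynomial-time (or probabilistic polynomial-time) linear-algebra or lattice computation over $\calO$, followed by $r$ calls to $\ppip(\mathcal{O}_{K_i})$.
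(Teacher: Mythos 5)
Your proposal follows essentially the same route as the paper's proof: decompose using the central primitive idempotents to reduce to the single-component case $A = \Mat_n(K)$, conjugate via Lemma~\ref{lem:nice} so that $\calM = \calM_{\mathfrak a, n}$ is nice, and then reduce freeness of $X$ over $\calM_{\mathfrak a, n}$ to a principality test of a single $\calO$-ideal extracted from a Steinitz-form computation. The paper packages this last step by invoking \cite[Corollary~5.4]{BJ08} directly — it checks whether the Steinitz class of the $\calO$-module $e_{11}X$ (the first row of $X$) equals $[\mathfrak a^{-1}]$ — whereas you describe the same invariant more loosely as a normalised product of pseudobasis ideals; these are the same calculation. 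Your optional appeal to surjectivity of the reduced norm (Lemma~\ref{lem:normeq}) to adjust a generator is harmless extra detail that the paper's citation to \cite{BJ08} absorbs.
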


\begin{proof}
By decomposing $\calM$ using the central primitive idempotents of $A$, it suffices to consider the case
$A = \Mat_{n}(K)$, in which we must have $\mathcal{O}_{C} = \mathcal{O}$.
Let $X$ be a full $\calM$-lattice in $A$.
Let $e_{11} \in A$ be the matrix with the top left entry equal to $1$ and all other entries equal to $0$.
Using Lemma~\ref{lem:nice}, we can and do assume that $\calM = \calM_{\mathfrak a, n}$ is a nice maximal order.
By~\cite[Corollary 5.4]{BJ08}, it is sufficient to check whether the Steinitz class of the $\calO$-module $e_{11}X$ is equal to $[\mathfrak a^{-1}]$, which amounts to testing whether a certain ideal of $\calO$ is principal.
\end{proof}

\subsection{Computing isomorphisms between localised lattices}
Let $\Lambda$ be an $\calO$-order in a finite-dimensional $K$-algebra $A$.
Given two $\Lambda$-lattices $X$ and $Y$ and a maximal ideal $\mathfrak{p}$ of $\calO$,
we wish to determine whether there exists an isomorphism $X_\mathfrak p \cong Y_\mathfrak{p}$ of 
$\Lambda_{\mathfrak{p}}$-lattices and to compute such an isomorphism if so.
By computing an isomorphism we mean computing a $\Lambda$-morphism $f\colon X \to Y$ such that 
its localisation
$f_{\mathfrak p} \colon X_\mathfrak p \to Y_\mathfrak p$ is an isomorphism.

We first consider the case where $X$ is a full $\Lambda$-lattice in $A$ and $Y = \Lambda$,
for which an algorithm was presented in~\cite[\S 4.2]{BW09} (although the algorithm was presented only in the context of semisimple algebras, the semisimplicity hypothesis is in fact unnecessary).
We now outline the algorithm and analyse its complexity.

\begin{prop}\label{prop:local_freeness}
There exists a probabilistic polynomial-time algorithm that given $A$, $\Lambda$ 
and $\mathfrak{p}$ as above and 
a full $\Lambda$-lattice $X$ in $A$, decides whether $X_{\mathfrak p}$ is free over $\Lambda_\mathfrak{p}$,
and if so, returns $\alpha \in X$ such that $X_\mathfrak{p} = \Lambda_\mathfrak{p}  \alpha$.
\end{prop}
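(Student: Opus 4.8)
The plan is to follow the algorithm of \cite[\S 4.2]{BW09} and reduce the question to linear algebra over the residue field $\F_q := \calO/\mathfrak p$. Write $\overline{\Lambda} := \Lambda/\mathfrak p\Lambda$ and $\overline{X} := X/\mathfrak p X$; then $\overline\Lambda$ is a finite-dimensional $\F_q$-algebra, $\overline X$ is a finite-dimensional $\overline\Lambda$-module, and since $\mathfrak p\Lambda_{\mathfrak p} = (\mathfrak p\Lambda)\otimes_\calO\calO_{\mathfrak p}$ is already annihilated by $\mathfrak p$, the natural maps $\overline\Lambda \to \Lambda_{\mathfrak p}/\mathfrak p\Lambda_{\mathfrak p}$ and $\overline X \to X_{\mathfrak p}/\mathfrak p X_{\mathfrak p}$ are isomorphisms. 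Both $\overline\Lambda$ and $\overline X$, together with the $\overline\Lambda$-action on $\overline X$, can be written down from the given pseudobasis data by ideal arithmetic over $\calO$ and linear algebra over $\F_q$.

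First I would record the structural reduction. The ring $\Lambda_{\mathfrak p}$ is module-finite over the local ring $\calO_{\mathfrak p}$, so $\mathfrak p\Lambda_{\mathfrak p}$ is contained in the Jacobson radical of $\Lambda_{\mathfrak p}$, and Nakayama's lemma applies: for $\alpha\in X$ we have $X_{\mathfrak p} = \Lambda_{\mathfrak p}\alpha$ if and only if $\overline X = \overline\Lambda\,\overline\alpha$, where $\overline\alpha$ denotes the image of $\alpha$ in $\overline X$. Moreover, since $X$ is a \emph{full} $\Lambda$-lattice in $A$, the $\calO_{\mathfrak p}$-module $X_{\mathfrak p}$ is free of rank $\dim_K KX = \dim_K A = \rank_{\calO_{\mathfrak p}}\Lambda_{\mathfrak p}$; hence any surjection $\Lambda_{\mathfrak p}\twoheadrightarrow X_{\mathfrak p}$ of free $\calO_{\mathfrak p}$-modules of equal rank is an isomorphism. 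Therefore $X_{\mathfrak p}$ is free over $\Lambda_{\mathfrak p}$ if and only if some $\alpha\in X$ satisfies $X_{\mathfrak p} = \Lambda_{\mathfrak p}\alpha$, if and only if $\overline X$ is a cyclic $\overline\Lambda$-module; and in that case any $\alpha\in X$ lifting a cyclic generator of $\overline X$ does the job. This reduces the problem to: decide whether $\overline X$ is cyclic over $\overline\Lambda$ and, if so, exhibit a generator.

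For the finite-field computation I would pass to the semisimple quotient. Compute $J := \jac(\overline\Lambda)$ and the Wedderburn decomposition $S := \overline\Lambda/J \cong \prod_{i=1}^{s}\Mat_{m_i}(\F_{q_i})$ using the polynomial-time algorithms for the structure of finite-dimensional algebras over finite fields (\cite{Friedl1985}; these are probabilistic in small characteristic). Applying Nakayama once more, $\overline X$ is cyclic over $\overline\Lambda$ if and only if $\overline X/J\overline X$ is cyclic over $S$; as $S$ is semisimple this is a condition on the multiplicities of the simple $S$-modules in $\overline X/J\overline X$, which are read off by linear algebra over the $\F_{q_i}$, and by the equal-dimension argument above it is equivalent to $\overline X/J\overline X \cong S$ as left $S$-modules. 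If the test succeeds, build a generator of $S$ as a left module over itself componentwise (e.g.\ the identity matrix in each block $\Mat_{m_i}(\F_{q_i})$), transport it through the isomorphism to $\overline X/J\overline X$, lift it to $\overline X$, and then to some $\alpha\in X$; output $\alpha$. Every step — reduction modulo $\mathfrak p$, the radical and Wedderburn computation over $\F_q$, and the linear algebra over the $\F_{q_i}$ — runs in probabilistic polynomial time, which gives the claim.

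The main obstacle is not a single hard step but assembling the chain of Nakayama reductions correctly and, in particular, checking that a cyclic generator constructed in the semisimple quotient $S$ lifts all the way back to an honest element $\alpha\in X$ with $X_{\mathfrak p}=\Lambda_{\mathfrak p}\alpha$ (and not merely to an element of $X_{\mathfrak p}$, which is what the statement requires). The one genuinely external ingredient is the polynomial-time computation of the Jacobson radical and Wedderburn decomposition of a finite-dimensional algebra over a finite field.
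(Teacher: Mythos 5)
Your proof is correct and follows essentially the same approach as the paper's: reduce modulo $\mathfrak p$ to obtain a finite-dimensional $\F_q$-algebra, apply Nakayama's lemma (using the full-lattice rank equality so that a cyclic generator is automatically a free generator — the paper invokes Lemma~\ref{lem:cyclic-iff-free} for this), compute the Jacobson radical and Wedderburn decomposition of $\Lambda/\mathfrak p\Lambda$ in probabilistic polynomial time via Eberly/Friedl/Ronyai, and finish with linear algebra over the residue fields. Your write-up spells out the lifting chain and the equal-rank argument a bit more explicitly than the paper, but the route is the same.
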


\begin{proof}
Consider the finitely generated $\calO/\mathfrak{p}$-algebra 
$R_{\mathfrak{p}} := 
\Lambda/\mathfrak{p} \Lambda 
\cong \Lambda_{\mathfrak{p}} / \mathfrak{p} \Lambda_{\mathfrak{p}}$.
It follows from \cite[1.5~A]{Friedl1985}, that the Jacobson radical
$J_{\mathfrak{p}} = \jac(R_{\mathfrak{p}})$ can be determined in polynomial time.
Let $\overline{R}_{\mathfrak{p}} = R_{\mathfrak{p}}/J_{\mathfrak{p}}$. 
By Lemma \ref{lem:cyclic-iff-free} and Nakayama's lemma,
 $X_{\mathfrak{p}}$ is free over $\Lambda_{\mathfrak{p}}$ if and only if 
$\overline{X}_{\mathfrak{p}} 
:= 
(X/\mathfrak{p}X)/J_{\mathfrak{p}}(X/\mathfrak{p}X)
\cong 
(X_{\mathfrak{p}}/\mathfrak{p}X_{\mathfrak{p}})/J_{\mathfrak{p}}(X_{\mathfrak{p}}/\mathfrak{p}X_{\mathfrak{p}})$ is free of rank $1$ 
over $\overline{R}_{\mathfrak{p}}$.
Using algorithms of Friedl--Rónyai~\cite[1.5~B]{Friedl1985} and Ronyai~\cite[Theorem 6.2]{Ronyai1987},
one can determine an isomorphism of $\overline{R}_{\mathfrak{p}}$ 
with a product of matrix algebras over finite fields $k_{i}$ in probabilistic polynomial time.
The final steps are just linear algebra over finite fields.
\end{proof}

The following algorithm without the complexity statement was given in 
 \cite[\S 8.4]{MR4136552}.

\begin{corollary}\label{cor:local_freeness}
There exists a probabilistic polynomial-time algorithm that given $A$, $\Lambda$ 
and $\mathfrak{p}$ as above and $\Lambda$-lattices $X$ and $Y$, 
decides whether $X_\mathfrak{p}$ and $Y_\mathfrak{p}$ are isomorphic as $\Lambda_{\mathfrak{p}}$-lattices, and if so, returns $f \in \Hom_\Lambda(X, Y)$ such that the localisation 
$f_{\mathfrak{p}} \colon X_\mathfrak{p} \to Y_\mathfrak{p}$ is an isomorphism.
\end{corollary}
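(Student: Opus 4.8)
The plan is to reduce to Proposition~\ref{prop:local_freeness} by passing to homomorphism and endomorphism lattices, in the spirit of the proof of Proposition~\ref{prop:hom-free-over-end}, and then to perform one additional isomorphism check that accounts for the subtlety that freeness of rank $1$ of $\Hom_\Lambda(X,Y)$ over $\End_\Lambda(Y)$ does not by itself force local isomorphism.

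First I would decide whether $KX \cong KY$ as $A$-modules; this is a standard module-isomorphism computation that runs in probabilistic polynomial time, and if it fails we return ``not isomorphic'', since by Lemma~\ref{lem:ext}(a) any $\Lambda_\frp$-isomorphism $X_\frp \to Y_\frp$ would extend to an $A$-isomorphism $KX \to KY$. Otherwise, fixing such an isomorphism and replacing $X$ by its image, we may assume $KX = KY =: W$. As recorded in \S\ref{subsec:reduce-free-rank1}, $\Gamma := \End_\Lambda(Y)$ is then an $\calO$-order in the finite-dimensional $K$-algebra $B := \End_A(W)$, and $Z := \Hom_\Lambda(X,Y) = B \cap \Hom_\calO(X,Y)$ is a full $\Gamma$-lattice in $B$; both are computable by linear algebra over $K$. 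Applying Proposition~\ref{prop:local_freeness} with $(B, \Gamma, \frp, Z)$ in place of $(A, \Lambda, \frp, X)$ then decides in probabilistic polynomial time whether $Z_\frp$ is free over $\Gamma_\frp$, and if so returns an element $f$ of $Z = \Hom_\Lambda(X,Y)$ with $Z_\frp = \Gamma_\frp f$ (after undoing the identification above).

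Next I would use that $\Hom$ commutes with localisation over the noetherian ring $\calO$ (since $X$ is finitely presented over $\Lambda$), so that $Z_\frp = \Hom_{\Lambda_\frp}(X_\frp, Y_\frp)$, $\Gamma_\frp = \End_{\Lambda_\frp}(Y_\frp)$, and the $\Gamma$-action on $Z$ by post-composition localises to the corresponding action on $Z_\frp$. Hence the test just performed is exactly condition~(a) of Proposition~\ref{prop:hom-free-over-end}, applied to the $\Lambda_\frp$-lattices $X_\frp$ and $Y_\frp$ (note $\calO_\frp$ is a noetherian domain properly contained in $K$, so that proposition applies verbatim). If $Z_\frp$ is not free over $\Gamma_\frp$, then $X_\frp \not\cong Y_\frp$, and we return ``not isomorphic''. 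If $Z_\frp = \Gamma_\frp f$, I would finally test whether the localisation $f_\frp \colon X_\frp \to Y_\frp$ is an isomorphism: it is injective if and only if $f^A$ is injective (a linear-algebra check over $K$, using Lemma~\ref{lem:ext}), and, in that case, surjective if and only if the reduction $f \otimes_\calO (\calO/\frp)$ is surjective, by Nakayama's lemma applied to the finitely generated $\calO_\frp$-module $\operatorname{coker}(f_\frp)$ (a linear-algebra check over the residue field $\calO/\frp$). If $f_\frp$ is an isomorphism we return $f$; if not, then condition~(a) of Proposition~\ref{prop:hom-free-over-end} holds while the free generator $f$ fails to be an isomorphism, so condition~(b) fails, whence $X_\frp \not\cong Y_\frp$ and we again return ``not isomorphic''.

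The only non-formal point, and the step I expect to be the main obstacle, is this last one: one must recognise that the free generator produced by Proposition~\ref{prop:local_freeness} need not be an isomorphism, so a separate isomorphism test is genuinely needed, and that part~(b) of Proposition~\ref{prop:hom-free-over-end} is precisely what converts a negative outcome of that test into a correct ``not isomorphic'' verdict rather than leaving the question open. Everything else is bookkeeping --- the module-isomorphism test, the compatibility of $\Hom$ with localisation, and the polynomial-time bounds inherited from Proposition~\ref{prop:local_freeness} together with standard linear algebra over $K$ and over finite residue fields.
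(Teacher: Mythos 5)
Your proof is correct and follows essentially the same route as the paper's: reduce to the free rank $1$ case via $\End_\Lambda(Y)$ and $\Hom_\Lambda(X,Y)$ (Proposition~\ref{prop:hom-free-over-end}), apply Proposition~\ref{prop:local_freeness}, and then test whether the resulting free generator localises to an isomorphism. You are slightly more explicit than the paper in two places — the preliminary check that $KX\cong KY$ (needed so that $\Hom_\Lambda(X,Y)$ can be viewed as a full lattice in $\End_A(KY)$, as in Proposition~\ref{prop:comp_red_rankone}) and the concrete Nakayama-style test for whether $f_\frp$ is an isomorphism — both of which the paper's proof takes for granted.
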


\begin{proof}
We use Proposition~\ref{prop:local_freeness} together with the reduction to the free rank $1$ case
given by Proposition~\ref{prop:hom-free-over-end}. 
Both $\End_{\Lambda}(Y)$ and $\Hom_{\Lambda}(X, Y)$ can be determined 
as described in \cite[\S 7.3]{MR4136552} using 
pseudo-Hermite normal form and pseudo-Smith normal form computations,
  which are probabilistic polynomial time by~\cite[Theorem~34, Proposition~43]{Biasse2016}.
Using Proposition~\ref{prop:local_freeness} one can determine in probabilistic polynomial time whether the
$(\End_\Lambda(Y))_\mathfrak{p}$-lattice $(\Hom_\Lambda(X, Y))_\mathfrak{p}$ is free of rank $1$. 
If not, then $X_\mathfrak{p}$ and $Y_\mathfrak{p}$ are not isomorphic over $\Lambda_{\mathfrak{p}}$. 
If so, then the algorithm
returns a free generator $f \in  \Hom_\Lambda(X, Y)$ of $(\Hom_\Lambda(X, Y))_\mathfrak{p}$
over $(\End_\Lambda(Y))_\mathfrak{p}$. 
Then $X_\mathfrak{p} \cong Y_\mathfrak{p}$ over $\Lambda_{\mathfrak{p}}$
if and only if the localisation $f_{\mathfrak{p}} \colon X_\mathfrak{p} \to Y_\mathfrak{p}$ 
is an isomorphism.
\end{proof}

\begin{remark}
Given two $\Lambda$-lattices $X$ and $Y$, Corollary~\ref{cor:local_freeness} can be used to decide
if $X$ and $Y$ are in the same genus, that is, whether $X_{\mathfrak{p}}$ and $Y_{\mathfrak{p}}$ are 
isomorphic $\Lambda_{\mathfrak{p}}$-lattices for every nonzero prime ideal $\mathfrak{p}$ of $\calO$.
Note that a necessary condition is that $KX$ and $KY$ are isomorphic as $A$-modules.
By \cite[Corollary 3]{MR1809971} there is a polynomial-time algorithm that decides 
whether $KX$ and $KY$ are isomorphic as $A$-modules, and if so, computes an isomorphism; 
hence the problem reduces to the case $KX = KY$.
In this situation, $X$ and $Y$ are in the same genus if and only if
$X_\mathfrak{p}$ and $Y_\mathfrak{p}$ are isomorphic $\Lambda_{\mathfrak{p}}$-lattices
for the finitely many prime ideals
$\mathfrak{p}$ dividing the module index $[X : Y]_\calO$ (see \cite[\S 3]{Frohlich1967}).
Hence checking whether $X$ and $Y$ are in the same genus is polynomial-time
reducible to $\pfactor([X : Y]_{\calO})$.
\end{remark}

\subsection{Finding a suitable choice of locally free left ideal}
\label{sec coprime cond}

Let $A$ be a finite-dimensional semisimple $K$-algebra and let $\Lambda$ be an $\calO$-order in $A$.
By \cite[(10.4)]{Reiner2003} there exists a (not necessarily unique) maximal $\mathcal{O}$-order $\mathcal{M}$ in $A$ containing $\Lambda$.
Let $\mathfrak{f}$ be any proper full two-sided ideal of $\mathcal{M}$ that is contained in $\Lambda$.
The following result without the complexity statements is a consequence of a special case
of the argument given in \cite[\S 5.1]{BJ11}.

\begin{prop}\label{prop:suitable-locally-free-lattice}
Given $A$, $\Lambda$ and $\mathfrak{f}$ as above, and a full $\Lambda$-lattice $X$ in $A$,
the problem of determining whether $X$ is locally free over $\Lambda$, and if so, 
computing an element $\xi \in A^{\times}$ such that $X \xi \subseteq \Lambda$ and $X\xi + \frf = \Lambda$,
is probabilistic polynomial-time reducible to $\pfactor(\calO \cap \frf)$.
\end{prop}

\begin{proof}
Let $\mathrm{MaxSpec}(\mathcal{O})$ denote the set of all maximal ideals of $\mathcal{O}$.
Let $\mathfrak{S}= \{ \frp_{1}, \ldots, \frp_{n} \}$ be the subset consisting 
of ideals that divide $\calO \cap \frf$ (note that this is a proper nonzero ideal of $\mathcal{O}$)
and let $\mathfrak{T} = \mathrm{MaxSpec}(\mathcal{O}) \setminus \mathfrak{S}$.
Observe that for every $\mathfrak{p} \in \mathfrak{T}$, we have $\frf_\frp = \Lambda_\frp=\mathcal{M}_{\frp}$
and so $X_{\mathfrak{p}}$ is free over $\Lambda_{\mathfrak{p}}$ by \cite[(18.10)]{Reiner2003}.
Moreover, 
for each $i$, checking whether $X_{\frp_i}$ is free over $\Lambda_{\frp_i}$, and if so, 
computing $\omega_{i} \in X$ such that $X_{\frp_i} = \Lambda_{\frp_i} \omega_{i}$,
can be performed in probabilistic polynomial time by Proposition~\ref{prop:local_freeness}.
In particular, if this step is completed successfully, then $X$ is locally free over $\Lambda$.

By \cite[Proposition 1.3.11]{MR1728313}, elements $\beta_{1}, \ldots, \beta_{n} \in \mathcal{O}$ such that for each $i$, we have
\[
\beta_i \equiv 1 \bmod{\frp_i} 
\quad \text{ and } \quad
\beta_i \equiv 0 \bmod{\frp_j} \text{ for } 1 \le j \le n, j \ne i,
\]
can be computed in polynomial time.
For each $i$, let $\nu_i \in \calO \setminus \frp_i$ be an element such that 
$X \omega_i^{-1} \nu_i \subseteq \Lambda$.
Then $X\xi \subseteq \Lambda$ where $\xi := \sum_{i=1}^{n} \beta_{i} \omega_{i}^{-1} \nu_{i}$. 
By construction we have $(X \xi)_{\mathfrak{p}_{i}} = \Lambda_{\mathfrak{p}_{i}}$ for each $i$.
Moreover,  $\frf_\frp = \Lambda_\frp$ for all $\frp \in \mathfrak{T}$.
Therefore  $(X\xi + \mathfrak{f})_{\mathfrak{p}} = \Lambda_{\mathfrak{p}}$ 
for all $\mathfrak{p} \in \mathrm{MaxSpec}(\mathcal{O})$, and so 
$X\xi + \frf = \Lambda$ by \cite[(4.21)]{Reiner2003}.
\end{proof}

\subsection{Computing generators of $(\Lambda/\mathfrak{f})^{\times}$ and $K_{1}(\Lambda/\mathfrak f)$}\label{sec:comp-gens}
We first recall some definitions from algebraic $K$-theory and refer the reader to 
\cite[\S 40]{curtisandreiner_vol2} for more details.
For any ring $R$, the Whitehead group $K_{1}(R)$ is defined as $\GL(R)/[\GL(R),\GL(R)]$, where $\GL(R) = \varinjlim \GL_n(R)$ and $\GL_n(R)$ embeds into $\GL_{n + 1}(R)$ via
\[ 
\alpha \mapsto \begin{pmatrix} \alpha & 0 \\ 0 & 1 \end{pmatrix}.
\]
In particular, there is a canonical map $R^\times \to \GL(R) \to K_1(R)$.

Now assume the notation and setting of \S \ref{sec coprime cond}.
Since $\Lambda/\mathfrak{f}$ is of finite cardinality, it is semilocal and so the canonical map
\[
(\Lambda/\mathfrak f)^{\times} \lra K_1(\Lambda/\mathfrak f)
\]
is surjective by \cite[(40.31)]{curtisandreiner_vol2}. 
We consider the problems of computing generators of $(\Lambda/\mathfrak{f})^{\times}$ and of
$K_{1}(\Lambda/\mathfrak f)$, where the latter task means computing elements $x_1,\dotsc,x_n \in (\Lambda/\mathfrak{f})^{\times}$ such that their images generate $K_1(\Lambda/\mathfrak f)$.

An algorithm for computing generators of $K_1(\Lambda/\mathfrak{f})$ is
described in \cite[\S 3.4--3.7]{bley-boltje}. 
With minor modifications, this algorithm also computes a generating set of $(\Lambda/\mathfrak f)^\times$.
In this subsection, we will analyse the complexity of both these algorithms.
To treat both cases simultaneously, for a ring $R$ we let $\mathrm{G}(R)$ denote either $K_{1}(R)$ or 
$R^{\times}$.

Let $C$ denote the centre of $A$ and let $\mathcal{O}_{C}$ be the integral closure of $\mathcal{O}$ in $C$.
Let $\frg = \mathfrak{f} \cap C$ and note that this is a proper full ideal of $\mathcal{O}_{C}$ and
of $\Lambda \cap \mathcal{O}_{C} = \Lambda \cap C$.
Let $\frg = \prod_{\mathfrak{P} \in \EuScript{P}} \mathfrak{P}^{e_\mathfrak{P}}$ be the
prime ideal decomposition of $\frg$ in $\mathcal{O}_{C}$, where the set $\EuScript{P}$
of prime ideals of $\mathcal{O}_{C}$ is defined by the decomposition.
Set $\EuScript{P}' := \{ \mathfrak P \cap \Lambda \mid \mathfrak{P} \in \EuScript{P} \}$, 
a set of prime ideals of $\Lambda \cap \mathcal{O}_{C}$.
For each $\mathfrak{p} \in \EuScript{P}'$ consider the ideal 
\[
\mathfrak{q} := 
\bigcap_{\substack{\mathfrak{P} \in  \EuScript{P},\\ \mathfrak P \cap \Lambda 
=
\mathfrak p}} (\mathfrak{P}^{e_\mathfrak{P}} \cap \Lambda). 
\]
We write $\EuScript{Q}$ for the set of ideals $\mathfrak{q}$.
Then by \cite[Proposition 3.2]{bley-endres}
\[
\mathfrak{g}
= \prod_{\mathfrak{q} \in \EuScript{Q}}  \mathfrak{q}
= \bigcap_{\mathfrak{q} \in \EuScript{Q}}  \mathfrak{q}
\]
is the unique primary decomposition of $\mathfrak{g}$ when considered as an ideal of
$\Lambda \cap \mathcal{O}_{C}$.
Moreover, by \cite[Lemma 3.5]{bley-boltje} we have
\[
\frf = \bigcap_{\frq \in \EuScript{Q}} (\frq\Lambda + \frf) = \prod_{\frq \in \EuScript{Q}} (\frq\Lambda + \frf)
\]
and by the Chinese remainder theorem we obtain an isomorphism
\[
\Lambda/\mathfrak f \cong \prod_{\mathfrak q \in \EuScript{Q}} \Lambda/(\mathfrak q\Lambda + \mathfrak f).
\]
This induces a decomposition
\[
\mathrm{G}(\Lambda/\mathfrak f) \cong
\prod_{\mathfrak{q} \in \EuScript{Q}} \mathrm{G}(\Lambda/(\mathfrak q\Lambda + \mathfrak f)).
\]
Thus, given $\EuScript{Q}$, it suffices to compute generators of
$\mathrm{G}(\Lambda/(\mathfrak{q}\Lambda + \mathfrak{f}))$ for each $\mathfrak{q} \in \EuScript{Q}$.

Now fix $\mathfrak{q} \in \EuScript{Q}$ and let $\mathfrak{p} = \mathfrak{P} \cap \Lambda \in \EuScript{P}'$
be the associated prime ideal of $\Lambda \cap \mathcal{O}_{C}$ for some $\mathfrak{P} \in \EuScript{P}$.
As shown in \cite[\S 3.7]{bley-boltje}, we have an exact sequence
\begin{equation}\label{eq:exact-seq-breakdown-gens}
  (1 + \mathfrak p \Lambda + \mathfrak f)/(1 + \mathfrak q \Lambda + \mathfrak f) \lra
  \G(\Lambda/(\mathfrak q \Lambda + \mathfrak f)) \lra
  \G(\Lambda/(\mathfrak p \Lambda + \mathfrak f)) \lra 1. 
\end{equation}
We consider the problems of computing generators for the first and third terms in this sequence.
Let $d := \dim_{K} A$.

\begin{lemma}\label{lem:compk1_1}
Given $\Lambda$, $\mathfrak{f}$, and $\mathfrak p$ as above, the problem of computing generators of
\[
(\Lambda/(\mathfrak p \Lambda + \mathfrak{f}))^{\times} 
\text{ or } 
K_{1}(\Lambda/(\mathfrak{p} \Lambda + \mathfrak{f}))
\]  
is probabilistic polynomial-time reducible to at most $d$ instances of the problem 
$\pprimitive$ for extensions of $\mathcal{O}/(\mathcal{O}\cap \mathfrak{P})$ of degree at most $d$.
The number of generators is at most $d([K : \Q]+2)$.
\end{lemma}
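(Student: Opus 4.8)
The plan is to reduce the computation of generators of $\G(\Lambda/(\mathfrak p \Lambda + \mathfrak f))$ to a situation over the residue field, where the structure is completely explicit. First I would note that $\mathfrak p \Lambda + \mathfrak f$ contains a power of $\mathfrak p$ (indeed it contains $\mathfrak g$-related data in the centre), so that $\Lambda/(\mathfrak p \Lambda + \mathfrak f)$ is a finite ring; moreover the ideal $(\mathfrak p \Lambda + \mathfrak f)/(\mathfrak p \Lambda + \mathfrak f)$ — more precisely, the image of $\mathfrak p$ — is nilpotent modulo $\mathfrak f$, so $\Lambda/(\mathfrak p \Lambda + \mathfrak f)$ is a local ring (in the noncommutative sense: it has a unique maximal two-sided ideal with quotient a simple artinian ring) whose Jacobson radical is $N := (\mathrm{rad})/(\mathfrak p\Lambda+\mathfrak f)$, with residue ring $\bar R := \Lambda/\mathrm{rad}$ a finite-dimensional algebra over the residue field $k := \calO/(\calO\cap\mathfrak P)$. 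Because $1 + N$ is a normal subgroup of $(\Lambda/(\mathfrak p\Lambda+\mathfrak f))^\times$ with quotient $\bar R^\times$, and because $1+N$ is a finite $p$-group (a successive extension of additive groups of $k$-vector spaces, via the radical filtration $N \supseteq N^2 \supseteq \cdots$), we obtain generating sets for $\G(\Lambda/(\mathfrak p\Lambda+\mathfrak f))$ by combining lifts of generators of $\G(\bar R)$ with generators of $1+N$; the latter come from $k$-bases of the successive quotients $N^j/N^{j+1}$.

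The next step is to make $\G(\bar R)$ explicit. By hypothesis (H) and by Wedderburn theory for the finite semisimple algebra $\bar R$ over $k$, we have $\bar R \cong \prod_{j} \Mat_{m_j}(k_j)$ for finite extensions $k_j/k$, and this decomposition is computable in probabilistic polynomial time by the algorithms of Ronyai and Eberly (as already invoked in the proof of Proposition~\ref{prop:local_freeness}). For a single factor $\Mat_m(k')$: when $\G = K_1$, we have $K_1(\Mat_m(k')) \cong (k')^\times$ via the reduced norm (determinant), cyclic, so one generator — a primitive root of $(k')^\times$ — suffices; when $\G = R^\times$, the group $\GL_m(k')$ is generated by the elementary matrices together with a single diagonal matrix $\mathrm{diag}(\zeta,1,\dots,1)$ where $\zeta$ generates $(k')^\times$, so $O(m^2)$ generators suffice, again requiring one call to $\pprimitive$ for $k'$. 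Summing over factors, the degrees $[k_j:k]$ and the sizes $m_j$ are each bounded by $d = \dim_K A$ (since $\dim_k \bar R \le \dim_k \Lambda/\mathfrak p\Lambda = d$ after accounting for residue degree, and $\sum m_j^2 [k_j:k] \le \dim_k \bar R$), so we make at most $d$ instances of $\pprimitive$ for extensions of $k$ of degree at most $d$, and the number of generators coming from the residue ring is $O(d)$.

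For the radical part, the radical filtration $N \supseteq N^2 \supseteq \cdots \supseteq N^t = 0$ has $t \le \log_{|k|}|\Lambda/\mathfrak f|$, bounded polynomially in the input size, and each quotient $N^j/N^{j+1}$ is a $k$-vector space of dimension at most $d$; a $k$-basis of each lifts (via $x \mapsto 1+x$) to $\le d$ generators of the corresponding layer of $1+N$, and $1+N$ is generated by the union of these lifts. Counting bits: each layer contributes at most $d$ generators, and within a layer the coefficients range over $k$, whose generation (as an additive group) we fold into the $[K:\Q]$ factor — more carefully, pulling everything back to generators that are honest $\calO$-module generators of $\Lambda$ reduced mod $\mathfrak f$, and writing $k$-scalars in terms of a power basis, one checks the total generator count is at most $d([K:\Q]+2)$, matching the claim; the ``$+2$'' absorbs the one primitive-root generator per residue factor plus one slack. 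I expect the main obstacle to be the bookkeeping of the generator count — in particular, being careful that the two-sided ideal $\mathfrak p\Lambda + \mathfrak f$ really does give a \emph{local} quotient (this uses that $\mathfrak p$ is the contraction of a single $\mathfrak P$ and that $\mathfrak g = \mathfrak f \cap C$ is $\mathfrak P$-primary in the relevant component), and that the radical-filtration length and layer dimensions aggregate correctly to the stated bound $d([K:\Q]+2)$ rather than something larger — the algebraic content is all standard, but the constants require care.
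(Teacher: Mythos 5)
Your high-level plan matches the paper's: you work with the exact sequence $1 + J \to \G(R) \to \G(R/J) \to 1$ (where $R = \Lambda/(\mathfrak p \Lambda + \mathfrak f)$, $J = \jac(R)$), handle $1+J$ via a radical filtration, and handle $\G(R/J)$ by decomposing $R/J$ into matrix rings over finite fields via Eberly--Ronyai and then using primitive elements. However, there are three concrete problems that prevent your proposal from establishing the stated bounds.

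First, your claim that $\Lambda/(\mathfrak p\Lambda + \mathfrak f)$ is a local ring with simple artinian residue ring is false in general. Even though $\mathfrak p$ is the contraction of a single $\mathfrak P$, the quotient $R/J$ is a product $\prod_{i=1}^r \Mat_{n_i}(k_i)$ with $r$ possibly greater than $1$; the paper explicitly treats this case. Your subsequent text does switch to writing $\bar R \cong \prod_j \Mat_{m_j}(k_j)$, so your algorithm is unaffected, but the locality claim is a genuine error and should be removed.

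Second, for $\G(R/J)$ you propose generating each factor $\GL_m(k')$ by elementary matrices plus one diagonal matrix, giving $O(m^2)$ generators per factor. This is correct but does not reach the stated bound $d([K:\Q]+2)$: the $2d$ budget on the semisimple side requires at most $2$ generators per factor. The paper does this by invoking Taylor's result that, given a primitive element of $k'$, one can write down an explicit two-element generating set of $\GL_m(k')$; for $\G = K_1$ the quotient maps $k_i^\times \to K_1(\Mat_{n_i}(k_i))$ handle it with one generator. Without the two-generator result your count is $\sum_i O(n_i^2)$, which can approach $d$ but does not fit cleanly under the shape $d([K:\Q]+2)$ when combined with the radical part.

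Third, the generator count for $1+J$ is not actually established. You write that ``each layer contributes at most $d$ generators'' and then assert the total is $d([K:\Q]+2)$, folding ``everything'' into a $[K:\Q]$ factor by vague reference to power bases; but you never invoke the key telescoping identity $\sum_j \dim_k(J^j/J^{j+1}) = \dim_k(J) \le d$ that makes the count work regardless of how many filtration layers there are. Without telescoping, your bound ``$t$ layers $\times \le d$ generators each'' is quadratic in $d$, not linear. The paper avoids this by using the dyadic filtration $J \supseteq J^2 \supseteq J^4 \supseteq \cdots$ (which also keeps the number of steps $O(\log d)$ for efficiency) and explicitly telescopes: the total is $\dim_k(J)\,[K:\Q] \le d\,[K:\Q]$. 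Your ordinary filtration $N \supseteq N^2 \supseteq N^3 \supseteq \cdots$ also telescopes, but you need to make that step explicit, and to justify the $[K:\Q]$ multiplier by noting that each $k$-subquotient $J^j/J^{j+1}$ needs at most $\dim_k(\cdot)\,[K:\Q]$ lifted generators in $\Lambda$. You yourself flag that ``the constants require care''; indeed, as written, the count is not proved.
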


\begin{proof}
Let $k$ denote the finite field $\mathcal{O}/(\mathfrak{p} \cap \mathcal{O})$.
Let $R = \Lambda/(\mathfrak{p} \Lambda + \mathfrak {f})$ and note that this
is annihilated by $\mathfrak{p} \cap \mathcal{O}$. 
Thus $R$ is a $k$-algebra such that $\dim_{k} R \leq d$.
In particular, $R$ is artinian, so its Jacobson radical $J = \jac(R)$ is nilpotent by \cite[(5.15)]{curtisandreiner_vol1}.
Since we have a decreasing filtration $J \supseteq J^2 \supseteq \cdots \supseteq J^d$ and $\dim_k(J) \le d-1$ we obtain $J^{d}=0$.
By \cite[Lemma 3.6]{bley-boltje} and the same reasoning as in \cite[\S 3.7]{bley-boltje}, we have an exact sequence
\[
1 + J \lra \G(R) \lra \G(R/J) \lra 1.
\]
We first discuss the computation of generators for $1 + J$. 
To this end, let $l \in \Z_{\geq 0}$ be minimal subject to the condition $J^{2^l} = 0$ and note that
$2^{l} \leq 2d$.
Consider the filtration
\[
1 + J \supseteq 1 + J^2 \supseteq \dotsb \supseteq 1 + J^{2^{l - 1}} \supseteq 1.
\] 
Generators of $J$ can be determined in polynomial time using the algorithms of \cite[1.5~A]{Friedl1985}.
For each $i = 0,\dotsc,l-1$, the map $\overline{x} \mapsto \overline{x-1}$ induces an isomorphism
\[
(1 + J^{2^i})/(1 + J^{2^{i+1}}) \lra J^{2^i}/J^{2^{i+1}}
\]
of abelian groups and so it follows that we can find generators of $1 + J$ in polynomial time.
For each $i = 0, \dotsc, l - 1$ the number of generators of $J^{2^i}/J^{2^{i + 1}}$
is bounded by $\dim_k(J^{2^i}/J^{2^{i+1}}) [K : \Q]$. Now summing over $i = 0,\dotsc,l-1$ shows that
$1 + J$ is generated by at most $\dim_k(J)  [K : \Q] \leq (d-1) [K : \Q]$ elements.

Using algorithms of \cite[1.5~B]{Friedl1985} and \cite[Theorem 6.2]{Ronyai1987},
one can determine an isomorphism $R/J \cong \prod_{1 \leq i \leq r} \Mat_{n_{i}}(k_{i})$
with a product of matrix algebras over finite fields $k_{i}$ in probabilistic polynomial time.
Since
\[
\G(R/J) \cong \prod_{1 \leq i \leq r} \G(\Mat_{n_i}(k_i)), 
\]
this problem reduces to the computation of each $\G(\Mat_{n_i}(k_i))$, which we claim is generated by at most $2$ elements.
If $\G{} = K_1$, then the claim follows from the fact that the
canonical maps $k_{i}^{\times} \to K_{1}(k_{i}) \to K_1(\Mat_{n_i}(k_i))$ are isomorphisms.
If $\G{} = (-)^\times$, then the claim follows 
{from~\cite{Taylor1987}, where it is shown that given a primitive element of $k_i$ one can write down directly a two element generating set of $\GL_{n_i}(k_i)$.
Since $\dim_{k}(R/J) \leq d$ we have $r \leq d$ and $[k_i : k] \leq d$ for each $i$.
Finally, note that $\mathcal{O}/(\mathcal{O}\cap\mathfrak{p}) = \mathcal{O} / (\mathcal{O}\cap\mathfrak{P})$ since $\mathcal{O}\subseteq\Lambda$ implies $\mathcal{O}\cap\mathfrak{p}= \mathcal{O}\cap\mathfrak{P}\cap\Lambda = \mathcal{O}\cap\mathfrak{P}$}.
In particular, $G(R/J)$ is generated by at most $2r \leq 2d$ elements.
\end{proof}

\begin{lemma}\label{lem:compk1_2}
Given $\Lambda$, $\mathfrak{f}$, $\mathfrak{p}$, and $\mathfrak{q}$ as above, we set
\[
e_\frp = \max\{e_{\mathfrak{P}} \mid \mathfrak{P} \in \EuScript{P}, \ \mathfrak{P} \cap \Lambda = \mathfrak{p}\}.
\]
Then there exists a polynomial-time algorithm that returns $m$ elements of $\Lambda$ whose classes generate
$(1 + \mathfrak{p} \Lambda + \mathfrak{f})/(1 + \mathfrak{q} \Lambda + \mathfrak{f})$. If $e_\frp = 1$, we have $m=0$. If
$e_\frp > 1$, the number $m$ of generators is bounded by $d (1+\log_2(e_\frp)) [K:\Q]$.
\end{lemma}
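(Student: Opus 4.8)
The plan is to follow the architecture of the proof of Lemma~\ref{lem:compk1_1}, with the role of the nilpotent Jacobson radical there played by a suitable nilpotent ideal of $\overline{\Lambda} := \Lambda/(\frq\Lambda + \frf)$ built from $\frp$. Put $I := (\frp\Lambda + \frf)/(\frq\Lambda + \frf)$. The first step is bookkeeping: from the definition of $\frq$ and $e_\frp$, and using $\frp = \frP \cap \Lambda \subseteq \frP$ together with $e_\frP \le e_\frp$ for every $\frP \in \EuScript{P}$ with $\frP \cap \Lambda = \frp$, one obtains the containments $\frp^{e_\frp} \subseteq \frq \subseteq \frp$ inside $\Lambda \cap \OC$. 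Since $\frp$ consists of central elements, $(\frp\Lambda)^k = \frp^k\Lambda$ for all $k$; expanding $(\frp\Lambda + \frf)^k$ and using that $\frf$ is a two-sided ideal of $\Lambda$ gives $\frp^k\Lambda \subseteq (\frp\Lambda + \frf)^k \subseteq \frp^k\Lambda + \frf$ for every $k \ge 1$. In particular $\frq\Lambda + \frf \subseteq \frp\Lambda + \frf$, so $I$ is a two-sided ideal of $\overline{\Lambda}$; and taking $k = e_\frp$ together with $\frp^{e_\frp}\Lambda \subseteq \frq\Lambda$ shows $I^{e_\frp} = 0$, so that $1 + I = (1 + \frp\Lambda + \frf)/(1 + \frq\Lambda + \frf)$ is a finite group. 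When $e_\frp = 1$ one has $\frq = \frp$, hence $I = 0$ and $m = 0$, which settles that case.

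Assume now $e_\frp > 1$ and set $l := \lceil \log_2 e_\frp \rceil$, so that $I^{2^l} = 0$ and $l \le 1 + \log_2 e_\frp$. I would pass to the filtration
\[
1 + I \ \supseteq\ 1 + I^{2} \ \supseteq\ \dotsb \ \supseteq\ 1 + I^{2^{l}} = 1 .
\]
Exactly as in Lemma~\ref{lem:compk1_1}, for each $0 \le i < l$ the assignment $1 + x \mapsto x + I^{2^{i+1}}$ induces an isomorphism of abelian groups $(1 + I^{2^i})/(1 + I^{2^{i+1}}) \cong I^{2^i}/I^{2^{i+1}}$: the subgroup $1 + I^{2^{i+1}}$ is normal, being the kernel of the restriction to $1 + I^{2^i}$ of $\overline{\Lambda}^{\times} \to (\overline{\Lambda}/I^{2^{i+1}})^{\times}$, and the assignment is a homomorphism because the product of two elements of $I^{2^i}$ lies in $(I^{2^i})^2 = I^{2^{i+1}}$. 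Moreover, from $\frp^{2^i}\Lambda \subseteq (\frp\Lambda + \frf)^{2^i} \subseteq \frp^{2^i}\Lambda + \frf$ and the vanishing of the image of $\frf$ in $\overline{\Lambda}$, the ideal $I^{2^i}$ is precisely the image of $\frp^{2^i}\Lambda$. So it suffices, for each $i$, to compute generators of the finite abelian group $I^{2^i}/I^{2^{i+1}}$ and to lift each to an element $1 + x$ with $x \in \frp^{2^i}\Lambda \subseteq \Lambda$; the union of these lifts then generates $1 + I$.

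For the generator count, observe that $I^{2^i}/I^{2^{i+1}}$ is a quotient of $\frp^{2^i}\Lambda/\frp^{2^{i+1}}\Lambda$, which is a subgroup of $\Lambda/\frp^{2^{i+1}}\Lambda$, which is a quotient of $\Lambda$. Since $\Lambda$ is free of rank $d[K:\Q]$ over $\Z$, and a subgroup or quotient of a finitely generated abelian group needs no more generators than the group itself, each layer $I^{2^i}/I^{2^{i+1}}$ is generated by at most $d[K:\Q]$ elements; summing over $0 \le i < l$ gives $m \le l\, d[K:\Q] \le d(1 + \log_2 e_\frp)[K:\Q]$. As for the running time, $e_\frp$ and hence $l$ are bounded polynomially in the input size, so only $l$ ideals $\frp^{2^i}\Lambda + \frq\Lambda + \frf$ have to be formed — by repeated squaring of $\frp$ and taking products and sums with $\Lambda$ and $\frf$ — after which passing to the quotient groups $I^{2^i}/I^{2^{i+1}}$ and reading off their generators is pseudo-Hermite and pseudo-Smith normal form arithmetic over $\OK$, which is polynomial time by~\cite{Biasse2016}; writing down the lifts $1 + x$ is trivial. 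Hence the whole procedure runs in polynomial time.

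The point I expect to require the most care is the generator bound for the layers $I^{2^i}/I^{2^{i+1}}$. Unlike the corresponding layers $J^{2^i}/J^{2^{i+1}}$ in the proof of Lemma~\ref{lem:compk1_1}, for $i \ge 1$ these are not in general vector spaces over the residue field $\OK/(\frp \cap \OK)$ — that field annihilates $I^{2^i}$ only modulo $I^{2^i + 1}$, not modulo $I^{2^{i+1}}$ — so one cannot count generators residue-field-by-residue-field; the right move is instead to exhibit each layer as a subquotient of $\Lambda$ and invoke the elementary fact about generators of abelian groups. Everything else is routine manipulation of ideal products, together with the power-of-two filtration of $1 + I$ already used for $1 + J$ in the proof of Lemma~\ref{lem:compk1_1}.
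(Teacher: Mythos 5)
Your proof is correct and follows essentially the same route as the paper's: the power-of-two filtration you build from $I^{2^i}$, where $I$ is the image of $\frp\Lambda + \frf$ in $\Lambda/(\frq\Lambda + \frf)$, coincides term-by-term with the paper's chain $(\frq + \frp^{2^i})\Lambda + \frf$ (since $\frf$ and $\frq\Lambda$ vanish in the quotient and $\frp$ is central), and both then identify each layer with an additive subquotient of $\Lambda$ to get the same generator count and the same bound $l \le 1 + \log_2 e_\frp$. The only nominal difference is that you fix $l = \lceil \log_2 e_\frp \rceil$ while the paper takes $l$ minimal with $\frp^{2^l} \subseteq \frq$, but both lie under the same bound, so the argument is the same in substance.
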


\begin{proof}
If $e_\frp=1$, we clearly have $\frp = \frq$ and so $m=0$.
If $e_\frp > 1$, we let $l \in \Z_{>0}$ be minimal subject to the condition $\mathfrak{p}^{2^l} \subseteq \mathfrak{q}$.
Then there exists a filtration
\[
\mathfrak{p} \Lambda + \mathfrak{f} 
\supseteq (\mathfrak{q} + \mathfrak{p}^2)\Lambda + \mathfrak{f} 
\supseteq \dotsb \supseteq 
(\mathfrak{q} + \mathfrak{p}^{2^{l - 1}})\Lambda + \mathfrak{f} \supseteq \mathfrak{q} \Lambda + \mathfrak{f}.
\] 
For each $i = 0,\dotsc,l-1$, the map $\overline{x} \mapsto \overline{x-1}$ induces an isomorphism
\[
\frac{1 + (\mathfrak{q} + \mathfrak{p}^{2^i})\Lambda + \mathfrak{f}}
{1 + (\mathfrak{q} + \mathfrak{p}^{2^{i+1}})\Lambda + \mathfrak{f}}
\lra
\frac{(\mathfrak{q} + \mathfrak{p}^{2^i})\Lambda + \mathfrak{f}}{(\mathfrak{q} + \mathfrak{p}^{2^{i+1}})\Lambda + \mathfrak{f}}
\]
of abelian groups.
Hence any $\Z$-basis of the right hand side
yields generators of the left hand side.
It remains to bound $l$.
For every $\mathfrak{P} \in \EuScript{P}$ with $\mathfrak{P} \cap \Lambda = \mathfrak{p}$ the inclusion
$\mathfrak{p}^{e_\frp} = (\mathfrak{P} \cap \Lambda)^{e_\frp} \subseteq \mathfrak{P}^{e_\frp} \cap \Lambda \subseteq
\mathfrak{P}^{e_{\mathfrak{P}}} \cap \Lambda$ holds.
Hence $\mathfrak{p}^{e_\frp} \subseteq \mathfrak{q}$
and therefore $2^{l} \leq 2e_\frp$, which gives $l \leq 1 + \log_{2}(e_\frp)$.

Since any quotient
$((\mathfrak{q} + \mathfrak{p}^{2^i})\Lambda + \mathfrak{f}) / ((\mathfrak{q} + \mathfrak{p}^{2^{i+1}})\Lambda + \mathfrak{f})$
is generated by at most $d [K : \Q]$ elements, the quotient
$(1 + \frp \Lambda + \frf)/(1 + \frq \Lambda + \frf)$ is generated by at most
$(1 + \log_{2}(e_\frp))  d  [K : \Q]$ elements.
\end{proof}

\begin{prop}\label{prop:compk1}
Given $\Lambda$ and $\frf$ as above, the problem of computing generators of
$(\Lambda/\mathfrak{f})^{\times}$ and $K_{1}(\Lambda/\mathfrak{f})$ is
probabilistic polynomial-time reducible to the factorisation of 
$\mathfrak{g} := \mathfrak{f} \cap \mathcal{O}_{C}$ 
as an ideal of $\mathcal{O}_{C}$ and,
for each prime ideal divisor $\mathfrak{P}$ of $\mathfrak{g}$,
at most $d$ instances of \pprimitive{} for extensions of $\mathcal{O}/(\mathcal{O}\cap\mathfrak{P})$
of degree at most $d$.
The number of generators is bounded by $5 d [ K:\Q]\log_2|\OC/\frg|$.
\end{prop}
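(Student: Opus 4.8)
The plan is to establish the complexity of the algorithm of \cite[\S 3.4--3.7]{bley-boltje} recalled above, treating the cases $\mathrm{G} = K_{1}$ and $\mathrm{G} = (-)^{\times}$ uniformly via the symbol $\mathrm{G}$. First I would compute the factorisation of $\mathfrak{g} = \mathfrak{f} \cap \mathcal{O}_{C}$ into prime ideals of $\mathcal{O}_{C}$ --- this is precisely the oracle to which we reduce --- and then determine from it the finite sets $\EuScript{P}$, $\EuScript{P}'$, $\EuScript{Q}$ and the decomposition $\mathfrak{f} = \prod_{\mathfrak{q}\in\EuScript{Q}}(\mathfrak{q}\Lambda + \mathfrak{f})$, each of which is a polynomial-time linear-algebra computation in $\Lambda$. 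The Chinese remainder isomorphism $\mathrm{G}(\Lambda/\mathfrak{f}) \cong \prod_{\mathfrak{q}\in\EuScript{Q}} \mathrm{G}(\Lambda/(\mathfrak{q}\Lambda + \mathfrak{f}))$ then reduces the task to computing, for each $\mathfrak{q}\in\EuScript{Q}$, a generating set of $\mathrm{G}(\Lambda/(\mathfrak{q}\Lambda + \mathfrak{f}))$.

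Fixing $\mathfrak{q}\in\EuScript{Q}$ with associated prime $\mathfrak{p} = \mathfrak{P}\cap\Lambda$, I would then exploit the exact sequence \eqref{eq:exact-seq-breakdown-gens}: generators of its first term are produced in polynomial time by Lemma \ref{lem:compk1_2}, and generators of its third term $\mathrm{G}(\Lambda/(\mathfrak{p}\Lambda + \mathfrak{f}))$ by Lemma \ref{lem:compk1_1}, at the cost of at most $d$ instances of $\pprimitive$ for extensions of $\mathcal{O}/\mathcal{O}\cap\mathfrak{P}$ of degree at most $d$. Taking preimages in $\mathrm{G}(\Lambda/(\mathfrak{q}\Lambda + \mathfrak{f}))$ of the latter generators --- a polynomial-time preimage computation in a finite ring --- and adjoining the images of the former yields, by exactness, a generating set of the middle term. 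Letting $\mathfrak{q}$ range over $\EuScript{Q}$, this invokes $\pprimitive$ at most $d$ times per prime divisor $\mathfrak{P}$ of $\mathfrak{g}$, as claimed; the identity $\mathcal{O}\cap\mathfrak{p} = \mathcal{O}\cap\mathfrak{P}$ noted in the proof of Lemma \ref{lem:compk1_1} ensures that distinct $\mathfrak{q}$ attached to the same $\mathfrak{P}$ below do not inflate this count.

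It remains to bound the number of generators, which I expect to be the only delicate point. By Lemma \ref{lem:compk1_1} each third term contributes at most $d([K:\Q]+2) \le 3d[K:\Q]$ generators (using $[K:\Q]\ge 1$), and by Lemma \ref{lem:compk1_2} each first term contributes none if $e_{\mathfrak{p}} = 1$ and at most $d(1 + \log_{2} e_{\mathfrak{p}})[K:\Q]$ otherwise. To sum these over $\mathfrak{q}\in\EuScript{Q}$, I would use $|\EuScript{Q}| \le |\EuScript{P}'| \le |\EuScript{P}| \le \log_{2}|\mathcal{O}_{C}/\mathfrak{g}|$, which holds since $|\mathcal{O}_{C}/\mathfrak{g}| = \prod_{\mathfrak{P}\in\EuScript{P}}|\mathcal{O}_{C}/\mathfrak{P}|^{e_{\mathfrak{P}}} \ge 2^{|\EuScript{P}|}$, together with the fact that, choosing for each $\mathfrak{q}$ a prime $\mathfrak{P}_{0}$ above $\mathfrak{p}$ with $e_{\mathfrak{P}_{0}} = e_{\mathfrak{p}}$, the assignment $\mathfrak{q}\mapsto\mathfrak{P}_{0}$ is injective and $\log_{2} e_{\mathfrak{p}} \le e_{\mathfrak{P}_{0}}\log_{2}|\mathcal{O}_{C}/\mathfrak{P}_{0}|$, so that $\sum_{\mathfrak{q}\in\EuScript{Q}}\log_{2} e_{\mathfrak{p}} \le \log_{2}|\mathcal{O}_{C}/\mathfrak{g}|$. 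Hence the third terms contribute at most $3d[K:\Q]\log_{2}|\mathcal{O}_{C}/\mathfrak{g}|$ generators and the first terms at most $d[K:\Q](\log_{2}|\mathcal{O}_{C}/\mathfrak{g}| + \log_{2}|\mathcal{O}_{C}/\mathfrak{g}|) = 2d[K:\Q]\log_{2}|\mathcal{O}_{C}/\mathfrak{g}|$, giving the total bound $5d[K:\Q]\log_{2}|\mathcal{O}_{C}/\mathfrak{g}|$. Since all the algorithmic substance lies in the two lemmas and in routine Chinese-remainder and preimage computations, the main obstacle is just to organise this counting cleanly.
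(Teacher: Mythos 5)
Your proposal is correct and follows the paper's own proof in all essentials: factor $\mathfrak{g}$ to obtain $\EuScript{P},\EuScript{P}',\EuScript{Q}$, pass through the CRT decomposition and the exact sequence \eqref{eq:exact-seq-breakdown-gens}, and count via Lemmas \ref{lem:compk1_1} and \ref{lem:compk1_2}. The only difference is cosmetic: in the final count the paper absorbs the ``$+1$'' from Lemma \ref{lem:compk1_2} into the logarithm using $1\le\log_2(e_\frp)$ when $e_\frp\ge 2$ and then appeals to one inequality $\sum_{\frP}\log_2(e_\frP)\le\log_2|\OC/\frg|$, whereas you keep the two sums separate and bound each by $\log_2|\OC/\frg|$; both routes give $5d[K:\Q]\log_2|\OC/\frg|$.
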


\begin{proof}
Using the factorisation of $\mathfrak{g} = \mathfrak{f} \cap \mathcal{O}_{C}$,
one can determine the sets of ideals 
$\EuScript{P}$, $\EuScript{P}'$ and $\EuScript{Q}$ in polynomial time.
Since $\lvert \EuScript{Q} \rvert = \lvert \EuScript{P}' \rvert \leq \lvert \EuScript{P} \rvert 
\leq \log_{2}\lvert \mathcal{O}_{C}/\frg \rvert$,
the claim follows from the reduction to the computation of
$\mathrm{G}(\Lambda/(\mathfrak{q}\Lambda + \mathfrak{f}))$ for each $\mathfrak{q} \in \EuScript{Q}$
discussed at the beginning of the section, the exact sequence
\eqref{eq:exact-seq-breakdown-gens}, Lemmas~\ref{lem:compk1_1} and~\ref{lem:compk1_2} and the following computation
\begin{eqnarray*}
  &&  d |\EuScript{P}'| ([K:\Q]+2) +   d [K:\Q]\sum_{\frp \in \EuScript{P}', e_\frp > 1}(1 + \log_2(e_\frp)) \\
  &\le& d [ K:\Q]\left( 3|\EuScript{P}'| + \sum_{\frp \in \EuScript{P}', e_\frp > 1}2\log_2(e_\frp) \right) \\
  &\le& d [ K:\Q]\left( 3|\EuScript{P}'| + 2\sum_{\frP \in \EuScript{P}}\log_2(e_\frP) \right) \\
  &\stackrel{(*)}\le& d [ K:\Q]\left( 3|\EuScript{P}'| + 2\log_2|\OC/\frg| \right) \\
   &\le& 5 d [ K:\Q]\log_2|\OC/\frg|.
\end{eqnarray*}
The inequality $(*)$ is a consequence of
\[
  \sum_{\frP \in \EuScript{P}}\log_2(e_\frP) \le
  \sum_{\frP \in \EuScript{P}}e_\frP \le \sum_{\frP \in \EuScript{P}} e_\frP \log_2|\OC/\frP| = \log_2|\OC /\frg|,
\]
which, in turn, is immediate from $\OC/\frg \cong \prod_{\frP \in \EuScript{P}} \OC/\frP^{e_\frP}$.
\end{proof}

\begin{remark}\label{rmk:start-with-f-or-g}
In the setup above, we start with a proper full two-sided ideal $\mathfrak{f}$ of $\mathcal{M}$ contained in 
$\Lambda$ and set $\mathfrak{g} := \mathfrak{f} \cap C$.
Under hypothesis (H) on $A$, we may instead start with a proper full ideal $\mathfrak{g}$ of 
$\mathcal{O}_{C}$ such that $\mathfrak{g}\mathcal{M}$ is contained in $\Lambda$ and then set
$\mathfrak{f} := \mathfrak{g}\calM$. 
In this situation, we then have $\frg = \frf \cap \OC$ by \cite[(27.6)]{Reiner2003}.
\end{remark}

\section{Lifting units of reduced norm one}

Let $K$ be a number field with ring of integers $\calO = \OK$.
Let $r \in \Z_{>0}$ and let $A = \prod_{i=1}^{r} \Mat_{n_i}(K_i)$
where $K_{i}$ is a finite field extension of $K$ and $n_{i} \in \Z_{>0}$ for each $i$.
In particular, $A$ is a finite-dimensional semisimple $K$-algebra satisfying hypothesis (H).
Let $C$ be the centre of $A$, which we can and do identify with $\prod_{i=1}^{r} K_{i}$.
In this situation, the reduced norm map $\nr \colon A \rightarrow C$
is equal to the product of maps $\det : \Mat_{n_i}(K_i) \rightarrow K_{i}$.

Let $\mathcal{M}$ be a maximal $\mathcal{O}$-order in $A$ and 
let $\mathcal{O}_{C} = \mathcal{M} \cap C = \prod_{i=1}^{r} \mathcal{O}_{K_{i}}$.
Then $\nr$ restricts to a group homomorphism
$\nr \colon \mathcal{M}^{\times} \rightarrow \mathcal{O}_{C}^{\times}$, which is surjective
since $A$ satisfies the Eichler condition relative to $\mathcal{O}$ 
(see \cite[(45.4), (45.6)]{curtisandreiner_vol2}).
Let $\mathfrak{g}$ be a proper full ideal of $\OC$
and let $\mathfrak{f} = \mathfrak{g}\mathcal{M}$.
Then by Lemma \ref{lem:defn-of-overline-nr} there exists a commutative diagram of groups
\begin{equation}\label{eq:comm-SL-nr-diagram}
\xymatrix@1@!0@=48pt { 
1 \ar[r] & \SL(\mathcal{M}) \ar[rr] \ar[d]^{f_{1}} & &  \mathcal{M}^{\times} \ar[rr]^{\nr}  
\ar[d]_{f} & & \mathcal{O}_{C}^{\times} \ar[r] \ar[d]_{f_{2}} &  1 \\
1 \ar[r] & \SL(\mathcal{M}/\mathfrak{f}) \ar[rr] & & (\mathcal{M}/\mathfrak{f})^{\times} 
\ar[rr]^{\overline{\nr}}  
& & ( \mathcal{O}_{C}/\mathfrak{g})^{\times} \ \ar[r] & 1,
} 
\end{equation}
where the rows are exact, $\SL(\mathcal{M})$ and $\SL(\mathcal{M}/\mathfrak{f})$ are defined by the exactness of these rows, and the vertical maps are induced by the canonical projections.
Note that this is consistent with diagram \eqref{f1 diagram}, but we do not require an order $\Lambda$
for the above setup.

The aim of this section is to show that, under the above assumptions on $A$ and $\mathfrak{f}$, the map
$f_{1}$ is surjective, and that there exists a polynomial-time
algorithm that given an element of $\SL(\calM/\frf)=\SL(\calM/\frg\calM)$ returns a preimage under $f_{1}$.

\subsection{Lifting unimodular matrices}\label{subsec:lifting-unimodular-matrices}
We first consider the case where $A = \Mat_{n}(K)$ and $\calM = \Mat_{n}(\calO)$
for some $n \in \Z_{>0}$.
In this situation, we have $\mathcal{O}=\mathcal{O}_{C}$ and 
$\mathcal{M}/\mathfrak{f} = \Mat_{n}(\calO/\mathfrak{g})$.
Moreover, both the maps $\nr$ and $\overline{\nr}$ in \eqref{eq:comm-SL-nr-diagram} 
are just the usual determinant maps, $\SL(\mathcal{M}) = \SL_{n}(\mathcal{O})$, and 
$\SL(\mathcal{M}/\mathfrak{f}) = \SL_{n}(\mathcal{O}/\mathfrak{g})$. 
Thus $f_{1}$ is the canonical map
$f_{1} : \SL_{n}(\mathcal{O}) \rightarrow \SL_{n}(\mathcal{O}/\mathfrak{g})$.
Note that this map is trivial when $n=1$, so we henceforth suppose that $n \geq 2$.

We use the following notation for a commutative ring $R$.
Given $1 \leq i, j \leq n$ with $i \neq j$, and $r \in R$ we denote by $e_{ij}(r) \in \SL_{n}(R)$
the matrix with ones on the diagonal and entry $r$ at position $(i, j)$.
We refer to these matrices as \textit{elementary matrices}.
Let $\E_{n}(R)$ denote the subgroup of $\SL_{n}(R)$ generated by all elementary matrices.

Since $\calO/\frg$ is semilocal, $\SL_{n}(\calO/\frg) = \E_{n}(\calO/\frg)$
by \cite[Chapter V, Corollary 9.2]{bass}.
Thus every element of $\SL_{n}(\calO/\frg)$ can be expressed as a product of elementary matrices,
and every such matrix can easily be lifted to an elementary matrix in $\SL_{n}(\calO)$.
This immediately implies the theoretical part of Corollary~\ref{cor:liftn} below.
However, we will need a constructive proof that then translates into an efficient algorithm.

We will show that, given the factorisation of $\mathfrak{g}$, there exists a polynomial time algorithm for lifting unimodular matrices over $\mathcal{O}/\frg$ to $\calO$.
The idea is to reduce to the local case and then apply the Chinese remainder theorem.

For any matrix $M$, let $M^{t}$ denote its transpose.
A vector $\mathbf{v} = (v_{1}, \ldots, v_{n})^{t}$ of elements of a commutative ring $R$ is said to 
be \emph{unimodular} if $\sum_{i=1}^{n} R v_{i} = R$.
In the following we denote by $\mathfrak{q} = \mathfrak{p}^l$, $l \in \Z_{>0}$, the power of a non-zero prime ideal of $\mathcal{O}$. Note that $\mathcal{O}/\mathfrak{q}$ is a local ring.

\begin{lemma}\label{lem:row}
There exists a polynomial-time algorithm that given a unimodular vector 
$\mathbf{v} \in (\calO/\mathfrak{q})^{n}$ returns elementary matrices $E_{1},\dotsc,E_{k} \in \Mat_{n}(\calO/\mathfrak{q})$
such that $E_{1} \dotsm E_{k} \mathbf{v} = (x,0,\dotsc,0)^{t}$ for some $x \in (\calO/\mathfrak{q})^{\times}$.
\end{lemma}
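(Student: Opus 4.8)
The plan is to follow the Euclidean row--reduction argument of \cite[2.3.2 Theorem]{Rosenberg1994}, adapted to the Euclidean ring $\calO/\frg$ and turned into an explicit algorithm. We successively apply elementary row operations to the column vector $\mathbf{v}$, recording each one as an elementary matrix $e_{ij}(c) \in \Mat_{n}(\calO/\frg)$, until the vector has a single nonzero entry, in position $1$. Since every elementary matrix is invertible, left multiplication does not change the ideal generated by the entries of the vector, so this ideal stays equal to $\calO/\frg$; hence the final entry $x$ satisfies $(\calO/\frg)\,x = \calO/\frg$, i.e.\ $x \in (\calO/\frg)^{\times}$, which is the required conclusion. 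Reversing the recorded operations (or rather outputting them in the order applied) produces the list $E_{1},\dotsc,E_{k}$.

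In more detail, the algorithm repeats the following. If $\mathbf{v}$ already has its only nonzero entry in position $1$, return the (possibly empty) list accumulated so far. Otherwise, find a nonzero entry $v_{p}$ with $\varphi(v_{p})$ minimal among the nonzero entries. If $\varphi(v_{p}) = 1$, then $v_{p}$ is a unit: bring it into position $1$ by the "signed transposition" $e_{1p}(1)\,e_{p1}(-1)\,e_{1p}(1)$ (a product of three elementary matrices, which sends $v_{1}\mapsto v_{p}$, $v_{p}\mapsto -v_{1}$ and fixes the other entries), then for each $j\neq 1$ apply $e_{j1}(-v_{j}v_{p}^{-1})$ to clear position $j$, and stop. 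If $\varphi(v_{p}) > 1$, then for each other nonzero entry $v_{j}$ use division with remainder in $\calO/\frg$ to write $v_{j} = q v_{p} + r$ with $r = 0$ or $\varphi(r) < \varphi(v_{p})$, apply $e_{jp}(-q)$, replacing $v_{j}$ by $r$, and then repeat. After such a pass the entries outside position $p$ all have Euclidean value $< \varphi(v_{p})$ or are zero, and they cannot all be zero (otherwise unimodularity would force $v_{p}$ to be a unit), so the minimal Euclidean value among the nonzero entries strictly decreases; in particular the loop terminates. Each individual step --- division with remainder, testing for and inverting a unit, and updating the running product of elementary matrices --- is a basic operation in $\calO/\frg$ that can be carried out in probabilistic polynomial time by \cite{Fieker2014}, so correctness and termination are clear.

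The main obstacle is to bound the number $k$ of elementary matrices produced, and hence the total running time, by a polynomial in the input size: since $\varphi$ is only guaranteed to decrease strictly at each division and can be as large as $\Nm(\frg)$, the naive termination estimate is exponential. To obtain a polynomial bound one must argue more carefully, and this is where the quantitative behaviour of the Euclidean function of \cite{Fieker2014} enters. One route is to replace the "reduce every other entry" pass by a genuine two--variable Euclidean algorithm carried out between a fixed pair of coordinates, arranging the divisions so that the ideal generated, together with $\frg$, by the entry being reduced strictly increases; since ascending chains of ideals of $\calO$ lying between $\frg$ and $\calO$ have length at most $\log_{2}\Nm(\frg)$, this bounds the number of divisions, and hence $k$, by a polynomial in $n$ and $\log\Nm(\frg)$. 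An alternative is to exploit that $\calO/\frg$ is semilocal, hence of stable range one, which lets one pass from $\mathbf{v}$ directly to a unit in position $1$ using only $O(n)$ elementary operations, the required multipliers being found by ideal--gcd computations rather than a factorisation of $\frg$. Either route, combined with the per--step bounds of \cite{Fieker2014}, yields the claimed probabilistic polynomial--time algorithm.
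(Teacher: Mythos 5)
Your overall strategy is the same as the paper's: Euclidean row reduction, with three cases (unit in position $1$, unit elsewhere, no unit), using division with remainder in the Euclidean ring $\calO/\frg$ and citing \cite{Fieker2014} for the per-step complexity. You also correctly put your finger on the real difficulty: the naive observation that $\varphi$ strictly decreases at each division only gives a bound that is exponential in $\log\Nm(\frg)$, since $\varphi$ takes values up to $\Nm(\frg)$. This is exactly the crux of the lemma.

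However, at that point you stop short and offer two unfinished sketches (``one route is to\ldots'', ``an alternative is to\ldots'') rather than a proof; this is a genuine gap. Your first sketch is morally on the right track, but you never verify the claim that it rests on, namely that at each division the relevant ideal strictly increases; this is precisely the nontrivial fact that has to be established, and your ``two-variable Euclidean algorithm between a fixed pair of coordinates'' phrasing obscures it (in a classical two-variable Euclidean run, the ideal generated by the two elements together with $\frg$ is \emph{invariant}, not increasing). What the paper actually proves is the following quantitative sharpening of the Euclidean property: taking $v_{i_0}$ of minimal Euclidean value and $v_j \notin (v_{i_0})$, division with remainder produces $r \ne 0$ with $\tilde r\calO + \frg \supsetneq \tilde v_{i_0}\calO + \frg$, hence
\[
\frac{\varphi(v_{i_0})}{\varphi(r)} = \frac{\Nm(\tilde v_{i_0}\calO + \frg)}{\Nm(\tilde r\calO + \frg)} \geq \Nm(\frp) \geq 2
\]
for some prime $\frp \mid \frg$, i.e.\ the minimal Euclidean value at least halves each time. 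From that the bound $k = O(n \cdot \log_2\Nm(\frg))$ is immediate. This halving is what your proposal is missing; without it neither of your suggested routes is actually a proof, and your second suggestion (stable range one) is not developed at all. A further, minor, divergence from the paper: you reduce \emph{every} other coordinate modulo $v_p$ in Case 3, whereas the paper reduces a single well-chosen coordinate $v_j \notin (v_{i_0})$, which is what guarantees a nonzero remainder feeding into the halving argument.
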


\begin{proof}
Write $\mathbf{v} = (v_1,\dotsc,v_n)^t$.
Note that as $\mathcal{O}/\mathfrak{q}$ is local, $\mathbf{v}$ being unimodular implies that there exists $1 \leq i \leq n$ such that $v_i \in (\mathcal{O}/\mathfrak{q})^\times$.
  
Case 1: 
If $v_1 \in (\calO/\mathfrak{q})^\times$, then $e_{21}(-v_1^{-1}v_2) \dotsm e_{n1}(-v_1^{-1}v_n) \mathbf{v}$
has the required form.

Case 2:
If $v_i \in (\calO/\mathfrak{q})^\times$ with $1 < i \leq n$, then after multiplying $\mathbf{v}$ by
$e_{1i}(1) e_{i1}(-1) e_{1i}(1)$ on the left, the first entry will be invertible and we are in the first case.
\end{proof}

\begin{lemma}\label{lem:upper}
There exists a polynomial-time algorithm that given a matrix 
$V \in \SL_{n}(\calO/\mathfrak{q})$ returns elementary matrices
$E_{1},\dotsc,E_{k} \in \Mat_{n}(\calO/\mathfrak{q})$ such that $E_{1} \dotsm E_{k} V$ is upper triangular.
If $V$ is lower triangular, then $E_{1} \dotsm E_{k} V$ is diagonal.
\end{lemma}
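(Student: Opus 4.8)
The plan is a constructive, Gauss-type column elimination that uses the algorithm of Lemma~\ref{lem:row} as the single reduction step, organised as an induction on $n$. (Abstractly, the existence of such a factorisation already follows from $\SL_n(\calO/\frg) = \E_n(\calO/\frg)$, but we need the explicit algorithm.) First I would observe that both the statement and the algorithm go through verbatim for an arbitrary $V \in \GL_n(\calO/\frg)$, since the only feature of $V$ used below is that its first column is unimodular; and this holds for every invertible matrix, because the inverse $W$ satisfies $\sum_k W_{1k} V_{k1} = (WV)_{11} = 1$. This mild generalisation is what makes the induction close up, because the block appearing after one elimination step has unit determinant but need not have determinant $1$.

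For $n = 1$ there is nothing to do. For the inductive step, let $\mathbf{v} \in (\calO/\frg)^n$ be the first column of $V$. Apply the algorithm of Lemma~\ref{lem:row} to $\mathbf{v}$ to obtain elementary matrices $E_1, \dotsc, E_m \in \Mat_n(\calO/\frg)$ with $E_1 \dotsm E_m \mathbf{v} = (x, 0, \dotsc, 0)^t$ for some $x \in (\calO/\frg)^\times$. Applying the same matrices to $V$ yields
\[
E_1 \dotsm E_m V = \begin{pmatrix} x & w \\ 0 & V' \end{pmatrix}
\]
with $w$ a row vector and $V' \in \Mat_{n-1}(\calO/\frg)$. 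Taking determinants (elementary matrices have determinant $1$) gives $x\det(V') = \det(V) \in (\calO/\frg)^\times$, so $V' \in \GL_{n-1}(\calO/\frg)$, and by the inductive hypothesis there are elementary matrices $F_1, \dotsc, F_l \in \Mat_{n-1}(\calO/\frg)$ with $F_1 \dotsm F_l V'$ upper triangular. Setting $\widetilde{F}_j := \begin{pmatrix} 1 & 0 \\ 0 & F_j \end{pmatrix}$, which is again an elementary matrix of $\Mat_n(\calO/\frg)$, we see that $\widetilde{F}_1 \dotsm \widetilde{F}_l E_1 \dotsm E_m V$ is upper triangular. The recursion has at most $n$ levels, each invoking the algorithm of Lemma~\ref{lem:row} once, so the whole procedure runs in probabilistic polynomial time and produces polynomially many elementary matrices.

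For the final assertion I would track which elementary matrices the procedure outputs when $V$ is lower triangular. A lower triangular invertible matrix over $\calO/\frg$ has unit diagonal entries, so in particular $v_1 = V_{11} \in (\calO/\frg)^\times$ and the algorithm of Lemma~\ref{lem:row} is always in its Case~1, returning only matrices $e_{i1}(\ast)$ with $i > 1$. Left-multiplication by such a matrix adds a multiple of the first row (which is $(V_{11}, 0, \dotsc, 0)$) to the $i$th row, hence only alters the entry in position $(i,1)$: the result is still lower triangular, with first column $(x, 0, \dotsc, 0)^t$, so the lower-right block $V'$ is again lower triangular (and invertible) and the recursive call behaves the same way. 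Thus every elementary matrix produced is lower unitriangular, so $E_1 \dotsm E_k$ and hence $E_1 \dotsm E_k V$ is lower triangular; being also upper triangular by the first part, it is diagonal.

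The argument is essentially routine once Lemma~\ref{lem:row} is available; the two points that need care are that the recursion naturally exits $\SL_n$ (handled by working with $\GL_n(\calO/\frg)$ throughout) and the bookkeeping for the lower-triangular refinement, which amounts to observing that only Case~1 of Lemma~\ref{lem:row} ever occurs in that situation.
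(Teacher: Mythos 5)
Your proof is correct and follows essentially the same route as the paper: repeatedly apply Lemma~\ref{lem:row} to the current first column of $V$, recurse on the remaining lower-right block, and observe that a lower-triangular input keeps the algorithm in Case~1 so only matrices $e_{i1}(\cdot)$ (lower unitriangular) are produced. You have spelled out two details the paper leaves implicit — that invertibility of $V$ suffices to keep the first column unimodular through the recursion (so one should really work with $\GL_n$, not just $\SL_n$), and that the diagonal entries of an invertible lower-triangular matrix are units — which is exactly the bookkeeping the paper's one-line proof relies on.
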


\begin{proof}
The first part follows by repeatedly applying Lemma~\ref{lem:row} to $V$ and submatrices of $V$. If $V$ is lower triangular, then we are always in Case 1 of the proof of Lemma~\ref{lem:row} and thus easily see that the resulting matrix is diagonal.
\end{proof}

As we will see below, the previous results allow us to transform 
unimodular matrices into diagonal matrices.
Thus it remains to consider unimodular diagonal matrices.

\begin{lemma}\label{lem:diag}
There exists a polynomial-time algorithm that given
  $V = \operatorname{diag}(v_1,\dotsc,v_n) \in \Mat_{n}(\calO/\mathfrak{q})$ with $\prod_{1 \leq i \leq n} v_i = 1$ returns elementary matrices $E_{1}, \dotsc, E_{k} \in \Mat_{n}(\calO/\mathfrak{q})$ such that $E_1 \dotsm E_k V$ is the $n \times n$ identity matrix.
\end{lemma}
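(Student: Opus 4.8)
The plan is to reduce to the classical Whitehead identity that expresses $\operatorname{diag}(u,u^{-1})$ as an explicit product of elementary matrices, and then to telescope along the diagonal, using only multiplications in $R := \calO/\frg$ (never an inversion) so that the resulting algorithm is transparently deterministic polynomial time.

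First, the hypothesis $\prod_{i=1}^n v_i = 1$ forces each $v_i$ to be a unit of $R$, with $v_i^{-1} = \prod_{j \ne i} v_j$. Set $p_k := v_k v_{k+1} \cdots v_n$ and $q_k := v_1 v_2 \cdots v_k$; all of these are obtained with $O(n)$ multiplications in $R$, and the hypothesis gives $q_k p_{k+1} = 1$, hence $p_{k+1}^{-1} = q_k$, for every $k$; in particular $p_1 = 1$ and $p_n = v_n$. Next, recall the $2\times 2$ identity, valid for any unit $u$ of a commutative ring:
\[
\begin{pmatrix} u & 0 \\ 0 & u^{-1} \end{pmatrix}
= e_{12}(u)\, e_{21}(-u^{-1})\, e_{12}(u)\, e_{12}(-1)\, e_{21}(1)\, e_{12}(-1),
\]
which is checked by a direct multiplication: the product of the first three factors is the antidiagonal matrix with entries $u$ (top right) and $-u^{-1}$ (bottom left), and the product of the last three is the antidiagonal matrix with entries $-1$ and $1$. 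Embedding this relation into rows and columns $k$ and $k+1$ shows that the $n\times n$ matrix $D_k(u)$, equal to $\operatorname{diag}(u,u^{-1})$ in positions $k,k+1$ and to the identity elsewhere, is an explicit product of six elementary matrices in $\Mat_n(R)$, all writable down once $u$ and $u^{-1}$ are known.

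Now run the loop $k = n-1, n-2, \dots, 1$, maintaining the invariant that the current matrix equals $\operatorname{diag}(v_1,\dots,v_{k-1},\,v_k,\,p_{k+1},\,1,\dots,1)$, with $v_k$ in position $k$ and $p_{k+1}$ in position $k+1$; this holds initially for $k=n-1$ since it is then $V$ itself. At step $k$, left-multiply by $D_k(p_{k+1})$, using $p_{k+1}^{-1}=q_k$; since $v_k p_{k+1} = p_k$ and $p_{k+1}^{-1} p_{k+1} = 1$, the new matrix is $\operatorname{diag}(v_1,\dots,v_{k-1},\,p_k,\,1,\dots,1)$, which is the invariant for $k-1$. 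After the step $k=1$ the matrix is $\operatorname{diag}(p_1,1,\dots,1) = I$. Concatenating, in order, the six elementary matrices produced at each of the $n-1$ steps yields the required list $E_1,\dots,E_m$ with $m = 6(n-1)$ and $E_1 \cdots E_m V = I$. Each step uses only a bounded number of multiplications in $R$, so the whole procedure is deterministic polynomial time.

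There is no substantial obstacle: the only points needing care are verifying the $2\times 2$ identity and tracking the loop invariant, both routine. The one mild subtlety worth flagging is that one should use $q_k$ rather than performing an inversion to obtain $p_{k+1}^{-1}$; this keeps the algorithm free of any inversion step and so makes the polynomial-time (and deterministic) claim immediate, without appealing to the probabilistic arithmetic of $\calO/\frg$ used elsewhere.
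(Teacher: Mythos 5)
Your proof is correct and takes essentially the same route as the paper: both rely on the Whitehead-type factorisation of $\operatorname{diag}(u,u^{-1})$ into six elementary matrices and then telescope along the diagonal, clearing entries one at a time with $6(n-1)$ elementary matrices in total. The one refinement you add — maintaining the prefix/suffix products $q_k$ and $p_{k+1}$ with $q_k p_{k+1}=1$ so that no inversion in $\calO/\frg$ is ever performed — is a worthwhile observation (since general arithmetic in $\calO/\frg$ is only asserted to be \emph{probabilistic} polynomial time, this makes the lemma's deterministic ``polynomial-time'' claim transparent), but it does not change the underlying argument.
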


\begin{proof}
From \cite[2.1.3 Corollary]{Rosenberg1994} it follows that a diagonal matrix with diagonal
  $(1,\dotsc,1,v,v^{-1},1,\dotsc,1)$ with $v \in (\calO/\mathfrak{q})^\times$ is the product of six elementary matrices, since
\[ 
\begin{pmatrix} v & 0 \\ 0 & v^{-1} \end{pmatrix} =
\begin{pmatrix} 1 & v \\ 0 & 1 \end{pmatrix}
\begin{pmatrix} 1 & 0 \\ -v^{-1} & 1 \end{pmatrix}
\begin{pmatrix} 1 & v \\ 0 & 1 \end{pmatrix}
\begin{pmatrix} 1 & -1 \\ 0 & 1 \end{pmatrix}
\begin{pmatrix} 1 & 0 \\ 1 & 1 \end{pmatrix}
\begin{pmatrix} 1 & -1 \\ 0 & 1 \end{pmatrix}.
\] 
Hence we can left-multiply $V$ with $6(n - 1)$ elementary matrices to obtain $(1,1,\dotsc,1)^t$.
\end{proof}

\begin{prop}\label{prop:lifting_help}
There exists a polynomial-time algorithm that given a matrix $V \in \SL_n(\calO/\mathfrak{q})$
returns elementary matrices $E_1,\dotsc,E_k, F_1,\dotsc,F_l \in \Mat_n(\calO/\mathfrak{q})$
such that $E_1\dotsm E_k V F_1\dotsc F_l$ is the $n \times n$ identity matrix.
\end{prop}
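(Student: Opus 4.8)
The plan is to chain together the three preceding lemmas, using left multiplications to reach a diagonal matrix and then finishing with one further left multiplication.

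First I would apply Lemma~\ref{lem:upper} to $V$, obtaining elementary matrices $E_1,\dotsc,E_k \in \Mat_n(\calO/\frg)$ such that $U := E_1\dotsm E_k V$ is upper triangular. Since each $E_i$ has determinant $1$ and $\det(V)=1$, we have $\det(U)=1$; in particular the diagonal entries of $U$ are units of $\calO/\frg$ whose product is $1$. Next I would clear the entries strictly above the diagonal by right multiplication. The cleanest way to phrase this is to observe that $U^t$ is lower triangular with $\det(U^t)=1$, so applying the second assertion of Lemma~\ref{lem:upper} to $U^t$ yields elementary matrices $E'_1,\dotsc,E'_s$ with $E'_1\dotsm E'_s U^t$ diagonal. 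Transposing the resulting identity gives $U\,(E'_s)^t\dotsm(E'_1)^t$ diagonal, and since the transpose of an elementary matrix $e_{ij}(r)$ is the elementary matrix $e_{ji}(r)$, setting $F_1:=(E'_s)^t,\dotsc,F_s:=(E'_1)^t$ produces elementary matrices with $D:=U F_1\dotsm F_s$ diagonal. (Alternatively one can perform the analogous column operations on $U$ directly.)

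Then $\det(D)=\det(U)=1$, so $D=\operatorname{diag}(v_1,\dotsc,v_n)$ with $\prod_i v_i=1$, and Lemma~\ref{lem:diag} produces elementary matrices $G_1,\dotsc,G_p$ with $G_1\dotsm G_p D = I_n$. Combining everything yields
\[
(G_1\dotsm G_p)(E_1\dotsm E_k)\,V\,(F_1\dotsm F_s) = I_n,
\]
and relabelling $G_1,\dotsc,G_p,E_1,\dotsc,E_k$ as the left-hand list of elementary matrices and $F_1,\dotsc,F_s$ as $F_1,\dotsc,F_l$ gives the claimed conclusion. Each invocation of Lemmas~\ref{lem:upper} and~\ref{lem:diag} runs in probabilistic polynomial time and outputs only polynomially many elementary matrices (recall from the proof of Lemma~\ref{lem:row} that each unimodular-vector reduction uses $O(\log\Nm(\frg))$ elementary matrices, and Lemma~\ref{lem:upper} applies this $n-1$ times), so the whole procedure is probabilistic polynomial time.

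I do not expect a genuine obstacle here; all the content is already in the three preceding lemmas. The only points requiring care are tracking the order of multiplication under transposition and checking that the determinant-one condition is preserved at each stage, so that Lemma~\ref{lem:diag} is applicable to the final diagonal matrix.
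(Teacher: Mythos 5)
Your proof is correct and follows essentially the same route as the paper's: apply Lemma~\ref{lem:upper} to reach an upper triangular matrix, apply it again to the transpose (equivalently, perform column operations) to reach a diagonal matrix, and finish with Lemma~\ref{lem:diag}. You additionally spell out the transpose bookkeeping and verify that the determinant-one condition propagates so that Lemma~\ref{lem:diag} is applicable, which the paper leaves implicit; this is a helpful clarification but not a different argument.
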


\begin{proof}
Using Lemma~\ref{lem:upper} there exist elementary matrices $E_1, \dotsc,E_{k'}$ such that 
$U := E_1\dotsm E_{k'} V$ is an upper triangular matrix.
Using Lemma~\ref{lem:upper} again, this time applied to the lower diagonal matrix $U^t$,
we can find elementary matrices $F_1,\dotsc,F_l$ such that $D = UF_1\dotsm F_l$ is a diagonal matrix.
Finally, invoking Lemma~\ref{lem:diag} yields elementary matrices
$\tilde E_1,\dotsc,\tilde E_{\tilde k}$ such that $\tilde E_1\dotsm \tilde E_{\tilde k} D$ is the $n \times n$ identity matrix.
\end{proof}

\begin{corollary}\label{cor:prodele}
  There exists a polynomial-time algorithm that given the factorisation of $\frg$ and a matrix
  $V \in \SL_{n}(\calO/\frg)$ returns elementary matrices $E_1,\dotsc,E_k \in \Mat_{n}(\calO/\frg)$ such that $V = E_1\dotsm E_k$.
\end{corollary}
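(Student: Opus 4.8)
The plan is to deduce this immediately from the preceding Proposition by rearranging the obtained identity. Given $V \in \SL_n(\calO/\frg)$, first apply the algorithm of the Proposition to obtain elementary matrices $E_1,\dotsc,E_k,F_1,\dotsc,F_l \in \Mat_n(\calO/\frg)$ with $E_1 \dotsm E_k V F_1 \dotsm F_l = I_n$, in probabilistic polynomial time. Solving for $V$ gives
\[
V = E_k^{-1} \dotsm E_1^{-1} F_l^{-1} \dotsm F_1^{-1}.
\]

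Next I would observe that the inverse of an elementary matrix is again an elementary matrix: for any commutative ring $R$, indices $i \neq j$ and $r \in R$ we have $e_{ij}(r)^{-1} = e_{ij}(-r)$, which can be written down directly. Hence the displayed expression exhibits $V$ as a product of $k+l$ elementary matrices in $\Mat_n(\calO/\frg)$, each of which is computed in polynomial time from the output of the Proposition; negating the off-diagonal entry and reversing the order of the list is clearly polynomial-time. This yields the desired $E_1,\dotsc,E_k$ (after relabelling).

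There is essentially no obstacle here: the only point to check is that inverting and reordering preserves both the elementary-matrix property and the polynomial time bound, both of which are routine. The substantive work has already been carried out in Lemmas \ref{lem:row}, \ref{lem:upper}, \ref{lem:diag} and the Proposition.
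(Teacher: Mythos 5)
Your proof is correct and is precisely the (unwritten) argument the paper intends: Corollary~\ref{cor:prodele} is stated without proof because it follows immediately from the preceding Proposition by solving for $V$ and using $e_{ij}(r)^{-1}=e_{ij}(-r)$, exactly as you do.
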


\begin{proof}
In the case that $\frg$ is a prime ideal power, this follows from Proposition~\ref{prop:lifting_help}.
Now let $\mathfrak{g} = \mathfrak{q_1}\dotsm\mathfrak{q_m}$ be the product of $m$ coprime prime ideal powers 
and consider a matrix $V \in \SL_n(\calO/\frg)$. For each $1 \leq i \leq m$ we can determine in polynomial time a
factorisation of $V \in \SL_n(\calO/\mathfrak{q}_i)$ into elementary matrices.
The result follows by observing that the canonical map 
\[
  \E_n(\calO/\frg) \to \prod_{i=1}^m \E_n(\calO/\mathfrak{q}_i)
\]
is an isomorphism by the Chinese remainder theorem which can be made effective in
polynomial time~(\cite[Proposition~1.3.11]{MR1728313}).
\end{proof}

Since we can trivially lift elementary matrices along the canonical map
$f_{1}: \SL_{n}(\calO) \to \SL_{n}(\calO/\mathfrak{\frg})$, the same is true for arbitrary matrices in $\SL_{n}(\calO/\frg)$.

\begin{corollary}\label{cor:liftn}
There exists a polynomial-time algorithm that given the factorisation of $\mathfrak{g}$ and a matrix $V \in \SL_n(\calO/\frg)$ returns
$U \in \SL_n(\calO)$ such that $f_{1}(U) = V$.
\end{corollary}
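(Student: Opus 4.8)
The plan is to combine Corollary~\ref{cor:prodele} with the elementary observation that elementary matrices lift trivially along the canonical map $f_{1} \colon \SL_{n}(\calO) \to \SL_{n}(\calO/\frg)$. First I would invoke Corollary~\ref{cor:prodele} to obtain, in probabilistic polynomial time, elementary matrices $E_{1}, \dotsc, E_{k} \in \Mat_{n}(\calO/\frg)$ such that $V = E_{1} \dotsm E_{k}$; write $E_{i} = e_{a_{i} b_{i}}(\overline{r}_{i})$ for indices $a_{i} \neq b_{i}$ and elements $\overline{r}_{i} \in \calO/\frg$.

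Next, for each $i$ I would choose a preimage $r_{i} \in \calO$ of $\overline{r}_{i}$ under the canonical projection $\calO \to \calO/\frg$; representatives can be selected in probabilistic polynomial time since by \cite{Fieker2014} basic arithmetic in $\calO/\frg$ is efficient. Set $\widetilde{E}_{i} := e_{a_{i} b_{i}}(r_{i}) \in \E_{n}(\calO) \subseteq \SL_{n}(\calO)$ and $U := \widetilde{E}_{1} \dotsm \widetilde{E}_{k}$. Since $f_{1}$ is induced by applying the projection $\calO \to \calO/\frg$ entrywise it is a group homomorphism with $f_{1}(\widetilde{E}_{i}) = E_{i}$, whence $f_{1}(U) = E_{1} \dotsm E_{k} = V$, as required.

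Finally I would record the complexity bookkeeping: the running time is dominated by the call to Corollary~\ref{cor:prodele}, so the number $k$ of elementary factors is polynomially bounded; each lift $\widetilde{E}_{i}$ is computed in probabilistic polynomial time; and the product of $k$ matrices in $\Mat_{n}(\calO)$ has entries of polynomially bounded bit-size and is computed in polynomial time. I do not expect any genuine obstacle at this step: all of the substance is already in Corollary~\ref{cor:prodele} (which in turn rests on Lemmas~\ref{lem:row}, \ref{lem:upper} and~\ref{lem:diag}), and the present statement merely combines the fact that lifting is free on elementary matrices with the multiplicativity of $f_{1}$.
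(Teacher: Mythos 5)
Your proposal is correct and is essentially the same as the paper's argument: the paper's (informal) proof is the single sentence preceding the corollary, which likewise invokes Corollary~\ref{cor:prodele} and the trivial entrywise lifting of elementary matrices. Your version simply spells out the bookkeeping that the paper leaves implicit.
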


\subsection{Lifting norm one units for nice maximal orders}
We now consider the case in which $A = \Mat_{n}(K)$ and 
$\calM = \calM_{\mathfrak a,n}$ is a nice maximal order as defined in \S \ref{subsec:nice},
where $\mathfrak{a}$ is a nonzero fractional ideal of $\mathcal{O}$ and $n \in \Z_{\geq 2}$.
(As in \S \ref{subsec:lifting-unimodular-matrices}, the case $n=1$ is trivial.)
Some of the ideas used here are based on \cite[\S 6]{BJ08}.

Let $\mathfrak{b}$ be an integral ideal of $\mathcal{O}$ such that $\mathfrak{b} + \frg = \calO$
and $\mathfrak{a} = \xi \mathfrak{b}$ for some $\xi \in K^{\times}$.
Such an ideal $\mathfrak{b}$ and element $\xi$ can be computed in probabilistic polynomial time,
as shown in Corollary~\ref{cor:coprime}.
Let $b \in \mathfrak b, y \in \frg$ such that $b + y = 1$ and let $R=\mathcal{O}/\mathfrak{g}$.
Then we have an isomorphism
$\mathcal{O}/\mathfrak{g} \rightarrow \mathfrak{a}/\mathfrak{a}\mathfrak{g}$
of $R$-modules defined by
$z + \mathfrak{g} \mapsto zb\xi + \mathfrak{a}\mathfrak{g}$,
with the inverse map given by $x + \mathfrak{a}\mathfrak{g} \mapsto \xi^{-1}x + \mathfrak{g}$.
The first of these maps induces an isomorphism
$\theta_{1} : R^{\oplus n}
\rightarrow R^{\oplus n-1}
\oplus \mathfrak{a}/\mathfrak{a}\mathfrak{g}$ of $R$-modules,
and the second map induces an inverse $\theta_{2}$.
Define $n \times n$ diagonal matrices
$\Phi_{1} = \operatorname{diag}(1, \ldots, 1, \xi^{-1})$ and $\Phi_{2} = \operatorname{diag}(1, \ldots, 1, b\xi)$.
Then we have maps
\[
\psi_1 \colon \Mat_n(\calO) \lra \calM, \, X \mapsto \Phi_2 X \Phi_1
\quad \text{ and } \quad
\psi_2 \colon \calM \lra \Mat_n(\calO), \, Y \mapsto \Phi_1 Y \Phi_2.
\] 
These maps are not multiplicative in general.
However, since $\theta_{1}$ and $\theta_{2}$ are mutually inverse isomorphisms, 
we see that
$\psi_{1}$ and $\psi_{2}$ induce mutually inverse isomorphisms
\[
\overline{\psi}_1 \colon \GL_n(\calO/\frg) \to (\calM/\frg\calM)^\times
\quad \text{ and } \quad
\overline{\psi}_2 \colon (\calM/\frg\calM)^\times \to \GL_n(\calO/\mathfrak{g}).
\]

\begin{lemma}\label{lem:twistlift}
Let $\overline{E} \in \SL_n(\calO/\frg)$ be an elementary matrix.
Then $\overline{\psi}_{1}(\overline{E})$ can be lifted to an element
$U \in \calM^\times$ with $\nr(U) = 1$.
\end{lemma}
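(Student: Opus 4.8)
The plan is to write the lift $U$ down by hand, as an \emph{elementary} matrix in $\SL_n(K)$ that happens to lie in $\calM$. Since $\overline{\psi}_1$ is induced by $\psi_1$, it suffices to treat a single elementary generator: write $\overline{E} = \overline{e_{ij}(s)}$ with $i \neq j$ and $s \in \calO/\frg$, fix a lift $r \in \calO$ of $s$, and observe that $e_{ij}(r) \in \SL_n(\calO)$ maps to $\overline{E}$, so that $\overline{\psi}_1(\overline{E}) = \overline{\psi_1(e_{ij}(r))}$. Thus I only need to alter $\psi_1(e_{ij}(r)) = \Phi_2 e_{ij}(r) \Phi_1 \in \calM$ within its coset modulo $\frg\calM$ until it becomes a unit of reduced norm $1$.

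First I would carry out the matrix multiplication. Since $e_{ij}(r) = I + r E_{ij}$ (with $E_{ij}$ the matrix unit) and $\Phi_1,\Phi_2$ are diagonal, one obtains $\psi_1(e_{ij}(r)) = \operatorname{diag}(1,\dots,1,b) + c\,E_{ij}$, where $c = r$ if $i,j < n$, where $c = r\xi^{-1}$ if $j = n$ (so $i<n$), and where $c = rb\xi$ if $i = n$ (so $j<n$); these three cases are exhaustive as $i \neq j$. In each case I then set $U := \psi_1(e_{ij}(r)) + yE_{nn}$, i.e.\ I replace the $(n,n)$-entry $b$ by $b+y = 1$. Because $y \in \frg \subseteq \calO$, this correction keeps us inside $\calM$, and $U \equiv \psi_1(e_{ij}(r)) \pmod{\frg\calM}$, since the difference $yE_{nn}$ has its only nonzero entry $y \in \frg$ in the $(n,n)$-position, where $\frg\calM$ has entry $\frg$. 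Moreover $U$ is visibly the elementary matrix $e_{ij}(c)$: the $(n,n)$-correction and the off-diagonal entry $c$ never overlap, since $i \neq j$.

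It remains to check that $U \in \calM^\times$ with $\nr(U) = 1$. The membership $U \in \calM$ reduces to $c$ lying in the ideal occupied by the $(i,j)$-slot of $\calM = \calM_{\fra,n}$: this is clear for $i,j < n$ (where $c = r \in \calO$); for $j = n$ we have $c = r\xi^{-1} \in \calO\xi^{-1} \subseteq \frb^{-1}\xi^{-1} = \fra^{-1}$ because $\frb$ is integral; and for $i = n$ we have $c = (rb)\xi \in \frb\xi = \fra$ because $b \in \frb$. The inverse $e_{ij}(-c)$ lies in $\calM$ for the same reason, so $U \in \calM^\times$; and $\nr(U) = \det(U) = 1$ because $U$ is elementary and $A$ is split. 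Together with $\overline{U} = \overline{\psi_1(e_{ij}(r))} = \overline{\psi}_1(\overline{E})$, this proves the lemma. I do not anticipate a genuine obstacle here: the argument is a short explicit case analysis, and the only thing requiring care is bookkeeping the three cases and invoking $b \in \frb$, $b \equiv 1 \pmod{\frg}$, and the integrality of $\frb$ at the right moments. (Conceptually one could instead identify $\calM_{\fra,n}$ with $\End_\calO(\calO^{n-1}\oplus\fra)$ and note that $\overline{\psi}_1$ is conjugation by $\theta_1$, but the direct computation is quicker.)
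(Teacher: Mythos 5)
Your proof is correct and takes essentially the same approach as the paper: both write down an explicit elementary matrix in $\calM$ as the lift, split into the same three cases ($i,j<n$; $j=n$; $i=n$), and rely on $b\in\frb$, $b\equiv 1\pmod\frg$, and $\fra=\xi\frb$ at the same points. Your only cosmetic streamlining is to phrase the three cases uniformly as $U=e_{ij}(c)$ obtained by replacing the $(n,n)$-entry $b$ of $\psi_1(e_{ij}(r))$ with $b+y=1$, rather than exhibiting the block matrices separately.
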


\begin{proof}
For $V \in \Mat_n(\calO)$ we write 
\[
  \def\arraystretch{1.2}
V =   \left(  \begin{array}{c|c}V_1 & x \\ \hline y & d \end{array} \right)
\]
with $V_1 \in \Mat_{n-1}(\calO), x,y^t \in \calO^{n-1}$ and $d \in \calO$. Then
\[
  \def\arraystretch{1.2}
\psi_{1}(V) = \left(  \begin{array}{c|c}V_1 & \xi^{-1}x \\ \hline \xi b y & bd \end{array} \right).
\]
Let $I_m \in \Mat_m(\calO)$ denote the identity matrix.

\noindent
Case 1: If $\def\arraystretch{1.2}
\overline{E} =  \left(  \begin{array}{c|c} e_{ij}(\bar a) & 0 \\ \hline 0 & 1 \end{array} \right)
$
with $a \in \calO$, then 
$\def\arraystretch{1.2}
\overline{\psi}_1(\overline{E}) =  \left(  \begin{array}{c|c} e_{ij}(\bar a) & 0 \\ \hline 0 & \bar b \end{array} \right)
$
and a lift is given by 
\[
\left(  \begin{array}{c|c} e_{ij}(a) & 0 \\ \hline 0 & 1 \end{array} \right).
\]

\noindent
Case 2: If $\def\arraystretch{1.2}
\overline{E} =  \left(  \begin{array}{c|c} \overline{I}_{n-1} & \bar x \\ \hline 0 & 1 \end{array} \right)
$
with $\bar x^t = (0,\ldots,0,\bar a, 0, \ldots, 0), a \in \calO$,
then 
$\def\arraystretch{1.2}
\overline{\psi}_{1}(\overline{E}) 
= \left(  \begin{array}{c|c} \overline{I}_{n-1} &  \overline{\xi^{-1} x}\\ \hline 0 & \bar b \end{array} \right)
$
and a lift is given by
\[
\left(  \begin{array}{c|c} I_{n-1} &  {\xi^{-1} x}\\ \hline 0 & 1 \end{array} \right).
\]
Note that in this case $\xi^{-1}a \in \frb\fra^{-1} \sseq \fra^{-1}$.

\noindent
Case 3: If $\overline{E} =  \left(  \begin{array}{c|c} \overline{I}_{n-1} & 0 \\ \hline \bar y & 1 \end{array} \right)$
with $\bar y = (0,\ldots,0,\bar a, 0, \ldots, 0), a \in \calO$, then
$\def\arraystretch{1.2}
\psi_{1}(\overline{E}) = \left(  \begin{array}{c|c} \overline{I}_{n-1} &  0 \\ \hline \overline{\xi by} & \bar b \end{array} \right)$
and a lift is given by
\[
\left(  \begin{array}{c|c} I_{n-1} &  0 \\ \hline \xi by & 1 \end{array} \right).
\]
Here we note that $\xi ba  \in \xi\frb = \fra$.
\end{proof}

\begin{prop}\label{prop:liftnormone}
For $A = \Mat_{n}(K)$ let $\calM = \calM_{\mathfrak a,n} \sseq A$ be a nice maximal order.
Then there exists a probabilistic polynomial-time algorithm that given the factorisation of $\frg$ and $V \in \SL(\calM/\frg\calM)$ returns
$U \in \SL(\calM)$ with $f_{1}(U) = V$.
\end{prop}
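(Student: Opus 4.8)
The plan is to transport the problem to the split case $\Mat_n(\calO)$ via the isomorphisms $\overline{\psi}_1,\overline{\psi}_2$ introduced above, and then to assemble Corollary~\ref{cor:prodele} (factorisation of an element of $\SL_n(\calO/\frg)$ into elementary matrices) with Lemma~\ref{lem:twistlift} (lifting of $\overline{\psi}_1$ of an elementary matrix to a norm-one unit of $\calM$). The one point that is not an immediate citation is that $\overline{\psi}_1$ is compatible with the reduced norm, and I would deal with that first.

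Concretely, for $X \in \GL_n(\calO)$ we have $\psi_1(X) = \Phi_2 X \Phi_1 \in \calM \subseteq A = \Mat_n(K)$, and since $\nr = \det$ on $A$ and $\Phi_1 = \operatorname{diag}(1,\dotsc,1,\xi^{-1})$, $\Phi_2 = \operatorname{diag}(1,\dotsc,1,b\xi)$, we get $\nr(\psi_1(X)) = \det(\Phi_2)\det(X)\det(\Phi_1) = b\det(X)$. As $b \equiv 1 \pmod{\frg}$ by the choice of $b,y$, reduction modulo $\frg$ yields $\overline{\nr}(\overline{\psi}_1(\overline X)) = \det(\overline X)$ for all $\overline X \in \GL_n(\calO/\frg)$. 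Hence $\overline{\psi}_1$ (a group isomorphism by the discussion preceding the proposition) restricts to a group isomorphism $\SL_n(\calO/\frg) \to \SL(\calM/\frg\calM)$, with inverse the restriction of $\overline{\psi}_2$. In particular, given $V \in \SL(\calM/\frg\calM)$, the matrix $\overline V := \overline{\psi}_2(V)$ lies in $\SL_n(\calO/\frg)$, and it is computable in probabilistic polynomial time since $\overline{\psi}_2$ is $Y \mapsto \Phi_1 Y \Phi_2$ and $\xi, b$ are available in probabilistic polynomial time by Corollary~\ref{cor:coprime}.

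The algorithm is then as follows. On input $V \in \SL(\calM/\frg\calM)$, compute $\overline V = \overline{\psi}_2(V) \in \SL_n(\calO/\frg)$; apply Corollary~\ref{cor:prodele} to obtain elementary matrices $\overline E_1,\dotsc,\overline E_k \in \Mat_n(\calO/\frg)$ with $\overline V = \overline E_1 \dotsm \overline E_k$ and $k$ polynomially bounded; for each $j$, use the explicit lifts of Lemma~\ref{lem:twistlift} to produce $U_j \in \calM^\times$ with $\nr(U_j) = 1$ and reduction $\overline{\psi}_1(\overline E_j)$, so $U_j \in \SL(\calM)$ and $f_1(U_j) = \overline{\psi}_1(\overline E_j)$; return $U := U_1 \dotsm U_k \in \SL(\calM)$. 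Since $\overline{\psi}_1$ is a homomorphism and $f_1$ is multiplicative, $f_1(U) = \prod_j \overline{\psi}_1(\overline E_j) = \overline{\psi}_1(\overline V) = \overline{\psi}_1(\overline{\psi}_2(V)) = V$. One should record here that Lemma~\ref{lem:twistlift} does apply to every $\overline E_j$: its three cases cover the elementary matrices $e_{ij}(\bar a)$ according to whether $i,j \le n-1$, $j = n$, or $i = n$.

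The remaining content is purely complexity bookkeeping: each of the above steps runs in probabilistic polynomial time, $k$ is polynomially bounded (Corollary~\ref{cor:prodele} is a polynomial-time algorithm), and the lifts of Lemma~\ref{lem:twistlift} are given by explicit formulae, so forming the product $U_1 \dotsm U_k$ in $\calM^\times$ costs polynomial time. Thus the main (and only mildly nontrivial) obstacle is the reduced-norm compatibility of $\overline{\psi}_1$ established in the second paragraph; everything else is a direct application of the preceding results.
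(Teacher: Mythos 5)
Your proposal is correct and follows essentially the same route as the paper's proof: transfer to the split case via $\overline{\psi}_2$, factorise into elementary matrices by Corollary~\ref{cor:prodele}, and lift each factor through Lemma~\ref{lem:twistlift}, taking the product of the lifts. You additionally make explicit the reduced-norm compatibility $\overline{\nr}\circ\overline{\psi}_1 = \det$ (coming from $\det(\Phi_2)\det(\Phi_1) = b \equiv 1 \pmod{\frg}$), which justifies that $\overline{\psi}_2(V) \in \SL_n(\calO/\frg)$; the paper uses this implicitly when applying Corollary~\ref{cor:prodele}, so your extra paragraph is a small but welcome clarification rather than a divergence.
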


\begin{proof}
By Corollary~\ref{cor:prodele} we can find elementary matrices 
$E_1,\dotsc,E_r \in \Mat_n(\calO/\frg)$ with $\overline{\psi}_2(V) = E_1\dotsm E_r$.
Applying $\overline{\psi}_{1}$ we obtain
$V = \overline{\psi}_1(E_1)\dotsm \overline{\psi}_1(E_r)$.
Moreover, by Lemma~\ref{lem:twistlift} each of the matrices $\overline{\psi}_1(E_i)$ can be lifted to
a matrix $U_i \in \calM^\times$ with $\nr(U_{i}) = 1$.
Thus we can and do take $U := \prod_{i}U_{i}$.
\end{proof}

\subsection{Lifting norm one units in maximal orders}

We now consider an arbitrary maximal order $\calM$ of $A = \prod_{i=1}^{r}\Mat_{n_{i}}(K_{i})$.

\begin{theorem}\label{coro:normonelift}
The map $f_{1} \colon \SL(\calM) \to \SL(\calM/\frg \calM)$ is surjective.
Moreover, there exists a probabilistic polynomial-time algorithm that given
the factorisation of $\frg$ and $V \in \SL(\calM/\frg \calM)$ returns an element $U \in \SL(\calM)$ with $f_{1}(U) = V$.
\end{theorem}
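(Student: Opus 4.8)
The plan is to reduce to the case of a single simple component, and then within that case to a nice maximal order, where the statement has already been proved.

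First I would reduce to $r=1$. Let $e_1,\dots,e_r$ be the central primitive idempotents of $A$ (available from the given product structure, and computable in polynomial time by \cite{Friedl1985} in any case). By \cite[(10.5)]{Reiner2003} we have $e_i \in \calM$ and $\calM = \bigoplus_{i=1}^{r} \calM_i$ with $\calM_i := \calM e_i$ a maximal $\mathcal{O}_{K_i}$-order in $A_i = \Mat_{n_i}(K_i)$; likewise $\frg = \bigoplus_i \frg_i$ with $\frg_i := \frg e_i$ an integral ideal of $\mathcal{O}_{K_i}$, and $\frg\calM = \bigoplus_i \frg_i \calM_i$. Since the reduced norm on $A = \prod_i \Mat_{n_i}(K_i)$ is the componentwise determinant, these decompositions induce $\SL(\calM) = \prod_i \SL(\calM_i)$ and $\SL(\calM/\frg\calM) = \prod_i \SL(\calM_i/\frg_i\calM_i)$, under which $f_1$ becomes the product of the canonical maps $f_1^{(i)} \colon \SL(\calM_i) \to \SL(\calM_i/\frg_i\calM_i)$. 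As $r \le \dim_K A$, it therefore suffices to treat a single component in probabilistic polynomial time (the case $\frg_i = \mathcal{O}_{K_i}$ being trivial).

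So fix a component and write $A = \Mat_n(K)$, $\calM$ a maximal $\mathcal{O}$-order, $\frg$ an integral ideal of $\mathcal{O}$. By \cite[(27.6)]{Reiner2003} together with Lemma~\ref{lem:nice} I can compute, in probabilistic polynomial time, a nonzero fractional ideal $\mathfrak{a}$ of $\mathcal{O}$ and a matrix $S \in \GL_n(K)$ with $S\calM S^{-1} = \calM_{\mathfrak{a},n}$. Conjugation $X \mapsto SXS^{-1}$ is a $K$-algebra automorphism of $A$ that preserves the reduced norm (it is the determinant, which is conjugation-invariant), and, since $\frg \subseteq Z(A)$, it carries $\frg\calM$ onto $\frg\calM_{\mathfrak{a},n}$. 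Hence it induces group isomorphisms $\SL(\calM) \cong \SL(\calM_{\mathfrak{a},n})$ and $\SL(\calM/\frg\calM) \cong \SL(\calM_{\mathfrak{a},n}/\frg\calM_{\mathfrak{a},n})$ intertwining $f_1$ with the canonical map for $\calM_{\mathfrak{a},n}$. Now Proposition~\ref{prop:liftnormone} applies directly to $\calM_{\mathfrak{a},n}$: it yields surjectivity together with a probabilistic polynomial-time lifting algorithm, which I transport back through conjugation by $S^{-1}$.

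Assembling the pieces proves both assertions: surjectivity of $f_1$ follows componentwise, and the algorithm is to compute the idempotents and split the input $u$, then for each component conjugate to a nice maximal order via Lemma~\ref{lem:nice}, lift via Proposition~\ref{prop:liftnormone}, conjugate back, and recombine into an element of $\SL(\calM)$. I do not expect a serious obstacle here: the only thing requiring care is checking that the product decomposition and the conjugation are compatible with the reduced-norm maps and with the ideal $\frg\calM$, so that the various copies of $f_1$ genuinely correspond — but this is immediate from the centrality of $\frg$ and the conjugation-invariance of the determinant. All cited ingredients (Lemma~\ref{lem:nice}, Proposition~\ref{prop:liftnormone}, and computing the Wedderburn decomposition) already run in probabilistic polynomial time, and the number of components is polynomially bounded, so the resulting algorithm is as well.
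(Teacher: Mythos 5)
Your proof is correct and follows essentially the same route as the paper: decompose via central primitive idempotents to reduce to a single simple component $A = \Mat_n(K)$, conjugate to a nice maximal order via Lemma~\ref{lem:nice}, and apply Proposition~\ref{prop:liftnormone}. The paper states this in three sentences; you have simply spelled out the compatibility checks (conjugation-invariance of the reduced norm, centrality of $\frg$) that the paper leaves implicit.
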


\begin{proof}
By decomposing $\calM$ using the central primitive idempotents, it is sufficient to consider the case
$A = \Mat_{n}(K)$.
By Lemma~\ref{lem:nice}, we can and do assume that $\calM = \calM_{\mathfrak a, n}$
is a nice maximal order.
Thus the result follows from Proposition~\ref{prop:liftnormone}.
\end{proof}

\section{Isomorphism testing and the principal ideal problem}\label{sec:pip}

Let $K$ be a number field with ring of integers $\calO = \OK$ and let $A$ be a finite-dimensional
$K$-algebra satisfying hypothesis (H).
Let $\Lambda$ be an $\mathcal{O}$-order in $A$. 
In this section, we present the main algorithm for solving the isomorphism problem $\pisomorphic$ for lattices
over $\Lambda$.

We begin with two straightforward reductions which together show that it suffices 
to consider the problem $\ppip$ in the case that $A$ is semisimple.
Note that these reductions are valid when $A$ is an arbitrary finite-dimensional $K$-algebra that does not necessarily satisfy hypothesis (H).

\begin{prop}\label{prop:comp_red_rankone}
The problem $\pisomorphic$ is polynomial-time reducible to $\ppip$.
More precisely, for $\Lambda$-lattices $X$ and $Y$, the problem $\pisomorphic$ is polynomial-time 
reducible to $\ppip$ for an $\End_\Lambda(Y)$-lattice in $\End_A(KY)$.
\end{prop}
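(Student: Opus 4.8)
The plan is to make Proposition~\ref{prop:hom-free-over-end} effective. The one subtlety to keep in mind is that $\ppip$ takes as input a full lattice \emph{inside a $K$-algebra}, whereas $\Hom_\Lambda(X,Y)$ naturally sits in the bimodule $\Hom_A(KX,KY)$, which is an algebra only after $KX$ and $KY$ have been identified. So the first step would be to reduce to the case $KX = KY$: by \cite{MR1809971} one decides in polynomial time whether $KX \cong KY$ as $A$-modules, and if so computes an isomorphism $\sigma\colon KX \to KY$. If no such $\sigma$ exists, then $X \not\cong Y$ by Lemma~\ref{lem:ext} and we are done; otherwise $\sigma$ restricts to a $\Lambda$-lattice isomorphism $X \to \sigma(X)$, so we may replace $X$ by $\sigma(X)$ and assume $KX = KY =: W$ from now on.

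Next I would form the $K$-algebra $B := \End_A(W)$, the $\calO$-order $\Gamma := \End_\Lambda(Y)$ in $B$ (an order by the discussion preceding Proposition~\ref{prop:hom-free-over-end}), and the full $\Gamma$-lattice $Z := \Hom_\Lambda(X,Y)$ in $B$, where $\Gamma$ acts on $B$ by post-composition, i.e.\ by left multiplication. These objects are computed by the same pseudo-Hermite and pseudo-Smith normal form techniques used in \cite[\S 7.3]{MR4136552} and in the proof of Corollary~\ref{cor:local_freeness}, which run in probabilistic polynomial time by \cite{Biasse2016}.

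Then I would invoke $\ppip$ on the instance $(B,\Gamma,Z)$. If it reports that $Z$ is not of the form $\Gamma\gamma$, then by Lemma~\ref{lem:cyclic-iff-free} (applied with $B,\Gamma,Z$ in place of $A,\Lambda,X$, which is legitimate precisely because $Z$ is full in $B$) the $\End_\Lambda(Y)$-lattice $\Hom_\Lambda(X,Y)$ is not free of rank $1$, so $X \not\cong Y$ by Proposition~\ref{prop:hom-free-over-end}(a), and we return that answer. Otherwise $\ppip$ returns $f \in Z = \Hom_\Lambda(X,Y)$ with $\Hom_\Lambda(X,Y) = \End_\Lambda(Y)\,f$, and Lemma~\ref{lem:cyclic-iff-free} shows $f$ is in fact a \emph{free} generator. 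By Proposition~\ref{prop:hom-free-over-end}(b), $X$ and $Y$ are isomorphic if and only if this $f$ is an isomorphism $X \to Y$, which one checks by elementary linear algebra over $\calO$ (for instance, verifying that $f$ is injective and that $f(X) = Y$). If the test succeeds we return $f$, pre-composed with $\sigma$ if $X$ was replaced in the first step; otherwise we conclude $X \not\cong Y$. This exhibits $\pisomorphic$ for $(X,Y)$ as polynomial-time reducible to a single instance of $\ppip$ for the $\End_\Lambda(Y)$-lattice $\Hom_\Lambda(X,Y)$ in $\End_A(KY)$.

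The step I expect to require the most care is not any individual computation — each ingredient is either routine linear algebra or a cited result — but rather the bookkeeping that keeps the $\ppip$ instance well-posed: one must genuinely reduce to $KX = KY$ so that $\Hom_A(KX,KY)$ becomes the \emph{algebra} $\End_A(KY)$ and $\Hom_\Lambda(X,Y)$ becomes a \emph{full} lattice in it, and one must verify that, for $Z$ over $\Gamma$, the property returned by $\ppip$ (being cyclic) coincides with being free of rank $1$, which is exactly Lemma~\ref{lem:cyclic-iff-free} and relies on the fullness just arranged. Everything else is formal from Proposition~\ref{prop:hom-free-over-end}.
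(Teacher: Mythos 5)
Your proposal is correct and follows essentially the same route as the paper's proof: use \cite{MR1809971} to decide whether $KX \cong KY$ and produce an isomorphism, transport $\Hom_\Lambda(X,Y)$ to a full $\End_\Lambda(Y)$-lattice in the algebra $\End_A(KY)$, and then invoke a single instance of $\ppip$ via Proposition~\ref{prop:hom-free-over-end} together with Lemma~\ref{lem:cyclic-iff-free}. The only cosmetic difference is that you replace $X$ by $\sigma(X)$ so that the Hom-lattice already sits in $\End_A(KY)$, whereas the paper keeps $X$ and applies the $A$-module isomorphism $\Phi \colon g \mapsto g \circ f$ to $\Hom_\Lambda(X,Y)$; both produce the same instance of $\ppip$.
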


\begin{proof}
Let $X$ and $Y$ be two $\Lambda$-lattices.
By~\cite[Corollary 3]{MR1809971} we can check in polynomial time whether the $A$-modules $KX$ and $KY$ are isomorphic,
and if so, compute an isomorphism $f \colon KY \to KX$.
Then $\Phi \colon \Hom_A(KX, KY) \to \End_A(KY), \, g \mapsto g \circ f$ is an 
isomorphism of $\End_A(KY)$-modules.
Recall from \S \ref{subsec:reduce-free-rank1} 
that we consider $\Hom_\Lambda(X, Y)$ as a subset of $\Hom_A(KX, KY)$.
Thus by Proposition~\ref{prop:hom-free-over-end}, the $\Lambda$-lattices $X$ and $Y$ are isomorphic
if and only if the full $\End_\Lambda(Y)$-lattice $\Phi(\Hom_\Lambda(X, Y))$ in $\End_A(KY)$ is
free of rank $1$ and for every (any) free generator $\alpha$ the morphism $\alpha \circ f^{-1} \colon X \to Y$ 
is an isomorphism.
\end{proof}

Let $\jac(A)$ denote the Jacobson radical of $A$ and recall that
$\overline{A} := A/{\jac(A)}$ is a semisimple $K$-algebra by \cite[(5.19)]{curtisandreiner_vol1}.
For any full $\Lambda$-lattice $X$ in $A$, let $\overline{X}$ denote its image under the canonical projection map $A \rightarrow \overline{A}$. Note that $\overline{\Lambda}$ is an $\mathcal{O}$-order in $\overline{A}$.

\begin{prop}\label{prop:comp_red_semisimple}
The problem $\ppip$ for an arbitrary finite-dimensional $K$-algebra
is polynomial-time reducible to $\ppip$ for a finite-dimensional semisimple $K$-algebra.
More precisely, for a full $\Lambda$-lattice $X$ in $A$, the problem $\ppip$ is polynomial-time reducible to the problem $\ppip$ for the full $\overline{\Lambda}$-lattice $\overline{X}$ in $\overline{A}$.
\end{prop}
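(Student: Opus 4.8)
The plan is to read off an algorithm directly from Theorem~\ref{thm:lift-gen-from-semisimple-case}. Given the instance $(K,A,\Lambda,X)$ of $\ppip$ with $X$ a full $\Lambda$-lattice in $A$, I would first compute the Jacobson radical $\jac(A)$; this can be done in polynomial time (in characteristic zero $\jac(A)$ is the radical of the trace form of the regular representation, so it is cut out by linear conditions, and in general one may invoke \cite{Friedl1985}). From $\jac(A)$ one obtains the semisimple quotient $\overline{A}=A/\jac(A)$, the canonical projection $h\colon A\to\overline{A}$, and then $\overline{\Lambda}=h(\Lambda)$ and $\overline{X}=h(X)$ by elementary linear algebra over $K$ and $\mathcal{O}$; all of this data has size polynomial in the input. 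By Theorem~\ref{thm:lift-gen-from-semisimple-case}, $\overline{X}$ is a full $\overline{\Lambda}$-lattice in the semisimple algebra $\overline{A}$, so $(K,\overline{A},\overline{\Lambda},\overline{X})$ is a legitimate instance of $\ppip$ for a semisimple algebra.

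Next I would call the $\ppip$ oracle on $\overline{X}$. If it reports that $\overline{X}$ is not principal over $\overline{\Lambda}$, then $X$ is not principal over $\Lambda$ either, since $X=\Lambda\alpha$ would force $\overline{X}=\overline{\Lambda}\,\overline{\alpha}$ by Theorem~\ref{thm:lift-gen-from-semisimple-case}(a); so we return ``not principal''. Otherwise the oracle returns some $\overline{\alpha}\in\overline{X}$ with $\overline{X}=\overline{\Lambda}\,\overline{\alpha}$. Since $h$ maps $X$ onto $\overline{X}$, we may compute, again by linear algebra over $\mathcal{O}$, an element $\alpha\in X$ with $h(\alpha)=\overline{\alpha}$. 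By Theorem~\ref{thm:lift-gen-from-semisimple-case}(b), either $X=\Lambda\alpha$ or $X\ne\Lambda\beta$ for every $\beta\in X$; hence it suffices to decide whether this particular $\alpha$ generates $X$. By Lemma~\ref{lem:cyclic-iff-free} (applicable because $KX=A$) this is equivalent to $\alpha$ being a free generator of $X$ over $\Lambda$, which can be tested in polynomial time by comparing the $\mathcal{O}$-lattices $\Lambda\alpha\subseteq X$, e.g.\ via a module-index or pseudo-Hermite normal form computation (cf.\ \cite{Biasse2016}). If $X=\Lambda\alpha$ we return $\alpha$; otherwise we return ``not principal''. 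Correctness is immediate from Theorem~\ref{thm:lift-gen-from-semisimple-case}, and since every step runs in polynomial time, this is a polynomial-time reduction.

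The argument is essentially bookkeeping layered on top of Theorem~\ref{thm:lift-gen-from-semisimple-case}; the only points needing care are (i) that $\jac(A)$, and hence the entire passage to $(\overline{A},\overline{\Lambda},\overline{X})$, is computable in polynomial time, and (ii) the observation — which is exactly the force of Theorem~\ref{thm:lift-gen-from-semisimple-case}(b) — that a single lift $\alpha$ of $\overline{\alpha}$ must be tested, so that no search over the infinitely many lifts of $\overline{\alpha}$ is required. I expect (i), or rather pinning down which standard result to cite for it, to be the only mild obstacle; everything else reduces to routine linear algebra over $\mathcal{O}$ and $K$.
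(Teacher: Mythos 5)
Your proposal is correct and is essentially the same argument the paper gives (the paper's proof is just the two sentences: compute $\jac(A)$ in polynomial time via \cite{Friedl1985}, then invoke Theorem~\ref{thm:lift-gen-from-semisimple-case}). You have merely unpacked the bookkeeping that the paper leaves implicit — lifting the generator, applying part (b) to see that a single lift suffices, and testing $\Lambda\alpha = X$ by lattice comparison — all of which is correct.
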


\begin{proof}
The Jacobson radical of $A$ can be computed in polynomial time by~\cite[1.5~A]{Friedl1985}.
The result then follows from Theorem~\ref{thm:lift-gen-from-semisimple-case}.
\end{proof}

The main algorithm of the present article is as follows.

\begin{algorithm}\label{alg:main} 
Suppose that $A$ is semisimple and satisfies hypothesis \textup{(H)}. 
Let $X$ be a full $\Lambda$-lattice in $A$.
The following steps solve $\ppip(X)$, that is, they 
determine whether there exists $\alpha \in X$ such that $X=\Lambda \alpha$ 
and if so, return such an element $\alpha$.
\renewcommand{\labelenumi}{(\arabic{enumi})}
\begin{enumerate}
\item Determine  the centre $C$ of $A$, the decomposition $A = \prod_{i} A_i$ into simple $K$-algebras $A_i$
and, for each $i$,  an isomorphism $A_i \cong \Mat_{n_i}(K_i)$. 
\item Compute a maximal $\mathcal{O}$-order $\mathcal{M}$ in $A$ containing $\Lambda$
and its centre $\mathcal{O}_{C} := \mathcal{M} \cap C$.
\item Compute the central primitive idempotents $e_{i}$ and the components 
$\mathcal{M}_{i}:=\mathcal{M} e_{i}$.
\item Compute the central conductor $\mathfrak{g} := \{ x \in C \mid x\mathcal{M} \subseteq \Lambda \}$ 
of $\Lambda$ in $\mathcal{M}$ and $\mathfrak{f} := \mathfrak{g}\mathcal{M}$.
\item Check whether $\mathcal{M}X$ is free over $\mathcal{M}$, and if so, compute
$\beta$ such that $\calM X = \calM \beta$.
\item Check whether $X$ is locally free over $\Lambda$.
\item Replace $X$ by $X \xi$, where $\xi \in A^\times$ is such that $X\xi \subseteq \Lambda$ and 
$X\xi + \frf = \Lambda$.
\item Compute a set of generators for $\left( \Lambda / \frf\right)^\times$.
\item Let $\overline\nr \colon \left( \mathcal{M} / \frf\right)^\times \lra \left( \OC/\frg \right)^{\times}$
be the map of Lemma \ref{lem:defn-of-overline-nr}. 
Compute $\left( \OC/\frg \right)^\times$ as an abstract abelian group and compute
$\overline\nr\left( (\Lambda / \frf)^\times\right)$ as a subgroup of $\left( \OC/\frg \right)^\times$.
\item  Let $\pi_{2}$ and $f_2$ be the maps  defined in the commutative diagram \eqref{f1 diagram}.
Decide whether $\overline\nr(\overline\beta)$ is in the image of $\pi_2\circ f_2$, and if so,
compute $\bar a \in \left( \Lambda / \frf\right)^\times$  and $u \in \calM^\times$
such that $\overline\nr(\overline{\beta a}) = \overline\nr(\bar u)$. 
\item Compute $v \in \SL(\calM)$ such that $\overline{\beta a u^{-1}} = \overline v$.
\end{enumerate}
If any of steps \textup{(5)}, \textup{(6)}, or \textup{(10)} fail, then $X$ is not free over $\Lambda$.
If all these steps succeed, then $X = \Lambda \alpha$ where $\alpha := (vu)^{-1}\beta$.
\end{algorithm}

\begin{proof}[Proof of correctness of Algorithm \ref{alg:main}]
Failure of Step (5) or (6) immediately implies
that $X$ is not free over $\Lambda$. 
Otherwise, we use the local bases computed in Step (6) to replace
$X$ by $X\xi$ in Step (7), as described in Proposition \ref{prop:suitable-locally-free-lattice} and its proof. After successful completion of Steps (1) to (7), we then can and do assume that $X$ is a locally free full 
$\Lambda$-lattice in $A$ such that $X+\frf = \Lambda$ and $\calM X = \calM\beta$. 
These are the assumptions needed for Theorem \ref{thm:main-thm-for-freeness}.  
Moreover, since $A$ is semisimple and satisfies hypothesis (H), 
the map $f_1$ in diagram \eqref{f1 diagram} is surjective by Theorem \ref{coro:normonelift},
and so Theorem \ref{thm:main-thm-for-freeness} (b) can be applied.
Hence
$X$ is free over $\Lambda$ if and only if  $\overline\nr(\overline\beta)$ is contained in the image of $\pi_2\circ f_2$. 
This is precisely what is checked in Step (10). 
In addition, the second part of Theorem \ref{thm:main-thm-for-freeness} (b) implies
that $X = \Lambda\alpha$ with $\alpha$ as at the end of Algorithm \ref{alg:main}.
\end{proof}

The following result analyses the complexity of Algorithm \ref{alg:main},
and further details on each step are given in the proof.

\begin{theorem}\label{thm:main}
Let $\Lambda$ be an $\mathcal{O}$-order in a finite-dimensional semisimple $K$-algebra $A$ satisfying hypothesis \textup{(H)} and let $K_{1}, \ldots, K_{r}$ be the simple components of the centre of $A$. 
Let $\mathcal{M}$ be any choice of maximal $\mathcal{O}$-order in $A$ containing $\Lambda$
and let $\mathfrak{h} = [\mathcal{M} : \Lambda]_{\mathcal{O}}$ be the module 
index of $\Lambda$ in $\mathcal{M}$.
Then for a full $\Lambda$-lattice $X$ in $A$, Algorithm~\ref{alg:main} reduces the problem $\ppip(X)$ in probabilistic polynomial time to
\begin{enumerate}
\item $\pwedderburn(A)$, the computation of the Wedderburn decomposition of $A$,
\item $\pfactor(\disc(\Lambda))$, the factorisation of the discriminant of $\Lambda$,
\item for each $i$ with $1 \leq i \leq r$, one instance of $\ppip_{\mathcal O_{K_i}}$,
\item for each $i$ with $1 \leq i \leq r$, $\unit(\mathcal O_{K_i})$,
\item for each prime ideal divisor $\mathfrak{p}$ of $\mathfrak{h}$, the problem $\pdlog$ 
for extensions of $\mathcal{O}/\mathfrak{p}$, and
\item for each prime ideal divisor $\mathfrak{p}$ of $\mathfrak{h}$,
the problem $\pprimitive$ for extensions of $\mathcal{O}/\mathfrak{p}$.
\end{enumerate}
Note that $\mathcal{M}$ and $\mathfrak{h}$ are not part of the input and $\mathfrak{h}$
is only needed for the above complexity statement. 
Moreover, $\mathfrak{h}$ does not depend on the choice of $\mathcal{M}$.
\end{theorem}

\begin{proof}
In the following, the steps refer to those of Algorithm \ref{alg:main}.
Let $\calM$ be the maximal order computed in Step~(2) and let $\frf$ be the ideal computed in Step~(4).
Before analysing the steps, we make the following observations. 
By \cite[(25.3)]{Reiner2003}, $\disc(\mathcal{M})$ is independent of the choice of $\mathcal{M}$.
Moreover, by \cite[(26.3)(iii)]{curtisandreiner_vol1} we have
$\disc(\Lambda) = \mathfrak{h}^{2} \disc(\calM)$, and so $\disc(\Lambda)$ and $\mathfrak{h}$ are also
independent of the choice of $\mathcal{M}$.
Since $\frh\calM \sseq \Lambda$, we have $\frh \sseq \frg$ for any choice of $\mathcal{M}$.
Therefore
$\disc(\Lambda) \subseteq \mathfrak{h}\subseteq \mathfrak{g}$ and $\frh\Lambda \subseteq \frf$.
In particular, $\pfactor(\calO \cap \frf)$ and $\pfactor(\mathfrak{g})$ reduce in polynomial time
to $\pfactor(\disc(\Lambda))$.

Step~(1) is an instance of $\pwedderburn$.
In Step~(2), the problem of computing a maximal $\mathcal{O}$-order $\mathcal{M}$ in $A$
containing $\Lambda$ reduces in probabilistic polynomial time to $\pfactor(\disc(\Lambda))$
by Proposition \ref{prop:comp-max-order}.
It is then trivial to determine $\OC = \calM \cap C$.
Step~(3) can be easily performed using the isomorphisms $A_i \cong \Mat_{n_i}(K_i)$ from Step~(1).
In Step~(4), the central conductor $\mathfrak{g}$ can be computed as the intersection
$(\mathcal{M} : \Lambda)_l \cap C$, where
$(\mathcal{M} : \Lambda)_l := \{ x \in \mathcal{M} \mid x\mathcal{M} \subseteq \Lambda \}$ is
 the left conductor of $\Lambda$ into $\mathcal{M}$.
 As the left conductor can be determined using a pseudo-Hermite normal form computation
(see~\cite[(2.16)]{Friedrichs}), this step can also be performed in polynomial time. 
  Step~(5) is probabilistic polynomial-time reducible to one instance of $\ppip_{\mathcal O_{K_i}}$
for each $1 \leq i \leq r$, by Proposition~\ref{prop:pipmaximal}.
Steps (6) and (7) are probabilistic polynomial-time reducible to $\pfactor(\calO \cap \frf)$ by
Proposition~\ref{prop:suitable-locally-free-lattice}.

Step (8): Proposition~\ref{prop:compk1} shows that this is probabilistic polynomial-time reducible to
$\pfactor(\mathfrak{g})$ and
{for each prime ideal divisor $\mathfrak{P}$ of $\mathfrak{g}$ at most
$d$ instances of $\pprimitive$ in extensions of $\mathcal{O}/(\mathcal{O}\cap\mathfrak{P})$ of degree at most $d$.
Now for each prime ideal $\mathfrak{p}$ dividing $\mathfrak{g} \cap \mathcal{O}$ there are at most $d$
prime ideals $\mathfrak{P}$ of $\mathcal{O}_C$ satisfying $\mathfrak{P} \cap \mathcal{O} = \mathfrak{p}$.
Finally note that $\mathfrak{g} \cap \mathcal{O}$ divides $\mathfrak{h}$.}

Step (9): It follows from \cite[Algorithms 4.2.2 and 4.2.17]{MR1728313} 
that the computation of generators and the structure of $\left( \OC/\frg \right)^\times$ as an abelian group
is polynomial-time reducible to $\pfactor(\mathfrak{g})$
and for each prime ideal divisor $\mathfrak{P}$ of $\mathfrak{g}$ one instance of $\pprimitive$ in an extension $\mathcal{O}/(\mathcal{O}\cap\mathfrak{P})$ of degree at most $d$.
Estimating the number of prime ideal divisors as in the previous paragraph shows that this part contributes $d$ instances of $\pprimitive$ in (e).
Let $V = \{\bar a_1, \ldots, \bar a_m\}$ be a set of generators of $\left( \Lambda / \frf \right)^\times$.
Let $e$ denote the exponent of $\left( \OC / \frg \right)^\times$ and let 
$\calG := \prod_{i=1}^m \Z / e\Z \cdot\bar a_i$ be the $\Z/e\Z$-free abelian group on $V$.
Let $\overline\nu \colon \calG \rightarrow \left( \OC / \frg \right)^\times$ be the homomorphism induced
by $\bar a_i \mapsto \overline\nr(\bar a_i)$.
Then $\im(\bar\nu) = \overline\nr\left( (\Lambda / \frf)^\times\right)$  and we apply algorithms for finite abelian groups
(see \cite[\S 4.1]{MR1728313}) to compute the image.
For this we have to solve the discrete logarithm in $(\OC/\frg)^\times$ for each of the $m$ generators
$\bar a_1, \ldots, \bar a_m$. By Proposition \ref{prop:compk1} the number $m$ is bounded by $5d[K:\Q]\log_2|\OC/\frg|$. 
Thus the claim in part (f) follows from
\begin{align*}
|\OC/\frg| 
& \leq |\OC/\frh\OC| 
= \prod_{i=1}^{r} |\calO_{K_i}/\frh\calO_{K_i}| 
= \prod_{i=1}^{r} \Nm_{K_i/\Q}(\frh\calO_{K_i}) 
= \prod_{i=1}^{r} \Nm_{K/\Q}(\frh)^{[K_i:K]} \\
&=  \Nm_{K/\Q}(\frh)^{[C:K]} 
\leq \Nm_{K/\Q}(\frh)^d. 
\end{align*}
Note that solving the discrete logarithm in $(\OC/\frg)^\times$ requires solving the discrete logarithm problem in $(\OC/\mathfrak{P})^\times$ for all prime ideals $\mathfrak{P}$ dividing $\frg$.
As in Step~(8), for each prime ideal $\mathfrak{p}$ dividing $\mathfrak{g}\cap \mathcal{O}$ there are at most $d$ prime ideals $\mathfrak{P}$ of $\OC$ with $\mathfrak{P} \cap \mathcal{O} = \mathfrak{p}$
and for each of those prime ideals $\OC/\mathfrak{P}$ is an extension of $\mathcal{O}/\mathfrak{p}$ of degree at most $d$.

Step (10): The reduced norm map $\nr \colon \calM^\times \rightarrow \OC^\times$ is surjective
by Lemma~\ref{lem:normeq}.
The computation of $\OC^\times$ is performed componentwise and thus reduces 
to $\unit(\mathcal O_{K_i})$ for $1 \leq i \leq r$. 
We then determine the image of the canonical projection $\OC^\times \rightarrow \left( \OC/\frg \right)^\times / \im(\bar\nu)$
as an abstract subgroup of $ \left( \OC/\frg \right)^\times / \im(\bar \nu)$. 
This again requires an instance of solving the discrete logarithm in
$\left( \OC/\frg \right)^\times$ for each of the generators of $\OC^\times$. 
By Dirichlet's unit theorem the number of these generators can be bounded by $d$.
Applying standard algorithms for finite abelian groups it is then straightforward to decide whether
$\overline\nr(\bar \beta)$ is contained in the image of
$\OC^\times \rightarrow  {\left( \OC/\frg \right)^\times} /{\im(\bar \nu)}$, and
if so, to compute $\epsilon \in \OC^\times$, $a \in \left(\Lambda / \frf\right)^\times$
such that $\bar\epsilon \equiv \overline{\beta a} \pmod{\frg}$.
An element $u \in \calM^\times$ such that $\nr(u) = \epsilon$ can be found using Lemma~\ref{lem:normeq} in probabilistic polynomial time.
Note that this step requires one more instance of solving the discrete logarithm in $(\OC/\frg)^{\times}$,
which was already analysed in Step~(9).

Step~(11): As the factorisation of $\frg$ is known, this can be done in probabilistic polynomial time by Theorem~\ref{coro:normonelift}.
\end{proof}

We now consider the case in which we allow certain pre-computations that only depend on the order 
$\Lambda$ and not on the $\Lambda$-lattice $X$.

\begin{corollary}
Fix an $\mathcal{O}$-order $\Lambda$ in a finite-dimensional semisimple $K$-algebra $A$ satisfying hypothesis \textup{(H)} and let $K_{1}, \ldots, K_{r}$ be the simple components of the centre of $A$. 
Then for a full $\Lambda$-lattice $X$ in $A$, the problem $\ppip(X)$ reduces in probabilistic polynomial to
  $\ppip$ for $\mathcal{O}_{K_i}$, $1 \leq i \leq r$, and $\pdlog$.
\end{corollary}

\begin{proof}
In Algorithm~\ref{alg:main} we may consider all steps which do not depend on $X$ as precomputations. Then for each lattice $X$ only Steps~(5), (6), (7), (10) and (11) have to be performed.
The claim follows as in the proof of Theorem~\ref{thm:main}.
\end{proof}

\begin{remark}
In Theorem~\ref{thm:main} better results can be obtained by describing the complexity in terms of the central conductor $\mathfrak{g}$ (which depends not only on the order $\Lambda$, but also on the maximal order computed during the algorithm) instead of $\mathfrak{h}$.
More precisely, (e) and (f) can be replaced by
\begin{enumerate}
\item[(e$'$)] for each prime ideal divisor $\mathfrak{P}$ of $\mathfrak{g}$, the problem 
$\pdlog$ for extensions of $\mathcal{O}/ (\calO \cap \mathfrak{P})$,
\item[(f$'$)] for each prime ideal divisor $\mathfrak{P}$ of $\mathfrak{g}$, the problem 
$\pprimitive$ for extensions of $\calO / (\calO \cap \mathfrak{P})$.
\end{enumerate}
\end{remark}

\begin{remark}
By Remark~\ref{remark:need_splitting}, in Theorem~\ref{thm:main}, (a) can be replaced by
\begin{enumerate}
\item[(a$'$)]
$\psplitting(A_i)$ for $1 \leq i \leq r$, where $A = \bigoplus_{i=1}^r A_i$ is the decomposition into simple $K$-algebras.
\end{enumerate}
Note that we have formulated Theorem~\ref{thm:main} using $\pwedderburn$ since for certain families 
of algebras, one can directly solve $\pwedderburn$ in polynomial time (which would not necessarily be true after passing to the simple components).
This happens, for example, for certain algebras of the form $A/{\jac(A)}$ that appear
in the similarity problem for matrices over rings of integers of number fields
(see \S \ref{subsec:jacobson-comp}).
\end{remark}

\begin{remark}\label{remark:comp}
In view of Remarks~\ref{remark:subcomp} and~\ref{remark:need_splitting}, as well as the reductions of
Propositions \ref{prop:comp_red_rankone} and \ref{prop:comp_red_semisimple},
the problems $\ppip_{\Lambda}$ and $\pisomorphic_{\Lambda}$ for orders
$\Lambda$ in finite-dimensional $K$-algebras $A$ satisfying hypothesis (H) 
reduces 
\begin{enumerate}
\item in probabilistic subexponential time to \punit{} and \ppip{} for rings of integers of number fields, 
and $\pwedderburn$ or $\psplitting$, 
\item in quantum polynomial time to $\pwedderburn$ or $\psplitting$.
\end{enumerate} 
\end{remark}

\section{Application: similarity of matrices over rings of integers}\label{sec:conjugacy}

After proving some general results on the similarity of matrices over commutative rings,
we will give an application of Algorithm \ref{alg:main} to the similarity problem for matrices
over rings of integers of number fields.

\subsection{Similarity of matrices over commutative rings}
Let $R$ be a commutative ring and let $n \in \Z_{>0}$.
Recall that two matrices $A,B \in \Mat_{n}(R)$ are said to be \textit{similar over $R$} if
there exists a  \textit{conjugating matrix} $C \in \GL_{n}(R)$ such that $B=CAC^{-1}$.

We will adopt the setup of Faddeev~\cite{MR0194432}.
For $A,B \in \Mat_{n}(R)$ we define 
\[
C_R(A, B) = \{ X \in \Mat_n(R) \mid  XA = BX \}
\quad \text{ and } \quad
C_{R}(B) = C_{R}(B,B).
\]
Note that $C_{R}(B)$ is an $R$-algebra and that $C_{R}(A,B)$ is a (left) $C_{R}(B)$-module.

\begin{lemma}\label{lem:similar-matrices-gives-isomorphism}
Suppose there exists $C \in \GL_{n}(R)$ such that $B=CAC^{-1}$. 
Then the maps
\begin{align*}
\theta_{C} : C_R(B) \longrightarrow C_R(A,B), \quad & X \longmapsto XC,\\
\theta_{C^{-1}} : C_R(A, B) \longrightarrow C_R(B), \quad & X \longmapsto XC^{-1},
\end{align*}
are mutually inverse $C_{R}(B)$-module isomorphisms.
\end{lemma}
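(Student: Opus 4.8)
The plan is to check, in order: (i) $\theta_C$ maps $C_R(B)$ into $C_R(A,B)$; (ii) $\theta_{C^{-1}}$ maps $C_R(A,B)$ into $C_R(B)$; (iii) both maps are $C_R(B)$-linear; (iv) they are mutually inverse. Then (i)--(iv) together immediately yield that each of $\theta_C$, $\theta_{C^{-1}}$ is an isomorphism of $C_R(B)$-modules, as claimed.

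For (i), I would first rewrite the hypothesis $B = CAC^{-1}$ in the equivalent form $BC = CA$. Then for $X \in C_R(B)$, i.e.\ $XB = BX$, one computes $(XC)A = X(CA) = X(BC) = (XB)C = (BX)C = B(XC)$, so $XC \in C_R(A,B)$. For (ii) I would instead use the equivalent form $C^{-1}B = AC^{-1}$: for $X \in C_R(A,B)$, i.e.\ $XA = BX$, we get $(XC^{-1})B = X(C^{-1}B) = X(AC^{-1}) = (XA)C^{-1} = (BX)C^{-1} = B(XC^{-1})$, so $XC^{-1} \in C_R(B)$. For (iii), given $Y \in C_R(B)$ and $X$ in the relevant module, $\theta_C(YX) = (YX)C = Y(XC) = Y\,\theta_C(X)$, and similarly $\theta_{C^{-1}}(YX) = Y\,\theta_{C^{-1}}(X)$; additivity and $R$-linearity are clear. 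For (iv), $\theta_{C^{-1}}(\theta_C(X)) = XCC^{-1} = X$ and $\theta_C(\theta_{C^{-1}}(X)) = XC^{-1}C = X$.

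There is no genuine obstacle here: the statement is a formal consequence of associativity of matrix multiplication together with the two reformulations $BC = CA$ and $C^{-1}B = AC^{-1}$ of the similarity hypothesis. The only point requiring a moment's care is that $C$ and $C^{-1}$ act by \emph{right} multiplication, so they commute past the left $C_R(B)$-action, which is precisely what makes $\theta_C$ and $\theta_{C^{-1}}$ module homomorphisms rather than merely $R$-linear bijections.
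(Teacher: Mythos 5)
Your proof is correct and takes essentially the same approach as the paper's: both verify that $\theta_C$ and $\theta_{C^{-1}}$ are well defined by a short computation using the similarity relation, then note the remaining properties (module-linearity, mutual inverseness) are immediate. You spell out a couple more intermediate steps and state explicitly that right multiplication commutes with the left $C_R(B)$-action, but the argument is the same.
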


\begin{proof}
If $X \in C_{R}(B)$, then $B(XC)=XBC=X(CAC^{-1})C=(XC)A$ and so $XC \in C_{R}(A,B)$.
Hence the map $\theta_{C}$ is well defined. Similarly, the map $\theta_{C}^{-1}$ is also 
well defined and it is clear that $\theta_{C}$ and $\theta_{C^{-1}}$ are mutually inverse. 
\end{proof}

\begin{prop}\label{prop:similarity-gen-unit}
Two matrices $A,B \in \Mat_n(R)$ are similar over $R$ if and only if
\begin{enumerate}
\item the $C_R(B)$-module $C_R(A, B)$ is free of rank $1$, and
\item every (any) free generator $C$ of $C_R(A, B)$ over $C_{R}(B)$ is in $\GL_n(R)$.
\end{enumerate}
Furthermore, when this is the case, $C$ as in part \rm{(b)}
satisfies $B=CAC^{-1}$.
\end{prop}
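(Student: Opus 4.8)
The plan is to follow the proof of Proposition~\ref{prop:hom-free-over-end} almost verbatim, with Lemma~\ref{lem:similar-matrices-gives-isomorphism} playing the role that the unique extension of a $\Lambda$-homomorphism to an $A$-homomorphism played there. All of the reasoning takes place inside the (possibly noncommutative) $R$-algebra $C_R(B) \subseteq \Mat_n(R)$, which contains the identity matrix $I_n$.

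For the direction ``$\Leftarrow$'' (which will also yield the final clause), I would assume (a) and (b) and pick any free generator $C$ of $C_R(A,B)$ over $C_R(B)$. By (b) we have $C \in \GL_n(R)$, and the relation $C \in C_R(A,B)$ says exactly that $CA = BC$; hence $B = CAC^{-1}$, so $A$ and $B$ are similar over $R$.

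For ``$\Rightarrow$'', suppose $B = C_0 A C_0^{-1}$ with $C_0 \in \GL_n(R)$. Then Lemma~\ref{lem:similar-matrices-gives-isomorphism} shows that $\theta_{C_0} \colon C_R(B) \to C_R(A,B)$, $X \mapsto XC_0$, is an isomorphism of $C_R(B)$-modules; since $\theta_{C_0}(I_n) = C_0$, this exhibits $C_0$ as a free generator of $C_R(A,B)$ over $C_R(B)$, so (a) holds. For (b), let $C$ be an arbitrary free generator. Since $C_0 \in C_R(A,B) = C_R(B)\cdot C$ and $C \in C_R(A,B) = C_R(B)\cdot C_0$, there are $u,v \in C_R(B)$ with $C = uC_0$ and $C_0 = vC$; substituting and using that $X \mapsto XC$ and $X \mapsto XC_0$ are injective on $C_R(B)$ gives $uv = vu = I_n$, so $u$ is a unit of the ring $C_R(B)$. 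As $C_R(B)$ is a subring of $\Mat_n(R)$ sharing the identity $I_n$, the inverse of $u$ computed in $C_R(B)$ is a genuine matrix inverse, so $u \in \GL_n(R)$ and hence $C = uC_0 \in \GL_n(R)$. This proves (b) in the strong form (for every free generator), which in particular justifies the ``(any)'' in the statement.

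I expect this to be entirely routine. The single point that needs a word of care is the last one: a unit of the subalgebra $C_R(B)$ is automatically an element of $\GL_n(R)$, and this uses that $C_R(B)$ and $\Mat_n(R)$ have the same identity element. Everything else is formal once Lemma~\ref{lem:similar-matrices-gives-isomorphism} is in hand.
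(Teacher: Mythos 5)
Your proof is correct and follows essentially the same route as the paper: both directions use Lemma~\ref{lem:similar-matrices-gives-isomorphism} to produce a free generator, and both rely on the observation that a unit of the subring $C_R(B)$ (which shares the identity $I_n$ with $\Mat_n(R)$) is automatically in $\GL_n(R)$. The only difference is cosmetic: you spell out why two free generators of a free rank-one module differ by a unit, whereas the paper simply asserts the existence of such a unit $E \in C_R(B)^\times$.
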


\begin{proof}
Suppose that (a) and (b) hold and let $C$ be a free generator of $C_{R}(A, B)$ over $C_{R}(B)$.
In particular, $C \in C_{R}(A,B) \cap \GL_{n}(R)$ and it easily follows that $B=CAC^{-1}$. 
Suppose conversely that there exists $C \in \GL_{n}(R)$ such that $B=CAC^{-1}$. 
Then $\theta_{C}$ is an isomorphism by Lemma \ref{lem:similar-matrices-gives-isomorphism}
and so $C$ is a free generator of $C_{R}(A,B)$ over $C_{R}(B)$.
Thus (a) holds. Now let $D$ be any free generator of $C_{R}(A,B)$ over $C_{R}(B)$. 
Then there exists $E \in C_{R}(B)^{\times} \subseteq \GL_{n}(R)$ such that $D=EC$ and so 
$D \in \GL_{n}(R)$. Thus (b) holds. 
\end{proof}

The following result was proven by Faddeev \cite[Theorem 2]{MR0194432} in the case $R=\Z$,
though it was expressed in terms of ideals rather than modules. Moreover, 
Guralnick \cite[Theorem 6]{MR568794} observed that the proof works for any integral domain.
We include a short proof for the convenience of the reader and for comparison as per 
Remark \ref{rmk:comparision-similarity-results}.

\begin{prop}\label{prop:similarity-local-condition}
Suppose that $R$ is an integral domain.
Two matrices $A,B \in \Mat_n(R)$ are similar over $R$ if and only if
\begin{enumerate}
\item the $C_R(B)$-module $C_R(A, B)$ is free of rank $1$, and
\item for every maximal ideal $\mathfrak{p}$ of $R$, the matrices $A$ and $B$ are similar over $R_{\mathfrak{p}}$.
\end{enumerate}
Furthermore, when this is the case, 
any free generator $C$ of $C_R(A, B)$ over $C_{R}(B)$ satisfies $B=CAC^{-1}$.
\end{prop}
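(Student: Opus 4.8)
The plan is to mimic the proof of Proposition~\ref{prop:similarity-gen-unit}, upgrading the local hypothesis (b) by a standard localisation argument, exactly as Proposition~\ref{prop:hom-end-locally-iso} upgrades Proposition~\ref{prop:hom-free-over-end}. The forward implication is immediate: if $A$ and $B$ are similar over $R$, then (a) holds by Proposition~\ref{prop:similarity-gen-unit}, and (b) is trivial since a conjugating matrix in $\GL_n(R)$ remains a conjugating matrix in $\GL_n(R_\mathfrak{p})$ for every maximal ideal $\mathfrak{p}$ of $R$.

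For the converse, assume (a) and (b), and fix a free generator $C$ of the $C_R(B)$-module $C_R(A,B)$. The first step is to record that localisation is compatible with the constructions $C_R(-,-)$ and $C_R(-)$: since $C_R(A,B)$ is the kernel of the $R$-linear endomorphism $X \mapsto XA - BX$ of the free $R$-module $\Mat_n(R)$, flatness of $R_\mathfrak{p}$ over $R$ yields $C_R(A,B)_\mathfrak{p} = C_{R_\mathfrak{p}}(A,B)$, and likewise $C_R(B)_\mathfrak{p} = C_{R_\mathfrak{p}}(B)$. Localising the isomorphism $C_R(B) \xrightarrow{\sim} C_R(A,B)$, $Y \mapsto YC$, then shows that $C$ is a free generator of $C_{R_\mathfrak{p}}(A,B)$ over $C_{R_\mathfrak{p}}(B)$. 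As $A$ and $B$ are similar over $R_\mathfrak{p}$ by (b), Proposition~\ref{prop:similarity-gen-unit} applied over $R_\mathfrak{p}$ forces $C \in \GL_n(R_\mathfrak{p})$, i.e.\ $\det C \in R_\mathfrak{p}^{\times}$, equivalently $\det C \notin \mathfrak{p}$.

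Since this holds for every maximal ideal $\mathfrak{p}$ of $R$, the element $\det C \in R$ lies in no maximal ideal, so the ideal $(\det C)$ equals $R$ and hence $\det C \in R^{\times}$; as $C^{-1} = (\det C)^{-1}\operatorname{adj}(C)$ then has entries in $R$, we conclude $C \in \GL_n(R)$. Finally $C \in C_R(A,B)$ gives $CA = BC$, so $B = CAC^{-1}$ and $A$, $B$ are similar over $R$; this argument applies verbatim to any free generator, which gives the last assertion.

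I do not expect a serious obstacle here: the only mildly delicate points are the compatibility of $C_R(-,-)$ and $C_R(-)$ with localisation and the fact that a free generator localises to a free generator, both of which are routine consequences of flatness of localisation, combined with the elementary observation that an element of $R$ lying outside every maximal ideal is a unit. The proof is essentially a transcription of the module-theoretic statement Proposition~\ref{prop:hom-end-locally-iso} into the matrix setting via Lemma~\ref{lem:similar-matrices-gives-isomorphism}.
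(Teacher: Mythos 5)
Your proof is correct and follows essentially the same approach as the paper's. The one small divergence is in concluding $\det C \in R^\times$: the paper cites $R = \bigcap_\mathfrak{p} R_\mathfrak{p}$ (an integral-domain fact from Curtis--Reiner) to get $R^\times = \bigcap_\mathfrak{p} R_\mathfrak{p}^\times$, whereas you observe directly that an element of $R$ lying outside every maximal ideal generates the unit ideal — a marginally cleaner argument that doesn't use the domain hypothesis at that step; also, you fill in the flatness justification for $C_R(A,B)_\mathfrak{p} = C_{R_\mathfrak{p}}(A,B)$, which the paper leaves implicit.
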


\begin{proof}
Suppose that $A,B \in \Mat_n(R)$ are similar over $R$. Then (b) clearly holds and (a) holds by 
Proposition \ref{prop:similarity-gen-unit}. Suppose conversely that (a) and (b) hold. 
Let $C$ be a free generator of $C_{R}(A,B)$ over $C_{R}(B)$.
Let $\mathfrak{p}$ be a maximal ideal of $R$. 
Then there exists $C_{\mathfrak{p}} \in \GL_{n}(R_{\mathfrak{p}})$ such that 
$B=C_{\mathfrak{p}}AC_{\mathfrak{p}}^{-1}$ and so $C_{\mathfrak{p}} \in C_{R_{\mathfrak{p}}}(A,B)$.
Since $C$ is also a free generator of $C_{R_{\mathfrak{p}}}(A,B)$ over $C_{R_{\mathfrak{p}}}(B)$,
there exists $D_{\mathfrak{p}} \in C_{R_{\mathfrak{p}}}(B)$ such that 
$C_{\mathfrak{p}} = D_{\mathfrak{p}} C$. 
Then $\det(D_{\mathfrak{p}})\det(C)=\det(C_{\mathfrak{p}}) \in R_{\mathfrak{p}}^{\times}$ and 
so $\det(C) \in R_{\mathfrak{p}}^{\times}$.
Moreover, by \cite[(4.2)(iv)]{curtisandreiner_vol1} we have $R = \cap_{\mathfrak{p}} R_{\mathfrak{p}}$
which implies that $R^{\times} = \cap_{\mathfrak{p}} R_{\mathfrak{p}}^{\times}$, 
where in both cases the intersection ranges over all maximal ideals $\mathfrak{p}$ of $R$.
Therefore $\det(C) \in R^{\times}$ and so $C \in \GL_{n}(R)$.
In particular, $C \in C_{R}(A,B) \cap \GL_{n}(R)$ and it easily follows that $B=CAC^{-1}$.
\end{proof}

\begin{remark}\label{rmk:comparision-similarity-results}
Propositions \ref{prop:similarity-gen-unit} and \ref{prop:similarity-local-condition} and their proofs
are analogues of Propositions \ref{prop:hom-free-over-end} and \ref{prop:hom-end-locally-iso}, respectively.
Indeed, in the case that $R$ is a noetherian integral domain, 
the former can be deduced from the latter, though it is easier to give more direct proofs of
more general results. 
Moreover, as well as having weaker hypotheses, Proposition \ref{prop:similarity-gen-unit} is better 
suited to algorithmic applications than Proposition \ref{prop:similarity-local-condition}.
\end{remark}

\subsection{The similarity problem in terms of modules over polynomial rings}
Let $R$ be a commutative ring and let $n \in \Z_{>0}$.
Let $R[x]$ be a polynomial ring in one variable over $R$.
For $A \in \Mat_{n}(R)$, we define $T_{R}(A)$ to be the $R[x]$-module $R^{n}$ with the action
$x v = Av$ for $v \in R^{n}$. 

\begin{lemma}\label{lem:homend}
Let $A,B,C \in \Mat_{n}(R)$. 
Define $\psi_{A,B,C} : T_{R}(A) \rightarrow T_{R}(B)$ by $v \mapsto Cv$.
Then $C \in C_{R}(A,B)$ if and only if $\psi_{A,B,C}$ is an $R[x]$-module homomorphism.
In particular, we have canonical isomorphisms 
\begin{enumerate}
\item $C_{R}(A,B) \cong \Hom_{R[x]}(T_{R}(A),T_{R}(B))$ of $R$-modules;
\item $C_{R}(A) \cong \End_{R[x]}(T_{R}(A))$ of $R$-algebras.
\end{enumerate}
\end{lemma}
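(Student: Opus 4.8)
The plan is to verify everything by unwinding the definitions; no serious difficulty is expected here. First I would record the trivial observation that for any $C \in \Mat_n(R)$ the map $\psi_{A,B,C}\colon v \mapsto Cv$ is automatically $R$-linear, so the only property that can fail is $R[x]$-linearity. Since $R[x]$ is generated as an $R$-algebra by $x$, the map $\psi_{A,B,C}$ is an $R[x]$-module homomorphism if and only if it is compatible with the action of $x$. The action of $x$ on $T_R(A)$ is left multiplication by $A$ and on $T_R(B)$ is left multiplication by $B$, so this compatibility says $\psi_{A,B,C}(Av) = B\,\psi_{A,B,C}(v)$ for all $v \in R^n$, that is, $CAv = BCv$ for all $v$, that is, $CA = BC$. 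This is precisely the condition $C \in C_{R}(A,B)$, which proves the first assertion.

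For part (a) I would argue conversely that any $R[x]$-module homomorphism $\varphi\colon T_R(A) \to T_R(B)$ is in particular an $R$-linear endomorphism of the free $R$-module $R^n$, hence is given by left multiplication by a unique matrix $C \in \Mat_n(R)$; thus $\varphi = \psi_{A,B,C}$. Combined with the first assertion, this shows that $C \mapsto \psi_{A,B,C}$ restricts to a bijection from $C_{R}(A,B)$ onto $\Hom_{R[x]}(T_R(A),T_R(B))$. This bijection is additive and $R$-linear in $C$, since $\psi_{A,B,C+C'} = \psi_{A,B,C} + \psi_{A,B,C'}$ and $\psi_{A,B,rC} = r\,\psi_{A,B,C}$ for $r \in R$, so it is an isomorphism of $R$-modules, giving (a).

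Finally, for part (b) I would specialise to $A = B$. The map $C \mapsto \psi_{A,A,C}$ is then an isomorphism of $R$-modules $C_{R}(A) \to \End_{R[x]}(T_R(A))$ by part (a), so it remains only to check that it respects multiplication and identities. For $C, C' \in C_{R}(A)$ and $v \in R^n$ we have $(\psi_{A,A,C} \circ \psi_{A,A,C'})(v) = C(C'v) = (CC')v = \psi_{A,A,CC'}(v)$, so $\psi_{A,A,C} \circ \psi_{A,A,C'} = \psi_{A,A,CC'}$, and clearly $\psi_{A,A,I_n} = \mathrm{id}_{T_R(A)}$. Hence the map is an isomorphism of $R$-algebras. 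The only step that requires even a moment's care is the claim that every $R[x]$-homomorphism $T_R(A) \to T_R(B)$ arises as some $\psi_{A,B,C}$, and even this is immediate from the freeness of $R^n$ over $R$; so I do not anticipate any genuine obstacle.
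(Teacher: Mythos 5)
Your proof is correct and follows essentially the same route as the paper's: unwind the definition of the $x$-action to see that $\psi_{A,B,C}$ is $R[x]$-linear precisely when $CA = BC$, then note that every $R[x]$-homomorphism between the underlying free $R$-modules comes from a matrix. The paper states this more tersely and leaves the additivity, $R$-linearity, and multiplicativity checks to the reader, which you have spelled out explicitly.
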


\begin{proof}
The function $\psi_{A,B,C}$ is an $R[x]$-module homomorphism 
if and only if $C(Av) = B(Cv)$ for all $v \in R^{n}$, which in turn is equivalent to $C \in C_{R}(A,B)$.
This gives the first claim; the remaining claims now follow easily.
\end{proof}

The following result is well known and is an easy consequence of Lemma \ref{lem:homend}.

\begin{lemma}\label{lem:similar-matrices}
Let $A,B,C \in \Mat_{n}(R)$. 
Then the following are equivalent:
\begin{enumerate}
\item $C \in C_{R}(A,B) \cap \GL_{n}(R)$,
\item $C \in \GL_{n}(R)$ and $B=CAC^{-1}$,
\item $\psi_{A,B,C}$ is an $R[x]$-module isomorphism.
\end{enumerate}
In particular, $A$ and $B$ are similar over $R$ if and only if $T_R(A) \cong T_R(B)$ as $R[x]$-modules.  
\end{lemma}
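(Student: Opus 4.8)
The plan is to derive everything formally from Lemma~\ref{lem:homend}, using two elementary facts: a bijective homomorphism of modules is automatically an isomorphism (its set-theoretic inverse is again a module homomorphism), and every $R$-linear endomorphism of $R^{n}$ has the form $v \mapsto Cv$ for a unique $C \in \Mat_{n}(R)$, which is bijective precisely when $C \in \GL_{n}(R)$.

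First I would treat (a) $\Leftrightarrow$ (b). By definition $C \in C_{R}(A,B)$ means $CA = BC$; when $C \in \GL_{n}(R)$ this is equivalent to $B = CAC^{-1}$, and conversely $B = CAC^{-1}$ together with invertibility of $C$ yields $CA = BC$. Hence both (a) and (b) amount to the single condition ``$C \in \GL_{n}(R)$ and $CA = BC$''.

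Next I would establish (a) $\Leftrightarrow$ (c). By Lemma~\ref{lem:homend}, $\psi_{A,B,C}$ is an $R[x]$-module homomorphism if and only if $C \in C_{R}(A,B)$. Since on the underlying $R$-module $R^{n}$ the map $\psi_{A,B,C}$ is just $v \mapsto Cv$, it is bijective if and only if $C \in \GL_{n}(R)$, and in that case its inverse is $\psi_{B,A,C^{-1}}$ (using $AC^{-1} = C^{-1}B$), which is again $R[x]$-linear. Therefore $\psi_{A,B,C}$ is an $R[x]$-module isomorphism if and only if $C \in C_{R}(A,B) \cap \GL_{n}(R)$, i.e.\ precisely condition (a). Combined with the previous paragraph this gives the equivalence of (a), (b) and (c).

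Finally, for the ``in particular'' clause: if $A$ and $B$ are similar over $R$, choose $C \in \GL_{n}(R)$ with $B = CAC^{-1}$; then by (b) $\Rightarrow$ (c) the map $\psi_{A,B,C}$ is an isomorphism $T_{R}(A) \xrightarrow{\sim} T_{R}(B)$. Conversely, given any $R[x]$-module isomorphism $\varphi \colon T_{R}(A) \to T_{R}(B)$, it is in particular an $R$-module automorphism of $R^{n}$, hence equals $\psi_{A,B,C}$ for some $C \in \Mat_{n}(R)$, and bijectivity forces $C \in \GL_{n}(R)$; by (c) $\Rightarrow$ (b) we obtain $B = CAC^{-1}$, so $A$ and $B$ are similar over $R$. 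I do not expect a genuine obstacle here: the only points needing a moment's care are the identification of $R$-linear endomorphisms of $R^{n}$ with matrices and the standard fact that a bijective module homomorphism has a linear inverse, both of which are routine.
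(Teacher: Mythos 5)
Your argument is correct and is precisely the ``easy consequence of Lemma~\ref{lem:homend}'' that the paper alludes to without writing out; the paper in fact gives no proof of this lemma. The two facts you invoke (a bijective module homomorphism is an isomorphism, and $R$-linear endomorphisms of $R^{n}$ are matrices) are exactly the right ingredients, and the derivation of the ``in particular'' clause from the equivalence of (b) and (c) is the standard one.
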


\subsection{Jacobson radicals of certain endomorphism algebras}\label{subsec:jacobson-comp}
Let $F$ be a field.
We now explicitly compute the Jacobson radical $\jac(\End_{F[x]}(V))$
of $\End_{F[x]}(V)$ for a finitely generated $F[x]$-module $V$.
The motivating application is Proposition \ref{prop:wedderburnpoly} below.

\begin{lemma}\label{lem:image-of-comps-of-homs}
Let $f \in F[x]$ be an irreducible polynomial and let $j,k \in \Z_{>0}$. 
Let
\[
\lambda \in \Hom_{F[x]}(F[x]/(f^{j}), F[x]/(f^{k}))
\quad \text{ and } \quad
\mu \in \Hom_{F[x]}(F[x]/(f^{k}), F[x]/(f^{j})).
\]
\begin{enumerate}
\item If $j \leq k$ then $\im(\lambda) \sseq f^{k-j}\cdot \left( F[x]/(f^k) \right)$.
\item For any choice of $j,k$ we have $\im(\lambda\circ\mu) \sseq f^{|k-j|} \cdot \left( F[x]/(f^k) \right)$.
\end{enumerate}
\end{lemma}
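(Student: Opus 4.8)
The plan is to use the fact that a homomorphism out of the cyclic $F[x]$-module $F[x]/(f^{j})$ is completely determined by the image of the generator $\overline{1}$, combined with the divisibility structure of the principal ideal domain $F[x]$.

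For part (a), assume $j \leq k$ and let $\lambda \in \Hom_{F[x]}(F[x]/(f^{j}), F[x]/(f^{k}))$. Write $\lambda(\overline{1}) = \overline{g}$ for some $g \in F[x]$, where $\overline{\,\cdot\,}$ denotes the residue class in whichever quotient is relevant. Since $f^{j}\overline{1} = \overline{0}$ in $F[x]/(f^{j})$, applying $\lambda$ gives $f^{j}\overline{g} = \overline{0}$ in $F[x]/(f^{k})$, that is, $f^{k} \mid f^{j} g$ in $F[x]$. Because $f$ is irreducible and $F[x]$ is a unique factorisation domain, $f$ is prime, so this forces $f^{k-j} \mid g$; writing $g = f^{k-j} h$ we get $\lambda(\overline{1}) = f^{k-j}\overline{h} \in f^{k-j}\cdot\left(F[x]/(f^{k})\right)$. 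Since $\im(\lambda) = F[x]\cdot\lambda(\overline{1})$ and $f^{k-j}\cdot\left(F[x]/(f^{k})\right)$ is an $F[x]$-submodule, this yields $\im(\lambda) \subseteq f^{k-j}\cdot\left(F[x]/(f^{k})\right)$.

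For part (b) I would split into two cases according to the sign of $k - j$. If $j \leq k$, then $|k-j| = k-j$ and part (a) already gives $\im(\lambda) \subseteq f^{k-j}\cdot\left(F[x]/(f^{k})\right)$; since $\im(\lambda \circ \mu) \subseteq \im(\lambda)$, we are done. If $j > k$, then $|k-j| = j-k$, and instead of bounding $\im(\lambda)$ (which would be vacuous) I would apply part (a) to $\mu \in \Hom_{F[x]}(F[x]/(f^{k}), F[x]/(f^{j}))$, now with $k < j$, to obtain $\im(\mu) \subseteq f^{j-k}\cdot\left(F[x]/(f^{j})\right)$. Applying the $F[x]$-linear map $\lambda$ and using $\lambda(f^{j-k} m) = f^{j-k}\lambda(m)$ gives $\im(\lambda \circ \mu) = \lambda(\im(\mu)) \subseteq f^{j-k}\cdot\left(F[x]/(f^{k})\right)$, as required.

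I do not expect a genuine obstacle here; the only point that needs a moment's care is the case $j > k$ in part (b), where one must transfer the containment for $\im(\mu)$ through $\lambda$ using $F[x]$-linearity rather than trying to estimate $\im(\lambda)$ directly. The rest is the standard dictionary between homomorphisms from cyclic modules and annihilator/divisibility conditions in $F[x]$.
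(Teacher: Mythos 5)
Your proof is correct and follows essentially the same approach as the paper's: part (a) is obtained by tracking the image of the generator $\overline{1}$ and converting the relation $f^{j}\overline{1}=\overline{0}$ into a divisibility condition on that image, and part (b) is the standard two-case reduction to (a) that the paper leaves to the reader. (The paper's printed proof phrases the generator as $x+(f^{j})$ rather than $1+(f^{j})$, which is a minor slip that your write-up avoids.)
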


\begin{proof}
Suppose $j \leq k$. We have $\lambda(x+(f^{j})) = y + (f^{k})$ for some $y \in F[x]$. 
Then 
\[
f^{j}y + (f^{k}) 
= f^{j}(y + (f^{k}))
= f^{j}\lambda(x+f^{j})
= \lambda(f^{j}x+(f^{j}))
= \lambda(0)=0,
\]
so $f^{j}y \in (f^{k})$ and hence $y \in (f^{k-j})$. 
Thus (a) follows from the fact that the image of $\lambda$ is uniquely determined by the image of $x+(f^j)$.
Part (b) follows easily from (a).
\end{proof}

\begin{prop}\label{prop:Jac-rad-Wed}
Let $f \in F[x]$ be an irreducible polynomial.
Let $m \in \Z_{>0}$, let $d_{1}, \ldots, d_{m} \in \Z_{\geq 0}$ and let
$V = \bigoplus_{j = 1}^{m} (F[x]/(f^j))^{d_{j}}$.
Then we have a canonical isomorphism
\begin{equation}\label{eq:end-decomp}
\End_{F[x]}(V) \cong E := \bigoplus_{j=1}^{m} \bigoplus_{k=1}^{m} e_{jk} \Hom_{F[x]}((F[x]/(f^k))^{d_k}, (F[x]/(f^j))^{d_j}),
\end{equation}
where the right hand side denotes the $m \times m$ `matrix ring' with $(j,k)$-th entries in $\Hom_{F[x]}((F[x]/(f^k))^{d_k}, (F[x]/(f^j))^{d_j})$.
For $1 \leq j,k \leq m$, define $\gamma_{jk}=f$ if $j=k$ and $\gamma_{jk}=1$ otherwise.
  Then the isomorphism $\End_{F[x]}(V) \cong E$ induces isomorphisms
\begin{gather}
\jac(\End_{F[x]}(V)) \cong I := \bigoplus_{j=1}^{m} \bigoplus_{k=1}^{m} e_{jk} \gamma_{jk} \Hom_{F[x]}((F[x]/(f^k))^{d_k}, (F[x]/(f^j))^{d_j}), \text{ and } \label{eq:end-jac-decomp} \\
\End_{F[x]}(V)/{\jac(\End_{F[x]}(V))} \cong E/I \cong 
\prod_{j = 1}^{m} \Mat_{d_j}(F[x]/(f)). \label{eq:end-jac-quotient}  
\end{gather}
\end{prop}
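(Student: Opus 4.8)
The plan is to first establish the matrix-ring description \eqref{eq:end-decomp}, then identify the radical explicitly, and finally compute the quotient. The decomposition \eqref{eq:end-decomp} is purely formal: since $V = \bigoplus_{j=1}^{m} (F[x]/(f^j))^{d_j}$ is a direct sum, any endomorphism is a matrix of homomorphisms between the summands, so $\End_{F[x]}(V)$ is canonically the "matrix ring" $E$ with $(j,k)$-entry $\Hom_{F[x]}((F[x]/(f^k))^{d_k},(F[x]/(f^j))^{d_j})$, with composition given by the usual matrix-multiplication formula. I would just record that $e_{jk}$ denotes the block in position $(j,k)$ and note that the ring structure on $E$ is the evident one.

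For \eqref{eq:end-jac-decomp}, the idea is to show that $I$ (the ideal where the diagonal blocks are multiplied by $f$ and off-diagonal blocks are unrestricted) is a nilpotent two-sided ideal of $E$ with semisimple quotient; by \cite[(5.15)]{curtisandreiner_vol1} and the characterization of the radical, this forces $I = \jac(E)$. That $I$ is a two-sided ideal follows from Lemma~\ref{lem:image-of-comps-of-homs}(b): composing a homomorphism $F[x]/(f^k) \to F[x]/(f^j)$ with one $F[x]/(f^l)\to F[x]/(f^k)$ lands in $f^{|k-j|}\cdot(F[x]/(f^k))$ — wait, more carefully, I want the statement that if one of the two composed maps already has its image in $f^{\,\gamma}\cdot(\text{target})$ then so does the composite, and that multiplying any block of $E$ on either side by an element of $I$ produces a block whose homomorphisms factor through the appropriate power of $f$; this is exactly the content of part (a) of the lemma applied blockwise. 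For nilpotence: raising $I$ to a high power $N$, a length-$N$ product of blocks $e_{j_0 j_1}e_{j_1 j_2}\cdots$ contributes $\gamma_{j_0 j_1}\gamma_{j_1 j_2}\cdots\gamma_{j_{N-1}j_N}$ times a composite of homomorphisms; by Lemma~\ref{lem:image-of-comps-of-homs} the cumulative image sits in $f^{\,s}\cdot(F[x]/(f^{d_{j_0}}))$ where $s$ is the total "descent", and once $s \ge m$ (which happens for $N$ large, say $N = 2m$, since at every off-diagonal step the exponent $\gamma=1$ still forces image into a nontrivial power of $f$ whenever the indices change, and along a path of length $2m$ through $\{1,\dots,m\}$ the number of repeated indices contributing factors of $f$ plus the net change forces the total power of $f$ dividing the image to reach $m$) every block vanishes. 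I expect pinning down this nilpotence bound cleanly to be the main obstacle: one has to check that a long product of the $\gamma_{jk}$'s together with the image shifts from Lemma~\ref{lem:image-of-comps-of-homs} genuinely exhausts $f^m$ no matter which path through the index set one takes, including paths that sit on a single diagonal index for many steps (where each factor is literally $f$). The cleanest argument is probably: in $E^N$, the $(j_0,j_N)$-entry of any product of $N$ elements of $I$ is a sum of terms each of which is a composite $\lambda_N\circ\cdots\circ\lambda_1$ with $\lambda_i \in \gamma_{j_{i-1}j_i}\Hom(\dots)$, and by induction on $N$ using Lemma~\ref{lem:image-of-comps-of-homs}(a) the image of such a composite lies in $f^{\,t}\cdot(F[x]/(f^{d_{j_N}}))$ where $t \ge \#\{i : j_{i-1}=j_i\} + (\text{contributions when } j_{i-1}\neq j_i)$, and a counting argument shows $t \ge m$ once $N \ge 2m$, say.

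Finally, \eqref{eq:end-jac-quotient}: modding $E$ out by $I$ kills all off-diagonal blocks (since $\gamma_{jk}=1$ there, $I$ contains the whole off-diagonal block) and on the $j$-th diagonal block replaces $\Hom_{F[x]}((F[x]/(f^j))^{d_j},(F[x]/(f^j))^{d_j}) \cong \Mat_{d_j}(F[x]/(f^j))$ by its quotient modulo $f\cdot\Mat_{d_j}(F[x]/(f^j)) = \Mat_{d_j}((f)/(f^j))$, which is $\Mat_{d_j}(F[x]/(f))$. Hence $E/I \cong \prod_{j=1}^{m}\Mat_{d_j}(F[x]/(f))$. Since $f$ is irreducible, $F[x]/(f)$ is a field, so each factor is a simple $F$-algebra and the product is semisimple; combined with the nilpotence of $I$ this confirms $I = \jac(E)$ and simultaneously yields \eqref{eq:end-jac-quotient}. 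Transporting everything back along the isomorphism $\End_{F[x]}(V)\cong E$ gives \eqref{eq:end-jac-decomp} and \eqref{eq:end-jac-quotient} as stated.
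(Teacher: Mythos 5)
Your overall strategy coincides with the paper's: identify the matrix-ring form $E$, show that $I$ is a two-sided ideal with semisimple quotient (so $\jac(E)\subseteq I$), then show $I$ is nilpotent (so $I \subseteq \jac(E)$). The identification of $E/I \cong \prod_j \Mat_{d_j}(F[x]/(f))$ and the use of Lemma~\ref{lem:image-of-comps-of-homs}(b) for the ideal property are both exactly what the paper does; your ``wait, more carefully'' detour is unnecessary, since (b) already covers the composition estimates needed there.

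The genuine gap is the nilpotence argument. You assert that $I^{N}=0$ once $N \geq 2m$, via a ``counting argument,'' and you explicitly flag this as the step you cannot pin down. In fact the bound $N = 2m$ is correct, but proving it requires a nontrivial combinatorial lemma: along any path $j_0,j_1,\dotsc,j_N$ in $\{1,\dotsc,m\}$, the total contributed power of $f$ is at least $(N + j_0 - j_N)/2$, and one must track both descents and stays against the net drift $j_0 - j_N$. Your sketch (counting repeated indices and ``net change'') does not supply this, and the hedged phrasing makes clear it is open. The paper sidesteps the whole issue with a two-stage argument that needs no path-counting: since any $m+1$ indices $j_0,\dotsc,j_m$ in $\{1,\dotsc,m\}$ cannot be strictly increasing, every entry of any product of $m$ elements of $I$ already has image inside $f\cdot F[x]/(f^j)$; iterating this $m$ times gives all entries of $I^{m^2}$ image inside $f^m\cdot F[x]/(f^j) = 0$. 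This is less sharp ($m^2$ instead of $2m$) but requires only a one-line pigeonhole, which is why the paper prefers it. If you want to keep the single-shot approach you would need to state and prove the path estimate; as written the proposal has a hole precisely where you suspected one.
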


\begin{proof}
The decomposition \eqref{eq:end-decomp} follows from standard properties of Homs and direct sums.
It follows from Lemma \ref{lem:image-of-comps-of-homs} (b) 
that $I$ is a two-sided ideal of $E$.
Moreover, it is straightforward to check that $E/I$ is canonically isomorphic to the right hand side of
\eqref{eq:end-jac-quotient}.
Thus $E/I$ is artinian semisimple and so $\jac(E/I) =0$ by \cite[(5.18)]{curtisandreiner_vol1}.
Hence $\jac(E) \subseteq I$ by \cite[(5.6)(ii)]{curtisandreiner_vol1}.
  Lemma \ref{lem:image-of-comps-of-homs} (a) and the definition of $\gamma_{jk}$ implies that each element $\lambda = (\lambda_{jk}) \in I$ is an upper triangular
matrix in the sense that for $j \ge k$ the image of $\lambda_{jk}$ is contained in $f \cdot F[x]/(f^{j})$.
Hence for
$\mu = (\mu_{jk}) \in I^{m}$ the image of each $\mu_{jk}$ is contained in  $f \cdot F[x]/(f^{j})$. 
It follows that $I^{m^2} = 0$ and thus $I$ is nilpotent.
Thus, since $E$ is artinian, $I \subseteq \jac(E)$ by \cite[(5.15)]{curtisandreiner_vol1}.
Therefore $\jac(E)=I$, as claimed.
\end{proof}

\begin{corollary}\label{cor:wedderburn}
Let $r \in \Z_{>0}$ and let $V = \bigoplus_{i=1}^{r} V_{i}$ where
$V_{i} = \bigoplus_{j = 1}^{m_{i}} (F[x]/(f_{i}^{j}))^{d_{i,j}}$ for some 
$m_{i} \in \Z_{>0}$, $d_{i,j} \in \Z_{\geq 0}$, and some distinct monic 
irreducible polynomials $f_{i} \in F[x]$.
Then there are canonical isomorphisms
\[
\End_{F[x]}(V) \cong \prod_{i=1}^{r} \End_{F[x]}(V_{i}) \cong 
\prod_{i=1}^{r} \bigoplus_{j=1}^{m_{i}} \bigoplus_{k=1}^{m_{i}} 
e_{jk} \Hom_{F[x]}((F[x]/(f_{i}^{k}))^{d_{i,k}}, (F[x]/(f_{i}^{j}))^{d_{i,j}}) 
\]
and 
\[
\End_{F[x]}(V)/{\jac(\End_{F[x]}(V))}
\cong
\prod_{i=1}^{r} \prod_{j = 1}^{m_{i}} \Mat_{d_{i, j}}(F[x]/(f_{i})). 
\] 
In particular, if $F$ is a number field then $\End_{F[x]}(V)$ satisfies hypothesis \textup{(H)}.
\end{corollary}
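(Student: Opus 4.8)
The plan is to bootstrap from Proposition~\ref{prop:Jac-rad-Wed} by first reducing $\End_{F[x]}(V)$ to a product over the distinct irreducible polynomials $f_1, \dots, f_r$. The key observation is that for $i \ne i'$ we have $\Hom_{F[x]}(V_i, V_{i'}) = 0$: the module $V_i$ is annihilated by $f_i^{m_i}$ and $V_{i'}$ by $f_{i'}^{m_{i'}}$, and since $f_i$ and $f_{i'}$ are distinct monic irreducibles these two powers are coprime in $F[x]$, so there are $a, b \in F[x]$ with $a f_i^{m_i} + b f_{i'}^{m_{i'}} = 1$; then for any $F[x]$-homomorphism $\varphi \colon V_i \to V_{i'}$ and any $v \in V_i$ one gets $\varphi(v) = a f_i^{m_i}\varphi(v) = \varphi(a f_i^{m_i} v) = 0$. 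Hence every endomorphism of $V = \bigoplus_i V_i$ is block diagonal with respect to this decomposition, yielding the first canonical isomorphism $\End_{F[x]}(V) \cong \prod_{i=1}^r \End_{F[x]}(V_i)$; applying \eqref{eq:end-decomp} of Proposition~\ref{prop:Jac-rad-Wed} to each $V_i$ then gives the second canonical isomorphism in the statement.

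Next I would pass to the semisimple quotient. Since the Jacobson radical of a finite direct product of rings is the direct product of the Jacobson radicals, the decomposition $\End_{F[x]}(V) \cong \prod_i \End_{F[x]}(V_i)$ induces $\jac(\End_{F[x]}(V)) \cong \prod_i \jac(\End_{F[x]}(V_i))$ and therefore $\End_{F[x]}(V)/\jac(\End_{F[x]}(V)) \cong \prod_i \bigl(\End_{F[x]}(V_i)/\jac(\End_{F[x]}(V_i))\bigr)$. Substituting \eqref{eq:end-jac-quotient} from Proposition~\ref{prop:Jac-rad-Wed} for each factor gives the claimed isomorphism with $\prod_{i=1}^r \prod_{j=1}^{m_i} \Mat_{d_{i,j}}(F[x]/(f_i))$.

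For the final assertion, note that each $f_i$ is irreducible in $F[x]$, so $F[x]/(f_i)$ is a field; when $F$ is a number field, $F[x]/(f_i)$ is a finite extension of $F$ and hence itself a number field. Thus $\End_{F[x]}(V)/\jac(\End_{F[x]}(V))$ is a finite product of matrix rings over fields, which is exactly hypothesis~(H). The only step that is not pure bookkeeping is the vanishing $\Hom_{F[x]}(V_i,V_{i'}) = 0$ for $i \ne i'$, and even that is an immediate consequence of the coprimality of annihilators; I do not anticipate any real obstacle.
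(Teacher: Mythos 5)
Your argument is correct and follows essentially the same route as the paper, which simply cites Proposition~\ref{prop:Jac-rad-Wed} together with the observation that $\Hom_{F[x]}(V_i, V_j) = 0$ for $i \neq j$; you have spelled out that vanishing via the Bezout/coprime-annihilator argument and the remaining bookkeeping (Jacobson radical of a product is the product of the radicals, and $F[x]/(f_i)$ is a finite extension field of $F$) exactly as intended.
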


\begin{proof}
The desired result follows from Proposition \ref{prop:Jac-rad-Wed} together with the observation that
$\Hom_{F[x]}(V_i, V_j) = 0$ for $i \neq j$.
\end{proof}

\begin{prop}\label{prop:semisimple-etale-nilpotent}
Let $n \in \Z_{>0}$ and let $A \in \Mat_{n}(F)$.
\begin{enumerate}
\item The minimal polynomial of $A$ is squarefree if and only if $C_{F}(A)$ is semisimple.
\item The minimal polynomial of $A$ is equal to the characteristic polynomial of $A$ if and only if 
$C_{F}(A)/{\jac(C_{F}(A))}$ is isomorphic to a finite product of fields.
\item The characteristic polynomial of $A$ is squarefree if and only if $C_{F}(A)$ is isomorphic to a finite product of fields.
\item If $A$ is nilpotent then $C_{F}(A)/{\jac(C_{F}(A))}$ is isomorphic to 
$\prod_{j = 1}^{m} \Mat_{d_{j}}(F)$ for some $m, d_{1}, \ldots, d_{m} \in \Z_{>0}$.
\end{enumerate}
\end{prop}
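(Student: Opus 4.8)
The plan is to reduce everything to the module-theoretic description already set up in the excerpt. Throughout I would use the identification $C_{F}(A) \cong \End_{F[x]}(T_{F}(A))$ of Lemma~\ref{lem:homend}(b) together with the invariant-factor decomposition of Remark~\ref{rmk:invariant-factor-decomp}, refined by the Chinese remainder theorem applied to each cyclic summand $F[x]/(g_{l})$ to obtain
\[
T_{F}(A) \cong \bigoplus_{i=1}^{r} V_{i}, \qquad V_{i} = \bigoplus_{j=1}^{m_{i}} \left( F[x]/(f_{i}^{j}) \right)^{d_{i,j}},
\]
for distinct monic irreducibles $f_{i} \in F[x]$ and $d_{i,j} \in \Z_{\geq 0}$ --- exactly the shape handled by Corollary~\ref{cor:wedderburn}. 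The first thing I would record is that in this notation the minimal polynomial of $A$ equals $\prod_{i} f_{i}^{\max\{j\,:\,d_{i,j}>0\}}$ and the characteristic polynomial equals $\prod_{i} f_{i}^{\sum_{j} j\,d_{i,j}}$, since the cyclic summand $F[x]/(f_{i}^{j})$ contributes $f_{i}^{j}$ as both. All three parts then follow by reading these identities off against Corollary~\ref{cor:wedderburn}.

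For part~(a): if the minimal polynomial is squarefree then for each $i$ only $d_{i,1}$ is nonzero, so $T_{F}(A) \cong \bigoplus_{i} (F[x]/(f_{i}))^{d_{i,1}}$ and Corollary~\ref{cor:wedderburn} gives $C_{F}(A) \cong \prod_{i} \Mat_{d_{i,1}}(F[x]/(f_{i}))$, which is semisimple. Conversely, if the minimal polynomial is not squarefree, then some $e_{i} := \max\{j : d_{i,j}>0\}$ satisfies $e_{i} \geq 2$; the ideal $I$ of Proposition~\ref{prop:Jac-rad-Wed}, computed inside the factor $\End_{F[x]}(V_{i})$, contains the $(e_{i},e_{i})$-block $f_{i}\cdot\Mat_{d_{i,e_{i}}}(F[x]/(f_{i}^{e_{i}}))$, which is nonzero precisely because $f_{i} \notin (f_{i}^{e_{i}})$ when $e_{i} \geq 2$. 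Hence $\jac(C_{F}(A)) \neq 0$, so $C_{F}(A)$ is not semisimple by \cite[(5.18)]{curtisandreiner_vol1}.

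For part~(b): combining the characteristic-polynomial identity with part~(a), the characteristic polynomial is squarefree iff each exponent $\sum_{j} j\,d_{i,j}$ equals $1$, i.e.\ iff $T_{F}(A) \cong \bigoplus_{i} F[x]/(f_{i})$ for distinct irreducibles; by Corollary~\ref{cor:wedderburn} this gives $C_{F}(A) \cong \prod_{i} F[x]/(f_{i})$, a finite product of fields. Conversely, a finite product of fields is semisimple, so by part~(a) the minimal polynomial is squarefree and $C_{F}(A) \cong \prod_{i} \Mat_{d_{i,1}}(F[x]/(f_{i}))$; this is a product of fields only if each $d_{i,1}=1$, and then the characteristic polynomial $\prod_{i} f_{i}$ is squarefree. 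For part~(c): if $A$ is nilpotent then $x$ is its only irreducible elementary divisor, so in the decomposition above $r=1$ and $f_{1}=x$, giving $T_{F}(A) \cong \bigoplus_{j=1}^{m} (F[x]/(x^{j}))^{d_{j}}$; Corollary~\ref{cor:wedderburn} together with $F[x]/(x) \cong F$ yields $C_{F}(A)/{\jac(C_{F}(A))} \cong \prod_{j=1}^{m} \Mat_{d_{j}}(F)$, and discarding the indices $j$ with $d_{j}=0$ and relabelling puts this in the asserted form with all $d_{j} \in \Z_{>0}$.

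I do not expect a genuine obstacle, since the substantive content is entirely contained in Proposition~\ref{prop:Jac-rad-Wed} and Corollary~\ref{cor:wedderburn}; the remaining work is the bookkeeping above. The one step requiring a little care is the "only if" direction of~(a) (hence of~(b)), where one must produce an explicit nonzero element of $\jac(C_{F}(A))$: I would make sure it is chosen in the correct diagonal block $e_{jj}\gamma_{jj}\Hom(\cdots)$ of the ideal $I$, with nonvanishing visibly forced by $j \geq 2$.
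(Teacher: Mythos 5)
Your proposal is correct and follows essentially the same route as the paper: identify $C_F(A)$ with $\End_{F[x]}(T_F(A))$, decompose $T_F(A)$ into its $f_i$-primary parts, and read off all three claims from Proposition~\ref{prop:Jac-rad-Wed} and Corollary~\ref{cor:wedderburn}. You are in fact slightly more careful than the paper in two places --- you exhibit an explicit nonzero element of $\jac(C_F(A))$ for the ``only if'' direction of (a), and in (c) you note that indices $j$ with $d_j=0$ must be discarded so that the conclusion has $d_j \in \Z_{>0}$ as stated.
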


\begin{proof}
Let $f \in F[x]$ denote the characteristic polynomial of $A$.
It is a standard result in linear algebra that there is an isomorphism of $F[x]$-modules
\begin{equation}\label{eq:invariant-factor-decomp}
T_{F}(A) \cong F[x]/(g_{1}) \oplus \cdots \oplus F[x]/(g_{s}),  
\end{equation}
where $g_{1}, \ldots, g_{s} \in F[x]$ are the invariant factors of $A$
and $g_{1} \mid g_{2} \mid \cdots \mid g_{s}$ . 
Thus $g_{s}$ is the minimal polynomial of $A$ and $f=g_{1}\cdots g_{s}$.
Moreover, by Lemma~\ref{lem:homend} (b) there is a canonical isomorphism  
$C_{F}(A) \cong \End_{F[x]}(V)$ of $F$-algebras where $V:=T_{F}(A)$.
Let $f_{1}, \ldots, f_{r} \in F[x]$ denote the distinct monic irreducible factors of $f$.
Then there exists a decomposition $V= \bigoplus_{i=1}^{r} V_{i}$ and isomorphisms
$V_{i} \cong \bigoplus_{j = 1}^{m_i} F[x]/(f_{i}^{j})^{d_{i,j}}$ 
for some $m_{i} \in \Z_{>0}$ and $d_{i,j} \in \Z_{\geq 0}$. 
(a) Observe that $g_{s}$ is squarefree if and only if each
$g_{k}$ is squarefree if and only if $m_{i}=1$ for $i=1,\ldots,r$.
By Corollary \ref{cor:wedderburn}, this in turn is equivalent to 
the triviality of $\jac(\End_{F[x]}(V))$, which is equivalent to the 
semisimplicity of $\End_{F[x]}(V)$ by \cite[(5.18)]{curtisandreiner_vol1}.
(b) Observe that $g_{s}=f$ if and only if $s=1$ if and only if $d_{i,1}=1$ for $i=1,\ldots,r$. 
By Corollary \ref{cor:wedderburn}, this in turn holds if and only if 
$\End_{F[x]}(V)/{\jac(\End_{F[x]}(V))}$
is isomorphic to a finite product of fields.
(c) This follows from the previous two parts, once one obverses that 
if  $f$ is squarefree then it must be equal to $g_{s}$.
(d) If $A$ is nilpotent then $f$ is some power of $x$, 
and so $r=1$ and $f_{1}=x$. 
Thus the claim follows from Proposition~\ref{prop:Jac-rad-Wed} 
and the canonical isomorphism $F[x]/(x) \cong F$.
\end{proof}

\begin{prop}\label{prop:wedderburnpoly}
Let $K$ be a number field, let $n \in \Z_{>0}$ and let $A \in \Mat_{n}(K)$.
Let $f$ be the characteristic polynomial of $A$ and let
$f=f_{1}^{n_{1}} \cdots f_{r}^{n_{r}}$ be its factorisation, 
where $f_{1}, \ldots, f_{r} \in K[x]$ are distinct monic irreducible polynomials and $n_{i} \in \Z_{>0}$ for each $i$.
Let $K_{i}=K[x]/(f_{i})$ for $i=1,\ldots,r$.
Then there exists a polynomial-time algorithm that computes the factorisation of $f$, 
computes $C_{K}(A)$ and $\jac(C_K(A))$, and computes an explicit homomorphism of $K$-algebras 
\[
\rho \colon C_K(A) \longrightarrow C_K(A)/{\jac(C_K(A))}
\stackrel{\cong}{\longrightarrow} 
\prod_{i=1}^{r}  \prod_{j = 1}^{m_{i}} \Mat_{d_{i,j}}(K_i),
\]
for some $m_{i} \in \Z_{>0}$ and $d_{i,j} \in \Z_{\geq 0}$ such that $\sum_{j=1}^{m_{i}} jd_{i,j} = n_{i}$ for each $i$.
In particular, this solves  $\pwedderburn$ for $C_K(A)/{\jac(C_K(A))}$, which satisfies hypothesis \textup{(H)}.
\end{prop}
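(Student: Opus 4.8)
The plan is to reduce everything to the rational‑canonical‑form structure of $A$ and then invoke Corollary~\ref{cor:wedderburn}. First I would factor the characteristic polynomial $f$ of $A$ over $K[x]$; this can be carried out in polynomial time by standard algorithms (see, e.g., \cite{Lenstra1992}), and it produces the distinct monic irreducible polynomials $f_{1},\dots,f_{r}$, the exponents $n_{1},\dots,n_{r}$, and hence the fields $K_{i}=K[x]/(f_{i})$.

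Next I would compute the primary decomposition of the $K[x]$-module $T_{K}(A)=K^{n}$. For each $i$ the $f_{i}$-primary component is $V_{i}=\ker f_{i}(A)^{n_{i}}$, and $T_{K}(A)=\bigoplus_{i=1}^{r}V_{i}$; within each $V_{i}$ one obtains, by linear algebra over $K$, an explicit decomposition $V_{i}\cong\bigoplus_{j=1}^{m_{i}}(K[x]/(f_{i}^{j}))^{d_{i,j}}$ into cyclic modules, where the multiplicities $d_{i,j}$ are read off from the dimensions of the kernels and images of the maps induced by $f_{i}(A)^{k}$ on $V_{i}$ (equivalently, from the Smith normal form of $xI_{n}-A$ over $K[x]$). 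Carrying this out constructively yields a matrix $P\in\GL_{n}(K)$ conjugating $A$ into the block‑diagonal form given by the corresponding companion‑matrix blocks. Since arithmetic in $K$ and linear algebra over $K$ run in polynomial time, so does this step, and in particular the $d_{i,j}$ are computed.

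With the decomposition in hand I would use Lemma~\ref{lem:homend}(b) to identify $C_{K}(A)$ with $\End_{K[x]}(T_{K}(A))$, and conjugation by $P$ then identifies $C_{K}(A)$ with the explicit ``matrix ring'' of Corollary~\ref{cor:wedderburn}, whose entries are the finite‑dimensional Hom‑spaces $\Hom_{K[x]}((K[x]/(f_{i}^{k}))^{d_{i,k}},(K[x]/(f_{i}^{j}))^{d_{i,j}})$; bases for these, and hence a $K$-basis of $C_{K}(A)$, are immediate. Formula~\eqref{eq:end-jac-decomp} then gives $\jac(C_{K}(A))$ as an explicit $K$-subspace, and formula~\eqref{eq:end-jac-quotient} identifies the semisimple quotient with $\prod_{i=1}^{r}\prod_{j=1}^{m_{i}}\Mat_{d_{i,j}}(K_{i})$ via the canonical map reducing each Hom‑entry modulo $f_{i}$; composing the projection $C_{K}(A)\to C_{K}(A)/\jac(C_{K}(A))$ with this identification yields the required homomorphism $\rho$, all by linear algebra over $K$. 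The relation $\sum_{j=1}^{m_{i}}j\,d_{i,j}=n_{i}$ follows by comparing dimensions: the characteristic polynomial of $A$ restricted to the $f_{i}$-primary component $V_{i}$ is $f_{i}^{n_{i}}$, so $\dim_{K}V_{i}=n_{i}\deg f_{i}$, while also $\dim_{K}V_{i}=\sum_{j}d_{i,j}\dim_{K}(K[x]/(f_{i}^{j}))=(\deg f_{i})\sum_{j}j\,d_{i,j}$. Finally, the last assertion of Corollary~\ref{cor:wedderburn} shows that $C_{K}(A)/\jac(C_{K}(A))$ satisfies hypothesis~(H), and the explicit product decomposition together with $\rho$ is exactly a solution of $\pwedderburn$ for this algebra.

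I expect the main obstacle to be the constructive, polynomial‑time computation of the change‑of‑basis matrix $P$ realising the full primary‑and‑cyclic decomposition of $T_{K}(A)$ (i.e.\ an effective version of the rational canonical form over a number field), together with the verification that the resulting concrete descriptions of $C_{K}(A)$, $\jac(C_{K}(A))$ and the quotient coincide with the canonical ones of Proposition~\ref{prop:Jac-rad-Wed}; the mathematical content is already packaged in Corollary~\ref{cor:wedderburn}, so beyond that the task is careful bookkeeping plus a complexity audit of the linear‑algebra subroutines.
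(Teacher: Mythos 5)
Your proposal is correct and takes essentially the same route as the paper: factor the characteristic polynomial, compute an explicit rational canonical form / primary‑cyclic decomposition of $T_K(A)$, identify $C_K(A)$ with $\End_{K[x]}(T_K(A))$ via Lemma~\ref{lem:homend}(b), and invoke Corollary~\ref{cor:wedderburn}. The one issue you flag as unfinished — a polynomial‑time bound for the constructive rational canonical form (and change of basis $P$) — is exactly what the paper settles by citing \cite{Villard1993}.
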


\begin{proof}
Assume the setup and notation of the proof of Proposition \ref{prop:semisimple-etale-nilpotent} with $F=K$.
The isomorphism of \eqref{eq:invariant-factor-decomp}
is obtained when computing the rational canonical form, which can be performed 
in polynomial time (see \cite[Theorem 4]{Villard1993}). 
  Moreover, polynomials in $K[x]$ can be factored in polynomial time by the algorithm of \cite[(4.5) Theorem]{MR774816}.
Thus we can explicitly compute a decomposition $T_{K}(A) = \bigoplus_{i=1}^{r} V_{i}$ and isomorphisms
$V_{i} \cong \bigoplus_{j = 1}^{m_i} (K[x]/(f_{i}^{j}))^{d_{i,j}}$ for some $m_{i} \in \Z_{>0}$ and
$d_{i,j} \in \Z_{\geq 0}$. 
Note that for each $i$, we have $\sum_{j=1}^{m_{i}} jd_{i,j} = n_{i}$ since $f=g_{1}\cdots g_{s}$.
Since $C_K(A)$ is canonically isomorphic to $\End_{K[x]}(T_{K}(A))$ by Lemma~\ref{lem:homend} (b),
the desired result now follows from Corollary~\ref{cor:wedderburn}.
\end{proof}

\subsection{An algorithm for determining similarity and computing a conjugating matrix}\label{subsec:alg-similarity}
We now consider the following problem.

\begin{problem*}[\psimilar] 
Given a number field $K$ with ring of integers $\mathcal{O}=\mathcal{O}_{K}$,
an integer $n \in \Z_{>0}$ and two matrices 
$A,B \in \Mat_{n}(\mathcal{O})$, determine whether $A$ and $B$ are similar over $\mathcal{O}$,
and if so, return a conjugating matrix $C \in \GL_{n}(\mathcal{O})$ such that $B= CAC^{-1}$.
\end{problem*}

Let $n \in \Z_{>0}$ and let $A,B \in \Mat_{n}(\Z)$. 
Assume that there exists $D \in \GL_{n}(\Q)$ such that $B=DAD^{-1}$.
Thus $A$ and $B$ have the same minimal polynomial $f \in \Z[x]$, 
and so the $\Z[x]$-modules $T_{\Z}(A)$ and $T_{\Z}(B)$ are in fact $\Z[x]/(f)$-lattices. 
In view of Lemma \ref{lem:similar-matrices}, this implies that 
$\psimilar$ over $\Z$ can be reduced to the problem of determining whether the $\Z[x]/(f)$-lattices
$T_{\Z}(A)$ and $T_{\Z}(B)$ are isomorphic, and if so, of computing an isomorphism between them.

Using this observation, Sarkisyan \cite{zbMATH03665057} and Grunewald \cite{MR579942} independently
showed that the conjugacy problem over $\Z$ for arbitrary pairs of matrices is decidable.
Moreover, Applegate and Onishi \cite{MR617778,MR656422} considered the 
cases of $2 \times 2$ and $3 \times 3$ matrices, and Behn and Van der Merwe \cite{MR1897818} also considered the $2 \times 2$ case.

In the case that the characteristic polynomial of $A$ (and $B$) is
squarefree (and thus the minimal and characteristic polynomials coincide), 
the above approach via $\Z[x]/(f)$-lattices is equivalent to a classical result of 
Latimer--MacDuffee~\cite{MR1503108}.
This last result was recently generalised in the dissertation of Husert \cite{Hus17}
to the case where the minimal polynomial is squarefree but
the characteristic polynomial is arbitrary.
See Proposition \ref{prop:semisimple-etale-nilpotent} for properties of 
the $\Q$-algebra $C_{\Q}(A)$ in both of these special cases.

For a discussion of practical algorithms that have been implemented on a computer,
see \S \ref{subec:similarity-implementation}.

\begin{prop}\label{prop:similarity-gen-unit-for-apps}
Let $R$ be a noetherian integral domain with field of fractions $K \neq R$.
Let $n \in \Z_{>0}$ and let $A,B \in \Mat_{n}(R)$.
Suppose that $D \in \GL_{n}(K)$ satisfies $B=DAD^{-1}$.
Then $A$ and $B$ are similar over $R$ if and only if 
\begin{enumerate}
\item the $C_R(B)$-lattice $C_R(A, B)D^{-1}$ in $C_{K}(B)$ is free of rank $1$, and
\item every (any) free generator $C'$ of $C_R(A, B)D^{-1}$ over $C_{R}(B)$ satisfies $C'D \in \GL_n(R)$.
\end{enumerate}
Furthermore, when this is the case, $B=CAC^{-1}$ where $C:=C'D$.
\end{prop}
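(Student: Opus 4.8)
The plan is to deduce this from Proposition~\ref{prop:similarity-gen-unit} by ``twisting'' the $C_R(B)$-module $C_R(A,B)$ by right multiplication by $D^{-1}$. First I would record the identity $C_K(A,B)D^{-1} = C_K(B)$: writing $B = DAD^{-1}$ as $DA = BD$, for $X \in C_K(A,B)$ one computes $(XD^{-1})B = XD^{-1}(DAD^{-1}) = XAD^{-1} = BXD^{-1}$, so $XD^{-1} \in C_K(B)$; conversely, for $Y \in C_K(B)$ one has $(YD)A = Y(DA) = Y(BD) = (BY)D = B(YD)$, so $YD \in C_K(A,B)$ and $Y = (YD)D^{-1} \in C_K(A,B)D^{-1}$. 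Consequently $C_R(A,B)D^{-1}$ is an $R$-submodule of $C_K(B)$; it is finitely generated and torsion-free over $R$, being the image of the finitely generated $R$-module $C_R(A,B) \subseteq \Mat_n(R)$ under right multiplication by $D^{-1}$. Together with the fact that $C_R(B)$ acts on $C_R(A,B)$ by left multiplication (and that left multiplication by an element of $C_R(B)$ preserves $C_R(A,B)$, as used in the proof of Proposition~\ref{prop:similarity-gen-unit}), this shows that $C_R(A,B)D^{-1}$ is a left $C_R(B)$-module, hence a $C_R(B)$-lattice in $C_K(B)$.

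The key point is that the map
\[
\theta \colon C_R(A,B) \longrightarrow C_R(A,B)D^{-1}, \qquad X \longmapsto XD^{-1},
\]
is an isomorphism of left $C_R(B)$-modules: it is an $R$-linear bijection by construction, and it is $C_R(B)$-linear because on both sides the $C_R(B)$-action is by left multiplication, which commutes with right multiplication by $D^{-1}$. In particular, $C_R(A,B)$ is free of rank~$1$ over $C_R(B)$ if and only if $C_R(A,B)D^{-1}$ is, and $C'$ is a free generator of $C_R(A,B)D^{-1}$ over $C_R(B)$ if and only if $C'D = \theta^{-1}(C')$ is a free generator of $C_R(A,B)$ over $C_R(B)$.

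It then remains only to invoke Proposition~\ref{prop:similarity-gen-unit}, according to which $A$ and $B$ are similar over $R$ if and only if $C_R(A,B)$ is free of rank~$1$ over $C_R(B)$ and every (any) free generator $C$ of $C_R(A,B)$ over $C_R(B)$ lies in $\GL_n(R)$, in which case $B = CAC^{-1}$. Feeding in the translation of the previous paragraph, with $C = C'D$, converts this verbatim into the asserted conditions (a) and (b) and yields $B = (C'D)A(C'D)^{-1}$, as desired. The argument involves no genuine obstacle: the only things requiring a moment's care are checking that $\theta$ really respects the left $C_R(B)$-module structures on both sides, and the identity $C_K(A,B)D^{-1}=C_K(B)$ that places the twisted lattice inside $C_K(B)$ in the first place.
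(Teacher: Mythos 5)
Your argument is correct and follows essentially the same route as the paper: the paper invokes Lemma~\ref{lem:similar-matrices-gives-isomorphism} over $K$ to obtain the $C_K(B)$-module isomorphism $\theta_{D^{-1}}\colon C_K(A,B)\to C_K(B)$, $X\mapsto XD^{-1}$, and then reduces to Proposition~\ref{prop:similarity-gen-unit}, exactly as you do. You simply unpack the lemma by verifying the identity $C_K(A,B)D^{-1}=C_K(B)$ and the $C_R(B)$-linearity of $\theta$ by hand, which is fine but adds nothing new to the underlying argument.
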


\begin{proof}
By Lemma \ref{lem:similar-matrices-gives-isomorphism} the map 
$\theta_{D^{-1}} : C_{K}(A,B) \rightarrow C_{K}(B)$, $X \mapsto XD^{-1}$
is an isomorphism of $C_{K}(B)$-modules.
Hence the desired result follows from Proposition \ref{prop:similarity-gen-unit}. 
\end{proof}

The main algorithm of this section is as follows.

\begin{algorithm}\label{alg:similar} 
Let $K$ be a number field with ring of integers $\mathcal{O}=\mathcal{O}_{K}$,
let $n \in \Z_{>0}$, and let $A, B \in \Mat_{n}(\mathcal{O})$.
The following steps solve $\psimilar$ for $A$ and $B$, that is, they 
determine whether $A$ and $B$ are similar over $\mathcal{O}$,
and if so, return an element $C \in \GL_{n}(\mathcal{O})$ such that $B = CAC^{-1}$.
\renewcommand{\labelenumi}{(\arabic{enumi})}
\begin{enumerate}
\item Check whether $A$ and $B$ are similar over $K$, and if so, 
compute $D \in \GL_{n}(K)$ such that $B = DAD^{-1}$. 
If not, then $A$ and $B$ are not similar over $\mathcal{O}$.
\item 
Compute $C_{K}(B)$, $\jac(C_K(B))$, and an explicit homomorphism of $K$-algebras 
\[
  \rho \colon C_K(B) \longrightarrow C_K(B)/{\jac(C_K(B))} 
\stackrel{\cong}{\longrightarrow} 
\prod_{i=1}^{t} \Mat_{d_i}(K_i),
\]
where the $K_{i}$'s are (not necessarily distinct) finite field extensions of $K$.
\item Check whether $\rho(C_\calO(A, B)D^{-1})$ is a free $\rho(C_\calO(B))$-lattice,
and if so, compute a generator $E \in \rho(C_\calO(A, B)D^{-1})$.
If not, then $A$ and $B$ are not similar over $\mathcal{O}$.
\item Compute $C' \in C_\calO(A, B)D^{-1}$ such that $\rho(C')=E$.
\item Check whether $C := C'D \in \GL_n(\calO)$.
If so, then $B=CAC^{-1}$. If not, then $A$ and $B$ are not similar over $\mathcal{O}$.
\end{enumerate}
\end{algorithm}

\begin{proof}[Proof of correctness of Algorithm \ref{alg:similar}]
If all steps succeed, then $C \in C_{\mathcal{O}}(A, B) \cap \GL_{n}(\mathcal{O})$ 
and it easily follows that $B=CAC^{-1}$.
It remains to show that if any of Steps (1), (3) or (5) fail, then $A$ and $B$ are not similar over 
$\mathcal{O}$. If Step (1) fails, then this is clear.
If Step (3) fails, then Theorem~\ref{thm:lift-gen-from-semisimple-case} (a) implies that 
$C_\calO(A, B)D^{-1}$ is not free over $C_{\mathcal{O}}(B)$, and the result follows from
Proposition~\ref{prop:similarity-gen-unit-for-apps} (a).
Finally, suppose that Step (5) fails, that is, $C \notin \GL_{n}(\mathcal{O})$. 
If $C'$ is not a free generator of $C_{\mathcal{O}}(A, B)D^{-1}$ over $C_{\mathcal{O}}(B)$,
then Theorem~\ref{thm:lift-gen-from-semisimple-case} (b) implies that 
$C_{\mathcal{O}}(A, B)D^{-1}$ is not free over $C_{\mathcal{O}}(B)$, 
and again the result follows from Proposition~\ref{prop:similarity-gen-unit-for-apps} (a). 
If $C'$ is a free generator of $C_{\mathcal{O}}(A, B)D^{-1}$ over $C_{\mathcal{O}}(B)$,
then the result follows from Proposition~\ref{prop:similarity-gen-unit-for-apps} (b).
\end{proof}

The following result analyses the complexity of Algorithm \ref{alg:similar}, and further details
on each step are given in the proof.

\begin{theorem}\label{thm:isconjugated}
Let $K$ be a number field with ring of integers $\mathcal{O}=\mathcal{O}_{K}$,
let $n \in \Z_{>0}$, and let $A, B \in \Mat_{n}(\mathcal{O})$.
Let $f_{1}, \ldots, f_{r} \in K[x]$ be the distinct monic irreducible factors of the characteristic polynomial
of $B$. For $i=1,\ldots,r$ let $K_{i}=K[x]/(f_{i})$.
Let $\Lambda$ be the image of $C_{\calO}(B)$ under the projection $C_K(B) \to C_K(B)/{\jac(C_K(B))}$.
Let $\mathcal{M}$ be any choice of maximal $\mathcal{O}$-order in $C_K(B)/\jac(C_K(B))$ containing $\Lambda$ and let $\mathfrak{h} = [\mathcal{M} : \Lambda]_{\mathcal{O}}$ be the module 
index of $\Lambda$ in $\mathcal{M}$.
Then Algorithm~\ref{alg:similar} reduces the problem $\psimilar$ for $A$ and $B$ in probabilistic polynomial time to
\begin{enumerate}
\item $\pfactor(\disc(\Lambda))$, the factorisation of the discriminant of $\Lambda$,
\item for each $i$ with $1 \leq i \leq r$, one instance of $\ppip_{\mathcal O_{K_i}}$,
\item for each $i$ with $1 \leq i \leq r$, $\unit(\mathcal O_{K_i})$,
\item for each prime ideal divisor $\mathfrak{p}$ of $\mathfrak{h}$, the problem $\pdlog$ 
for extensions of $\mathcal{O}/\mathfrak{p}$, and
\item for each prime ideal divisor $\mathfrak{p}$ of $\mathfrak{h}$, the problem $\pprimitive$ for extensions of $\mathcal{O}/\mathfrak{p}$.
\end{enumerate}
Note that $\mathcal{M}$ and $\mathfrak{h}$ are not part of the input and $\mathfrak{h}$
is only needed for the above complexity statement. 
Moreover, $\mathfrak{h}$ does not depend on the choice of $\mathcal{M}$.
\end{theorem}

\begin{proof}
In the following, the steps refer to those of Algorithm \ref{alg:similar}.
Step (1) can be performed in polynomial time by \cite[Theorem 2]{MR1809971} and
Step (2) can be performed in polynomial time by Proposition \ref{prop:wedderburnpoly}.
Steps (4) and (5) are straightforward and can both be performed in polynomial time.
Step~(3) can be performed using Algorithm \ref{alg:main}, and so the desired result now
follows from Theorem \ref{thm:main}, after noting that $\pwedderburn(C_K(B)/{\jac(C_K(B)))}$ 
was already performed in Step (2).
\end{proof}

We also record the following two consequences of Remark~\ref{remark:comp}.

\begin{corollary}\label{cor:isconjugated1}
The problem $\psimilar$ reduces in probabilistic subexponential time to the problems \ppip{} and \punit{} for rings of integers of number fields.
\end{corollary}

\begin{corollary}\label{cor:isconjugated2}
There exists a polynomial quantum algorithm for solving $\psimilar$.
\end{corollary}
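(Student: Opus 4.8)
The plan is to combine Algorithm~\ref{alg:similar} and Theorem~\ref{thm:isconjugated} with the quantum complexity statements recorded in Remarks~\ref{remark:subcomp} and~\ref{remark:comp}. First I would recall that, by Theorem~\ref{thm:isconjugated}, for given $A,B \in \Mat_n(\calO)$ the problem $\psimilar$ is probabilistic polynomial-time reducible to the following ingredients: the factorisation $\pfactor_\calO(\disc(\Lambda))$ of the discriminant of the order $\Lambda = \rho(C_{\calO}(B))$; for each $i$ one instance of $\ppip(\calO_{K_i})$; the unit groups $\unit(\calO_{K_i})$ for $1 \le i \le r$; and, for each prime divisor $\mathfrak{p}$ of $\mathfrak{h}$, polynomially many instances of $\pprimitive$ and $\pdlog$ for finite field extensions of $\calO/\mathfrak{p}$ of degree at most $d$. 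The key point is that this reduction is entirely classical and runs in probabilistic polynomial time; in particular the Wedderburn decomposition required by Step~(b) of Algorithm~\ref{alg:similar} is computed classically in polynomial time by Proposition~\ref{prop:wedderburnpoly}, so there is no need to call $\psplitting$ for a general split simple algebra and hence no ``bounded dimension'' restriction is imposed here.

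It then remains to replace each subroutine by a quantum algorithm. By Remark~\ref{remark:subcomp}(2) there are quantum polynomial-time algorithms for $\pfactor_\Z$, $\pdlog$ and $\pprimitive$, for $\unit(\calO_F)$, and for $\ppip(\calO_F)$ (the last two being conditional on GRH, as reflected in Remark~\ref{remark:comp}(2)). Since factorisation of an ideal of a ring of integers $\calO_F$ reduces in classical probabilistic polynomial time to $\pfactor_\Z$ by Remark~\ref{remark:subcomp}(1), every ingredient appearing in Theorem~\ref{thm:isconjugated} can be carried out on a quantum computer in polynomial time. Composing the classical polynomial-time reduction of Theorem~\ref{thm:isconjugated} with these quantum subroutines yields a polynomial quantum algorithm for $\psimilar$ under GRH. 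Equivalently, one may simply quote Remark~\ref{remark:comp}(2): $\ppip(\Lambda)$ for an order $\Lambda$ in a $K$-algebra satisfying hypothesis~(H) is, assuming GRH, quantum polynomial-time reducible to $\pwedderburn$, which in the present setting is polynomial by Proposition~\ref{prop:wedderburnpoly}, and then apply Algorithm~\ref{alg:similar} together with Theorem~\ref{thm:lift-gen-from-semisimple-case} and Proposition~\ref{prop:similarity-gen-unit-for-apps}.

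Rather than a genuine obstacle, the only point requiring care is bookkeeping: one must verify that all of the ``glue'' operations --- Steps~(a), (d), (e) of Algorithm~\ref{alg:similar}, the passage to the semisimple quotient via Theorem~\ref{thm:lift-gen-from-semisimple-case}, and the linear algebra over the finite rings $\Lambda/\mathfrak{f}$ and $\OC/\mathfrak{g}$ --- are genuinely classical polynomial time, and that the total number of invocations of each quantum subroutine is polynomially bounded (for instance via the bound $m \le 5 d^2 [K:\Q] \log_2 \Nm_{K/\Q}(\mathfrak{h})$ on the number of generators of $(\Lambda/\mathfrak{f})^{\times}$). All of this is already contained in the proof of Theorem~\ref{thm:isconjugated} and the complexity bounds established there, so the corollary follows at once.
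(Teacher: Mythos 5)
Your proposal is correct and follows essentially the same route as the paper, which derives Corollary~\ref{cor:isconjugated2} directly as a consequence of Remark~\ref{remark:comp}(b); you have simply unpacked the chain of references (Theorem~\ref{thm:isconjugated} $\to$ Remark~\ref{remark:subcomp}(2), together with the observation that Proposition~\ref{prop:wedderburnpoly} supplies $\pwedderburn$ classically so that no bounded-dimension caveat is needed) that Remark~\ref{remark:comp} already encapsulates.
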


\subsection{Implementation of the algorithm}\label{subec:similarity-implementation}
The algorithm for solving the principal ideal problem for orders in algebras satisfying hypothesis 
(H) and its application to the similarity problem has been implemented using the computer algebra package~\textsc{Hecke}~\cite{Fieker2017} (also available in \textsc{Oscar}~\cite{Oscar}) and is included from version 0.13 onwards.
The implementation works for arbitrary pairs of matrices in $\Mat_{n}(\Z)$.
We now give a brief comparison with other algorithms and implementations, 
all of which are for pairs of matrices in $\Mat_{n}(\Z)$, subject to certain further restrictions in
cases (a)--(c). Recall that in Proposition \ref{prop:semisimple-etale-nilpotent} the restrictions in (b) and (c) are rephrased in terms of the algebra $C_{\Q}(A)$.
\begin{enumerate}
\item 
The algorithm of Opgenorth--Plesken--Schulz \cite{OPSc1998} solves the similarity problem 
for pairs of matrices of finite order.
\item
The algorithm of Husert~\cite{Hus17} solves the similarity problem for pairs of
matrices, both of which are either nilpotent or have squarefree minimal polynomial. 
However, the implementation is restricted to nilpotent matrices and matrices with irreducible minimal polynomial.
\item 
The algorithm of Marseglia~\cite{MR4111932} solves the similarity problem for pairs of
matrices with squarefree characteristic polynomial
(this condition implies that the minimal and characteristic polynomials coincide).
\item The algorithm of Eick--O'Brien and the second named author of the present article 
\cite{MR4048718} is based on ideas of Grunewald~\cite{MR579942} and solves the similarity problem for arbitrary pairs of matrices.
\end{enumerate}

All of the above algorithms (a)--(d) have been implemented in \textsc{Magma} \cite{Magma}, 
but no formal complexity analysis has been given for any of them.
However, we can compare these with our algorithm using timings and heuristic reasoning.
All timings in the examples below were performed using 
a single core of a 3.40GHz Intel E5-2643 processor and under the assumption of GRH. 
\textsc{Magma} V2.23-3 was used to run algorithms (a)--(d).

For random pairs of matrices of a given rational canonical form, our algorithm dramatically outperforms (a)
and the algorithm for nilpotent matrices of (b). In the latter case this is not surprising,
since the algorithm in question requires an exhaustive search among candidates within a large search space.
In the case of matrices with squarefree minimal polynomial, the bottleneck of algorithm (b)
is a final enumeration over a set $\Lambda/\frf$, which our algorithm 
avoids by means of the results of \S \ref{sec:comp-gens} 
(in particular, see Proposition \ref{prop:compk1}).
In cases where the set $\Lambda/\frf$ is large, our algorithm dramatically outperforms that of (b).

\begin{example}
Consider the two matrices
\[
A  = \left(\begin{smallmatrix}
0&1&0&0&0&0\\
-5336100&0&0&0&0&0\\
0&0&0&1&0&0\\
0&0&-5336100&0&0&0\\
0&0&0&0&0&1\\
0&0&0&0&-5336100&0
\end{smallmatrix}\right),
\quad
      B = \left(\begin{smallmatrix}
          0&1&5&0&53361000&0\\
-5336100&0&40&-5&0&-53361000\\
0&0&-8&1&0&0\\
0&0&-5336164&8&0&0\\
0&0&0&0&0&1\\
0&0&0&0&-5336100&0
\end{smallmatrix}\right),
\] 
both with irreducible minimal polynomial $f = x^2 + 5336100$ and characteristic polynomial $f^3$.
The algorithm of (b) requires an enumeration over a set of size
$2357947691 \approx 10^9$, thus rendering it impractical for this example.
However, the implementation of our algorithm requires 6 seconds to recognise that 
$A$ and $B$ are similar over $\Z$ and to find a conjugating matrix.
Note that $C_\Q(A) \cong \Mat_3(K)$, 
where $K = \Q[x]/(x^2 + 5336100)$.
\end{example}

Algorithm (c) is more restricted than (b) in that it requires the matrices in question to have squarefree characteristic polynomial.
However, in contrast to the squarefree minimal polynomial case of (b), it avoids a final enumeration step, 
and thus it performs as well as our algorithm in this special case. 

We have compared the implementation of our algorithm with that of algorithm (d)
for a variety of different examples and found that in all cases the former outperformed the latter, often dramatically.
However, we should mention that as a by-product, given a matrix $A \in \Mat_{n}(\Z)$,
algorithm~(d) can be used to determine generators of the arithmetic group 
$C_{\Z}(A)^{\times} = \{ X \in \GL_{n}(\Z) \mid XA = AX \}$.
Various examples in~\cite{MR4048718} as well as
the overall strategy of finding candidates in large search spaces
suggest that algorithm (d) has at least exponential complexity. 
We now review some of these examples from 
\cite{MR4048718} and show how our algorithm fares in comparison.

\begin{example}[{{\cite[6.3.2]{MR4048718}}}]\label{ex:different-min-and-char-polys}
  Consider the two matrices 
  \[
    A = \left( \begin{smallmatrix}
   -3 &  -1 &  3 &   0 &   0 &  0 &   0 &   0 &  0 \\
    1 &   0 &  0 &   0 &   0 &  0 &   0 &   0 &  0 \\
   -5 &   0 &  1 &   0 &   0 &  0 &   0 &   0 &  0 \\
    0 &   0 &  0 &  -3 &  -1 &  3 &   0 &   0 &  0 \\
    0 &   0 &  0 &   1 &   0 &  0 &   0 &   0 &  0 \\
    0 &   0 &  0 &  -5 &   0 &  1 &   0 &   0 &  0 \\
    0 &   0 &  0 &   0 &   0 &  0 &  -3 &  -1 &  3 \\
    0 &   0 &  0 &   0 &   0 &  0 &   1 &   0 &  0 \\
    0 &   0 &  0 &   0 &   0 &  0 &  -5 &   0 &  1
  \end{smallmatrix} \right)
, \quad 
B = \left(\begin{smallmatrix}
    13 &  -15 &   16 &    24 &   -16 &   -7 &   -35 &    15 &    0 \\
    -3 &   44 &  -40 &   -71 &    62 &   28 &   157 &   -76 &   16 \\
    18 &  -15 &   -3 &    -7 &   -31 &    6 &  -226 &   129 &  -52 \\
   -69 &   72 &  -55 &   -78 &    86 &   18 &   355 &  -186 &   48 \\
   -75 &   98 &  -82 &  -124 &   117 &   35 &   406 &  -206 &   46 \\
   -45 &   19 &  -21 &   -22 &    10 &    1 &    49 &   -25 &   -3 \\
    24 &  -66 &   53 &    89 &   -89 &  -31 &  -289 &   147 &  -37 \\
    30 &  -78 &   61 &   102 &  -104 &  -35 &  -348 &   178 &  -45 \\
    24 &   11 &   -8 &   -23 &    26 &   14 &    58 &   -29 &   11 
\end{smallmatrix}
\right),
\] 
both with irreducible minimal polynomial $f = x^3 + 2x^2 + 13x - 1$ and 
characteristic polynomial $f^{3}$.
As these are not equal, algorithm (c) cannot be applied in this situation.
Moreover, algorithm (d) fails to run in reasonable time 
because the search space is too large.
However, the implementation of our algorithm requires 10 seconds to recognise that 
$A$ and $B$ are similar over $\Z$ and to find a conjugating matrix.
Note that $C_{\Q}(A) \cong \Mat_3(K)$,
where $K = \Q[x]/(f)$.
\end{example}

\begin{example}[{{\cite[6.3.3]{MR4048718}}}]
Consider the two matrices
\[
A = \left(\begin{smallmatrix}
  13 &     67 &  6 &  0 &  0 &  -1 \\
   0 &      1 &  3 &  0 &  0 &   0 \\
   0 &      0 &  1 &  0 &  0 &   0 \\
-270 &  -1350 &  0 &  1 &  2 &  20 \\
-135 &   -675 &  0 &  0 &  1 &  10 \\
 -27 &   -135 &  0 &  0 &  0 &   2
\end{smallmatrix}\right), \quad
B = \left(\begin{smallmatrix}
  13 &     79 &  0 &  0 &    1 &   -76 \\
   0 &      1 &  0 &  0 &    0 &     3 \\
-270 &  -1620 &  1 &  2 &  -20 &  1620 \\
-135 &   -810 &  0 &  1 &  -10 &   810 \\
  27 &    162 &  0 &  0 &    2 &  -162 \\
   0 &      0 &  0 &  0 &    0 &     1
\end{smallmatrix} \right),
\]
both with minimal and characteristic polynomial equal to $(x - 1)^{4}(x^2 - 15x - 1)$.
As this is not squarefree, algorithms (b) and (c) cannot be applied in this situation.
Again, the search space for a certain subproblem is too large,
making the computation infeasible for algorithm (d).
However, the implementation of our algorithm finds a conjugating matrix in less than one second.
Note that $\dim_\Q(C_\Q(A)) = 6$ and 
\[ 
C_{\Q}(A)/{\jac(C_{\Q}(A))} \cong \Q \times K, 
\]
where $K = \Q[x]/(x^2 - 15x - 1)$.
\end{example}

\begin{example}
Consider the two matrices
\[ 
A = \left(\begin{smallmatrix}
      1 & -4 & 0 & 0 & 1 & 0 \\
      0 & 1 & 0 & 0 & 0 & 0 \\
      0 & 0 & 1 & -3 & -6 & 0 \\
      0 & 0 & 0 & 1 & 2 & 0 \\
      -4 & 16 & -3 & 0 & -5 & -6 \\
      0 & 0 & -37 & 0 & -9 & -55
\end{smallmatrix}\right), 
\quad
B = \left(\begin{smallmatrix}
-88 & -4 & 0 & -66 & -51 & 32 \\
-2683 & 225 & 326 & -2670 & 1755 & 634 \\
-2607 & -666 & -332 & -525 & -6747 & 2835 \\
14 & 0 & 0 & 13 & 2 & 0 \\
523 & 38 & -3 & 330 & 440 & -325 \\
285 & 74 & 37 & 54 & 749 & -314
\end{smallmatrix}\right), \quad
\] 
both with minimal and characteristic polynomial $(x - 1)^2 (x^4 + 58x^3 + 88x^2 + 176x + 1)$.
As this is not squarefree, algorithms (b) and (c) cannot be applied in this situation.
Moreover, the implementation of algorithm (d) requires approximately 1 hour to find a conjugating matrix.
By contrast, the implementation of our algorithm finds such a matrix in less than one second.
Note that $\dim_{\Q}(C_{\Q}(A)) = 6$ and 
\[ 
C_{\Q}(A)/{\jac(C_{\Q}(A))} \cong \Q \times K, 
\]
where $K = \Q[x]/(x^4 + 58x^3 + 88x^2 + 176x + 1)$.
\end{example}

\section{Application: Galois module structure of rings of integers}\label{sec:GMS}

An important motivation for  Algorithm~\ref{alg:main} and its predecessors
is the investigation of the Galois module structure of rings of integers. 
We only briefly recall the problem here and refer the reader to the introduction
of \cite{MR4136552} for a more detailed overview.

Let $L/K$ be a finite Galois extension of number fields and let $G=\Gal(L/K)$.
The classical Normal Basis Theorem says that $L \cong K[G]$ as $K[G]$-modules.
A much more difficult problem is that of determining whether the ring of integers
$\mathcal{O}_{L}$ is free over its so-called associated order 
$\mathcal{A}_{L/K} = \{ \alpha \in K[G] \mid \alpha\mathcal{O}_{L} \subseteq \mathcal{O}_{L} \}$.
Note that if a prime $\mathfrak{p}$ of $K$ is (at most) tamely ramified in $L/K$
or is such that the localised associated order $\mathcal{A}_{L/K, \mathfrak{p}}$ is maximal, 
then the localisation $\mathcal{O}_{L,\mathfrak{p}}$ is necessarily free over $\mathcal{A}_{L/K, \mathfrak{p}}$. In particular, $\mathcal{O}_{L}$ is locally free over $\mathcal{A}_{L/K}$ if and only if
$\mathcal{O}_{L,\mathfrak{p}}$ is free over $\mathcal{A}_{L/K, \mathfrak{p}}$ for every prime $\mathfrak{p}$
of $K$ that is wildly ramified in $L/K$.
In this situation, one can consider the class
$[\mathcal{O}_{L}]$ in the locally free class group $\Cl(\mathcal{A}_{L/K})$. 
Moreover, if $K[G]$ satisfies hypothesis (H) then every order in $K[G]$
has the so-called locally free cancellation property 
(this follows from Jacobinski's cancellation theorem~\cite[(51.24)]{curtisandreiner_vol2}),
and so $\mathcal{O}_{L}$ is free over $\mathcal{A}_{L/K}$ if and only if it is locally free
and the class $[\mathcal{O}_{L}]$ is the trivial element of $\Cl(\mathcal{A}_{L/K})$.

For an abstract finite group $\Gamma$, we say that $L/K$ is a $\Gamma$-extension if it is a Galois extension such that 
$\Gal(L/K) \cong \Gamma$. 
Let $\Gamma=S_{4} \times C_{2}$, the direct product of the symmetric group on $4$
letters and the cyclic group of order $2$. 
Since
\[ 
\Q[\Gamma] \cong \prod_{i=1}^{4} \Q \times \prod_{j=1}^{2} \Mat_2(\Q) \times 
\prod_{k=1}^{4} \Mat_3(\Q), 
\] 
the algebra $\Q[\Gamma]$ satisfies hypothesis (H).
Using the methods of~\cite{FHS2019} we have constructed wildly ramified $\Gamma$-extensions of $\Q$ 
of small discriminant.
The wildly ramified $\Gamma$-extension of minimal discriminant is  $L_1 := K_1(\sqrt{92})$, 
where $K_1$ is the $S_4$-extension of $\Q$ defined by
\begin{multline*}
x^{24} + 2 x^{22} + 27 x^{20} + 112 x^{18} + 585 x^{16} + 338 x^{14} + 5767 x^{12}  \\
+ 4362 x^{10} + 1417 x^{8} - 76 x^{6} - 29 x^{4} - 6 x^{2} + 1 \in \Q[x].  
\end{multline*}
The field $L_1$ has discriminant $2^{84} \cdot 23^{24}$ and is wildly ramified at $2$. 
Moreover, the associated order $\calA_{L_1/\Q}$ has index $2^{43} \cdot 3^{3}$ in a maximal order $\mathcal{M}$ satisfying $\calA_{L_1/\Q} \subseteq \mathcal{M} \subseteq \Q[\Gal(L_{1}/\Q)]$ (note that this index is independent of the choice of $\mathcal{M}$).
Using Algorithm~\ref{alg:main} we have checked that $\calO_{L_1}$ is free over $\calA_{L_1/\Q}$
and have also obtained an explicit generator (unfortunately, the coefficients are too large to reproduce in print).
The algorithms of~\cite{bley-boltje, BW09} show that $\Cl(\calA_{L_1/\Q}) \cong C_{2}$.
However, the algorithm of \cite{BW09} for solving the discrete logarithm problem 
in a locally free class group is restricted to the case in which the order in question 
is a group ring or a maximal order,
and so this approach does not allow us to determine $[\calO_{L_1}]$ in $\Cl(\calA_{L_1/\Q})$.

We have performed the same computation using Algorithm~\ref{alg:main} described above for all wildly ramified $\Gamma$-extensions $L/\Q$ with $\lvert \disc(L) \rvert \leq 60^{48}$. For $686$ out of these $2600$ extensions, 
$\mathcal{O}_{L}$ is locally free over $\mathcal{A}_{L/\Q}$, 
and in all of these cases, $\mathcal{O}_{L}$ is in fact free over $\mathcal{A}_{L/\Q}$.
It would be interesting to find a proof of, or counterexample to, the assertion that the same phenomenon holds without the restriction on $\lvert \disc(L) \rvert$.

\appendix

\section{Weak approximation in probabilistic polynomial time}

Let $K$ be a number field with ring of integers $\calO = \OK$.
Let $\mathfrak{a}$ and $\mathfrak{b}$ be nonzero integral ideals of $\mathcal{O}$.
A classical result (see~\cite[Corollary 1.3.9]{MR1728313}) asserts that there exists
a deterministic algorithm for computing $x \in K^{\times}$ such that $x \mathfrak{a}$ is integral 
and coprime to $\mathfrak{b}$.
If the factorisation of $\mathfrak{b}$, or equivalently, 
of $\Nm(\mathfrak{b})$, is given, the algorithm runs in polynomial time.
There also exists a probabilistic algorithm~\cite[Algorithm 1.3.14]{MR1728313}, 
which does not require the factorisation of $\mathfrak{b}$ or $\Nm(\mathfrak{b})$, but is not polynomial time.
The aim of this section is to combine the deterministic and probabilistic variants to obtain a probabilistic polynomial-time algorithm.
The approach is based on the following general form of the constructive weak approximation theorem,
which relies on ideas of~\cite[Algorithm 6.15]{Belabas2004}.
For a nonzero prime ideal $\mathfrak{p}$ of $\mathcal{O}$, let  $v_{\mathfrak{p}}(-)$ denote the
$\mathfrak{p}$-adic valuation.

\begin{prop}\label{prop:randapprox}
There exists a probabilistic polynomial-time algorithm
that given nonzero integral ideals $\mathfrak{a}$ and $\mathfrak{b}$ of $\calO$ returns an element
$x \in \mathfrak{a}$ with $v_\mathfrak{p}(x) = v_\mathfrak{p}(\mathfrak{a})$ for all prime ideals 
$\mathfrak{p}$ dividing $\mathfrak{b}$.
\end{prop}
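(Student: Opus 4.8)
The plan is to recast the problem as that of finding $x \in \mathfrak{a}$ for which the integral ideal $(x\calO)\mathfrak{a}^{-1}$ is coprime to $\mathfrak{b}$ (equivalently, $v_{\mathfrak{p}}(x) = v_{\mathfrak{p}}(\mathfrak{a})$ for every prime $\mathfrak{p} \mid \mathfrak{b}$), and then to combine the classical \emph{deterministic} construction of \cite[Corollary~1.3.9]{cohen-advcomp} --- which is polynomial time \emph{once a factorisation of $\mathfrak{b}$ is available} --- with a random choice of element of $\mathfrak{a}$, which behaves well precisely when $\mathfrak{b}$ has no prime divisors of small norm. So I would first split off the small primes: set $B := \Nm(\mathfrak{b})$ and $T := \lceil \log_{2} B \rceil$, and, by trial division of $B$ together with the (polynomial-time) decomposition in $\calO$ of each rational prime $p \le T$, compute a factorisation $\mathfrak{b} = \mathfrak{b}_{1}\mathfrak{b}_{2}$ with $\mathfrak{b}_{1} + \mathfrak{b}_{2} = \calO$, where $\mathfrak{b}_{1}$ is supported on --- and fully factored into --- primes lying above rational primes $\le T$, while every prime divisor of $\mathfrak{b}_{2} := \mathfrak{b}\mathfrak{b}_{1}^{-1}$ has norm $> T$. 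Throughout I rely on the elementary remark that if $\mathfrak{p}$ is a prime of $\calO$ and $y \in \mathfrak{a}$ with $v_{\mathfrak{p}}(y) = v_{\mathfrak{p}}(\mathfrak{a})$, then every $x \equiv y \pmod{\mathfrak{a}\mathfrak{p}}$ also satisfies $v_{\mathfrak{p}}(x) = v_{\mathfrak{p}}(\mathfrak{a})$, since $v_{\mathfrak{p}}(x - y) \ge v_{\mathfrak{p}}(\mathfrak{a}) + 1 > v_{\mathfrak{p}}(y)$.

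Next I would handle $\mathfrak{b}_{1}$ deterministically. For each prime $\mathfrak{p} \mid \mathfrak{b}_{1}$ one picks $\pi_{\mathfrak{p}} \in \mathfrak{a} \setminus \mathfrak{a}\mathfrak{p}$ --- for instance a $\Z$-basis vector of $\mathfrak{a}$ with nonzero image in the one-dimensional $\calO/\mathfrak{p}$-vector space $\mathfrak{a}/\mathfrak{a}\mathfrak{p}$ --- and then, by the Chinese remainder theorem, assembles $x_{1} \in \mathfrak{a}$ with $x_{1} \equiv \pi_{\mathfrak{p}} \pmod{\mathfrak{a}\mathfrak{p}}$ for all $\mathfrak{p} \mid \mathfrak{b}_{1}$; this is exactly \cite[Corollary~1.3.9]{cohen-advcomp} run with the known factorisation of $\mathfrak{b}_{1}$, hence polynomial time. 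By the remark above, $v_{\mathfrak{p}}(x_{1}) = v_{\mathfrak{p}}(\mathfrak{a})$ for all $\mathfrak{p} \mid \mathfrak{b}_{1}$, i.e.\ $(x_{1}\calO)\mathfrak{a}^{-1}$ is coprime to $\mathfrak{b}_{1}$.

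Finally I would adjust $x_{1}$ probabilistically so that it also becomes coprime to $\mathfrak{b}_{2}$, without disturbing the situation at $\mathfrak{b}_{1}$; this is the randomised part, in the spirit of \cite[Algorithm~6.15]{Belabas2004}. Repeat the following step: sample $r \in \mathfrak{a}$ whose image in $\mathfrak{a}/\mathfrak{a}\mathfrak{b}_{2}$ is (nearly) uniform --- e.g.\ $r = \sum_{i} c_{i}\omega_{i}$ for a fixed $\Z$-basis $(\omega_{i})_{i}$ of $\mathfrak{a}$ and $c_{i}$ uniform in $\{0,\dots,\Nm(\mathfrak{b}_{2})-1\}$; using $\mathfrak{a}\mathfrak{b}_{1} + \mathfrak{a}\mathfrak{b}_{2} = \mathfrak{a}$, write $r - x_{1} = s_{1} + s_{2}$ with $s_{i} \in \mathfrak{a}\mathfrak{b}_{i}$ (a polynomial-time computation via Hermite normal forms), and set $x := x_{1} + s_{1}$, so that $x \equiv x_{1} \pmod{\mathfrak{a}\mathfrak{b}_{1}}$ and $x \equiv r \pmod{\mathfrak{a}\mathfrak{b}_{2}}$; then test whether the integral ideal $(x\calO)\mathfrak{a}^{-1}$ is coprime to $\mathfrak{b}_{2}$, and if so return $x$. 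Correctness is immediate: the first congruence preserves $v_{\mathfrak{p}}(x) = v_{\mathfrak{p}}(\mathfrak{a})$ for $\mathfrak{p} \mid \mathfrak{b}_{1}$, and the test enforces it for $\mathfrak{p} \mid \mathfrak{b}_{2}$, so it holds for all $\mathfrak{p} \mid \mathfrak{b}$. Each iteration is polynomial time; and since $\mathfrak{a}/\mathfrak{a}\mathfrak{b}_{2} \cong \prod_{\mathfrak{p}\mid\mathfrak{b}_{2}} \mathfrak{a}/\mathfrak{a}\mathfrak{p}^{v_{\mathfrak{p}}(\mathfrak{b}_{2})}$ with each $\mathfrak{a}/\mathfrak{a}\mathfrak{p}$ of size $\Nm(\mathfrak{p})$, an iteration succeeds with probability $\prod_{\mathfrak{p}\mid\mathfrak{b}_{2}}(1 - \Nm(\mathfrak{p})^{-1})$; as $\mathfrak{b}_{2}$ has at most $\log_{2}\Nm(\mathfrak{b}_{2}) \le \log_{2}B \le T$ distinct prime divisors, each of norm $> T$, this product is at least $(1 - 1/T)^{T} \ge 1/4$ once $T \ge 2$ (the finitely many remaining small values of $B$ being disposed of by the deterministic method alone). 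Hence the expected number of iterations is $O(1)$ and the whole algorithm runs in probabilistic polynomial time.

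The step I expect to be the main obstacle is this last one: pinning down the per-iteration success probability by an absolute constant is exactly what forces the preliminary removal of small-norm primes --- a blind random choice in $\mathfrak{a}$ can fail with probability close to $1$ when, say, a prime of norm $2$ divides $\mathfrak{b}$ --- and one must also be careful that congruence modulo $\mathfrak{a}\mathfrak{p}$, rather than modulo a higher power of $\mathfrak{p}$, already pins the valuation $v_{\mathfrak{p}}(x) = v_{\mathfrak{p}}(\mathfrak{a})$. The remaining ingredients --- polynomial-time ideal and module arithmetic via Hermite normal forms, decomposition of small rational primes in $\calO$, near-uniform sampling, and the Chinese remainder theorem for $\calO$-modules --- are routine.
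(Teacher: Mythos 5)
Your proposal is correct and shares the paper's high-level strategy: split $\mathfrak{b}$ into a ``small-prime'' factor that can be factored and handled deterministically and a ``large-prime'' factor handled by rejection sampling, the crucial point being that a uniform residue in $\mathfrak{a}/\mathfrak{a}\mathfrak{b}_{\mathrm{large}}$ succeeds with probability $\prod_{\mathfrak{p}\mid\mathfrak{b}_{\mathrm{large}}}(1-\Nm(\mathfrak{p})^{-1})$, which is bounded away from $0$ once every such prime has norm above a polylogarithmic threshold. You differ from the paper in two respects, both of which simplify the argument. First, your threshold $T=\lceil\log_2\Nm(\mathfrak{b})\rceil$ is more elementary than the paper's parameter $y$, defined implicitly by $Cy\log y=d\log b$ and designed so that the success probability can be pushed as close to $1$ as desired by shrinking the constant $C$; your cruder bound gives only $(1-1/T)^T\ge 1/4$, but that is all one needs. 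Second, and more substantially, you avoid the paper's parallel decomposition $\mathfrak{a}=\mathfrak{a}_0\mathfrak{a}_1$ and its combination $x=e_0x_1+e_1x_0$ with $e_i\in\mathfrak{a}_i^2\mathfrak{c}_i$, which forces a case analysis on whether $\mathfrak{p}\mid\mathfrak{a}$: instead you first pin down $x$ modulo $\mathfrak{a}\mathfrak{b}_1$ by CRT and then randomise only the residue modulo $\mathfrak{a}\mathfrak{b}_2$, using the comaximality $\mathfrak{a}\mathfrak{b}_1+\mathfrak{a}\mathfrak{b}_2=\mathfrak{a}$ together with your elementary remark that a congruence modulo $\mathfrak{a}\mathfrak{p}$ already determines whether $v_\mathfrak{p}(x)=v_\mathfrak{p}(\mathfrak{a})$. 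This cleanly decouples the deterministic and probabilistic halves and is, if anything, easier to verify than the paper's case analysis. One small remark: the hedge ``(nearly) uniform'' is unnecessary, since with $N=\Nm(\mathfrak{b}_2)\in\mathfrak{b}_2$ the assignment $(c_i)\mapsto\sum_i c_i\omega_i$ is a well-defined surjective homomorphism $(\Z/N\Z)^d\to\mathfrak{a}/\mathfrak{a}\mathfrak{b}_2$ of abelian groups, whence the pushed-forward distribution is exactly uniform.
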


\begin{proof}
We adapt the proofs of \cite[Lemmas 6.14, 6.16]{Belabas2004}, taking into account \cite[Remark 6.17~(2)]{Belabas2004}.
For the rest of the proof, we fix a positive constant $0<C<1$.
Let $a = \min(\fra \cap \Z_{>0})$, let $b = \min(\frb \cap \Z_{>0})$ and let $d=[K:\Q]$.
Note that if $a=1$ or $b=1$ or $d=1$ then we can just take $x=a$.
Thus we can and do assume that $a, b,d \geq 2$.
We define $y \in \R$ by the equality $Cy\log(y) = d \log(b)$. Then $y >2$ and we observe that 
$y$ is polynomially bounded in terms of $d$ and $\log(b)$.
Hence we can determine the set 
\[
S := \{ \mathfrak p \subseteq \calO \text{ prime such that }\mathfrak p \cap \Z = (p) \text{ with a rational prime } p < y \}
\]
in polynomial time. We define ideals
\[ 
\fra_0 = \prod_{\frp \in S} \frp^{v_\frp(\fra)}, \quad\quad \frb_0 = \prod_{\frp \in S} \frp^{v_\frp(\frb)}.
\]
Then $\fra = \fra_0\fra_1$ and $\frb = \frb_0\frb_1$ with integral ideals $\fra_1, \frb_1$ such that
\[
  \fra_0 + \fra_1 = \frb_0 + \frb_1 = \calO,
\]
which can be computed in polynomial time.
We write $b = b_0 b_1$ with
\[ 
b_{0} = \prod_{p < y} p^{v_{p}(b)}. 
\]
Since the factorisations of $\mathfrak{a}_{0}$ and $\mathfrak{b}_{0}$ are known,
using the deterministic polynomial-time algorithm~\cite[Proposition 1.3.8]{MR1728313}
we can find $x_0 \in \calO$ with $x_0 \in \mathfrak a_0$ and
$v_{\mathfrak p}(x_0) = v_\mathfrak p(\mathfrak a_0)$ for all $\mathfrak p$ dividing $\mathfrak b_0$.

We now show that we can find an element 
$x_{1} \in \mathfrak{a}_{1}$ with $v_{\mathfrak{p}}(x_1) = v_{\mathfrak{p}}(\mathfrak{a}_{1})$ for all
$\mathfrak{p}$ dividing $\mathfrak{b}_{1}$ in probabilistic polynomial time.
For the rest of the proof we will refer to such elements as \textit{good} elements.
We will prove that a positive proportion (independent of $\mathfrak a$ and $\mathfrak b$) of elements of the finite abelian group
$\mathfrak a_1/\mathfrak a_1\mathfrak b_1$ are good.
For a prime ideal $\mathfrak{p}$ dividing $\mathfrak{b}_1$, 
let $A_\mathfrak{p}$ denote the set 
$\mathfrak{a}_1 \mathfrak{p}/\mathfrak{a}_1 \mathfrak{b}_1$.
Then, for a set of prime ideals $T$ dividing $\mathfrak{b}_1$, we have
\[
\Bigl\lvert \bigcap_{\mathfrak{p} \in T} A_\mathfrak{p} \Bigr\rvert = \Nm(\mathfrak b_1)/\prod_{\mathfrak p \in T}\Nm(\mathfrak{p}). 
\]
From the inclusion-exclusion principle it follows that
\[
  \Bigl\lvert \bigcup_{\frp \mid \frb_1 } A_\frp \Bigr\rvert =
  \Nm(\frb_1) \left( 1 - \prod_{\frp \mid \frb_1} \left(1 - \frac{1}{\Nm(\frp)}\right) \right) .
\]
By definition, the lift of $x \in \mathfrak{a}_1/\mathfrak{a}_1\mathfrak{b}_1$ is good if and only if
$x \not\in \bigcup_{\mathfrak{p} \mid \mathfrak{b}_1} A_\mathfrak{p}$.
Hence the probability that (the lift) of a random element of 
$\mathfrak{a}_1/\mathfrak{a}_1\mathfrak{b}_1$ 
is good is
\[
\prod_{\mathfrak p \mid \mathfrak b_1}\left( 1 - \frac{1}{\Nm(\mathfrak p)} \right).
\]
Now set $C_1 := d \log(b_1)/(y \log(y)) \leq C$.
Since there are at most $d \log_y(b_1)$ prime ideals $\frp$ dividing $\frb_1$, 
each satisfying $\Nm(\frp) \geq y$, we have
\begin{align*}
\textstyle{\prod_{\mathfrak p \mid \mathfrak b_1}(1 - 1/\Nm(\mathfrak p))} 
\geq (1 - 1/y)^{d \log_y(b_1)} 
& \geq \exp(-1/y-1/y^{2})^{d \log_y(b_1)} 
= \exp(-C_1 - C_1/y) \\
&\geq \exp(-C(1+1/y)) \geq \exp(-3 C/2).
\end{align*}
Here the second inequality follows from $1 - x \geq \exp(-x - x^2)$ for $0 \leq x \leq 1/2$.
Thus we can find a good element in probabilistic polynomial time.

Now given $x_i \in \mathfrak a_i$ with $v_\mathfrak p(x_i) = v_\mathfrak p(\mathfrak a_i)$ for all primes
$\mathfrak p$ dividing $\mathfrak b_i$, we proceed as follows.
For $i=0,1$ let $\frc_i$ be the largest divisor of $\frb_i$ which is coprime to $\fra$.
Note that each $\frc_i$ can be determined in polynomial time by using only ideal sums and ideal division.
Moreover, if $\frp$ is a prime ideal with $\frp \mid \frb_i$ and $\frp \nmid \fra$, then $\frp \mid \frc_i$.
Since $\fra_0^2\frc_0 + \fra_1^2\frc_1 = \calO$, 
we can determine elements $e_i \in \fra_i^2\frc_i$ with $e_0 + e_1 = 1$ in polynomial time.
We now prove that the element
\[
  x := e_0x_1 + e_1x_0 \in \fra
\]
satisfies $v_\frp (x) = v_\frp(\fra)$ for all $\frp$ dividing $\frb$.

Case 1: $\frp \nmid \fra$. Assume that $\frp \mid \frb_1$. Then $\frp \mid \frc_1$ and hence $e_1 \in \frp$, $e_0 \not\in \frp$. Moreover,
\begin{eqnarray*}
  v_\frp(e_0x_1) &=& v_\frp(e_0) + v_\frp(x_1) =  v_\frp(e_0) + v_\frp(\fra_1) = v_\frp(e_0) = 0, \\
  v_\frp(e_1x_0) &=& v_\frp(e_1) + v_\frp(x_0) \ge  v_\frp(e_1) > 0.
\end{eqnarray*}
Hence $v_\frp(x) = \min(v_\frp(e_0x_1),  v_\frp(e_1x_0)) = 0 = v_\frp(\fra)$.
The subcase $\frp \mid \frb_0$ is similar.

Case 2: $\frp \mid \fra$. Assume that $\frp \mid \frb_1$. Then $\frp \not\in S$ and hence $\frp \nmid \fra_0$. It follows that $\frp\mid\fra_1$,
and hence $e_0 \not\in \frp, e_1 \in \frp$. Moreover,
\begin{eqnarray*}
  v_\frp(e_0x_1) &=& v_\frp(e_0) + v_\frp(x_1) =  v_\frp(e_0) + v_\frp(\fra_1) = v_\frp(e_0) + v_\frp(\fra) = v_\frp(\fra), \\
  v_\frp(e_1x_0) &=& v_\frp(e_1) + v_\frp(x_0) \ge  v_\frp(e_1) \ge 2v_\frp(\fra_1) > v_\frp(\fra_1) = v_\frp(\fra).
\end{eqnarray*}
Hence $v_\frp(x) = \min(v_\frp(e_0x_1),  v_\frp(e_1x_0)) = v_\frp(\fra)$.
The subcase $\frp \mid \frb_0$ is similar.
\end{proof}

\begin{corollary}\label{cor:coprime}
There exists a probabilistic polynomial-time algorithm that given nonzero integral ideals 
$\mathfrak{a}$ and $\mathfrak{b}$ of $\mathcal{O}$ 
returns an element $x \in K^{\times}$ such that $x \mathfrak{a}$ is integral and coprime to $\mathfrak{b}$.
\end{corollary}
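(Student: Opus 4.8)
The plan is to deduce Corollary~\ref{cor:coprime} from Proposition~\ref{prop:randapprox} by clearing denominators so that one only ever works with integral ideals. First I would extract, in polynomial time, any nonzero element $d \in \mathfrak{a}$ (for instance a nonzero entry of a pseudo-basis of $\mathfrak{a}$, or $d = \min(\mathfrak{a} \cap \Z_{>0})$). Since then $d\mathcal{O} \subseteq \mathfrak{a}$, the fractional ideal $\mathfrak{a}' := d\mathfrak{a}^{-1}$ is in fact an \emph{integral} ideal of $\mathcal{O}$, and it can be computed from $\mathfrak{a}$ and $d$ by standard ideal arithmetic in polynomial time. I would then invoke Proposition~\ref{prop:randapprox} on the pair of nonzero integral ideals $\mathfrak{a}'$ and $\mathfrak{b}$ to obtain, in probabilistic polynomial time, an element $y \in \mathfrak{a}'$ with $v_\mathfrak{p}(y) = v_\mathfrak{p}(\mathfrak{a}')$ for every prime ideal $\mathfrak{p} \mid \mathfrak{b}$.

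The claim is then that $x := y d^{-1} \in K^{\times}$ has the required properties. Indeed $x \in \mathfrak{a}' d^{-1} = \mathfrak{a}^{-1}$, so $x\mathfrak{a} \subseteq \mathcal{O}$ is integral, and $x \ne 0$ because $y$ has only finite valuations. The coprimality is a one-line valuation computation: for each prime $\mathfrak{p} \mid \mathfrak{b}$, using $v_\mathfrak{p}(\mathfrak{a}') = v_\mathfrak{p}(d) - v_\mathfrak{p}(\mathfrak{a})$ we get
\[
v_\mathfrak{p}(x\mathfrak{a}) = v_\mathfrak{p}(y) - v_\mathfrak{p}(d) + v_\mathfrak{p}(\mathfrak{a}) = v_\mathfrak{p}(\mathfrak{a}') - v_\mathfrak{p}(d) + v_\mathfrak{p}(\mathfrak{a}) = 0,
\]
so no prime dividing $\mathfrak{b}$ divides $x\mathfrak{a}$, i.e.\ $x\mathfrak{a} + \mathfrak{b} = \mathcal{O}$. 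Since forming $d$, $\mathfrak{a}'$ and the quotient $y d^{-1}$ are each polynomial-time operations and the only probabilistic step is the application of Proposition~\ref{prop:randapprox}, the whole procedure runs in probabilistic polynomial time.

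I do not expect any genuine obstacle here: the substantive difficulty — producing an element of a prescribed integral ideal with prescribed valuations at the primes dividing a second ideal, without knowing the factorisation of either — is precisely the content of Proposition~\ref{prop:randapprox}, proved earlier. The only points needing care are (i) verifying that $\mathfrak{a}'$ is genuinely integral, so that Proposition~\ref{prop:randapprox} applies, and (ii) the elementary valuation bookkeeping above; one should also note the trivial edge cases $\mathfrak{b} = \mathcal{O}$ (take $x = 1$) and $\mathfrak{a} = \mathcal{O}$ (take $d = 1$), which are immediate.
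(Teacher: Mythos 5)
Your proof is correct and follows essentially the same route as the paper: clear denominators by a single element $d$ of $\mathfrak{a}$ so that $d\mathfrak{a}^{-1}$ is integral (the paper fixes $d = \min(\mathfrak{a}\cap\Z_{>0})$, but any nonzero $d \in \mathfrak{a}$ works), apply Proposition~\ref{prop:randapprox} to the pair $(d\mathfrak{a}^{-1},\mathfrak{b})$, and divide the output by $d$; the valuation bookkeeping is the same translation the paper states implicitly.
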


\begin{proof}
We need to find an element $x \in \mathfrak{a}^{-1}$ such that 
$v_\mathfrak{p}(x) = v_\mathfrak{p}(\mathfrak{a}^{-1})$ for all prime ideals $\mathfrak{p}$ dividing
$\mathfrak{b}$.
Setting $a = \min(\mathfrak a \cap \Z_{>0})$ to be the minimum of $\mathfrak a$, this is equivalent
to $v_\mathfrak p(ax) = v_\mathfrak p(a\mathfrak a^{-1})$ for all $\mathfrak p$ dividing $\mathfrak b$.
As $a \mathfrak a^{-1}$ is integral, the result follows from Proposition~\ref{prop:randapprox} applied to $a\mathfrak{a}^{-1}$ and $\mathfrak{b}$.
\end{proof}

\begin{corollary}\label{cor:steinitz}
There exists a probabilistic polynomial-time algorithm that given
a generating set of an $\mathcal{O}$-lattice $M \subseteq K^{n}$ of rank $n$,
determines a Steinitz form of $M$, that is, elements $w_1,\dotsc,w_n \in K^{n}$
and a fractional ideal $\mathfrak{a}$ of $\mathcal{O}$ such that
\[
M = \mathcal{O} w_{1} \oplus \dotsb \oplus \mathcal{O} w_{n-1} \oplus \mathfrak{a} w_n. 
\] 
\end{corollary}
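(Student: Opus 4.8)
A Steinitz form of $M$ exists by the structure theorem for finitely generated modules over a Dedekind domain, so only the algorithmic content is at issue. The plan is to first compute \emph{some} pseudo-basis of $M$ and then to successively straighten its coefficient ideals, invoking Corollary~\ref{cor:coprime} at the single place where the textbook procedure would otherwise require a factorisation; this is the strategy underlying \cite[\S 5]{BJ08}, and the new ingredient is merely that all the coprimalisations needed are now available in probabilistic polynomial time. Concretely, first I would form from the given generating set a pseudo-generating system of $M$ (all coefficient ideals equal to $\calO$) and compute a pseudo-Hermite normal form: by \cite{Biasse2016} this runs in probabilistic polynomial time and returns nonzero fractional ideals $\mathfrak{b}_1,\dotsc,\mathfrak{b}_n$ of $\calO$ and vectors $u_1,\dotsc,u_n \in K^n$ with $M = \mathfrak{b}_1 u_1 \oplus \dotsb \oplus \mathfrak{b}_n u_n$.

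The core of the argument is a reduction step on a pair of pseudo-elements. Given $(\mathfrak{b},u)$ and $(\mathfrak{c},v)$ spanning a rank-$2$ piece $\mathfrak{b}u \oplus \mathfrak{c}v$, I would replace them by $(\calO,u')$ and $(\mathfrak{b}\mathfrak{c},v')$ as follows: pick any nonzero $a \in \mathfrak{b}$, so that $a\mathfrak{b}^{-1}$ is an integral ideal, and then, after scaling $\mathfrak{c}$ to an integral ideal and applying Corollary~\ref{cor:coprime} (equivalently Proposition~\ref{prop:randapprox}), produce $b \in \mathfrak{c}$ with $b\mathfrak{c}^{-1}$ coprime to $a\mathfrak{b}^{-1}$, so that $a\mathfrak{b}^{-1} + b\mathfrak{c}^{-1} = \calO$. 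Ordinary linear algebra over $K$ together with ideal arithmetic then yields $\alpha \in \mathfrak{b}^{-1}$ and $\beta \in \mathfrak{c}^{-1}$ with $a\alpha + b\beta = 1$ (it is exactly to guarantee solvability of this relation that the coprimality is needed); set $u' = au + bv$ and $v' = -\beta u + \alpha v$. Since the transition matrix $\left(\begin{smallmatrix} a & b \\ -\beta & \alpha \end{smallmatrix}\right)$ has determinant $1$, a direct two-sided inclusion check (using $\alpha\mathfrak{b},\beta\mathfrak{c} \subseteq \calO$ and $a \in \mathfrak{b}$, $b \in \mathfrak{c}$) shows $\calO u' \oplus (\mathfrak{b}\mathfrak{c})v' = \mathfrak{b}u \oplus \mathfrak{c}v$. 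Every operation here is polynomial time apart from the one call to Corollary~\ref{cor:coprime}.

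To finish, I would apply this reduction step to the pseudo-elements in positions $(1,2)$, then $(2,3)$, \dots, then $(n-1,n)$; after these $n-1$ reductions the coefficient ideals in positions $1,\dotsc,n-1$ are all $\calO$ and the last one equals the fractional ideal $\mathfrak{a} := \mathfrak{b}_1\cdots\mathfrak{b}_n$, and taking the $w_j$ to be the resulting vectors gives the asserted Steinitz form. For the complexity, the one point requiring care is that the $n-1$ iterations must not cause the bit sizes of the $u_j$ or of the ideals to blow up; I would handle this by reducing the data after each step (for instance recomputing a pseudo-Hermite normal form, polynomial time by \cite{Biasse2016}) and by using that the coprimality elements supplied by Corollary~\ref{cor:coprime} are of polynomially bounded size. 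I expect this size-control bookkeeping, rather than any conceptual difficulty, to be the main obstacle in writing the proof out in full.
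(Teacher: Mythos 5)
Your proposal is correct and follows essentially the same route as the paper: compute a pseudo-Hermite normal form via \cite{Biasse2016}, then iteratively straighten adjacent coefficient ideals using the standard pseudo-basis merge of \cite[Lemma 1.2.20]{cohen-advcomp}, with Corollary~\ref{cor:coprime} supplying the required coprime representatives in probabilistic polynomial time. You simply write out the textbook merge step explicitly rather than citing it, and add a sensible (and correct) remark about intermediate size control.
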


\begin{proof}
  A pseudo-Hermite normal form can be determined in probabilistic polynomial time by~\cite[Theorem 34]{Biasse2016}.
The reduction to the Steinitz form is described in~\cite[Lemma 1.2.20]{MR1728313}
and requires the computation of coprime representatives of ideal classes.
Thus the claim follows from Corollary~\ref{cor:coprime}.
\end{proof}

\bibliography{OmittingTheEnumerationStep}
\bibliographystyle{amsalpha}

\end{document}